\numberwithin{equation}{section}
\numberwithin{figure}{section}
\theoremstyle{plain}
\newtheorem{thm}{\protect\theoremname}[section]
\theoremstyle{plain}
\newtheorem{rem}[thm]{\protect\remarkname}
\theoremstyle{definition}
\newtheorem{defn}[thm]{\protect\definitionname}
\theoremstyle{plain}
\newtheorem{fact}[thm]{\protect\factname}
\theoremstyle{plain}
\newtheorem{lem}[thm]{\protect\lemmaname}
\theoremstyle{definition}
\newtheorem{problem}[thm]{\protect\problemname}
\theoremstyle{definition}
\newtheorem{example}[thm]{\protect\examplename}
\theoremstyle{remark}
\newtheorem{claim}[thm]{\protect\claimname}
\theoremstyle{plain}
\newtheorem{prop}[thm]{\protect\propositionname}
\theoremstyle{plain}
\newtheorem{cor}[thm]{\protect\corollaryname}
\theoremstyle{remark}
\newtheorem*{claim*}{\protect\claimname}
\providecommand{\claimname}{Claim}
\providecommand{\corollaryname}{Corollary}
\providecommand{\definitionname}{Definition}
\providecommand{\examplename}{Example}
\providecommand{\factname}{Fact}
\providecommand{\lemmaname}{Lemma}
\providecommand{\problemname}{Problem}
\providecommand{\propositionname}{Proposition}
\providecommand{\remarkname}{Remark}
\providecommand{\theoremname}{Theorem}
\begin{document}
\global\long\def\JUSTIFY#1{\mbox{\fbox{\tiny#1}}\quad}%

\global\long\def\SUPPORT{\mathrm{supp\:}}%

\global\long\def\SUPPORTPOSITIVE{\mathrm{supp}}%

\global\long\def\ESSINF{\mathrm{essinf\:}}%
\global\long\def\ESSSUP{\mathrm{esssup\:}}%

\global\long\def\DIST{d}%

\global\long\def\cutDIST{\delta_{\square}}%

\global\long\def\EXP{\mathbf{E}}%

\global\long\def\INT{\mathrm{INT}}%

\global\long\def\ACC{\mathbf{ACC}_{\mathrm{w}*}}%
\global\long\def\LIM{\mathbf{\mathbf{LIM}_{\mathrm{w}*}}}%

\global\long\def\WEAKCONV{\overset{\mathrm{w}^{*}}{\;\longrightarrow\;}}%

\global\long\def\NOTWEAKCONV{\overset{\mathrm{w^{*}}}{\;\not\longrightarrow\;}}%

\global\long\def\CUTNORMCONV{\overset{\|\cdot\|_{\square}}{\;\longrightarrow\;}}%

\global\long\def\CUTDISTCONV{\overset{\cutDIST}{\;\longrightarrow\;}}%

\global\long\def\LONECONV{\overset{\|\cdot\|_{1}}{\;\longrightarrow\;}}%

\global\long\def\NOTCUTNORMCONV{\overset{\|\cdot\|_{\square}}{\;\not\longrightarrow\;}}%

\global\long\def\NOTDISTCONV{\overset{\cutDIST}{\;\not\longrightarrow\;}}%

\global\long\def\NOTLONECONV{\overset{\|\cdot\|_{1}}{\;\not\longrightarrow\;}}%

\global\long\def\SO{\stackrel{S}{\preceq}}%

\global\long\def\SSO{\stackrel{S}{\prec}}%

\global\long\def\GRAPHONSPACE{\mathcal{W}_{0}}%

\global\long\def\UNLABELLEDGRAPHONSPACE{\widetilde{\mathcal{W}}_{0}}%

\title{Relating the cut distance and the weak{*} topology for graphons}
\author{Martin Doležal, Jan Grebík, Jan Hladký, Israel Rocha, Václav Rozho\v{n}}
\address{\emph{Doležal:} Institute of Mathematics, Czech Academy of Sciences.
Žitná~25, 110~00, Praha, Czechia. With institutional support RVO:67985840\emph{}\\
\emph{Grebík:} Mathematics Institute, University of Warwick, Coventry,
CV4~7AL, UK.\emph{ This work was done while affiliated with:} Institute
of\emph{ }Mathematics, Czech Academy of Sciences. Žitná~25, 110~00,
Praha, Czechia. With institutional support RVO:67985840\emph{}\\
\emph{Hladký:} Institute of Mathematics, Czech Academy of Sciences.
Žitná~25, 110~00, Praha, Czechia\emph{. Part of this work was done
while affiliated with:} Institut für Geometrie, TU Dresden, 01062
Dresden, Germany\emph{}\\
\emph{Rocha}: Institute of Computer Science, Czech Academy of Sciences.
Pod Vodárenskou v\v{e}ží 2, 182~07, Prague, Czechia. With institutional
support RVO:67985807.\\
\emph{Rozho\v{n}: }ETH Zürich, Switzerland.\emph{ This work was done
while affiliated with:} Institute of Computer Science, Czech Academy
of Sciences. Pod Vodárenskou v\v{e}ží 2, 182~07, Prague, Czechia.
With institutional support RVO:67985807.}
\thanks{Research of \emph{Doležal} was supported by the GA\v{C}R project No.
17-27844S and RVO: 67985840.\emph{ Grebík} was supported by the GA\v{C}R
project 17-33849L and RVO: 67985840.\emph{ Hladký} was supported by
the Alexander von Humboldt Foundation\emph{ }and by the GA\v{C}R project
No. 18-01472Y and RVO: 67985840\emph{. Rocha }and \emph{Rozho\v{n}}
were supported by the Czech Science Foundation, grant number GJ16-07822Y}
\email{dolezal@math.cas.cz, greboshrabos@seznam.cz, honzahladky@gmail.com,
israelrocha@gmail.com, rozhonv@ethz.ch }
\begin{abstract}
The theory of graphons is ultimately connected with the so-called
cut norm. In this paper, we approach the cut norm topology via the
weak{*} topology (when considering a predual of $L^{1}$-functions).
We prove that a sequence $W_{1},W_{2},W_{3},\ldots$ of graphons converges
in the cut distance if and only if we have equality of the sets of
weak{*} accumulation points and of weak{*} limit points of all sequences
of graphons $W_{1}',W_{2}',W_{3}',\ldots$ that are weakly isomorphic
to $W_{1},W_{2},W_{3},\ldots$. We further give a short descriptive
set theoretic argument that each sequence of graphons contains a subsequence
with the property above. This in particular provides an alternative
proof of the theorem of Lovász and Szegedy about compactness of the
space of graphons. We connect these results to <<multiway cut>>
characterization of cut distance convergence from {[}Ann. of Math.
(2) 176 (2012), no. 1, 151-219{]}.

These results are more naturally phrased in the Vietoris hyperspace
$K(\GRAPHONSPACE)$ over graphons with the weak{*} topology. We show
that graphons with the cut distance topology are homeomorphic to a
closed subset of $K(\GRAPHONSPACE)$, and deduce several consequences
of this fact.

From these concepts a new order on the space of graphons emerges.
This order allows to compare how structured two graphons are. We establish
basic properties of this <<structuredness order>>.
\end{abstract}

\keywords{graphon; graph limit; cut norm; weak{*} convergence}
\maketitle

\section{Introduction\label{sec:Intro}}

Graphons emerged from the work of Borgs, Chayes, Lovász, Sós, Szegedy,
and Vesztergombi~\cite{Lovasz2006,Borgs2008c} on limits of sequences
of finite graphs. We write $\GRAPHONSPACE$ for the space of all \emph{graphons},
i.e., all symmetric measurable functions from $\Omega^{2}$ to $[0,1]$,
after identifying graphons that are equal almost everywhere. Here
as well as in the rest of the paper, $\Omega$ is an arbitrary separable
atomless probability space with probability measure $\nu$. While
it is meaningful to investigate the space $\GRAPHONSPACE$ with respect
to several metrics and topologies, the two that relate the most to
graph theory are the metrics $\DIST_{\square}$ and $\delta_{\square}$
based on the so-called \emph{cut norm} defined on the space $L^{1}(\Omega^{2})$
by\emph{
\[
\left\Vert Y\right\Vert _{\square}=\sup_{S,T\subseteq\Omega}\left|\int_{S\times T}Y\right|\quad\text{for each \ensuremath{Y\in L^{1}(\Omega^{2})}\:.}
\]
}Given $U,W\in\GRAPHONSPACE$ we set 
\begin{align}
\DIST_{\square}\left(U,W\right): & =\left\Vert U-W\right\Vert _{\square}=\sup_{S,T\subseteq\Omega}\left|\int_{S\times T}U-\int_{S\times T}W\right|\;,\text{ and}\nonumber \\
\cutDIST(U,W) & :=\inf_{\varphi}\DIST_{\square}\left(U,W^{\varphi}\right)\;,\label{eq:defcutdist}
\end{align}
where $\varphi$ ranges over all measure preserving bijections of
$\Omega$ and the graphon $W^{\varphi}$ is defined by 
\begin{equation}
W^{\varphi}(x,y)=W(\varphi(x),\varphi(y))\;.\label{eq:version}
\end{equation}
We call $d_{\square}$ the \emph{cut norm distance} and $\delta_{\square}$
the \emph{cut distance}. We call graphons of the form $W^{\varphi}$
\emph{versions} of $W$. Passing to a version is an infinitesimal
counterpart to considering another adjacency matrix of a graph, in
which the vertices are reordered.

The key property of the space $\GRAPHONSPACE$ is its compactness
with respect to the cut distance $\cutDIST$. The result was first
proven by Lovász and Szegedy~\cite{Lovasz2006} using the regularity
lemma,\footnote{see also~\cite{Lovasz2007} and~\cite{MR3425986} for variants of
this approach} and then by Elek and Szegedy~\cite{ElekSzegedy} using ultrafilter
techniques, by Austin~\cite{MR2426176} and Diaconis and Janson~\cite{MR2463439}
using the theory of exchangeable random graphs, and finally by Doležal
and Hladký~\cite{DH:WeakStar} by optimizing a suitable parameter
over the set of weak{*} limits.
\begin{thm}
\label{thm:compactness}For every sequence $\Gamma_{1},\Gamma_{2},\Gamma_{3},\ldots$
of graphons there is a subsequence $\Gamma_{n_{1}},\Gamma_{n_{2}},\Gamma_{n_{3}},\ldots$
and a graphon $\Gamma$ such that $\cutDIST(\Gamma_{n_{i}},\Gamma)\rightarrow0$.
\end{thm}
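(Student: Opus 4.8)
The plan is to derive the cut-distance compactness of $\GRAPHONSPACE$ from the weak* compactness of the unit ball of $L^{\infty}(\Omega^2)$, following the strategy of Dole\v{z}al and Hladk\'y. First I would observe that every graphon $W$ lies in the closed unit ball of $L^{\infty}(\Omega^2)$, which is the dual of the separable space $L^{1}(\Omega^2)$; hence by the Banach--Alaoglu theorem (together with separability, so that the weak* topology on the ball is metrizable and sequentially compact) the sequence $\Gamma_1,\Gamma_2,\ldots$ has a subsequence $\Gamma_{n_1},\Gamma_{n_2},\ldots$ converging weak* to some symmetric measurable function $\Gamma\colon\Omega^2\to[0,1]$, which is again a graphon. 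This gives a candidate limit essentially for free; the entire content of the theorem is upgrading this weak* convergence to cut-distance convergence, possibly after passing to a further subsequence and replacing each $\Gamma_{n_i}$ by a suitable version.

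The key step is an averaging/energy-increment argument. For a graphon $W$ and a finite measurable partition $\mathcal{P}=\{P_1,\ldots,P_k\}$ of $\Omega$, let $W_{\mathcal{P}}$ denote the stepping operator that replaces $W$ by its average on each cell $P_i\times P_j$, and consider the ``energy'' $\mathcal{E}(W,\mathcal{P})=\|W_{\mathcal{P}}\|_2^2$. The plan is: (i) by a weak-regularity type lemma, for every $\varepsilon>0$ there is $k=k(\varepsilon)$ such that every graphon admits a partition into at most $k$ parts whose stepping is within $\varepsilon$ of the original in the cut norm; (ii) apply this simultaneously along the subsequence with a sequence $\varepsilon_m\to 0$ and, via a diagonal argument combined with the fact that the relevant partition data (the measures of the cells and the averages of $W$ over the cells) live in a compact finite-dimensional space, extract a further subsequence along which these combinatorial/analytic data converge; (iii) use this to build, for each $\Gamma_{n_i}$, a version $\Gamma_{n_i}'$ so that the corresponding step functions converge in $L^1$, hence $\|\Gamma_{n_i}' - \Gamma_{m}'\|_{\square}\to 0$ as $i,m\to\infty$; since $\|\cdot\|_{\square}$-Cauchy sequences of graphons have a cut-norm limit (completeness of $(\GRAPHONSPACE,\DIST_{\square})$, which follows because $\|\cdot\|_{\square}$-convergence implies $L^1$-convergence along a subsequence, or directly), and this limit agrees with $\Gamma$ up to a version, we conclude $\cutDIST(\Gamma_{n_i},\Gamma)\to 0$.

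Alternatively, and perhaps more cleanly in the spirit of the present paper, I would phrase step (ii)--(iii) through the weak* limit directly: having fixed the weak* limit $\Gamma$, for each target accuracy $1/m$ choose a single partition $\mathcal{P}_m$ witnessing weak regularity for $\Gamma$; since $\Gamma_{n_i}\WEAKCONV\Gamma$, the averages of $\Gamma_{n_i}$ over the (finitely many) cells of $\mathcal{P}_m$ converge to those of $\Gamma$, so $\|(\Gamma_{n_i})_{\mathcal{P}_m}-\Gamma_{\mathcal{P}_m}\|_1\to 0$ for each fixed $m$; the only remaining point is that $\mathcal{P}_m$ is a good partition not just for $\Gamma$ but, after rearranging $\Gamma_{n_i}$ by an appropriate measure preserving bijection $\varphi_{n_i}$, also for $\Gamma_{n_i}$ — this is exactly where one needs the (first) moment / regularity input, to control $\|\Gamma_{n_i}-(\Gamma_{n_i})_{\mathcal{P}_m^{\varphi_{n_i}}}\|_{\square}$ uniformly. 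Combining $\|\Gamma_{n_i}-(\Gamma_{n_i})_{\mathcal{P}_m^{\varphi}}\|_{\square}\le 1/m$, $\|\Gamma-\Gamma_{\mathcal{P}_m}\|_{\square}\le 1/m$, and the convergence of the step functions, then a diagonalization over $m$ yields $\cutDIST(\Gamma_{n_i},\Gamma)\to 0$.

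The main obstacle is step (iii): the partitions $\mathcal{P}_m$ that witness weak regularity for $\Gamma$ need not be ``aligned'' with good partitions for the $\Gamma_{n_i}$, and weak* convergence alone does not control the cut norm (indeed the whole paper is about how much weaker weak* convergence is). Overcoming this requires the regularity-lemma input to produce, for each $n_i$, a version $\Gamma_{n_i}'$ whose optimal bounded-complexity step approximation is compatible with a fixed partition of $\Omega$; managing the interplay of these version-choosing measure preserving bijections across both indices $i$ and $m$ — so that a single diagonal subsequence works — is the delicate bookkeeping at the heart of the argument. Everything else (Banach--Alaoglu, metrizability of the weak* ball, the triangle inequality for $\cutDIST$, and the elementary inequality $\|\cdot\|_{\square}\le\|\cdot\|_1$) is routine.
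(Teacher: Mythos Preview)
Your first outline (steps (i)--(iii)) is essentially the Lov\'asz--Szegedy regularity proof: apply weak regularity to each $\Gamma_n$ with a fixed complexity bound, pass to a subsequence along which the finite-dimensional partition data (cell measures and cell averages) converge, realign via measure preserving bijections so the step approximations converge in $L^1$, then diagonalize over the precision parameter. That argument is correct, but it is precisely one of the existing proofs the paper sets out to replace; it is not the approach taken here.

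Your ``alternative'' route --- fix a weak{*} limit $\Gamma$ of the original sequence, take regularity partitions $\mathcal{P}_m$ for $\Gamma$, and hope they can be made good for $\Gamma_{n_i}$ after passing to versions --- has a genuine gap. Weak{*} convergence $\Gamma_{n_i}\WEAKCONV\Gamma$ does guarantee $(\Gamma_{n_i})_{\mathcal{P}_m}\to\Gamma_{\mathcal{P}_m}$, but gives you no handle on $\|\Gamma_{n_i}-(\Gamma_{n_i})_{\mathcal{P}_m}\|_\square$; to control that you would have to invoke weak regularity separately for each $\Gamma_{n_i}$, producing \emph{different} partitions, and you are back to the alignment problem of the first approach. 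There is no reason the single weak{*} limit $\Gamma$ of the unmodified sequence is the right representative of the eventual cut-distance limit class, so anchoring the argument at $\Gamma$ buys nothing.

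The paper's proof is structurally quite different from both. Rather than fixing one weak{*} limit, it looks at the whole set $\ACC(\Gamma_1,\Gamma_2,\ldots)$ of weak{*} accumulation points of \emph{all} version-sequences. A short transfinite induction (using that a strictly increasing chain of closed sets in a second-countable space is countable) extracts a subsequence on which $\LIM=\ACC$; then one shows this set contains a maximal element $W$ for the structuredness order $\preceq$, and finally an index-pumping argument (if $\Gamma_{n_i}'\WEAKCONV W$ but $\Gamma_{n_i}'\NOTCUTNORMCONV W$, one can reshuffle to produce a weak{*} limit strictly above $W$) forces $\Gamma_{n_i}'\CUTNORMCONV W$. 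No regularity lemma or partition-alignment bookkeeping is used; the role of ``energy increment'' is played abstractly by the order $\preceq$ (or, in the predecessor paper, by a strictly convex functional $\INT_f$).
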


\subsection{Overview of the results\label{subsec:Overview-of-the}}

We view graphons as functions in the Banach space $L^{\infty}(\Omega^{2})$,
with a predual Banach space $L^{1}(\Omega^{2})$ to which we associate
the concept of weak{*} convergence. That means that a sequence of
graphons $\Gamma_{1},\Gamma_{2},\Gamma_{3},\ldots$ \emph{converges
weak{*} }to a graphon $W$ if we have
\begin{equation}
\sup_{Q\subseteq\Omega^{2}}\;\lim_{n\rightarrow\infty}\int_{Q}\Gamma_{n}-\int_{Q}W=0\;.\label{eq:weakstarfull}
\end{equation}
Since the sigma-algebra of all measurable sets $Q\subseteq\Omega^{2}$
is generated by sets of the form $S\times T$, where $S$ and $T$
are measurable subsets of $\Omega$, we can equivalently rewrite~(\ref{eq:weakstarfull})
in a way which is more convenient for us,
\begin{equation}
\sup_{S,T\subseteq\Omega}\;\lim_{n\rightarrow\infty}\int_{S\times T}\Gamma_{n}-\int_{S\times T}W=0\;.\label{eq:panle1}
\end{equation}
The weak{*} topology is weaker than the topology generated by $\DIST_{\square}$,
of which the former can be viewed as a certain uniformization. Indeed,
recall that a sequence of graphons $\Gamma_{1},\Gamma_{2},\Gamma_{3},\ldots$
converges to $W$ in the cut norm if
\begin{equation}
\lim_{n\rightarrow\infty}\;\sup_{S,T\subseteq\Omega}\left\{ \int_{S\times T}\Gamma_{n}-\int_{S\times T}W\right\} =0\;,\label{eq:panle2}
\end{equation}
that is, (\ref{eq:panle1}) and (\ref{eq:panle2}) differ only in
the order of the limit and the supremum.

In Section~\ref{sec:WeakStarANDCutDist} and Section~\ref{sec:structurdness},
which we consider the main contribution of the paper, we show that
the interplay between the cut distance and weak{*} topology creates
a rich theory.

In particular, we can make use of the weak{*} convergence to prove
Theorem~\ref{thm:compactness}. To this end, we look at the set $\ACC\left(\Gamma_{1},\Gamma_{2},\Gamma_{3},\ldots\right)$
of all weak{*} accumulation points of sequences 
\[
\left\{ \Gamma'_{1},\Gamma'_{2},\Gamma'_{3},\ldots:\text{\ensuremath{\Gamma_{n}'} is a version of \ensuremath{\Gamma_{n}}}\right\} \;.
\]
Similarly, denote by $\LIM(\Gamma_{1},\Gamma_{2},\Gamma_{3},\ldots)$
the set of all graphons $W$ for which there exist versions $\Gamma'_{1},\Gamma'_{2},\Gamma'_{3},\ldots$
of $\Gamma_{1},\Gamma_{2},\Gamma_{3},\ldots$ such that $W$ is a
weak{*} limit of the sequence $\Gamma'_{1},\Gamma'_{2},\Gamma'_{3},\ldots$.
Note that equivalently, we could have required $\Gamma'_{1},\Gamma'_{2},\Gamma'_{3},\ldots$
to be weakly isomorphic to $\Gamma_{1},\Gamma_{2},\Gamma_{3},\ldots$,
rather than being versions of $\Gamma_{1},\Gamma_{2},\Gamma_{3},\ldots$.
The set $\ACC\left(\Gamma_{1},\Gamma_{2},\Gamma_{3},\ldots\right)$
is non-empty by the Banach\textendash Alaoglu Theorem. In the set
$\ACC\left(\Gamma_{1},\Gamma_{2},\Gamma_{3},\ldots\right)$ we cleverly
select one graphon $\Gamma$. The selection is done so that in addition
to being a weak{*} accumulation point, $\Gamma$ is also a cut distance
accumulation point (the latter being clearly a stronger property).
In~\cite{DH:WeakStar}, Doležal and Hladký carried out a similar
program\footnote{which in actuality is more complicated for reasons sketched in Section~\ref{subsec:DifferentiatingACCLIM}}
where they showed that for the <<clever selection>> we can take
$\Gamma$ as the maximizer of an arbitrary graphon parameter of the
form 
\begin{equation}
\INT_{f}(W):=\int_{x}\int_{y}f\left(W(x,y)\right)\label{eq:FDolHl}
\end{equation}
where $f:[0,1]\rightarrow\mathbb{R}$ is a fixed but arbitrary continuous
strictly convex function.\footnote{\label{fn:INT}Most of~\cite{DH:WeakStar} deals with \emph{minimizing}
$\INT_{f}(W)$ for a fixed continuous strictly \emph{concave} function.
This is obviously equivalent. Also, note that in~\cite{DGHRR:Parameters}
it is shown that the assumption of continuity is not needed.}

Our main result, Theorem~\ref{thm:hyperspaceANDcutDISTANCEsimpler},
says that a sequence of graphons $\Gamma_{1},\Gamma_{2},\Gamma_{3},\ldots$
is cut distance convergent if and only if $\ACC\left(\Gamma_{1},\Gamma_{2},\Gamma_{3},\ldots\right)=\LIM\left(\Gamma_{1},\Gamma_{2},\Gamma_{3},\ldots\right)$.
This is complemented by Theorem~\ref{thm:subsequenceLIMACC} which
says that from any sequence $\Gamma_{1},\Gamma_{2},\Gamma_{3},\ldots$
of graphons, we can choose a subsequence $\Gamma_{n_{1}},\Gamma_{n_{2}},\Gamma_{n_{3}},\ldots$
such that $\LIM\left(\Gamma_{n_{1}},\Gamma_{n_{2}},\Gamma_{n_{3}},\ldots\right)=\ACC\left(\Gamma_{n_{1}},\Gamma_{n_{2}},\Gamma_{n_{3}},\ldots\right)$.
In particular, this yields Theorem~\ref{thm:compactness}. In Section~\ref{sec:Multiway}
we show that Theorem~\ref{thm:hyperspaceANDcutDISTANCEsimpler} is
actually equivalent to one of the main results of~\cite{MR2925382}
on so-called <<multiway cuts>>.

It turns out that these results can be naturally phrased in terms
of the so-called Vietoris hyperspace $K(\GRAPHONSPACE)$ over graphons
with the weak{*} topology (see Section~\ref{subsec:PreliminaryHyperspace}
for definitions). To each graphon $W:\Omega^{2}\rightarrow\left[0,1\right]$,
we associate its \emph{envelope} $\left\langle W\right\rangle =\ACC(W,W,W,\ldots)$
which is a subset of $L^{\infty}\left(\Omega^{2}\right)$. We show
that cut distance convergence of graphons is equivalent to convergence
of the corresponding envelopes in $K(\GRAPHONSPACE)$, and that envelopes
form a closed set in $K(\GRAPHONSPACE)$. As $K(\GRAPHONSPACE)$ is
known to be compact, this connection in particular provides an alternative
proof of Theorem~\ref{thm:compactness}. However, the transference
between the space of graphons and $K(\GRAPHONSPACE)$ has other applications.

From these proofs a new partial order on the space of graphons naturally
emerges. We say that $U$ is \emph{more structured} than $W$ if $\left\langle U\right\rangle \supsetneq\left\langle W\right\rangle $,
and write $U\succ W$. One illustrative example is to take $U$ to
be a complete balanced bipartite graphon and $W\equiv\frac{1}{2}$.
Obviously, $\left\langle W\right\rangle =\left\{ W\right\} $. By
considering versions of $U$ which create finer and finer chessboards,
we have $\left\langle U\right\rangle \ni W$. Thus, $U\succeq W$.
We establish basic properties of this <<structuredness order>>.
We investigate these properties in Section~\ref{sec:BasicPropertiesOfStructurdness}.
In particular, in Proposition~\ref{prop:minimalAndMaximalElements}
we characterize minimal and maximal elements. In Section~\ref{subsec:ValuesAndDegrees},
we introduce the \emph{range frequencies }of a graphon \emph{$W$}
as a pushforward probability measure on $\left[0,1\right]$ defined
by
\[
\boldsymbol{\Phi}_{W}\left(A\right):=\nu^{\otimes2}\left(W^{-1}(A)\right)\,,
\]
for a measurable set $A\subseteq\left[0,1\right]$, and introduce
a certain \emph{flatness order} on these range frequencies. Roughly
speaking, one probability measure on $\left[0,1\right]$ is flatter
than another, if the former can be obtained by a certain sort of averaging
of the latter. As we show in Proposition~\ref{prop:flatter}, this
order is compatible with the structuredness order on the corresponding
graphons. In~\cite{DGHRR:Parameters} we use range frequencies to
reprove in a very quick way the result of Doležal and Hladký (even
for discontinuous functions, as mentioned in Footnote~\ref{fn:INT}).

The main motivation for introducing and studying the structuredness
order is its connection to Theorem~\ref{thm:compactness}. Indeed,
the <<clever selection>> in our proof of Theorem~\ref{thm:compactness}
is to take a maximal element (inside $\ACC\left(\Gamma_{1},\Gamma_{2},\Gamma_{3},\ldots\right)$)
with respect to the structuredness order. However, the abstract theory
of the weak{*} approach to cut norm convergence which we introduce
in this paper has also applications in classical problems in extremal
graph theory. More specifically, one of the main results which we
prove in~\cite{DGHRR:Parameters} says that if a graph is <<step
Sidorenko>> then it is <<weakly norming>>, thus answering an open
question from~\cite{KMPW:StepSidorenko}. Our proof in~\cite{DGHRR:Parameters}
relies heavily on the theory of the structuredness order introduced
in the current paper, and we do not see an alternative proof which
would avoid it.\footnote{See Remark~3.29 of~\cite{DGHRR:Parameters}.}

\section{Preliminaries}

\subsection{General notation}

We write $\stackrel{\varepsilon}{\approx}$ for equality up to $\varepsilon$.
For example, $1\stackrel{0.2}{\approx}1.1\stackrel{0.2}{\approx}1.3$.
We write $P_{k}$ for a path on $k$ vertices and $C_{k}$ for a cycle
on $k$ vertices. 

If $A$ and $B$ are measure spaces then we say that a map $f:A\rightarrow B$
is an \emph{almost-bijection} if there exist measure zero sets $A_{0}\subseteq A$
and $B_{0}\subseteq B$ so that $f_{\restriction A\setminus A_{0}}$
is a bijection between $A\setminus A_{0}$ and $B\setminus B_{0}$.
Note that in~(\ref{eq:defcutdist}), we could have worked with measure
preserving almost-bijections $\varphi$ instead.

\subsection{Graphon basics}

Our notation is mostly standard, following~\cite{Lovasz2012}. We
write $\GRAPHONSPACE$ for the space of all graphons, that is, symmetric
measurable functions from $\Omega^{2}$ to $[0,1]$, modulo differences
on null-sets.

Graphons $U$ and $W$ are called \emph{weakly isomorphic} if $\cutDIST(U,W)=0$.
Note that in this case there is not necessarily a measure preserving
bijection $\varphi$ for which $\DIST_{\square}\left(U,W^{\varphi}\right)=0$;
see~\cite[Figure 7.1]{Lovasz2012}. In other words, being versions
and being weakly isomorphic are two slightly different notions. Let
us denote the compact space of graphons after the weak isomorphism
factorization as $\UNLABELLEDGRAPHONSPACE$. For every $W\in\GRAPHONSPACE$
we denote its equivalence class $\left\llbracket W\right\rrbracket \in\UNLABELLEDGRAPHONSPACE$.

The probability measure underlying $\Omega$ is $\nu$. We write $\nu^{\otimes k}$
for the product measure on $\Omega^{k}$.
\begin{rem}
\label{rem:UnitIntervalGroundSpace}Every separable atomless probability
space is isomorphic to the unit interval with the Lebesgue measure.
While most of our arguments are abstract and work with an arbitrary
separable atomless probability space $\Omega$, there are some other,
where we will assume that graphons are defined on the square of the
unit interval, and then will make use of the usual order on $[0,1]$.
\end{rem}

If $W\colon\Omega^{2}\rightarrow[0,1]$ is a graphon and $\varphi,\psi$
are two measure preserving bijections of $\Omega$ then we use the
short notation $W^{\psi\varphi}$ for the graphon $W^{\psi\circ\varphi}$,
i.e. $W^{\psi\varphi}(x,y)=W(\psi(\varphi(x)),\psi(\varphi(y)))=W^{\psi}(\varphi(x),\varphi(y))=\left(W^{\psi}\right)^{\varphi}\left(x,y\right)$
for $(x,y)\in\Omega^{2}$. Thus we have a right action of the group
of all measure preserving isomorphisms of $\left[0,1\right]$ on $\GRAPHONSPACE$.

The graphons that take values $0$ or $1$ almost everywhere are called
\emph{0-1 valued graphons}. 

We call the quantity $\int_{x}\int_{y}W(x,y)$ the \emph{edge density
of $W$}. Recall also that for $x\in\Omega$, we have the \emph{degree
of $x$} in $W$ defined as $\deg_{W}(x)=\int_{y}W\left(x,y\right)$.
Recall that measurability of $W$ gives that $\deg_{W}(x)$ exists
for almost each $x\in\Omega$. We say that $W$ is \emph{$p$-regular
}if for almost every $x\in\Omega$, $\deg_{W}(x)=p$.

\subsubsection{The stepping operator}

Suppose that \emph{$W:\Omega^{2}\rightarrow[0,1]$} is a graphon.
We say that $W$ is a\emph{ step graphon} if $W$ is constant on each
$\Omega_{i}\times\Omega_{j}$, for a suitable a finite partition $\mathcal{P}$
of $\Omega$, $\mathcal{P}=\left\{ \Omega_{1},\Omega_{2},\ldots,\Omega_{k}\right\} $.
We recall the definition of the stepping operator.
\begin{defn}
Suppose that $\Gamma:\Omega^{2}\rightarrow[0,1]$ is a graphon. For
a finite partition $\mathcal{P}$ of $\Omega$, $\mathcal{P}=\left\{ \Omega_{1},\Omega_{2},\ldots,\Omega_{k}\right\} $,
we define a graphon $\Gamma^{\Join\mathcal{P}}$ by setting it on
the rectangle $\Omega_{i}\times\Omega_{j}$ to be the constant $\frac{1}{\nu^{\otimes2}(\Omega_{i}\times\Omega_{j})}\int_{\Omega_{i}}\int_{\Omega_{j}}\Gamma(x,y)$.
We allow graphons to have not well-defined values on null sets which
handles the cases $\nu(\Omega_{i})=0$ or $\nu(\Omega_{j})=0$.
\end{defn}

In~\cite{Lovasz2012}, a stepping is denoted by $\Gamma_{\mathcal{P}}$
rather than $\Gamma^{\Join\mathcal{P}}$.

Finally, we say that a graphon $U$ \emph{refines }a graphon $W$,
if $W$ is a step graphon for a suitable partition $\mathcal{P}$
of $\Omega$, $\mathcal{P}=\left\{ \Omega_{1},\Omega_{2},\ldots,\Omega_{k}\right\} $,
and $U^{\Join\mathcal{P}}=W$.

\subsection{Topologies on $\protect\GRAPHONSPACE$\label{subsec:TopologiesOnW}}

There are several natural topologies on $\GRAPHONSPACE$. The $\left\Vert \cdot\right\Vert _{1}$
topology inherited from the normed space $L^{1}(\Omega^{2})$, the
topology given by the $\left\Vert \cdot\right\Vert _{\square}$ norm,
and the weak{*} topology (when $\mathcal{W}_{0}$ is viewed as a subset
of the dual space of $L^{1}(\Omega^{2})$ i.e. a subset of $L^{\infty}(\Omega^{2})$).
Note that $\GRAPHONSPACE$ is closed in $L^{1}$. We write $d_{1}\left(\cdot,\cdot\right)$
for the distance derived from the $\left\Vert \cdot\right\Vert _{1}$
norm. Recall also that by the Banach\textendash Alaoglu Theorem, $\GRAPHONSPACE$
equipped with the weak{*} topology is compact and that the weak{*}
topology on $\GRAPHONSPACE$ is metrizable. We shall denote by $d_{\mathrm{w}^{*}}(\cdot,\cdot)$
any metric compatible with this topology. For example, we can take
some countable dense measure subalgebra $\left\{ A_{n}\right\} _{n\in\mathbb{N}}$
of all measurable subsets of $\Omega$, and define 
\begin{equation}
d_{\mathrm{w}^{*}}\left(U,W\right):=\sum_{n,k\in\mathbb{N}}2^{-(n+k)}\left|\int_{A_{n}\times A_{k}}(U-W)\;\mathrm{d}\nu\right|.\label{eq:exampleweakstarmetric}
\end{equation}

The following fact summarizes the relation of the above topologies.
\begin{fact}
\label{fact:differentnorms}The following identity maps are continuous:
$(\GRAPHONSPACE,d_{1})\to(\GRAPHONSPACE,d_{\Box})\to(\GRAPHONSPACE,d_{\mathrm{w}^{*}})$.
\end{fact}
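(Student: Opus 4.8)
The plan is to establish the two inclusions $(\GRAPHONSPACE,d_1)\to(\GRAPHONSPACE,d_\Box)$ and $(\GRAPHONSPACE,d_\Box)\to(\GRAPHONSPACE,d_{\mathrm{w}^*})$ separately, since continuity of the identity map just means that every convergent sequence in the stronger topology remains convergent (to the same limit) in the weaker one. For the first map, take graphons $W_n\to W$ in $L^1$; I want to show $\|W_n-W\|_\Box\to 0$. This is immediate from the pointwise bound $\|Y\|_\Box\le \|Y\|_1$, which holds because for any measurable $S,T\subseteq\Omega$ we have $\left|\int_{S\times T}Y\right|\le \int_{S\times T}|Y|\le \int_{\Omega^2}|Y|=\|Y\|_1$, and taking the supremum over $S,T$ gives $\|Y\|_\Box\le\|Y\|_1$. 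Applying this to $Y=W_n-W$ finishes the first part.

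For the second map, suppose $W_n\to W$ in the $d_\Box$ metric, i.e. $\|W_n-W\|_\Box\to 0$; I want $d_{\mathrm{w}^*}(W_n,W)\to 0$. Using the explicit metric from~(\ref{eq:exampleweakstarmetric}), each summand satisfies $2^{-(n+k)}\left|\int_{A_n\times A_k}(W_m-W)\right|\le 2^{-(n+k)}\|W_m-W\|_\Box$ by the definition of the cut norm (the supremum there is over \emph{all} measurable rectangles, so in particular over $A_n\times A_k$). Summing the geometric series gives $d_{\mathrm{w}^*}(W_m,W)\le \big(\sum_{n,k}2^{-(n+k)}\big)\|W_m-W\|_\Box = 4\,\|W_m-W\|_\Box$, which tends to $0$. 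Since the $d_{\mathrm{w}^*}$-topology does not depend on the choice of compatible metric, this suffices; alternatively one can argue directly from the definition of weak{*} convergence by noting that the sigma-algebra on $\Omega^2$ is generated by rectangles $S\times T$ and that $\sup_{S,T}\big|\lim_n\int_{S\times T}(W_n-W)\big|\le \limsup_n\|W_n-W\|_\Box=0$.

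Honestly, there is no real obstacle here: both inclusions reduce to the elementary inequality $\|Y\|_\Box\le\|Y\|_1$ together with the observation that $\|\cdot\|_\Box$ dominates each individual test functional used to define the weak{*} metric. The only point that deserves a word of care is the logical gap between ``$\|W_n-W\|_\Box\to 0$'' and ``$W_n\to W$ in the weak{*} topology'': one should make clear that weak{*} convergence of a sequence is implied by (indeed strictly weaker than) $d_\Box$-convergence precisely because weak{*} convergence tests against fixed sets while the $\Box$-norm takes a supremum, so that the interchange of limit and supremum in~(\ref{eq:panle1}) versus~(\ref{eq:panle2}) only makes convergence easier. I would therefore state the proof in a couple of lines, emphasizing the inequalities $\|Y\|_\Box\le \|Y\|_1$ and $d_{\mathrm{w}^*}(U,W)\le 4\|U-W\|_\Box$, and leave it at that.
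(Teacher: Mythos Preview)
Your proof is correct and follows exactly the approach the paper indicates: the paper's own proof merely states that the first map is ``an easy consequence of the definitions'' and that the second ``is explained in Subsection~\ref{subsec:Overview-of-the}'' (i.e., the comparison of~(\ref{eq:panle1}) and~(\ref{eq:panle2})), which is precisely what you have spelled out via the inequalities $\|Y\|_\Box\le\|Y\|_1$ and $d_{\mathrm{w}^*}(U,W)\le C\,\|U-W\|_\Box$.
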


\begin{proof}
The continuity of the first map is an easy consequence of the definitions
and the second is explained in Subsection \ref{subsec:Overview-of-the}. 
\end{proof}

\subsection{Auxiliary results about $L^{1}$-spaces}

We prove two auxiliary lemmas about $L^{1}$-spaces. Lemma~\ref{lem:lonenonconvergence}
is an easy result about functions that do not converge in $L^{1}$.
\begin{lem}
\label{lem:lonenonconvergence}Suppose that $\Lambda$ is a probability
measure space with measure $\lambda$. If we have functions $g,g_{1},g_{2},g_{3},\ldots:\Lambda\rightarrow[0,1]$
for which $g_{n}\NOTLONECONV g$, then there exists an interval $J\subseteq[0,1]$
and a number $c>0$ such that for the interval $J^{+}:=\left\{ x+d:x\in J,d\in[-c,c]\right\} $
we have $\lambda\left(g^{-1}\left(J\right)\setminus g_{n}^{-1}\left(J^{+}\right)\right)\not\longrightarrow0$.
\end{lem}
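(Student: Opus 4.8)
The plan is to contrapose the hypothesis into a quantitative statement and then play a measure-theoretic game with the level sets of $g$. Since $g_n \NOTLONECONV g$, there is some $\varepsilon_0>0$ and an infinite set of indices along which $\int_\Lambda |g_n-g|\,\mathrm{d}\lambda \geq \varepsilon_0$; I would pass to this subsequence at the start (noting that the conclusion "$\not\longrightarrow 0$" is about existence of a subsequence bounded away from $0$, so working along a sub-subsequence loses nothing). The goal is then to find a single interval $J$ and a single $c>0$ so that $\lambda\bigl(g^{-1}(J)\setminus g_n^{-1}(J^+)\bigr)$ stays bounded away from $0$ along a further subsequence.

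The key idea is to partition $[0,1]$ into finitely many small intervals $J_1,\dots,J_m$ each of length, say, less than $c$ (to be chosen), and observe that if $x\in g^{-1}(J_i)$ but $g_n(x)\notin J_i^+$, then $g(x)$ and $g_n(x)$ are separated by more than $c$, because $J_i^+$ is the $c$-neighborhood of $J_i$. Hence, writing $B_{n,i} := g^{-1}(J_i)\setminus g_n^{-1}(J_i^+)$, on the set $\bigcup_i B_{n,i}$ we have $|g_n(x)-g(x)|>c$. Conversely, for $x$ lying in none of the $B_{n,i}$ — that is, for every $i$ with $g(x)\in J_i$ we have $g_n(x)\in J_i^+$ — the value $g_n(x)$ is within $c$ of the interval $J_i$ containing $g(x)$, so $|g_n(x)-g(x)|\leq |J_i| + c < 2c$. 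Since $|g_n-g|\leq 1$ always, we get
\[
\varepsilon_0 \;\leq\; \int_\Lambda |g_n-g|\,\mathrm{d}\lambda \;\leq\; 1\cdot \lambda\Bigl(\bigcup_{i=1}^m B_{n,i}\Bigr) \;+\; 2c\cdot \lambda(\Lambda) \;=\; \lambda\Bigl(\bigcup_{i=1}^m B_{n,i}\Bigr) + 2c.
\]
Choosing $c := \varepsilon_0/4$ (and the partition into intervals of length $<c$ accordingly) forces $\lambda\bigl(\bigcup_i B_{n,i}\bigr) \geq \varepsilon_0/2$ for all $n$ in our subsequence, hence by the union bound $\max_i \lambda(B_{n,i}) \geq \varepsilon_0/(2m)$ for each such $n$.

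Finally, since there are only finitely many indices $i\in\{1,\dots,m\}$ but infinitely many $n$, by pigeonhole there is a fixed $i^\star$ and an infinite sub-subsequence along which $\lambda(B_{n,i^\star}) \geq \varepsilon_0/(2m)$. Setting $J := J_{i^\star}$ and keeping $c = \varepsilon_0/4$, this is exactly the assertion that $\lambda\bigl(g^{-1}(J)\setminus g_n^{-1}(J^+)\bigr)\not\longrightarrow 0$, completing the proof. The one genuinely delicate point — really the only obstacle worth flagging — is the bookkeeping with the boundary intervals: the intervals $J_i$ must tile $[0,1]$, and a point $x$ can have $g(x)$ sitting at an endpoint shared by $J_i$ and $J_{i+1}$; one should fix the convention (half-open intervals $[a,b)$, with the last one closed) so that each $x$ lies in exactly one $g^{-1}(J_i)$, and then check that the two-sided estimate above still goes through with the stated $J^+ = \{x+d : x\in J,\ d\in[-c,c]\}$. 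Everything else is a routine union bound and pigeonhole.
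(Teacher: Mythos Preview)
Your proof is correct and follows essentially the same approach as the paper: partition $[0,1]$ into roughly $1/\varepsilon$ intervals of length about $\varepsilon/4$, argue that the $L^1$-mass forces a definite measure of points where $g_n$ escapes the enlarged interval containing $g$, and pigeonhole over the finitely many intervals to find one that works infinitely often. The only cosmetic difference is that the paper first picks, for each $n$, an interval $J_{j(n)}$ on which the average of $|g_n-g|$ exceeds $\varepsilon$ and then applies a Markov-type bound, whereas you go directly via the global split $\int|g_n-g|\le \lambda(\bigcup_i B_{n,i})+2c$; both routes yield the same conclusion with the same parameters.
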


\begin{proof}
By passing to a subsequence, we may assume that there exists a constant
$\varepsilon>0$ so that for each $n$, $\left\Vert g_{n}-g\right\Vert _{1}>\varepsilon$.
Take $k:=\left\lceil 4/\varepsilon\right\rceil $ and a partition
of $[0,1]$ into $k$ intervals $J_{1},J_{2},\ldots,J_{k}$ of lengths
at most $\frac{\varepsilon}{4}$ (ordered from left to right; it is
not important if they are open, closed, or semiopen). For $j\in[k]$,
$L_{j}:=g^{-1}(J_{j})$. For each $n$, there exists a number $j(n)\in[k]$
so that we have $\int_{L_{j(n)}}|g_{n}-g|>\varepsilon\cdot\lambda(L_{j(n)})$.
Observe that the strict inequality forces that $\lambda(L_{j(n)})>0$.
In particular, we then have that 
\begin{equation}
|g_{n}(x)-g(x)|>\frac{\varepsilon}{2}\label{eq:novinar}
\end{equation}
for a set of points $x\in L_{j(n)}$ of measure at least $\frac{\varepsilon}{2}\cdot\lambda\left(L_{j(n)}\right)$.
Let $j$ be a number that repeats infinitely often in the sequence
$j(1),j(2),j(3),\ldots$. By passing to a subsequence once again,
we can assume that $j=j(1)=j(2)=\ldots$. We set $J:=J_{j}$, $c:=\frac{\varepsilon}{4}$,
and $J^{+}$ as in the statement of the lemma. Observe that whenever
$x\in L_{j}$ satisfies~(\ref{eq:novinar}), then $g_{n}(x)\notin J^{+}$.
Therefore, we conclude that for each $n$, $\lambda\left(g^{-1}\left(J\right)\setminus g_{n}^{-1}\left(J^{+}\right)\right)\ge\frac{\varepsilon}{2}\cdot\lambda\left(L_{j}\right)$.
This concludes the proof.
\end{proof}
We can now state the second lemma of this section.
\begin{lem}
\label{lem:approx}For every graphon $\Gamma:\Omega^{2}\rightarrow[0,1]$
and every $\varepsilon>0$ there exists a finite partition $\mathcal{P}$
of $\Omega$ such that $\left\Vert \Gamma-\Gamma^{\Join\mathcal{P}}\right\Vert _{1}<\varepsilon$.
\end{lem}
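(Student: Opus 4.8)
The plan is to produce the partition in two stages. First I would find a finite partition that works for a single measurable function into $[0,1]$, and then combine finitely many such partitions to handle the two-variable symmetric situation.

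\textbf{Step 1: approximation by a function constant on rectangles of a product partition.} Fix $\varepsilon>0$. I would invoke a standard density fact: simple functions are dense in $L^1(\Omega^2)$, and since the product $\sigma$-algebra on $\Omega^2$ is generated by the measurable rectangles, one can approximate $\Gamma$ in $L^1$ to within, say, $\varepsilon/4$ by a finite linear combination $\sum_i c_i \mathbf{1}_{A_i\times B_i}$ with $A_i,B_i\subseteq\Omega$ measurable. Let $\mathcal{Q}$ be the common refinement of the finite family $\{A_i,B_i\}_i$; this is a finite partition of $\Omega$, and the approximating function is constant on every cell $Q\times Q'$ of the product partition $\mathcal{Q}\times\mathcal{Q}$. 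Call this step function $F$; so $\|\Gamma-F\|_1<\varepsilon/4$.

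\textbf{Step 2: comparing $F$ with $\Gamma^{\Join\mathcal{Q}}$ and symmetrizing.} Since $F$ is constant on each $Q\times Q'$ and $\Gamma^{\Join\mathcal{Q}}$ takes on $Q\times Q'$ the average of $\Gamma$ there, $L^1$-projection onto the finite-dimensional subspace of functions constant on the cells of $\mathcal{Q}\times\mathcal{Q}$ — equivalently conditional expectation, which is an $L^1$-contraction — shows $\|\Gamma-\Gamma^{\Join\mathcal{Q}}\|_1\le\|\Gamma-F\|_1<\varepsilon/4<\varepsilon$. (Concretely, on each cell $\int_{Q\times Q'}|\Gamma-\Gamma^{\Join\mathcal{Q}}| = \int_{Q\times Q'}|\Gamma - \mathrm{avg}| \le \int_{Q\times Q'}|\Gamma-c|$ for any constant $c$, in particular for the value of $F$, by the defining property of the mean as the best $L^1$-ish approximant — actually it is cleanest to use that conditional expectation onto a sub-$\sigma$-algebra is an $L^1$-contraction, so $\|\Gamma^{\Join\mathcal{Q}} - F\|_1 = \|\mathbf{E}[\Gamma-F\mid\mathcal{Q}\times\mathcal{Q}]\|_1 \le \|\Gamma-F\|_1$, whence $\|\Gamma-\Gamma^{\Join\mathcal{Q}}\|_1\le\|\Gamma-F\|_1+\|F-\Gamma^{\Join\mathcal{Q}}\|_1<\varepsilon/2$.) One subtlety: I must check $\Gamma^{\Join\mathcal{Q}}$ is again a \emph{graphon}, i.e. symmetric; since $\Gamma$ is symmetric and the partition $\mathcal{Q}$ is the same on both coordinates, the average over $Q\times Q'$ equals the average over $Q'\times Q$, so symmetry is automatic. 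Setting $\mathcal{P}=\mathcal{Q}$ finishes the proof.

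\textbf{Main obstacle.} There is no real obstacle here; the only point requiring a moment's care is justifying the inequality $\|\Gamma-\Gamma^{\Join\mathcal{P}}\|_1\le\|\Gamma-F\|_1$ cleanly — the conceptual statement is that $\Gamma^{\Join\mathcal{P}}$ is exactly the conditional expectation $\mathbf{E}[\Gamma\mid\mathcal{A}_{\mathcal{P}}]$ with respect to the finite sub-$\sigma$-algebra $\mathcal{A}_{\mathcal{P}}$ generated by the rectangles $\Omega_i\times\Omega_j$, and conditional expectation is a contraction in every $L^p$, $1\le p\le\infty$. With that observation the estimate is immediate, and the density of step functions on rectangles (Step 1) is the only external input, itself a routine consequence of the generation of the product $\sigma$-algebra by rectangles together with the monotone class / approximation theorem.
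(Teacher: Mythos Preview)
Your proof is correct and follows essentially the same route as the paper: approximate $\Gamma$ in $L^{1}$ by a step function $F$ constant on the cells of a product partition $\mathcal{P}\times\mathcal{P}$, then use that replacing $F$ by the cell averages $\Gamma^{\Join\mathcal{P}}$ costs at most another $\|\Gamma-F\|_{1}$, giving $\|\Gamma-\Gamma^{\Join\mathcal{P}}\|_{1}\le 2\|\Gamma-F\|_{1}$. The paper isolates this last step as a separate elementary inequality (its Fact~\ref{fact:replaceapprox}), whereas you phrase it via the $L^{1}$-contraction of conditional expectation; these are the same argument, and your brief aside about the mean being a ``best $L^{1}$-ish approximant'' is indeed not quite right (that would be the median), but you correctly abandon it for the contraction argument.
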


For the proof of Lemma~\ref{lem:approx}, the following fact will
be useful.
\begin{fact}
\label{fact:replaceapprox}Suppose that $f\in L^{1}(\Lambda)$ is
an arbitrary function on a finite measure space $\Lambda$ with measure
$\lambda$. Set $a:=\frac{1}{\lambda(\Lambda)}\cdot\int_{\Lambda}f$.
Then for each $b\in\mathbb{R}$ we have that $\left\Vert f-a\right\Vert _{1}\le2\left\Vert f-b\right\Vert _{1}$.
\end{fact}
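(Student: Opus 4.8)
The plan is a two-fold application of the triangle inequality; the statement is just the standard observation that the mean of $f$ is a $2$-optimal constant $L^{1}$-approximation, and the bound is clean enough that the ``plan'' is essentially the proof.

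First I would record that, since $a$ and $b$ are constants, $\left\Vert a-b\right\Vert _{1}=|a-b|\cdot\lambda(\Lambda)$, and that by the triangle inequality in $L^{1}(\Lambda)$,
\[
\left\Vert f-a\right\Vert _{1}\;\le\;\left\Vert f-b\right\Vert _{1}+\left\Vert b-a\right\Vert _{1}\;=\;\left\Vert f-b\right\Vert _{1}+|a-b|\cdot\lambda(\Lambda)\,.
\]

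Second, I would bound the last term using the defining property $a\cdot\lambda(\Lambda)=\int_{\Lambda}f$. This gives $(a-b)\cdot\lambda(\Lambda)=\int_{\Lambda}(f-b)$, so by monotonicity of the integral
\[
|a-b|\cdot\lambda(\Lambda)=\left|\int_{\Lambda}(f-b)\right|\le\int_{\Lambda}|f-b|=\left\Vert f-b\right\Vert _{1}\,.
\]
Substituting into the previous display yields $\left\Vert f-a\right\Vert _{1}\le2\left\Vert f-b\right\Vert _{1}$, which is the claim.

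There is no real obstacle here. The only degenerate point is $\lambda(\Lambda)=0$, where $a$ is not well defined; but then $f=0$ almost everywhere and both sides of the asserted inequality vanish, so one may assume $\lambda(\Lambda)>0$ throughout (in the paper's applications $\Lambda$ will be a probability space or a positive-measure piece of one, so this case does not arise).
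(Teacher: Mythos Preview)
Your proof is correct and follows exactly the same approach as the paper: apply the triangle inequality to get $\left\Vert f-a\right\Vert _{1}\le\left\Vert f-b\right\Vert _{1}+\lambda(\Lambda)\cdot|a-b|$, then observe that $\lambda(\Lambda)\cdot|a-b|=\left|\int_{\Lambda}(f-b)\right|\le\left\Vert f-b\right\Vert _{1}$.
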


\begin{proof}
We have
\begin{align*}
\left\Vert f-a\right\Vert _{1} & =\int_{\Lambda}\left|f(x)-a\right|\le\int_{\Lambda}\left|f(x)-b\right|+\int_{\Lambda}\left|a-b\right|=\left\Vert f-b\right\Vert _{1}+\lambda(\Lambda)\cdot|a-b|\\
 & =\left\Vert f-b\right\Vert _{1}+\left|\int_{\Lambda}(f(x)-b)\right|\le2\left\Vert f-b\right\Vert _{1}\;.
\end{align*}
\end{proof}
\begin{proof}[Proof of Lemma~\ref{lem:approx}]
Since sets of the form $A\times B$, where $A,B\subseteq\Omega$
are measurable sets, generate the product sigma-algebra on $\Omega^{2}$,
there exists a finite partition $\mathcal{P}$ of $\Omega$ and a
function $S:\Omega^{2}\rightarrow\mathbb{R}$ such that $S$ is constant
on each rectangle of $\mathcal{P}\times\mathcal{P}$, and such that
$\left\Vert \Gamma-S\right\Vert _{1}<\frac{\varepsilon}{2}$. Now,
for each rectangle $(A,B)\in\mathcal{P}\times\mathcal{P}$, we apply
Fact~\ref{fact:replaceapprox} on the restricted function $\Gamma_{\restriction A\times B}$
and the constant $S_{\restriction A\times B}$. Summing up the contributions
coming from these applications of Fact~\ref{fact:replaceapprox},
we get that $\left\Vert \Gamma-\Gamma^{\Join\mathcal{P}}\right\Vert _{1}\le2\left\Vert \Gamma-S\right\Vert _{1}<\varepsilon$.
\end{proof}
We call \emph{$\Gamma^{\Join\mathcal{P}}$ }with properties as in
Lemma~\ref{lem:approx} \emph{averaged $L^{1}$-approximation of
$\Gamma$ by a step-graphon for precision $\varepsilon$}.

\subsection{Hyperspace $K(\protect\GRAPHONSPACE)$\label{subsec:PreliminaryHyperspace}}

Let $X$ be a metrizable compact space. We denote as $K(X)$ the space
of all compact subsets of $X$ with the topology generated by sets
of the form $\{L\in K(X):L\subseteq U\}$ and $\{L\in K(X):L\cap U\not=\emptyset\}$
where $U\subseteq X$ ranges over all open sets of $X$. Then $K(X)$
is called the \emph{hyperspace of $X$ with the Vietoris topology.}
\begin{fact}[(4.22) and (4.26) in~\cite{MR1321597}]
\label{fact:VietorisCompact}Let $X$ be a metrizable compact space
with compatible metric $\rho$. Then $K(X)$ is metrizable compact
(and hence separable). Furthermore, the Hausdorff metric on $K(X)$,
\begin{equation}
d_{\rho}^{\mathrm{Hf}}(L,M)=\max\left\{ \max_{x\in L}\left\{ \rho(x,M)\right\} ,\max_{y\in M}\left\{ \rho(y,L)\right\} \right\} \label{eq:HausdorffMetric}
\end{equation}
is compatible with the Vietoris topology on $K(X)$.
\end{fact}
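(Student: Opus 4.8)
The plan is to take the Hausdorff metric $d_{\rho}^{\mathrm{Hf}}$ as the primary object and establish, in order: (a) that it is a genuine metric on $K(X)$; (b) that the metric topology it generates coincides with the Vietoris topology; and (c) that $(K(X),d_{\rho}^{\mathrm{Hf}})$ is both complete and totally bounded, hence compact. Metrizability together with compactness then yields separability for free. (I treat the members of $K(X)$ as nonempty; if $\emptyset$ is admitted, it is an isolated point and causes no trouble.) For (a) I would check the metric axioms directly: well-definedness and attainment of the maxima follow from continuity of $x\mapsto\rho(x,M)$ on the compact set $L$; if $d_{\rho}^{\mathrm{Hf}}(L,M)=0$ then every point of $L$ lies in the closed set $M$ and conversely, so $L=M$; and the triangle inequality comes from $\rho(x,N)\le\rho(x,y)+\rho(y,N)$ by taking first the infimum over $y\in M$, then the maximum over $x\in L$, and symmetrizing.

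For (b) I would introduce the sets
\[
\langle U_{1},\ldots,U_{k}\rangle:=\Bigl\{L\in K(X):L\subseteq{\textstyle\bigcup_{i}}U_{i}\ \text{and}\ L\cap U_{i}\neq\emptyset\ \text{for each}\ i\Bigr\}
\]
for open $U_{1},\ldots,U_{k}\subseteq X$, and show they form a basis for each of the two topologies. That they generate the Vietoris topology is immediate from its definition via the subbasic sets $\{L:L\subseteq U\}$ and $\{L:L\cap U\neq\emptyset\}$. For the metric side I would verify two inclusions. First, each $\langle U_{1},\ldots,U_{k}\rangle$ is $d_{\rho}^{\mathrm{Hf}}$-open: if $L$ belongs to it, compactness of $L$ and closedness of $X\setminus\bigcup_{i}U_{i}$ supply an $\varepsilon>0$ with the $\varepsilon$-neighbourhood of $L$ contained in $\bigcup_{i}U_{i}$, and shrinking $\varepsilon$ so that for each $i$ some point of $L\cap U_{i}$ carries an $\varepsilon$-ball inside $U_{i}$, any $M$ with $d_{\rho}^{\mathrm{Hf}}(L,M)<\varepsilon$ still lies in the set. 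Second, conversely, every $d_{\rho}^{\mathrm{Hf}}$-ball around $L$ contains such a set containing $L$: cover $L$ by finitely many balls $B(x_{i},\varepsilon/2)$ with $x_{i}\in L$, put $U_{i}:=B(x_{i},\varepsilon/2)$, and check that $L\in\langle U_{1},\ldots,U_{k}\rangle$ while any $M\in\langle U_{1},\ldots,U_{k}\rangle$ satisfies $d_{\rho}^{\mathrm{Hf}}(L,M)<\varepsilon$.

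For (c) I would argue total boundedness and completeness separately. Total boundedness: for $\varepsilon>0$ pick a finite $\varepsilon$-net $F\subseteq X$ (possible since $X$ is compact metric); the finitely many nonempty subsets of $F$ form an $\varepsilon$-net in $(K(X),d_{\rho}^{\mathrm{Hf}})$, since any $L\in K(X)$ is within $\varepsilon$ of $\{p\in F:\rho(p,L)<\varepsilon\}$. Completeness: for a $d_{\rho}^{\mathrm{Hf}}$-Cauchy sequence $(L_{n})$ I would set $L:=\bigcap_{n}\overline{\bigcup_{m\ge n}L_{m}}$, observe that this is a decreasing intersection of nonempty compact sets — nonemptiness seen by extracting, along a rapidly converging subsequence, a Cauchy sequence of points $x_{n_{k}}\in L_{n_{k}}$ and taking its limit — hence nonempty and compact, and then check the routine estimate $d_{\rho}^{\mathrm{Hf}}(L_{n},L)\to 0$. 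Since a complete totally bounded metric space is compact, $K(X)$ is compact; and a compact metric space is separable.

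The only step where genuine care is required is the topology identification in (b), specifically the direction that every $d_{\rho}^{\mathrm{Hf}}$-ball contains a basic Vietoris set: one must produce the finite subcover and then verify the Hausdorff bound in \emph{both} directions — that $L$ is close to $M$ uses $L\subseteq\bigcup_{i}U_{i}$, while $M$ close to $L$ uses that each $U_{i}$ meets $M$ near its chosen point $x_{i}$. Everything else is bookkeeping. Since the statement is entirely classical — it is precisely (4.22) and (4.26) of~\cite{MR1321597} — the cleanest option, and the one we adopt, is simply to cite it rather than reproduce the argument.
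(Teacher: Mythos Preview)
Your proposal is correct, and in fact it does more than the paper: the paper gives no proof at all for this fact, treating it as a black-box citation to (4.22) and (4.26) of~\cite{MR1321597}. Your final sentence---simply citing the reference---is exactly what the paper does, so the approaches agree; the preceding proof sketch you supply is a correct bonus that the paper omits.
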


\begin{rem}
\label{rem:interestedhyperspace}We will be interested in the situation
where $X=\GRAPHONSPACE$ is endowed with the weak{*} topology. By
the discussion in Section~\ref{subsec:TopologiesOnW}, $X$ is indeed
metrizable compact.
\end{rem}

\section{Weak{*} convergence and the cut distance\label{sec:WeakStarANDCutDist}}

\subsection{Becoming familiar with $\protect\ACC(\Gamma_{1},\Gamma_{2},\Gamma_{3},\ldots)$
and $\protect\LIM(\Gamma_{1},\Gamma_{2},\Gamma_{3},\ldots)$\label{subsec:BecomingFamiliar}}

Let us observe some basic properties of the sets $\LIM(\Gamma_{1},\Gamma_{2},\Gamma_{3},\ldots)$
and $\ACC(\Gamma_{1},\Gamma_{2},\Gamma_{3},\ldots)$. We have $\LIM(\Gamma_{1},\Gamma_{2},\Gamma_{3},\ldots)\subseteq\ACC(\Gamma_{1},\Gamma_{2},\Gamma_{3},\ldots)$.
The set $\LIM(\Gamma_{1},\Gamma_{2},\Gamma_{3},\ldots)$ can be empty
(for example when $\Gamma_{1}\equiv0,\Gamma_{2}\equiv1,\Gamma_{3}\equiv0,\Gamma_{4}\equiv1,\ldots$)
but $\ACC(\Gamma_{1},\Gamma_{2},\Gamma_{3},\ldots)$ is non-empty
by the Banach\textendash Alaoglu Theorem. Actually, we can describe
some elements of $\ACC(\Gamma_{1},\Gamma_{2},\Gamma_{3},\ldots)$
fairly easily. Let $T\subseteq[0,1]$ be the set of the accumulation
points of the edge densities of the graphons $\Gamma_{1},\Gamma_{2},\Gamma_{3},\ldots$,
i.e., $T$ is the set of the accumulation points of the sequence $\left(\int_{x}\int_{y}\Gamma_{n}(x,y)\right)_{n}$.
Now, a constant $c\in[0,1]$ (viewed as a constant graphon) lies in
$\ACC(\Gamma_{1},\Gamma_{2},\Gamma_{3},\ldots)$ if and only if $c\in T$.
The direction that $c\in\ACC(\Gamma_{1},\Gamma_{2},\Gamma_{3},\ldots)$
implies $c\in T$ is obvious. Now, suppose that $c\in T$. That is,
for some subsequence $\Gamma_{n_{1}},\Gamma_{n_{2}},\Gamma_{n_{3}},\ldots$
the densities converge to $c$. Partition each $\Gamma_{n_{i}}$ into
$i$ sets of measure $\frac{1}{i}$ and consider a version $\widehat{\Gamma_{n_{i}}}$
of $\Gamma_{n_{i}}$ obtained by a measure preserving bijection permuting
these sets randomly. Then almost surely, $\widehat{\Gamma_{n_{1}}},\widehat{\Gamma_{n_{2}}},\widehat{\Gamma_{n_{3}}},\ldots$
weak{*} converge to $c$. This is included here just to get familiar
with $\ACC(\Gamma_{1},\Gamma_{2},\Gamma_{3},\ldots)$ and a proof
is not needed at this point. However, the statement follows from Lemma~\ref{lem:basicpropofenvelopes}\ref{enu:averagingINSIDE}.

The first non-trivial fact we will prove about the set $\LIM(\Gamma_{1},\Gamma_{2},\Gamma_{3},\ldots)$
is that it is closed.
\begin{lem}
\label{lem:LIMclosed}Let $\Gamma_{1},\Gamma_{2},\Gamma_{3},\ldots\in\GRAPHONSPACE$
be a sequence of graphons. Then the following holds for the set $\LIM(\Gamma_{1},\Gamma_{2},\Gamma_{3},\ldots)$.
\begin{enumerate}[label=(\alph*)]
\item \label{enu:weakstar}$\LIM(\Gamma_{1},\Gamma_{2},\Gamma_{3},\ldots)$
is weak{*} closed in $L^{\infty}\left(\Omega^{2}\right)$. 
\item \label{enu:weakstarcompact} $\LIM(\Gamma_{1},\Gamma_{2},\Gamma_{3},\ldots)$
is weak{*} compact in $L^{\infty}\left(\Omega^{2}\right)$.
\item \label{enu:closedL1}$\LIM(\Gamma_{1},\Gamma_{2},\Gamma_{3},\ldots)$
is closed in $L^{1}\left(\Omega^{2}\right)$.
\end{enumerate}
\end{lem}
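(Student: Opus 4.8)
The plan is to prove the three parts in the order (a) $\Rightarrow$ (b) $\Rightarrow$ (c), since weak{*} closedness is the substantial statement and the other two follow quickly. For part (a), I would take a net (or, using metrizability of the weak{*} topology on $\GRAPHONSPACE$, a sequence) $W^{(1)}, W^{(2)}, W^{(3)}, \ldots$ of graphons in $\LIM(\Gamma_{1},\Gamma_{2},\Gamma_{3},\ldots)$ that converges weak{*} to some graphon $W$, and show $W \in \LIM(\Gamma_{1},\Gamma_{2},\Gamma_{3},\ldots)$. By definition, for each $j$ there are versions $\Gamma^{(j)}_{1},\Gamma^{(j)}_{2},\ldots$ of $\Gamma_{1},\Gamma_{2},\ldots$ with $\Gamma^{(j)}_{n} \WEAKCONV W^{(j)}$ as $n\to\infty$. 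The natural approach is a diagonal argument: fix a countable basis for the weak{*} topology, or equivalently fix the metric $d_{\mathrm{w}^{*}}$ from~(\ref{eq:exampleweakstarmetric}). For each $j$, choose an index $n_j$ (increasing in $j$) so that $d_{\mathrm{w}^{*}}(\Gamma^{(j)}_{n_j}, W^{(j)}) < 1/j$, and then set $\Gamma'_{n_j} := \Gamma^{(j)}_{n_j}$. The issue is that this only defines $\Gamma'_n$ for $n$ in the range of $j\mapsto n_j$; I would need to fill in the remaining indices $n$ with \emph{some} version of $\Gamma_n$, which is harmless since only a tail matters for weak{*} convergence. Then for $n = n_j$, by the triangle inequality $d_{\mathrm{w}^{*}}(\Gamma'_{n_j}, W) \le d_{\mathrm{w}^{*}}(\Gamma^{(j)}_{n_j}, W^{(j)}) + d_{\mathrm{w}^{*}}(W^{(j)}, W) \to 0$, so $\Gamma'_n \WEAKCONV W$ along the subsequence $(n_j)$; combined with the filler choices on the complementary indices this does \emph{not} immediately give convergence of the whole sequence $(\Gamma'_n)_n$, which is what the definition of $\LIM$ demands.

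This gap is the main obstacle, and the fix is to be more careful in choosing $n_j$. Instead of matching $W^{(j)}$ with precision $1/j$ at a single index, I would use that $\Gamma^{(j)}_n \WEAKCONV W^{(j)}$ to pick $N_j$ so that $d_{\mathrm{w}^{*}}(\Gamma^{(j)}_n, W^{(j)}) < 1/j$ for \emph{all} $n \ge N_j$; arrange $N_1 < N_2 < \cdots$; and then define $\Gamma'_n := \Gamma^{(j)}_n$ for $N_j \le n < N_{j+1}$ (and arbitrarily for $n < N_1$). Now for every $n$ with $N_j \le n < N_{j+1}$ we have $d_{\mathrm{w}^{*}}(\Gamma'_n, W) \le d_{\mathrm{w}^{*}}(\Gamma^{(j)}_n, W^{(j)}) + d_{\mathrm{w}^{*}}(W^{(j)}, W) \le 1/j + d_{\mathrm{w}^{*}}(W^{(j)}, W)$, and since $j\to\infty$ as $n\to\infty$ and $W^{(j)} \WEAKCONV W$, the right-hand side tends to $0$. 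Each $\Gamma'_n$ is a version of $\Gamma_n$ (it equals $\Gamma^{(j)}_n$, which is a version of $\Gamma_n$, for the appropriate $j$), so $W \in \LIM(\Gamma_{1},\Gamma_{2},\Gamma_{3},\ldots)$. This proves~(a). (One should note the degenerate case where $\LIM$ is empty, in which (a)–(c) are vacuous.)

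For part (b): $\LIM(\Gamma_{1},\Gamma_{2},\Gamma_{3},\ldots)$ is a subset of $\GRAPHONSPACE$, which by the Banach–Alaoglu Theorem is weak{*} compact (as recalled in Section~\ref{subsec:TopologiesOnW}); a weak{*} closed subset of a weak{*} compact set is weak{*} compact, so~(b) follows from~(a). For part (c): by Fact~\ref{fact:differentnorms}, the identity map $(\GRAPHONSPACE, d_1) \to (\GRAPHONSPACE, d_{\mathrm{w}^{*}})$ is continuous, hence the preimage of the weak{*} closed set $\LIM(\Gamma_{1},\Gamma_{2},\Gamma_{3},\ldots)$ — which is the same set — is closed in $(\GRAPHONSPACE, d_1)$, and since $\GRAPHONSPACE$ is itself closed in $L^1(\Omega^2)$, the set is closed in $L^1(\Omega^2)$. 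Alternatively, and perhaps more cleanly, $L^1$-convergence implies $d_{\mathrm{w}^{*}}$-convergence, so any $L^1$-limit of elements of $\LIM$ is also their weak{*} limit, which lies in $\LIM$ by~(a). I expect part (a) to be the only place requiring real work; parts (b) and (c) are soft consequences of compactness and of Fact~\ref{fact:differentnorms}.
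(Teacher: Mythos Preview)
Your proposal is correct and follows essentially the same approach as the paper's proof: the diagonal construction in~(a) (choosing thresholds $N_j$ so that $d_{\mathrm{w}^{*}}(\Gamma^{(j)}_n,W^{(j)})<1/j$ for all $n\ge N_j$ and splicing the version sequences accordingly) is exactly what the paper does, and your deductions of~(b) from Banach--Alaoglu and of~(c) from Fact~\ref{fact:differentnorms} match the paper's as well.
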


\begin{proof}[Proof of Part~\ref{enu:weakstar}]
 Suppose that $L_{1},L_{2},L_{3},\ldots$ are elements of $\LIM(\Gamma_{1},\Gamma_{2},\Gamma_{3},\ldots)$
such that $L_{k}\WEAKCONV L$ for $k\rightarrow\infty$. For every
$k$ let $\Gamma_{1}^{k},\Gamma_{2}^{k},\Gamma_{3}^{k},\ldots$ be
a sequence of versions of $\Gamma_{1},\Gamma_{2},\Gamma_{3},\ldots$
converging to $L_{k}$. We find an increasing sequence $i_{1},i_{2},i_{3},\ldots$
of integers such that for every $k$ and for every $n\geq i_{k}$
we have $d_{\mathrm{w}^{*}}(\Gamma_{n}^{k+1},L_{k+1})<\tfrac{1}{k}$.
Then the following sequence of versions of $\Gamma_{1},\Gamma_{2},\Gamma_{3},\ldots$
weak{*} converges to $L$: 
\[
\Gamma_{1}^{1},\Gamma_{2}^{1},\ldots,\Gamma_{i_{1}-1}^{1},\Gamma_{i_{1}}^{2},\Gamma_{i_{1}+1}^{2},\ldots,\Gamma_{i_{2}-1}^{2},\Gamma_{i_{2}}^{3},\Gamma_{i_{2}+1}^{3},\ldots,\Gamma_{i_{3}-1}^{3},\ldots.
\]

\emph{Proof of Part~\ref{enu:weakstarcompact}: }Recall that the
closed unit ball is compact in the weak{*} topology. Since $\LIM(\Gamma_{1},\Gamma_{2},\Gamma_{3},\ldots)$
lies in this ball, it is weak{*} compact.

\emph{Proof of Part~\ref{enu:closedL1}:} The unit ball $B$ of $L^{\infty}\left(\Omega^{2}\right)$
is closed in $L^{1}\left(\Omega^{2}\right)$. $\LIM(\Gamma_{1},\Gamma_{2},\Gamma_{3},\ldots)$
is a weak{*} closed subset of $B$, and so it is also closed in $B$
in the topology inherited from $L^{1}\left(\Omega^{2}\right)$ (by
Fact~\ref{fact:differentnorms}). So, $\LIM(\Gamma_{1},\Gamma_{2},\Gamma_{3},\ldots)$
is closed in $L^{1}\left(\Omega^{2}\right)$.
\end{proof}
\begin{rem}
\label{rem:ACCnotClosed}In \cite{DH:WeakStar}, an analogous closeness
property of $\LIM(\Gamma_{1},\Gamma_{2},\Gamma_{3},\ldots)$ than
Lemma~\ref{lem:LIMclosed}\emph{\ref{enu:closedL1}} was established
and used, namely that the set $\left\{ \INT_{f}(W):W\in\LIM(\Gamma_{1},\Gamma_{2},\Gamma_{3},\ldots)\right\} $
attains it supremum (here, $f$ is a fixed continuous strictly convex
function). Section~7.4 of~\cite{DH:WeakStar} contains an example,
due to Jon Noel, which shows that the set $\left\{ \INT_{f}(W):W\in\ACC(\Gamma_{1},\Gamma_{2},\Gamma_{3},\ldots)\right\} $
need not even achieve its supremum.
\end{rem}

\subsection{Differentiating between $\protect\ACC(\Gamma_{1},\Gamma_{2},\Gamma_{3},\ldots)$
and $\protect\LIM(\Gamma_{1},\Gamma_{2},\Gamma_{3},\ldots)$ in~\cite{DH:WeakStar}
and in the present paper\label{subsec:DifferentiatingACCLIM}}

In the proof of Theorem~\ref{thm:compactness} given in~\cite{DH:WeakStar},
which is in some sense a precursor of the current work, quite some
work is put into zigzagging between $\LIM(\Gamma_{1},\Gamma_{2},\Gamma_{3},\ldots)$
and $\ACC(\Gamma_{1},\Gamma_{2},\Gamma_{3},\ldots)$. Let us explain
this in more detail. Let us fix a continuous strictly convex function
$f$. The idea for finding the graphon $\Gamma$ in Theorem~\ref{thm:compactness}
in \cite{DH:WeakStar} is as follows. Denoting by $X$ either \emph{(i)}
$\LIM(\Gamma_{1},\Gamma_{2},\Gamma_{3},\ldots)$ or \emph{(ii)} $\ACC(\Gamma_{1},\Gamma_{2},\Gamma_{3},\ldots)$,
we take $\Gamma\in X$ that maximizes $\INT_{f}(\Gamma)$. Using the
definition of $X$, there exist versions $\Gamma_{n_{1}}',\Gamma_{n_{2}}',\Gamma_{n_{3}}',\ldots$
of $\Gamma_{n_{1}},\Gamma_{n_{2}},\Gamma_{n_{3}},\ldots$ that converge
to $\Gamma$ weak{*}.\footnote{Note that in variant (i), we actually have $n_{1}=1,n_{2}=2,n_{3}=3,\ldots$.}
The aim is to prove that $\Gamma_{n_{1}}',\Gamma_{n_{2}}',\Gamma_{n_{3}}',\ldots$
actually converge to $\Gamma$ also in the cut norm \textemdash{}
that would obviously prove Theorem~\ref{thm:compactness}. Now, the
key step in~\cite{DH:WeakStar} is to prove that if $\Gamma_{n_{1}}',\Gamma_{n_{2}}',\Gamma_{n_{3}}',\ldots$
do not converge to $\Gamma$ in the cut norm, then there exist versions
$\Gamma_{n_{k_{1}}}'',\Gamma_{n_{k_{2}}}'',\Gamma_{n_{k_{3}}}'',\ldots$
of a suitable subsequence of $\Gamma_{n_{1}}',\Gamma_{n_{2}}',\Gamma_{n_{3}}',\ldots$
that weak{*} converge to a graphon $\Gamma'$ with $\INT_{f}(\Gamma')>\INT_{f}(\Gamma)$.
Since $\Gamma_{n_{k_{1}}}'',\Gamma_{n_{k_{2}}}'',\Gamma_{n_{k_{3}}}'',\ldots$
witness that $\Gamma'\in X$, this is a contradiction. Now, let us
explain why we need favorable properties of both (i) and (ii) for
the proof. Firstly, note that in the sentence <<Since $\Gamma_{n_{k_{1}}}'',\Gamma_{n_{k_{2}}}'',\Gamma_{n_{k_{3}}}'',\ldots$
witness that $\Gamma'\in X$>> we are referring to a subsequence,
so this is a correct justification only in case $X=\ACC(\Gamma_{1},\Gamma_{2},\Gamma_{3},\ldots)$.
On the other hand, in the sentence <<we take $\Gamma\in X$ that
maximizes $\INT_{f}(\Gamma)$>> we need the maximum to be achieved.
Such a closeness property is enjoyed by $\LIM(\Gamma_{1},\Gamma_{2},\Gamma_{3},\ldots)$
as we saw in Lemma~\ref{lem:LIMclosed}\ref{enu:closedL1}, but not
by $\ACC(\Gamma_{1},\Gamma_{2},\Gamma_{3},\ldots)$ as we saw in Remark~\ref{rem:ACCnotClosed}.

So, while differences between $\LIM(\Gamma_{1},\Gamma_{2},\Gamma_{3},\ldots)$
and $\ACC(\Gamma_{1},\Gamma_{2},\Gamma_{3},\ldots)$ were viewed in~\cite{DH:WeakStar}
as a nuisance that required a subtle and technical treatment, in this
section we shall show that these differences capture the essence of
the cut norm convergence. Namely, we shall prove in Theorem~\ref{thm:subsequenceLIMACC}
that each sequence $\Gamma_{1},\Gamma_{2},\Gamma_{3},\ldots$ of graphons
contains a subsequence $\Gamma_{n_{1}},\Gamma_{n_{2}},\Gamma_{n_{3}},\ldots$
such that 
\begin{equation}
\LIM\left(\Gamma_{n_{1}},\Gamma_{n_{2}},\Gamma_{n_{3}},\ldots\right)=\ACC\left(\Gamma_{n_{1}},\Gamma_{n_{2}},\Gamma_{n_{3}},\ldots\right)\;,\label{eq:MaMa}
\end{equation}
and in Theorem~\ref{thm:hyperspaceANDcutDISTANCEsimpler} we shall
prove that~(\ref{eq:MaMa}) is equivalent to cut distance convergence
of $\Gamma_{n_{1}},\Gamma_{n_{2}},\Gamma_{n_{3}},\ldots$. Of course,
a proof of Theorem~\ref{thm:compactness} then follows immediately.

\subsection{Main results: subsequences with $\protect\LIM=\protect\ACC$}

As we observed earlier we have $\LIM(\Gamma_{1},\Gamma_{2},\Gamma_{3},\ldots)\subseteq\ACC(\Gamma_{1},\Gamma_{2},\Gamma_{3},\ldots)$
and equality usually does not hold. The next theorem however says
that we can always achieve equality after passing to a subsequence.
\begin{thm}
\label{thm:subsequenceLIMACC}Let $\mathcal{S}=\left(\Gamma_{1},\Gamma_{2},\Gamma_{3},\ldots\right)$
be a sequence of graphons. Then there exists a subsequence $\Gamma_{n_{1}},\Gamma_{n_{2}},\Gamma_{n_{3}},\ldots$
such that $\LIM\left(\Gamma_{n_{1}},\Gamma_{n_{2}},\Gamma_{n_{3}},\ldots\right)=\ACC\left(\Gamma_{n_{1}},\Gamma_{n_{2}},\Gamma_{n_{3}},\ldots\right)$.
\end{thm}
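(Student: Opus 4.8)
The plan is to iteratively prune the sequence so that along the final subsequence every weak* accumulation point of (versions of the) subsequence is in fact a genuine weak* limit. The obstacle is that there can be a priori uncountably many accumulation points to handle and, worse, an accumulation point of the subsequence need not be witnessed by a sequence of versions that stabilizes — so one must diagonalize carefully. First I would observe that by Remark~\ref{rem:interestedhyperspace} the space $(\GRAPHONSPACE,d_{\mathrm{w}^*})$ is metrizable and compact, so $K(\GRAPHONSPACE)$ with the Hausdorff metric $d_{d_{\mathrm{w}^*}}^{\mathrm{Hf}}$ is metrizable compact by Fact~\ref{fact:VietorisCompact}. For a finite tuple of graphons, or more usefully for each tail, consider the set $A_m(\mathcal{S}) := \ACC(\Gamma_m,\Gamma_{m+1},\dots)$; these are nonempty compact sets (Banach--Alaoglu plus a closedness argument as in Lemma~\ref{lem:LIMclosed}) and decreasing in $m$, and for the whole sequence $\ACC(\Gamma_1,\Gamma_2,\dots)=\bigcap_m A_m(\mathcal{S})$ after accounting for versions. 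The point of passing to a subsequence will be to make the accumulation-point set ``converge'' in $K(\GRAPHONSPACE)$, and then to realize its elements as limits.

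The key step is the following dichotomy, which I would isolate as the heart of the argument. Fix a countable dense subset $\{D_1,D_2,\dots\}$ of $\ACC\left(\Gamma_1,\Gamma_2,\dots\right)$ in the $d_{\mathrm{w}^*}$ metric (possible by separability). Each $D_j$ is a weak* accumulation point of some sequence of versions, hence for every $k$ and every tail there is a version of some far-out $\Gamma_n$ within $d_{\mathrm{w}^*}$-distance $1/k$ of $D_j$. Now build the subsequence $\Gamma_{n_1},\Gamma_{n_2},\dots$ together with chosen versions by a diagonal scheme: at stage $i$, having committed to $n_1<\dots<n_{i-1}$, pick $n_i$ large and, for it, record that for each $j\le i$ there is a version $\Gamma_{n_i}^{(j)}$ with $d_{\mathrm{w}^*}(\Gamma_{n_i}^{(j)},D_j)<1/i$. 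Then for each fixed $j$, interleaving these versions along $i\ge j$ (using versions of the skipped early terms arbitrarily, exactly as in the proof of Lemma~\ref{lem:LIMclosed}\ref{enu:weakstar}) exhibits a sequence of versions of $\Gamma_{n_1},\Gamma_{n_2},\dots$ converging weak* to $D_j$, so $D_j\in\LIM(\Gamma_{n_1},\Gamma_{n_2},\dots)$. Hence $\{D_j\}_j\subseteq\LIM(\Gamma_{n_1},\Gamma_{n_2},\dots)$.

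Finally I would upgrade from the dense set to all of the accumulation set. By construction $\ACC(\Gamma_{n_1},\Gamma_{n_2},\dots)\subseteq\ACC(\Gamma_1,\Gamma_2,\dots)=\overline{\{D_j\}_j}$, and by Lemma~\ref{lem:LIMclosed}\ref{enu:weakstar} the set $\LIM(\Gamma_{n_1},\Gamma_{n_2},\dots)$ is weak* closed, so it contains $\overline{\{D_j\}_j}\supseteq\ACC(\Gamma_{n_1},\Gamma_{n_2},\dots)$. Together with the trivial inclusion $\LIM\subseteq\ACC$ this gives the desired equality. The one point that needs genuine care — and where I expect to spend effort rather than hand-wave — is the very first claim that $\ACC(\Gamma_1,\Gamma_2,\dots)\subseteq\overline{\{D_j\}_j}$ survives passage to a subsequence: we need the subsequence's accumulation set to be contained in the original one (monotone, easy) but we also implicitly want it to \emph{contain} enough of $\{D_j\}$; both directions are delivered automatically by the diagonal construction above, since every $D_j$ is put into $\LIM$ of the subsequence and $\LIM\subseteq\ACC$. (An alternative, slicker packaging: pass to a subsequence along which $A_m(\mathcal{S})$ converges in $K(\GRAPHONSPACE)$ to some compact set $A_\infty$, show $A_\infty=\ACC$ of the subsequence, and then show each point of $A_\infty$ is a weak* limit by a Hausdorff-distance diagonalization — but the concrete version above avoids needing to identify $A_\infty$ precisely.)
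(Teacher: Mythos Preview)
Your diagonal construction has a genuine gap at the step ``pick $n_i$ large and, for it, record that for each $j\le i$ there is a version $\Gamma_{n_i}^{(j)}$ with $d_{\mathrm{w}^*}(\Gamma_{n_i}^{(j)},D_j)<1/i$.'' Membership of $D_j$ in $\ACC(\Gamma_1,\Gamma_2,\ldots)$ only guarantees that $D_j$ is approximable by versions at \emph{infinitely many} indices, not at a cofinal set common to all $j$. These index sets can be pairwise disjoint. Concretely, take $\Gamma_n=A$ for odd $n$ and $\Gamma_n=B$ for even $n$, where $A,B$ have different edge densities; then $\langle A\rangle\cap\langle B\rangle=\emptyset$, $\ACC(\Gamma_1,\Gamma_2,\ldots)=\langle A\rangle\cup\langle B\rangle$, and if $D_1\in\langle A\rangle$, $D_2\in\langle B\rangle$, then already at stage $i=2$ no single index $n_2$ admits versions close to both. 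Your closure step at the end therefore never gets off the ground. (Your parenthetical alternative also misfires: the sets $A_m=\ACC(\Gamma_m,\Gamma_{m+1},\ldots)$ are independent of $m$, since accumulation ignores finite initial segments, so there is nothing for a subsequence to select.)

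This is exactly why the paper does not try to force all of the original $\ACC$ into $\LIM$ of one subsequence. Instead it runs a transfinite recursion: if $\LIM(\mathcal S_\beta)\neq\ACC(\mathcal S_\beta)$, pick $W\in\ACC(\mathcal S_\beta)\setminus\LIM(\mathcal S_\beta)$ and pass to a further subsequence $\mathcal S_{\beta+1}$ along which $W$ becomes a weak{*} limit; at limit ordinals, diagonalize. Each step may discard part of $\ACC$ (as it must, in the alternating example), but it \emph{strictly} enlarges $\LIM$; since the $\LIM(\mathcal S_\alpha)$ are a strictly increasing chain of weak{*} closed sets in a second countable space, the recursion terminates at a countable ordinal. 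A correct ``hyperspace'' alternative does exist: pass to a subsequence along which the \emph{envelopes} $\langle\Gamma_{n_k}\rangle$ (not the tail-$\ACC$ sets) converge in $K(\GRAPHONSPACE)$, and check directly that the Hausdorff limit equals both $\LIM$ and $\ACC$ of that subsequence. This works and is close in spirit to your sketch, but it is not the argument you wrote.
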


The proof of Theorem~\ref{thm:subsequenceLIMACC} proceeds by transfinite
induction. Crucially, we rely on a well-known fact from descriptive
set theory, below referred to~\cite{MR1321597}, which says that
a strictly increasing transfinite sequence of closed sets in a second
countable topological space must be of at most countable length. We
shall apply this to the space $\left(L^{\infty}(\Omega^{2}),\mathrm{w}^{*}\right)$
which is second countable because it is metrizable and separable.
\begin{proof}
For two sequences\footnote{By a \emph{sequence}, we mean a system indexed by a countable initial
segment of ordinals.} of graphons $\mathcal{U}$ and $\mathcal{T}$ we write $\mathcal{U\le^{\star}T}$
if deleting finitely many terms from $\mathcal{U}$ gives us a subsequence
of $\mathcal{T}$. Note that the relation $\le^{\star}$ is transitive.
Note that if $\mathcal{U\le^{\star}T}$ then 
\begin{equation}
\LIM\left(\mathcal{T}\right)\subseteq\LIM\left(\mathcal{U}\right)\;.\label{eq:LimSequences}
\end{equation}

In the following, we construct a countable ordinal $\alpha_{0}$ and
a transfinite sequence $\left(\mathcal{S}_{\alpha}\right)_{\alpha\leq\alpha_{0}}$
of subsequences of $\mathcal{S}$ such that for every pair of ordinals
$\gamma<\delta$ it holds that 
\begin{equation}
\mathcal{S}_{\delta}\leq^{\star}\mathcal{S}_{\gamma}\;,\label{eq:orderedfish}
\end{equation}
 and also that $\LIM\left(\mathcal{S}_{\gamma}\right)$ is a proper
subset of $\LIM\left(\mathcal{S}_{\delta}\right)$.

In the first step, we put $\mathcal{S}_{0}=\mathcal{S}$. Now suppose
that for some countable ordinal $\alpha$, we have already constructed
$\mathcal{S}_{\beta}$ for every $\beta<\alpha$. Either $\alpha=\beta+1$
for some ordinal $\beta$ or $\alpha$ is a limit ordinal. Suppose
first that $\alpha=\beta+1$ for some ordinal $\beta$. We distinguish
two cases. If $\LIM(\mathcal{S}_{\beta})=\ACC(\mathcal{S}_{\beta})$
then we define $\alpha_{0}=\beta$ and the construction is finished.
Otherwise there is some graphon $W\in\ACC(\mathcal{S}_{\beta})\setminus\LIM(\mathcal{S}_{\beta})$.
Then we proceed the construction by finding a subsequence $\mathcal{S}_{\beta+1}$
of $\mathcal{S}_{\beta}$ such that some versions of the graphons
from $S_{\beta+1}$ converge to $W$. This way we have $\mathcal{S}_{\beta+1}$$\leq^{\star}$$\mathcal{S}_{\beta}$
and $W\in\LIM(\mathcal{S}_{\beta+1})\setminus\LIM(\mathcal{S}_{\beta})$.
Now suppose that $\alpha$ is a countable limit ordinal. We find an
increasing sequence $\beta_{1},\beta_{2},\beta_{3},\ldots$ of ordinals
such that $\beta_{i}\rightarrow\alpha$ for $i\rightarrow\infty$
(this is possible as $\alpha$ has countable cofinality). Now we use
the diagonal method to define a sequence $S_{\alpha}$ such that $S_{\alpha}$$\le^{\star}$$S_{\beta_{i}}$
for every $i$. Combined with (\ref{eq:orderedfish}) and with $\beta_{i}\rightarrow\alpha$,
we get that $\mathcal{S}_{\alpha}$$\le^{\star}$$\mathcal{S}_{\beta}$
for every $\beta<\alpha$. Plugging in~(\ref{eq:LimSequences}),
we conclude $\bigcup_{\beta<\alpha}\LIM(\mathcal{S}_{\beta})\subseteq\LIM(\mathcal{S}_{\alpha})$.

The obtained transfinite sequence $\left(\LIM(\mathcal{S}_{\alpha})\right)_{\alpha\le\alpha_{0}}$
is a strictly increasing sequence of subsets of unit ball in $L^{\infty}(\Omega^{2})$.
By Lemma~\ref{lem:LIMclosed}\ref{enu:weakstar}, all of these subsets
are weak{*} closed. By~\cite[Theorem 6.9]{MR1321597}, the sequence
is at most countable, i.e. the previous construction stopped at some
countable ordinal $\alpha_{0}$. This means that $\LIM\left(\mathcal{S}_{\alpha_{0}}\right)=\ACC\left(\mathcal{S}_{\alpha_{0}}\right)$.
\end{proof}
\begin{rem}
\label{rem:LIMACCobservable}Theorem~\ref{thm:subsequenceLIMACC}
substantially extends the key Lemma~13 from~\cite{DH:WeakStar}
which states that any sequence of graphons $\Gamma_{1},\Gamma_{2},\Gamma_{3},\ldots$
contains a subsequence $\Gamma_{n_{1}},\Gamma_{n_{2}},\Gamma_{n_{3}},\ldots$
such that
\[
\sup\left\{ \INT_{f}\left(\Gamma\right)\;:\;\Gamma\in\LIM\left(\Gamma_{n_{1}},\Gamma_{n_{2}},\Gamma_{n_{3}},\ldots\right)\right\} =\sup\left\{ \INT_{f}\left(\Gamma\right)\;:\;\Gamma\in\ACC\left(\Gamma_{n_{1}},\Gamma_{n_{2}},\Gamma_{n_{3}},\ldots\right)\right\} \;,
\]
for a continuous strictly convex function $f:\left[0,1\right]\rightarrow\mathbb{R}$.
Lemma~13 in~\cite{DH:WeakStar} is proved by induction (over natural
numbers) without any appeal to descriptive set theory.
\end{rem}

As promised, we shall now state that the property asserted in Theorem~\ref{thm:subsequenceLIMACC}
is necessary and sufficient for cut distance convergence. 
\begin{thm}
\label{thm:hyperspaceANDcutDISTANCEsimpler}Let $\Gamma_{1},\Gamma_{2},\Gamma_{3},\ldots\in\GRAPHONSPACE$.
The following are equivalent:
\begin{enumerate}[label=(\alph*)]
\item \label{enu:basicMainCauchy}The sequence $\Gamma_{1},\Gamma_{2},\Gamma_{3},\ldots$
is Cauchy with respect to the cut distance $\delta_{\square}$,
\item \label{enu:basicThmLIMACC}$\LIM\left(\Gamma_{1},\Gamma_{2},\Gamma_{3},\ldots\right)=\ACC\left(\Gamma_{1},\Gamma_{2},\Gamma_{3},\ldots\right)$.
\end{enumerate}
Furthermore, in case~\ref{enu:basicMainCauchy} and \ref{enu:basicThmLIMACC}
hold, we can take a maximal element $W$ in $\LIM\left(\Gamma_{1},\Gamma_{2},\Gamma_{3},\ldots\right)$
with respect to the structuredness order (defined in Section~\ref{sec:structurdness}
below) and then $\Gamma_{1},\Gamma_{2},\Gamma_{3},\ldots\CUTDISTCONV W$.
\end{thm}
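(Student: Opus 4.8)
The plan is to prove the two implications separately, with the harder direction being \ref{enu:basicThmLIMACC}$\Rightarrow$\ref{enu:basicMainCauchy}, and to extract the "furthermore" clause from that argument.

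First I would handle \ref{enu:basicMainCauchy}$\Rightarrow$\ref{enu:basicThmLIMACC}. Assume $\Gamma_1,\Gamma_2,\Gamma_3,\ldots$ is Cauchy in $\delta_\square$, hence by Theorem~\ref{thm:compactness} it $\delta_\square$-converges to some graphon $\Gamma$; but I would prefer to avoid circularity and argue directly. The inclusion $\LIM\subseteq\ACC$ always holds, so the content is $\ACC\subseteq\LIM$. Take $W\in\ACC(\Gamma_1,\Gamma_2,\ldots)$, so there are versions $\Gamma_n'$ and a subsequence along which $\Gamma_{n_i}'\WEAKCONV W$. The key point is that the Cauchy property in $\delta_\square$ is a property of the weak isomorphism classes $\llbracket\Gamma_n\rrbracket$, so it is inherited by \emph{every} choice of versions up to cut distance: for any $\epsilon$ there is $N$ with $\delta_\square(\Gamma_m,\Gamma_n)<\epsilon$ for $m,n\ge N$. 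One then shows that a $\delta_\square$-Cauchy sequence of versions that has a weak* convergent subsequence must have the \emph{whole} sequence (suitably re-versioned) weak* converging to the same limit: given the subsequential weak* limit $W$, use that for large $n$ one can find a measure preserving bijection $\varphi_n$ with $d_\square(\Gamma_n^{\varphi_n},\Gamma_{n_i}')$ small for appropriate $i$, and $d_\square$ dominates $d_{\mathrm{w}^*}$ (Fact~\ref{fact:differentnorms}), so $\Gamma_n^{\varphi_n}\WEAKCONV W$; hence $W\in\LIM$. A little care is needed to interleave the bijections consistently, but this is routine diagonalization.

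The main obstacle is the converse \ref{enu:basicThmLIMACC}$\Rightarrow$\ref{enu:basicMainCauchy}, and this is where the structuredness order and the machinery of Section~\ref{sec:structurdness} must enter; I expect the proof to be delegated largely to results proved later in the paper. The plan is: pick a maximal element $W$ of $\LIM(\Gamma_1,\Gamma_2,\ldots)$ with respect to the structuredness order (existence of such a maximal element requires that $\LIM$ is closed under the relevant operations and that the envelope-inclusion chains terminate — again a descriptive-set-theoretic / compactness argument analogous to Theorem~\ref{thm:subsequenceLIMACC}, using Lemma~\ref{lem:LIMclosed}). Choose versions $\Gamma_n'$ with $\Gamma_n'\WEAKCONV W$. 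Suppose for contradiction that $\Gamma_n'\NOTCUTNORMCONV W$. The crucial lemma (this is the analogue in our setting of the key step of~\cite{DH:WeakStar}, and the place where the multiway-cut / averaging ideas of~\cite{MR2925382} are really being used) is: if a sequence of versions weak* converges to $W$ but not in cut norm, then some versions of a subsequence weak* converge to a graphon $W'$ that is \emph{strictly more structured} than $W$, i.e.\ $\langle W'\rangle\supsetneq\langle W\rangle$. The heuristic is that failure of cut-norm convergence means there is a persistent "defect" set $S\times T$ on which the $\Gamma_n'$ oscillate around $W$; absorbing this oscillation into a finer partition and re-versioning produces a weak* limit whose envelope strictly contains $\langle W\rangle$. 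Since a subsequence of $(\Gamma_n)$ witnesses $W'\in\ACC(\Gamma_1,\Gamma_2,\ldots)=\LIM(\Gamma_1,\Gamma_2,\ldots)$ (here we use hypothesis \ref{enu:basicThmLIMACC}, plus stability of $\LIM$ under passing to subsequences), this contradicts maximality of $W$. Therefore $\Gamma_n'\CUTNORMCONV W$, so $\delta_\square(\Gamma_n,W)\to 0$, which gives both that the original sequence is $\delta_\square$-Cauchy and the "furthermore" statement simultaneously.

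For the "furthermore" clause no extra work is needed: the argument above produces exactly the maximal element $W\in\LIM(\Gamma_1,\Gamma_2,\ldots)$ with $\Gamma_n\CUTDISTCONV W$. I would remark that the existence of a structuredness-maximal element in $\LIM$, and the "defect increases structuredness" lemma, are the two substantive inputs and should be isolated as separate lemmas in Section~\ref{sec:structurdness}; everything else here is soft functional analysis (Banach--Alaoglu, metrizability of the weak* topology on $\GRAPHONSPACE$, and the continuity chain of Fact~\ref{fact:differentnorms}).
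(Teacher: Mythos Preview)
Your plan is correct and matches the paper's strategy for \ref{enu:basicThmLIMACC}$\Rightarrow$\ref{enu:basicMainCauchy}: the paper invokes Lemma~\ref{lem:LIMACCcontainsMAX} for the existence of a $\preceq$-maximal $W\in\LIM$, and the ``defect increases structuredness'' step you describe is realised via Lemma~\ref{lem:cohen} (imported from~\cite{DH:WeakStar}) together with the range-frequency and $\INT_f$ machinery of Section~\ref{subsec:ValuesAndDegrees} --- one produces $\tilde W$ with $W\in\langle\tilde W\rangle$ but $\INT_f(\tilde W)>\INT_f(W)$ for strictly convex $f$, which is exactly your abstract lemma unpacked. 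For \ref{enu:basicMainCauchy}$\Rightarrow$\ref{enu:basicThmLIMACC} you take a more direct route than the paper: the paper first applies Theorem~\ref{thm:subsequenceLIMACC} and the already-proven reverse implication to obtain a cut-norm limit $W$ of the full sequence, and only then verifies $\ACC\subseteq\langle W\rangle\subseteq\LIM$ by comparing everything to $W$; your argument instead fills in the gaps of a weak{*}-convergent subsequence of versions using only the $\delta_\square$-Cauchy hypothesis and Fact~\ref{fact:differentnorms}. That works and has the mild advantage of decoupling this implication from Theorem~\ref{thm:subsequenceLIMACC}.
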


We provide a proof of Theorem~\ref{thm:hyperspaceANDcutDISTANCEsimpler}
in Section~\ref{sec:ProofOfEquivalenceSimpler}, after building key
tools in Section~\ref{sec:structurdness}. In Section~\ref{subsec:RelatingToVietoris}
we state and prove Theorem~\ref{thm:hyperspaceANDcutDISTANCE} which
extends Theorem~\ref{thm:hyperspaceANDcutDISTANCEsimpler} and relates
cut distance convergence to convergence in the hyperspace $K(\GRAPHONSPACE)$.

\section{Envelopes and the structuredness order\label{sec:structurdness}}

Suppose that $W\in\GRAPHONSPACE$ is a graphon. We call the set $\left\langle W\right\rangle :=\LIM(W,W,W,\ldots)$
the \emph{envelope} of $W$. Envelopes allow us to introduce structuredness
order on graphons. Intuitively, less-structured graphons have smaller
envelopes. Extreme examples of this are constant graphons $W\equiv c$
(for some $c\in[0,1]$), which are obviously the only graphons for
which $\left\langle W\right\rangle =\left\{ W\right\} $. This leads
us to say that a graphon $U$ is \emph{at most as structured as a
graphon $W$} if $\left\langle U\right\rangle \subseteq\left\langle W\right\rangle $.
We write $U\preceq W$ in this case. We write $U\prec W$ if $U\preceq W$
but it does not hold that $W\preceq U$. Observe that $\preceq$ is
a quasiorder on the space of graphons and if $U\preceq W$ then also
$U^{\varphi}\preceq W$ for every measure preserving bijection $\varphi$.
As we shall see in Lemma~\ref{lem:ORDER}, it is actually an order
on the space of graphons modulo weak isomorphism. To prove these results
we shall need several auxiliary results.

\begin{lem}[Lemma 7 in \cite{DH:WeakStar}]
\label{lem:averagingInsideLIM}Suppose that $\Gamma_{1},\Gamma_{2},\Gamma_{3},\ldots:\Omega^{2}\rightarrow[0,1]$
is a sequence of graphons. Suppose that $W\in\LIM(\Gamma_{1},\Gamma_{2},\Gamma_{3},\ldots)$
and that we have a partition $\mathcal{P}$ of $\Omega$ into finitely
many sets. Then $W^{\Join\mathcal{P}}\in\LIM(\Gamma_{1},\Gamma_{2},\Gamma_{3},\ldots)$.
\end{lem}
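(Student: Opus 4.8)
The plan is to reduce the lemma to two ingredients and then glue them together using the weak{*} closedness of $\LIM$ from Lemma~\ref{lem:LIMclosed}\ref{enu:weakstar}. The first ingredient is that passing to a version acts weak{*}-continuously on $L^\infty(\Omega^2)$, so that $\LIM(\Gamma_1,\Gamma_2,\ldots)$ is closed under it; the second, and the real content, is that the step graphon $W^{\Join\mathcal P}$ is itself a weak{*} limit of a sequence of versions of $W$.

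\emph{Step 1 (versions act weak{*}-continuously).} Fix a measure preserving bijection $\sigma$ of $\Omega$ and let $T_\sigma\colon L^\infty(\Omega^2)\to L^\infty(\Omega^2)$ be $T_\sigma U=U^\sigma$. A one-line change of variables gives $\langle T_\sigma U,g\rangle=\langle U,g^{\sigma^{-1}}\rangle$ for every $g\in L^1(\Omega^2)$, so $T_\sigma$ is the adjoint of the bounded operator $g\mapsto g^{\sigma^{-1}}$ on $L^1(\Omega^2)$ and is therefore weak{*}-continuous. Hence if $\Gamma_n'$ are versions of $\Gamma_n$ with $\Gamma_n'\WEAKCONV W$, then $(\Gamma_n')^{\sigma}$ are again versions of $\Gamma_n$ (writing $\Gamma_n'=\Gamma_n^{\psi_n}$ we have $(\Gamma_n')^{\sigma}=\Gamma_n^{\psi_n\sigma}$) and $(\Gamma_n')^{\sigma}\WEAKCONV W^{\sigma}$. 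In particular $W\in\LIM(\Gamma_1,\Gamma_2,\ldots)$ implies $W^{\sigma}\in\LIM(\Gamma_1,\Gamma_2,\ldots)$ for every such $\sigma$.

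\emph{Step 2 (the stepping is a limit of versions of $W$).} I would construct measure preserving bijections $\sigma_1,\sigma_2,\ldots$ of $\Omega$ with $W^{\sigma_m}\WEAKCONV W^{\Join\mathcal P}$. Write $\mathcal P=\{\Omega_1,\ldots,\Omega_k\}$, discard null blocks, fix measure isomorphisms $\eta_i\colon\Omega_i\to[0,\nu(\Omega_i))$, cut each $\Omega_i$ into $m$ pieces $\Omega_i^1,\ldots,\Omega_i^m$ of equal measure along $\eta_i$, and let $\sigma_m$ permute these pieces inside each block by translations (transported via $\eta_i$), choosing the permutations ``as spread out as possible'' --- or, cleanest, uniformly at random and independently over the blocks. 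The claim is that for every fixed pair of measurable sets $S,T\subseteq\Omega$ one has $\int_{S\times T}W^{\sigma_m}\to\int_{S\times T}W^{\Join\mathcal P}$. Expanding this integral over the $k^2m^2$ cells $\Omega_i^p\times\Omega_j^q$ and using that within a block a full grid of translations of step $\nu(\Omega_i)/m$ equidistributes (an approximate-identity argument: the averaging measures converge weakly to the uniform one and have matching $L^2$ norms, so the convolutions converge in $L^2$), the expected value of the integral converges to $\sum_{i,j}\nu(S\cap\Omega_i)\,\nu(T\cap\Omega_j)\,c_{ij}=\int_{S\times T}W^{\Join\mathcal P}$, where $c_{ij}$ is the constant value of $W^{\Join\mathcal P}$ on $\Omega_i\times\Omega_j$; a second-moment estimate shows the fluctuation around this mean tends to $0$. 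Since the weak{*} topology on $\GRAPHONSPACE$ is metrizable and it suffices to test against the countable family of rectangles appearing in~\eqref{eq:exampleweakstarmetric}, a diagonal choice of the (deterministic, or realized random) $\sigma_m$ yields $W^{\sigma_m}\WEAKCONV W^{\Join\mathcal P}$.

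Combining: by Step~1 each $W^{\sigma_m}$ lies in $\LIM(\Gamma_1,\Gamma_2,\ldots)$, by Step~2 the sequence $W^{\sigma_m}$ converges weak{*} to $W^{\Join\mathcal P}$, and by Lemma~\ref{lem:LIMclosed}\ref{enu:weakstar} the set $\LIM(\Gamma_1,\Gamma_2,\ldots)$ is weak{*} closed; hence $W^{\Join\mathcal P}\in\LIM(\Gamma_1,\Gamma_2,\ldots)$. The main obstacle is Step~2 --- the quasirandomness/equidistribution estimate showing that interleaving the pieces of $W$ on scale $1/m$ drives every rectangle integral to its block average --- whereas Step~1 and the final gluing are soft.
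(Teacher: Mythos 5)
Your overall reduction is correct and, in fact, quite clean: the right action of a measure preserving bijection $\sigma$ on $L^\infty(\Omega^2)$ is the adjoint of the isometry $g\mapsto g^{\sigma^{-1}}$ on $L^1(\Omega^2)$ and is therefore weak{*}-continuous, so $W\in\LIM(\Gamma_1,\Gamma_2,\ldots)$ forces $W^{\sigma}\in\LIM(\Gamma_1,\Gamma_2,\ldots)$; combined with the weak{*} closedness from Lemma~\ref{lem:LIMclosed}\ref{enu:weakstar}, the whole statement is reduced to showing $W^{\Join\mathcal P}\in\langle W\rangle$ (your Step~2). The paper itself does not prove the lemma (it cites Lemma~7 of~\cite{DH:WeakStar}), so there is nothing in the text to compare your proof against verbatim; what you have produced is a valid factoring of that external lemma through Lemma~\ref{lem:LIMclosed}\ref{enu:weakstar} of the present paper. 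Note this is not circular, even though Lemma~\ref{lem:basicpropofenvelopes}\ref{enu:averagingINSIDE} (which asserts exactly your Step~2) is stated in the paper as a special case of the lemma you are proving: you supply an independent construction for it.

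The one place that needs real attention is the construction in Step~2, and your sketch of it has a misstep. A deterministic family of \emph{translations} (cyclic shifts of the $m$ equal pieces inside each block $\Omega_i$) does not equidistribute: a single cyclic shift sends an interval to an interval and averages nothing, and the ``approximate-identity / convolution in $L^2$'' remark describes the \emph{average over all shifts}, which is not the same as choosing a single measure preserving bijection $\sigma_m$. You should drop that remark and commit to one of the two working constructions: either (i) the deterministic ``grid transpose'' interleaving on each block (refine $\Omega_i$ into $m^2$ equal pieces, view them as an $m\times m$ array, and let $\sigma_m$ transpose the array; this is the same device as the maps $\psi_n$ in the proof of Proposition~\ref{prop:minimalAndMaximalElements}), for which the rectangle integrals $\int_{S\times T}W^{\sigma_m}$ can be computed directly to converge to $\int_{S\times T}W^{\Join\mathcal P}$ for $S,T$ finite unions of grid cells and then for general $S,T$ by approximation; or (ii) the random permutation within each block, for which the first moment is exactly the block average and the second-moment (hypergeometric sampling) variance is $O(1/m)$, after which Chebyshev plus a union bound over the countable rectangle family defining $d_{\mathrm w^*}$ in~\eqref{eq:exampleweakstarmetric} gives an almost-surely good realization (Borel--Cantelli along $m$, then intersect over the countable family). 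Either route closes the gap; without one of them, Step~2 as written is not a proof.
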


\begin{lem}
\label{lem:basicpropofenvelopes}Suppose that $W\in\GRAPHONSPACE$.
Then 
\end{lem}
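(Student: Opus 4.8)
The statement is cut off mid-sentence, but the lemma is clearly meant to collect the basic structural facts about the envelope $\langle W\rangle = \LIM(W,W,W,\ldots)$ that the structuredness order rests on. Judging from the surrounding text (the reference to part~\ref{enu:averagingINSIDE}, the discussion of constant graphons being exactly those with $\langle W\rangle=\{W\}$, and the "chessboard" example showing $W\equiv\tfrac12\in\langle U\rangle$ for $U$ the balanced complete bipartite graphon), I expect the parts to assert roughly: (a) $W\in\langle W\rangle$; (b) $\langle W\rangle$ is weak{*}-closed and weak{*}-compact (hence also $L^1$-closed), which is immediate from Lemma~\ref{lem:LIMclosed} applied to the constant sequence; (c) $\langle W\rangle$ is invariant under passing to versions, i.e. $W^\varphi\in\langle W\rangle$ and more generally $\langle W^\varphi\rangle=\langle W\rangle$; (d) an \emph{averaging} property, that $\langle W\rangle$ is closed under the stepping operator $U\mapsto U^{\Join\mathcal P}$ for every finite partition $\mathcal P$; and (e) that every $U\in\langle W\rangle$ satisfies $\langle U\rangle\subseteq\langle W\rangle$ and has the same edge density as $W$.

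Here is how I would prove each piece. For $W\in\langle W\rangle$: the constant sequence $W,W,W,\ldots$ trivially weak{*}-converges to $W$ (take all versions to be $W$ itself), so $W\in\LIM(W,W,\ldots)$. The closedness/compactness statements are literally Lemma~\ref{lem:LIMclosed}\ref{enu:weakstar}--\ref{enu:closedL1} specialized to the constant sequence, so nothing new is needed. For the version-invariance: if $\Gamma'_1,\Gamma'_2,\ldots$ are versions of $W$ weak{*}-converging to $L$, then $(\Gamma'_n)^{\varphi^{-1}}$ are versions of $W^\varphi$, and since the measure-preserving action is by weak{*}-homeomorphisms one checks $(\Gamma'_n)^{\varphi^{-1}}\WEAKCONV L^{\varphi^{-1}}$; running this in both directions gives $\langle W^\varphi\rangle=\langle W\rangle$. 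For the averaging property, the clean route is to invoke Lemma~\ref{lem:averagingInsideLIM} directly with the constant sequence: if $U\in\langle W\rangle$ and $\mathcal P$ is a finite partition, then $U^{\Join\mathcal P}\in\LIM(W,W,\ldots)=\langle W\rangle$. Combined with Lemma~\ref{lem:approx}, this is what lets one approximate $U$ in $L^1$ by step graphons lying in $\langle W\rangle$, and it is also the engine behind the "constant graphon in $\ACC$ iff density is an accumulation point" remark promised earlier in the text.

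For the transitivity-type statement $U\in\langle W\rangle\Rightarrow\langle U\rangle\subseteq\langle W\rangle$: given $T\in\langle U\rangle$, pick versions $U^{\psi_k}\WEAKCONV T$; since each $U^{\psi_k}\in\langle W\rangle$ (version-invariance) and $\langle W\rangle$ is weak{*}-closed, $T\in\langle W\rangle$. The equal-density claim follows because $U\mapsto \int\!\!\int U$ is weak{*}-continuous and constant on versions, so it is constant on $\langle W\rangle$. I don't anticipate a genuine obstacle here — every part reduces to a routine application of Lemma~\ref{lem:LIMclosed}, Lemma~\ref{lem:averagingInsideLIM}, or continuity of the relevant functionals on the weak{*}-compact unit ball; the only mild subtlety is keeping straight which direction the measure-preserving maps act (the paper already fixed the right action convention $W^{\psi\varphi}=W^{\psi\circ\varphi}$), so that the version-invariance argument is stated with the correctly-inverted bijections.
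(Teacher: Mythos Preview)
Your guessed items and their proofs align almost exactly with the paper's actual parts~\ref{enu:closureINenve}--\ref{enu:UprecWenve}: weak{*}-closedness from Lemma~\ref{lem:LIMclosed}\ref{enu:weakstar}, the averaging property as a special case of Lemma~\ref{lem:averagingInsideLIM}, and the equivalence $U\in\langle W\rangle\Leftrightarrow U\preceq W$ via the observation that $W^{\varphi_n}\WEAKCONV U$ implies $W^{\varphi_n\psi}\WEAKCONV U^\psi$ for every measure preserving $\psi$, so every version of $U$ lies in $\langle W\rangle$ and one passes to the weak{*} closure. Your extra items ($W\in\langle W\rangle$, constancy of the edge density on $\langle W\rangle$) are correct but not part of the lemma as stated.

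The one part you did not anticipate is the paper's~\ref{enu:envelopeswhendistzero}: if $\delta_\Box(W,U)=0$ then $\langle W\rangle=\langle U\rangle$. This is strictly stronger than your version-invariance $\langle W^\varphi\rangle=\langle W\rangle$, since weak isomorphism need not be witnessed by a single measure preserving bijection. The paper's proof is short and uses only ingredients you already have: from $\delta_\Box(W,U)=0$ one gets a sequence $W^{\varphi_n}\CUTNORMCONV U$, which by Fact~\ref{fact:differentnorms} gives $W^{\varphi_n}\WEAKCONV U$, hence $U\in\langle W\rangle$ and so $U\preceq W$ by part~\ref{enu:UprecWenve}; the symmetric argument gives $W\preceq U$, and equality of envelopes follows.
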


\begin{enumerate}[label=(\alph*)]
\item \label{enu:closureINenve}If $Q\subseteq\langle W\rangle$ then the
weak{*} closure of $Q$ is also contained in $\langle W\rangle$,
\item \label{enu:averagingINSIDE}$W^{\Join\mathcal{P}}\in\langle W\rangle$
for every finite partition $\mathcal{P}$ of $\Omega$,
\item \label{enu:UprecWenve}$U\in\langle W\rangle$ if and only if $U\preceq W$,
\item \label{enu:envelopeswhendistzero}if $\delta_{\Box}(W,U)=0$ then
$\langle W\rangle=\langle U\rangle$.
\end{enumerate}
\begin{proof}
Item~\ref{enu:closureINenve} follows from Lemma~\ref{lem:LIMclosed}\ref{enu:weakstar}.
Item~\ref{enu:averagingINSIDE} is a special case of Lemma~\ref{lem:averagingInsideLIM}.

Let us now turn to Item~\ref{enu:UprecWenve}. If $U\preceq W$ then
$U\in\langle W\rangle$ follows from the definition of $\preceq$
and the fact that $U\in\langle U\rangle$. To prove the opposite implication
observe that if $\left(\varphi_{n}:\Omega\rightarrow\Omega\right)_{n}$
is a sequence of measure preserving bijections, $W^{\varphi_{n}}\WEAKCONV U$
and $\psi:\Omega\rightarrow\Omega$ is a measure preserving bijection
then $W^{\varphi_{n}\psi}\WEAKCONV U^{\psi}$. Then we have that every
version of $U$ is in $\left\langle W\right\rangle $. Because $\left\langle U\right\rangle $
is exactly the weak{*} closure of the set of all versions of $U$
we obtain that $U\preceq W$.

Let us now prove Item~\ref{enu:envelopeswhendistzero}. If $\delta_{\Box}(W,U)=0$
then we have a sequence $W^{\varphi_{n}}\CUTNORMCONV U$ which by
Fact~\ref{fact:differentnorms} implies that $W^{\varphi_{n}}\WEAKCONV U$.
Therefore $U\preceq W$. A symmetric argument gives $W\preceq U$
and we may conclude that $\langle W\rangle=\langle U\rangle$.
\end{proof}

\subsection{Beyond Lemma~\ref{lem:basicpropofenvelopes}\ref{enu:averagingINSIDE}\label{subsec:BeyondFiniteSubalgebras}}

In this short section we borrow results stated and proven further
below to show, in Theorem~\ref{thm:Subalgebras}, that Lemma~\ref{lem:basicpropofenvelopes}\ref{enu:averagingINSIDE}
can be strengthened. We do not have applications of Theorem~\ref{thm:Subalgebras}.
On the other hand, together with the complement of Theorem~\ref{thm:Subalgebras},
which we include as Problem~\ref{prob:Subalgebras}, this would lead
to a correspondence between the structuredness order and order on
sub-sigma-algebras of the Borel sigma-algebra. Let us now give details.

Let $\mathcal{B}$ be the sigma-algebra of measurable sets on $\Omega$.
Suppose that $W:\Omega^{2}\rightarrow[0,1]$ and $\mathcal{\mathcal{P}}$
is a finite partition of $\Omega$ into measurable sets. Let $\mathcal{P}^{*}$
be the algebra generated by $\mathcal{\mathcal{P}}$; clearly $\mathcal{P}^{*}$
is finite. We can express $W^{\Join\mathcal{P}}$ as a conditional
expectation of $W$ with respect to a sub-sigma-algebra, $W^{\Join P}=\EXP\left[W|\mathcal{P}^{*}\times\mathcal{P}^{*}\right]$.
Then Lemma~\ref{lem:basicpropofenvelopes}\ref{enu:averagingINSIDE}
tells us that this particular conditional expectation lies in $\langle W\rangle$.
Here, the fact that we are taking a conditional expectation with respect
to a finite sub-sigma-algebra is not needed.
\begin{thm}
\label{thm:Subalgebras}Suppose $\mathcal{B}$ is the sigma-algebra
of measurable sets on $\Omega$, $\mathcal{A}$ is a sub-sigma-algebra
of $\mathcal{B}$, and that $W:\Omega^{2}\rightarrow[0,1]$ is a graphon.
Then $\EXP\left[W|\mathcal{A}\times\mathcal{A}\right]\in\langle W\rangle$.
\end{thm}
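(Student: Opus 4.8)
The plan is to approximate the sub-sigma-algebra $\mathcal{A}$ from below by finite algebras and then invoke the martingale convergence theorem, using the closedness of envelopes already established. First I would use the separability of $\Omega$: its measure algebra is countably generated, hence so is $\mathcal{A}$, and therefore there is an increasing sequence of finite partitions $\mathcal{P}_1,\mathcal{P}_2,\mathcal{P}_3,\ldots$ of $\Omega$ into $\mathcal{A}$-measurable sets, with $\mathcal{P}_{n+1}$ refining $\mathcal{P}_n$, such that the generated finite algebras $\mathcal{P}_n^{*}$ satisfy $\sigma\!\left(\bigcup_n\mathcal{P}_n^{*}\right)=\mathcal{A}$ up to $\nu$-null sets. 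Since each $\mathcal{P}_n^{*}$ contains $\Omega$, the finite algebras on $\Omega^2$ generated by the rectangles $C\times D$ with $C,D\in\mathcal{P}_n^{*}$ increase in $n$ and their union generates $\mathcal{A}\times\mathcal{A}$ up to $\nu^{\otimes2}$-null sets. As conditional expectation depends on the underlying sigma-algebra only up to null sets, this is all that is needed.

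Next, for each $n$ we have $W^{\Join\mathcal{P}_n}=\EXP\!\left[W\mid\mathcal{P}_n^{*}\times\mathcal{P}_n^{*}\right]$, exactly as recorded just before the statement, and by Lemma~\ref{lem:basicpropofenvelopes}\ref{enu:averagingINSIDE} each $W^{\Join\mathcal{P}_n}$ lies in $\langle W\rangle$. Symmetry of $W$ together with the fact that we condition with respect to the same algebra in both coordinates guarantees that $\EXP[W\mid\mathcal{P}_n^{*}\times\mathcal{P}_n^{*}]$ and $\EXP[W\mid\mathcal{A}\times\mathcal{A}]$ are again symmetric, hence genuine graphons. Now the sequence $\left(W^{\Join\mathcal{P}_n}\right)_n$ is a uniformly bounded martingale with respect to the filtration of the algebras $\mathcal{P}_n^{*}\times\mathcal{P}_n^{*}$ on $\Omega^2$; by L\'evy's upward theorem it converges almost everywhere, and by dominated convergence also in $L^1(\Omega^2)$, to $\EXP\!\left[W\mid\sigma\!\left(\bigcup_n\mathcal{P}_n^{*}\times\mathcal{P}_n^{*}\right)\right]=\EXP\!\left[W\mid\mathcal{A}\times\mathcal{A}\right]$. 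In particular $W^{\Join\mathcal{P}_n}\LONECONV\EXP[W\mid\mathcal{A}\times\mathcal{A}]$.

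Finally, $\langle W\rangle=\LIM(W,W,W,\ldots)$ is closed in $L^1(\Omega^2)$ by Lemma~\ref{lem:LIMclosed}\ref{enu:closedL1} (equivalently, it is weak{*} closed and $L^1$-convergence implies weak{*}-convergence by Fact~\ref{fact:differentnorms}). Since every $W^{\Join\mathcal{P}_n}$ belongs to $\langle W\rangle$ and the sequence converges there in $L^1$ to $\EXP[W\mid\mathcal{A}\times\mathcal{A}]$, the limit lies in $\langle W\rangle$, as claimed. The only genuinely non-formal step is the first one — producing an increasing sequence of finite $\mathcal{A}$-measurable partitions generating $\mathcal{A}$ modulo null sets — but this is a standard consequence of separability of $\Omega$ (a sub-sigma-algebra of a countably generated measure algebra is countably generated, and a countably generated algebra is the increasing union of finite subalgebras); everything else is the martingale convergence theorem combined with the closedness of envelopes.
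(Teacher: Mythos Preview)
Your proof is correct and in fact cleaner than the paper's own argument. Both proofs share the same skeleton: use separability of $\Omega$ to produce an increasing sequence of finite partitions $\mathcal{P}_n$ whose generated algebras increase to $\mathcal{A}$ (modulo null sets), then apply the martingale convergence theorem to get $W^{\Join\mathcal{P}_n}\LONECONV\EXP[W\mid\mathcal{A}\times\mathcal{A}]$. The divergence is in how one passes to the limit inside $\langle W\rangle$. You observe directly that each $W^{\Join\mathcal{P}_n}\in\langle W\rangle$ by Lemma~\ref{lem:basicpropofenvelopes}\ref{enu:averagingINSIDE}, and then invoke the $L^1$-closedness of $\langle W\rangle$ (Lemma~\ref{lem:LIMclosed}\ref{enu:closedL1}) to conclude. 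The paper instead notes that $W^{\Join\mathcal{P}_n}\preceq W^{\Join\mathcal{P}_{n+1}}$, appeals to the forward-referenced Corollary~\ref{cor:chains} to get cut-distance convergence of this increasing chain to some $U$, uses Lemma~\ref{lem:biggerForConvergent} to place $U$ in $\langle W\rangle$, and finally matches $U$ with $\EXP[W\mid\mathcal{A}\times\mathcal{A}]$ via the martingale limit. Your route avoids the forward reference entirely and needs only results already established at that point in the paper, which is a genuine simplification; the paper's detour through the structuredness order buys nothing extra here.

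One small remark on the point you flag yourself: a sub-sigma-algebra of a countably generated sigma-algebra need not be countably generated as a sigma-algebra, but it is countably generated \emph{modulo null sets} because the measure algebra of a separable probability space is a separable metric space under $d(A,B)=\nu(A\triangle B)$, and subspaces of separable metric spaces are separable. You handle this correctly by working modulo null sets throughout.
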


\begin{proof}
Recall that we assume that $\Omega$ is a separable atomless measure
space. This implies that there is an increasing sequence $\left\{ \mathcal{A}_{n}\right\} _{n\in\mathbb{N}}$
of finite subalgebras of $\mathcal{A}$ such that $\bigcup_{n\in\mathbb{N}}\mathcal{A}_{n}$
generates $\mathcal{A}$, i.e., $\mathcal{A}$ is the smallest sub-sigma-algebra
of $\mathcal{B}$ that contains $\mathcal{A}_{n}$ for every $n\in\mathbb{N}$.
Clearly, there are finite partitions $\mathcal{P}_{n}$ of $\Omega$
for every $n\in\mathbb{N}$ such that $\mathcal{P}_{n}^{*}=\mathcal{A}_{n}$,
where we use the notation from the beginning of this subsection. It
follows that $W^{\Join P_{n}}\in\left\langle W^{\Join P_{n+1}}\right\rangle $
and by Corollary~\ref{cor:chains} together with Lemma~\ref{lem:biggerForConvergent}
we have that $W^{\Join P_{n}}\CUTDISTCONV U$ where $U\in\left\langle W\right\rangle $.
It follows from the Martingale convergence theorem \cite[Theorem 35.5]{Billingsley95}
that $W^{\Join P_{n}}\LONECONV\EXP\left[W|\mathcal{A}\times\mathcal{A}\right]$
and as a consequence we get that $\delta_{\Box}\left(\EXP\left[W|\mathcal{A}\times\mathcal{A}\right],U\right)=0$.
\end{proof}
As advertised, we pose the complement of Theorem~\ref{thm:Subalgebras},
which would give one-to-one correspondence between the structuredness
order and containment of sub-sigma-algebras of $\mathcal{B}$, as
an open problem.
\begin{problem}
\label{prob:Subalgebras}Suppose $\mathcal{B}$ is the sigma-algebra
of measurable sets on $\Omega$, and that $U\preceq W$ are two graphons.
Do there exist graphons $U',W'$, and a sub-sigma-algebra $\mathcal{A}$
of $\mathcal{B}$ such that $\cutDIST(U,U')=\cutDIST(W,W')=0$ and
$U'=\EXP\left[W'|\mathcal{A}\times\mathcal{A}\right]$?
\end{problem}

\subsection{$\preceq$-maximal elements in $\protect\LIM$}

The main result of this section, Lemma~\ref{lem:LIMACCcontainsMAX},
says that if 
\[
\LIM\left(\Gamma_{1},\Gamma_{2},\Gamma_{3},\ldots\right)=\ACC\left(\Gamma_{1},\Gamma_{2},\Gamma_{3},\ldots\right)
\]
then there exists a $\preceq$-maximal element in $\LIM\left(\Gamma_{1},\Gamma_{2},\Gamma_{3},\ldots\right)$.
Most of the work for the proof of Lemma~\ref{lem:LIMACCcontainsMAX}
is done in Lemma~\ref{lem:forVasek} which is stated for step graphons
only. To infer that certain favorable properties of a sequence of
step graphons (on which Lemma~\ref{lem:forVasek} can be applied)
can be transferred even to a graphon they approximate, Lemma~\ref{lem:biggerForConvergent}
is introduced.
\begin{lem}
\label{lem:biggerForConvergent}Suppose $U_{1},U_{2},U_{3},\ldots$
is a sequence of graphons that converges weak{*} to $U$, and suppose
that $W$ is a graphon. Suppose that for each $n\in\mathbb{N}$ we
have that $U_{n}\preceq W$. Then $U\preceq W$.
\end{lem}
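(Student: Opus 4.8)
The plan is to restate the claim purely in terms of envelopes, where it follows at once from the weak* closedness established earlier. Recall that $\left\langle W\right\rangle =\LIM(W,W,W,\ldots)$, and that by Lemma~\ref{lem:basicpropofenvelopes}\ref{enu:UprecWenve} the relation $V\preceq W$ is equivalent to the membership $V\in\left\langle W\right\rangle $.

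First I would use this equivalence to rewrite the hypothesis: for every $n\in\mathbb{N}$ we have $U_{n}\preceq W$, hence $U_{n}\in\left\langle W\right\rangle $, so the whole sequence $U_{1},U_{2},U_{3},\ldots$ lies in $\left\langle W\right\rangle $. Next, by Lemma~\ref{lem:LIMclosed}\ref{enu:weakstar} (applied to the constant sequence $W,W,W,\ldots$), the set $\left\langle W\right\rangle $ is weak* closed in $L^{\infty}(\Omega^{2})$. Since $U_{n}\WEAKCONV U$, weak* closedness gives $U\in\left\langle W\right\rangle $. Applying Lemma~\ref{lem:basicpropofenvelopes}\ref{enu:UprecWenve} in the reverse direction then yields $U\preceq W$, which is the assertion.

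Concretely, the weak* closedness invoked above can be unwound as a diagonalization: for each $n$ one picks a version $W^{\psi_{n}}$ of $W$ with $d_{\mathrm{w}^{*}}(W^{\psi_{n}},U_{n})$ small (possible because each $U_{n}\in\left\langle W\right\rangle $ is a weak* limit of versions of $W$), and then, combining this with $U_{n}\WEAKCONV U$, the sequence $W^{\psi_{1}},W^{\psi_{2}},W^{\psi_{3}},\ldots$ weak* converges to $U$, witnessing $U\in\left\langle W\right\rangle $. Since both ingredients---the characterization of $\preceq$ via membership in the envelope, and the weak* closedness of envelopes---are already available, there is no substantial obstacle here; the only point requiring a moment's care is to pass from the envelope-containment definition of $\preceq$ to a membership statement before invoking closedness.
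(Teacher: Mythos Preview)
Your proof is correct and essentially coincides with the paper's. The paper simply writes ``Follows immediately from Lemma~\ref{lem:basicpropofenvelopes}\ref{enu:closureINenve}'', which is your argument packaged one level higher: that item itself is derived from Lemma~\ref{lem:LIMclosed}\ref{enu:weakstar}, and the implicit translation between $\preceq$ and envelope membership is exactly your use of Lemma~\ref{lem:basicpropofenvelopes}\ref{enu:UprecWenve}.
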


\begin{proof}
Follows immediately from Lemma \ref{lem:basicpropofenvelopes}\ref{enu:closureINenve}.
\end{proof}
For the key Lemma~\ref{lem:forVasek}, we shall refine the structure
of a graphon by <<moving some parts to the left>>. To this end,
it is convenient to work on $[0,1]$ (see Remark~\ref{rem:UnitIntervalGroundSpace}).
We introduce the following definitions.
\begin{defn}
By an \emph{ordered partition} $\mathcal{P}$ of a set $S$, we mean
a finite partition of $S$, $S=P_{1}\sqcup P_{2}\sqcup P_{3}\sqcup\ldots\sqcup P_{k}$,
$\mathcal{P}=\left(P_{1},P_{2},P_{3},\ldots,P_{k}\right)$ in which
the sets $P_{1},P_{2},P_{3},\ldots,P_{k}$ are linearly ordered (in
the way they are enumerated in $\mathcal{P}$).
\end{defn}

\begin{defn}
\label{def:shiftingMaps}For an ordered partition $\mathcal{J}$ of
$I=[0,1]$ into finitely many sets $C_{1},C_{2},\ldots,C_{k}$, we
define mappings $\alpha_{\mathcal{J},1},\alpha_{\mathcal{J},2},\ldots,\alpha_{\mathcal{J},k}:I\rightarrow I$,
and a mapping $\gamma_{\mathcal{J}}:I\rightarrow I$ by 
\begin{equation}
\begin{split}\alpha_{\mathcal{J},i}(x) & =\lambda\left(\bigcup_{i=1}^{j-1}C_{j}\right)+\lambda\left(C_{i}\cap\left[0,x\right]\right),\quad i=1,2,\ldots,k\;.\\
\gamma_{\mathcal{J}}(x) & =\alpha_{\mathcal{J},i}(x)\quad\text{if }x\in C_{i},\quad i=1,2,\ldots,k\;.
\end{split}
\label{eq:shiftingMaps}
\end{equation}
 Informally, $\gamma_{\mathcal{J}}$ is defined in such a way that
it maps the set $C_{1}$ to the left side of the interval $I$, the
set $C_{2}$ next to it, and so on. Finally, the set $C_{k}$ is mapped
to the right side of the interval $I$. Clearly, $\gamma_{\mathcal{J}}$
is a measure preserving almost-bijection.

Last, given a graphon $W:I^{2}\rightarrow[0,1]$, we define a graphon
$_{\mathcal{J}}W:I^{2}\rightarrow[0,1]$ by
\begin{equation}
_{\mathcal{J}}W(x,y):=W\left(\gamma_{\mathcal{J}}^{-1}(x),\gamma_{\mathcal{J}}^{-1}(y)\right)\;.\label{eq:reorderedW}
\end{equation}
When $\mathcal{J}$ has only two parts, $\mathcal{J}=\left(A,I\setminus A\right)$,
in order to simplify notation, we write
\begin{equation}
_{A}W:={}_{\mathcal{J}}W\;.\label{eq:ordered2}
\end{equation}
\end{defn}

\begin{lem}
\label{lem:forVasek}Let $\Gamma_{1},\Gamma_{2},\Gamma_{3},\ldots\in\GRAPHONSPACE$
be a sequence of graphons on $[0,1]$. Suppose that $U,V\in\LIM\left(\Gamma_{1},\Gamma_{2},\Gamma_{3},\ldots\right)$
are two step graphons. Then there exists a step graphon $W\in\ACC\left(\Gamma_{1},\Gamma_{2},\dots\right)$
that refines $U$ and such that $U,V\preceq W$.
\end{lem}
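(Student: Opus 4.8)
The plan is to build $W$ by a finite iteration of the reordering operation $_{\mathcal{J}}(\cdot)$ from Definition~\ref{def:shiftingMaps}, using one step of the iteration to absorb each step of the partition underlying $V$. Since $U$ and $V$ are step graphons, fix a common refining partition $\mathcal{Q}=(Q_1,\dots,Q_m)$ of $[0,1]$ such that both $U$ and $V$ are constant on every rectangle $Q_i\times Q_j$. Because $U\in\LIM(\Gamma_1,\Gamma_2,\dots)$, there are versions $\Gamma_n^{\varphi_n}$ converging weak* to $U$; the key point is that inside each block $Q_i$ we still have plenty of "room" — more precisely, for large $n$ the version $\Gamma_n^{\varphi_n}$ restricted to $Q_i\times Q_j$ is, in the weak* sense, close to the constant value $U$ takes there, but we have not yet used up the internal structure of $\Gamma_n$ on those blocks. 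The idea is to further permute mass \emph{within} the blocks $Q_i$, using maps of the form $\gamma_{\mathcal{J}}$ where $\mathcal{J}$ is an ordered partition refining $\mathcal{Q}$, so as to reproduce the value pattern of $V$ as a \emph{sub-pattern} living inside the blocks of $U$.

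First I would make the reduction that it suffices to prove the statement when $U$ is the trivial (constant, i.e.\ one-block) graphon: the general case follows by working block-by-block on each $Q_i\times Q_j$ and gluing, since $_{\mathcal{J}}(\cdot)$ acts independently within blocks once $\mathcal{J}$ refines the partition of $U$, and refining $U$ is automatic because we never coarsen. Second, with $U$ constant, I would take versions $\Gamma_n' := \Gamma_n^{\varphi_n} \WEAKCONV U$, and for each $n$ choose an ordered partition $\mathcal{J}_n$ of $[0,1]$ into sets whose measures match the block sizes of $V$, chosen so that on the corresponding rearranged graphon $_{\mathcal{J}_n}\Gamma_n'$ the block averages approach the values of $V$; here is where Lemma~\ref{lem:approx} (averaged $L^1$-approximation by step graphons) and the fact that $V\in\LIM(\Gamma_1,\Gamma_2,\dots)$ get combined — we already know \emph{some} reordering of the $\Gamma_n$ converges weak* to $V$, and we transplant that reordering into the blocks. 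Third, I would pass to a weak* accumulation point $W$ of $(_{\mathcal{J}_n}\Gamma_n')_n$; by construction $W$ is a step graphon refining both $U$ and $V$, lies in $\ACC(\Gamma_1,\Gamma_2,\dots)$, and $V\preceq W$ because versions of $V$ appear in $\langle W\rangle$ (using that within a single block $W$ literally contains a scaled copy of $V$, together with Lemma~\ref{lem:basicpropofenvelopes}\ref{enu:UprecWenve} and \ref{enu:closureINenve}). The relation $U\preceq W$ is immediate since $U$ is constant.

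The main obstacle I anticipate is the bookkeeping in the third step: ensuring that $V\preceq W$ rather than merely "$W$ contains a fuzzy version of $V$". The subtlety is that after reordering we only control \emph{averages} over the new blocks, so $W$'s restriction to a block is not itself a version of $V$ — it is only weak*-close, and weak* limits of versions of a fixed graphon need not recover that graphon. The fix is to do the construction \emph{twice}: first build an intermediate step graphon $W_0\in\ACC$ with $W_0\succeq U$ on which we can now run the same argument to push $V$ in exactly (exploiting that once we are inside $\langle W_0\rangle$ and $W_0$ is a step graphon, we can use Lemma~\ref{lem:averagingInsideLIM}/Lemma~\ref{lem:basicpropofenvelopes}\ref{enu:averagingINSIDE} to control conditional expectations precisely), or alternatively to invoke the hypothesis $U,V\in\LIM$ more carefully via a diagonal argument that keeps track of both target graphons simultaneously. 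A careful diagonalization over $n$, choosing $\mathcal{J}_n$ so that \emph{both} the $U$-pattern on the coarse blocks and the $V$-pattern on the fine blocks are respected to precision $1/n$ in $d_{\mathrm{w}^*}$, and then taking the accumulation point, should close the gap; this is where the real work of the lemma lies, and it is the reason the statement is restricted to step graphons, where only finitely many block values must be matched.
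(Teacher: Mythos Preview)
Your overall strategy --- rearrange within the blocks of $U$, pass to a weak* accumulation point, average down to a step graphon --- is the same as the paper's. But the execution has two genuine gaps.

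First, the reduction to constant $U$ is a false economy. When $U$ is constant the statement is trivial (take $W=V$, which already lies in $\LIM\subseteq\ACC$ and refines the one-block partition), so all the content is pushed into your ``gluing'' step, which you do not spell out. In particular, you never say \emph{which} ordered partition $\mathcal{J}_n$ refining the partition $\mathcal{P}$ of $U$ to use. The paper's construction is concrete: if $\varphi_\ell$ are bijections with $\Gamma_\ell^{\varphi_\ell^{-1}}\WEAKCONV V$, set $H_{i,j}^{(\ell)}=P_i\cap\varphi_\ell^{-1}(Q_j)$ and let $\mathcal{J}_\ell=(H_{1,1}^{(\ell)},\dots,H_{m,n}^{(\ell)})$ (so $\psi_\ell=\gamma_{\mathcal{J}_\ell}$ acts within each $P_i$). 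This intersection partition is exactly the ``transplant'' you allude to, but it is the heart of the argument and cannot be left implicit. One then passes to a subsequence along which the endpoints of the intervals $\psi_\ell(H_{i,j}^{(\ell)})$ converge, obtaining a limit partition $\mathcal{R}=(R_{i,j})$, and sets $W=\tilde{W}^{\Join\mathcal{R}}$ for a weak* accumulation point $\tilde{W}$ of the rearranged sequence.

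Second, the obstacle you worry about --- that $W$ only weak*-approximates $V$ on blocks, so that $V\preceq W$ is in doubt --- is not the real difficulty, and your proposed fixes (doing the construction twice, a finer diagonalization) are unnecessary. Once the limit partition $\mathcal{R}$ is in hand, a direct integral computation shows that $\sum_{g,h}\int_{R_{g,i}\times R_{h,j}}W=\int_{Q_i\times Q_j}V$ for every $i,j$. Since $V$ is a step graphon, this says precisely that a version of $V$ equals $W^{\Join\mathcal{R}'}$ for the coarsening $\mathcal{R}'_j=\bigcup_g R_{g,j}$, whence $V\preceq W$ by Lemma~\ref{lem:basicpropofenvelopes}\ref{enu:averagingINSIDE} and~\ref{enu:UprecWenve}. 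No limiting argument inside $\langle W\rangle$ is required. The refinement $W^{\Join\mathcal{P}}=U$ follows similarly, because each $\psi_\ell$ preserves each $P_i$ setwise and hence the $\mathcal{P}$-block integrals of $\Gamma_\ell^{\psi_\ell^{-1}}$ equal those of $\Gamma_\ell$, which converge to those of $U$.
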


\begin{proof}
\begin{figure}
\includegraphics[width=0.9\textwidth]{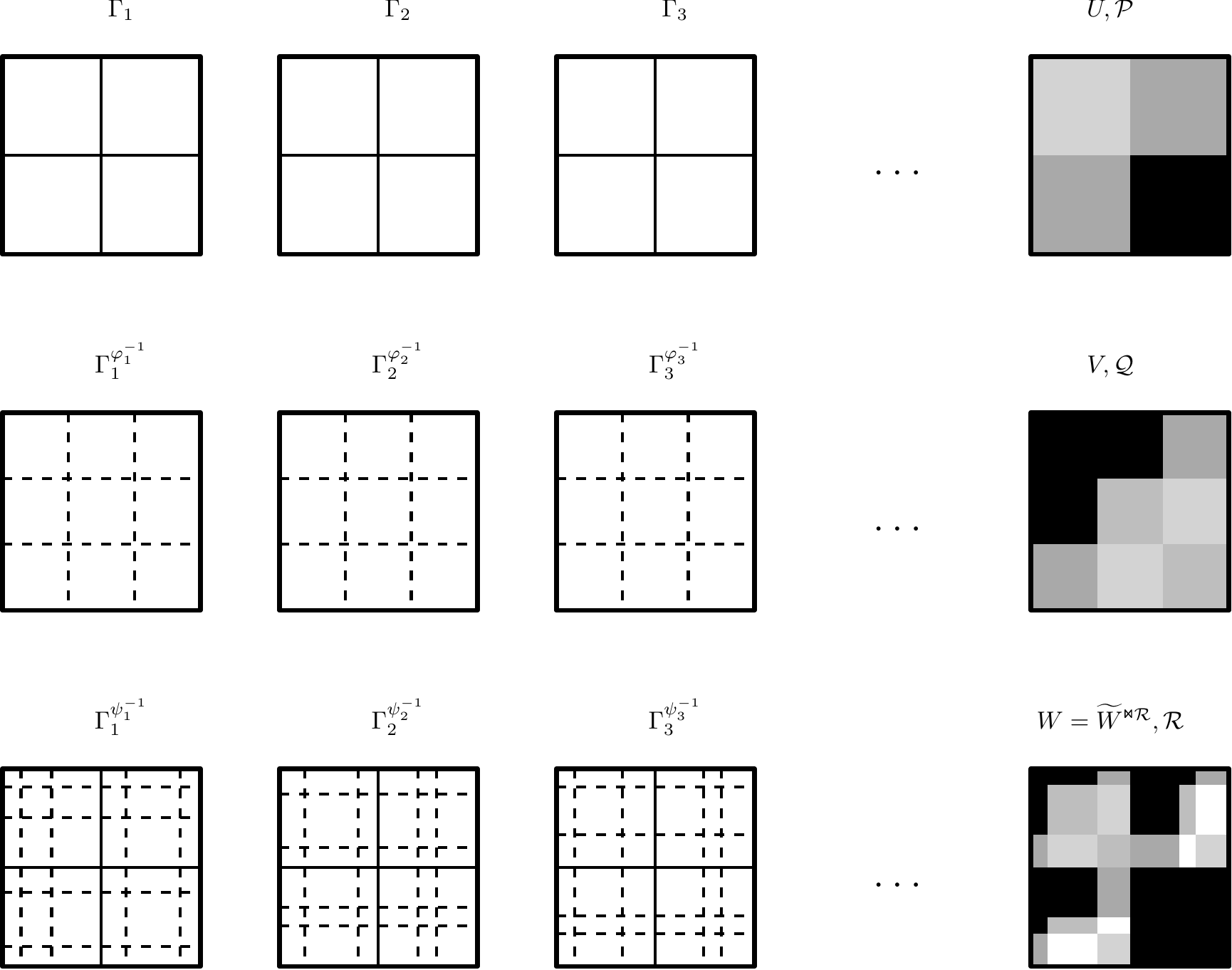}\caption{Two graphons $U,V$ are step graphons with partitions $\mathcal{P}$
and $\mathcal{Q}$. A subsequence of $\Gamma_{1}^{\psi_{1}^{-1}},\Gamma_{2}^{\psi_{2}^{-1}},\dots$
converges to $\widetilde{W}$ and the corresponding partitions converge
to the partition $\mathcal{R}$ that refines both $\mathcal{P}$ and
$\mathcal{Q}$. The graphon $W=\widetilde{W}^{\protect\Join\mathcal{R}}$
is the desired step graphon that is structured more than both $U$
and $V$. }
\end{figure}
We at first assume that the ordered partition $\mathcal{P}=\left(P_{1},\dots,P_{m}\right)$
of $U$ and the ordered partition $\mathcal{Q}=\left(Q_{1},\dots,Q_{n}\right)$
of $V$ are composed of intervals, since each partition $\mathcal{J}$
can be reordered to intervals by the measure preserving almost-bijection
$\gamma_{\mathcal{J}}$. We further assume without loss of generality
that the sequence of measure preserving bijections certifying that
$U\in\LIM\left(\Gamma_{1},\Gamma_{2},\dots\right)$ contains only
identities, i.e., that $\Gamma_{1},\Gamma_{2},\dots\WEAKCONV U$.
Let $\varphi_{1},\varphi_{2},\dots$ be measure preserving bijections
such that $\Gamma_{1}^{\varphi_{1}^{-1}},\Gamma_{2}^{\varphi_{2}^{-1}},\dots\WEAKCONV V$. 

We now describe a sequence of measure preserving bijections $\psi_{1},\psi_{2},\dots$
such that the weak star limit of $\Gamma_{1}^{\psi_{1}^{-1}},\Gamma_{2}^{\psi_{2}^{-1}}\dots$
gives the desired graphon $W$. For the $\ell$-th graphon $\Gamma_{\ell}$
we define its partition $\mathcal{\mathcal{H}}^{(\ell)}=\left(H_{1,1}^{(\ell)},H_{1,2}^{(\ell)},\dots,H_{1,n}^{(\ell)},H_{2,1}^{(\ell)}\dots,H_{m,n}^{(\ell)}\right)$
where $H_{i,j}^{(\ell)}=P_{i}\cap\varphi_{\ell}^{-1}(Q_{j})$ and
set $\psi_{\ell}=\gamma_{\mathcal{\mathcal{H}}^{\left(\ell\right)}}$
using Definition~\ref{def:shiftingMaps}. The intuition behind $\psi_{\ell}$
is that it refines each block $P_{i}$ of the partition $\mathcal{P}$
with the partition $\mathcal{Q}$; it can be, indeed, seen that for
each $i$ we have $\psi_{\ell}\left(P_{i}\right)=\psi_{\ell}\left(\bigcup_{j}H_{i,j}^{(\ell)}\right)=P_{i}$,
where the equalities hold up to a null set. 

We pass to a subsequence $mn$ times to get that both endpoints of
each of the intervals $\psi_{\ell}(H_{i,j}^{(\ell)})$ converge to
some fixed numbers from $[0,1]$, thus giving us a limit partition
$\mathcal{R}=(R_{1,1},\dots,R_{m,n})$ into intervals (some of them
may be degenerate intervals of length $0$). Note that as we know
that for each $i$ we have $\psi_{\ell}\left(\bigcup_{j}H_{i,j}^{(\ell)}\right)=P_{i}$,
it is also true that $\bigcup_{j}R_{i,j}=P_{i}$. We also have for
all $j$ that $\nu\left(\bigcup_{i}H_{i,j}^{(\ell)}\right)=\nu\left(Q_{j}\right)$,
hence $\nu\left(\bigcup_{i}R_{i,j}\right)=\nu\left(Q_{j}\right)$.
Now we use the fact that the set of accumulation points of our sequence
is non-empty due to Banach\textendash Alaoglu theorem, thus after
passing to a subsequence yet again we get a subsequence $\Gamma_{k_{1}}^{\psi_{k_{1}}^{-1}},\Gamma_{k_{2}}^{\psi_{k_{2}}^{-1}},\dots\WEAKCONV\widetilde{W}$.
Define $W$ as $\widetilde{W}^{\Join\mathcal{R}}$. We apply Lemma~\ref{lem:LIMclosed}\ref{enu:weakstar}
to $\LIM\left(\Gamma_{k_{1}},\Gamma_{k_{2}},\dots\right)$ and Lemma~\ref{lem:basicpropofenvelopes}\ref{enu:averagingINSIDE}
to $\widetilde{W}$ and $\widetilde{W}^{\Join\mathcal{R}}$ to get
that $\widetilde{W}^{\Join\mathcal{R}}\in\ACC(\Gamma_{1},\Gamma_{2},\ldots)$. 

At first we prove that $\widetilde{W}^{\Join\mathcal{R}}$ refines
$U$. Since $U$ is constant on each step $P_{i}\times P_{j}$, it
suffices to prove that for any $\varepsilon>0$ and any step $P_{i}\times P_{j}$
we have
\begin{align}
\int_{P_{i}\times P_{j}}U & =\int_{P_{i}\times P_{j}}\widetilde{W}^{\Join\mathcal{R}}.\label{eq:pikachu}
\end{align}
Take $\ell$ sufficiently large, so that 
\begin{align}
\left|\int_{P_{i}\times P_{j}}U-\int_{P_{i}\times P_{j}}\Gamma_{k_{\ell}}\right| & <\varepsilon\label{eq:raichu}
\end{align}
and 
\begin{align}
\left|\int_{P_{i}\times P_{j}}\widetilde{W}-\int_{P_{i}\times P_{j}}\Gamma_{k_{\ell}}^{\psi_{k_{\ell}}^{-1}}\right| & <\varepsilon.\label{eq:bulbasaur}
\end{align}
Putting this together with the facts that $P_{i}=\psi_{k_{\ell}}\left(P_{i}\right)$
up to a null set and $R_{i,j}\subseteq P_{i}$ for all $i,j$, we
get that 
\begin{align*}
\int_{P_{i}\times P_{j}}U\\
\JUSTIFY{Eq.\;\eqref{eq:raichu}} & \stackrel{\varepsilon}{\approx}\int_{P_{i}\times P_{j}}\Gamma_{k_{\ell}}\\
\JUSTIFY{\text{\ensuremath{P_{i}}=\ensuremath{\psi_{k_{\ell}}\left(P_{i}\right)}}} & =\int_{\psi_{k_{\ell}}(P_{i})\times\psi_{k_{\ell}}(P_{j})}\Gamma_{k_{\ell}}\\
 & =\int_{P_{i}\times P_{j}}\Gamma_{k_{\ell}}^{\psi_{k_{\ell}}^{-1}}\\
\JUSTIFY{Eq.\;\eqref{eq:bulbasaur}} & \stackrel{\varepsilon}{\approx}\int_{P_{i}\times P_{j}}\widetilde{W}\\
 & =\int_{P_{i}\times P_{j}}\widetilde{W}^{\Join\mathcal{R}}.
\end{align*}
Since this holds for every $\varepsilon>0$, we get the desired Equation~(\ref{eq:pikachu}). 

Now we prove that $\widetilde{W}^{\Join\mathcal{R}}\succeq V$. Since
$V$ is constant on each step $Q_{i}\times Q_{j}$, it suffices to
prove that for any $\varepsilon>0$ and any step $Q_{i}\times Q_{j}$
of $V$ we have

\begin{equation}
\int_{Q_{i}\times Q_{j}}V=\sum_{\substack{1\le g\le m\\
1\le h\le n
}
}\int_{R_{g,i}\times R_{h,j}}\widetilde{W}^{\Join\mathcal{R}}.\label{eq:abra}
\end{equation}
Take $\ell$ sufficiently large so that

\begin{align}
\left|\int_{Q_{i}\times Q_{j}}V-\int_{Q_{i}\times Q_{j}}\Gamma_{k_{\ell}}^{\varphi_{k_{\ell}}^{-1}}\right| & <\varepsilon,\label{eq:kadabra}\\
\left|\sum_{\substack{g,h}
}\int_{R_{g,i}\times R_{h,j}}\widetilde{W}^{\Join\mathcal{R}}-\sum_{\substack{g,h}
}\int_{R_{g,i}\times R_{h,j}}\Gamma_{k_{\ell}}^{\psi_{k_{\ell}}^{-1}}\right| & <\varepsilon\label{eq:alakazam}
\end{align}
and, moreover, the length of each interval $\psi_{k_{\ell}}\left(H_{i,j}^{(k_{\ell})}\right)$
differs from the length of interval $R_{i,j}$ by at most $\frac{\varepsilon}{4m^{2}n^{2}}$.
Now we can bound the measure of overlap of each pair of rectangles
$R_{g,i}\times R_{h,j}$ and $\psi_{k_{\ell}}\left(H_{g,i}^{(k_{\ell})}\right)\times\psi_{k_{\ell}}\left(H_{h,j}^{(k_{\ell})}\right)$.
More precisely, we have 
\begin{align}
\nu^{\otimes2}\left(\left(R_{g,i}\times R_{h,j}\right)\;\triangle\;\left(\psi_{k_{\ell}}\left(H_{g,i}^{(k_{\ell})}\right)\times\psi_{k_{\ell}}\left(H_{h,j}^{(k_{\ell})}\right)\right)\right) & <4mn\cdot\frac{\varepsilon}{4m^{2}n^{2}}=\frac{\varepsilon}{mn},\label{eq:charmander}
\end{align}
where $4mn$ comes from the facts that we bound the displacement of
all four sides of the rectangles and that their displacement depends
on the displacement of all preceding intervals. Putting all of this
together, we get that

\begin{align*}
\sum_{g,h}\int_{R_{g,i}\times R_{h,j}}\widetilde{W}^{\Join\mathcal{R}}\\
\JUSTIFY{Eq.\;\eqref{eq:alakazam}} & \stackrel{\varepsilon}{\approx}\sum_{g,h}\int_{R_{g,i}\times R_{h,j}}\Gamma_{k_{\ell}}^{\psi_{k_{\ell}}^{-1}}\\
\JUSTIFY{Eq.\;\eqref{eq:charmander}} & \stackrel{\varepsilon}{\approx}\sum_{g,h}\int_{\psi_{k_{\ell}}\left(H_{g,i}^{(k_{\ell})}\right)\times\psi_{k_{\ell}}\left(H_{h,j}^{(k_{\ell})}\right)}\Gamma_{k_{\ell}}^{\psi_{k_{\ell}}^{-1}}\\
 & =\sum_{g,h}\int_{\left(P_{g}\cap\varphi_{k_{\ell}}^{-1}\left(Q_{i}\right)\right)\times\left(P_{h}\cap\varphi_{k_{\ell}}^{-1}\left(Q_{j}\right)\right)}\Gamma_{k_{\ell}}\\
 & =\sum_{g,h}\int_{\left(\varphi_{k_{\ell}}(P_{g})\cap Q_{i}\right)\times\left(\varphi_{k_{\ell}}(P_{h})\cap Q_{j}\right)}\Gamma_{k_{\ell}}^{\varphi_{k_{\ell}}^{-1}}\\
 & =\int_{Q_{i}\times Q_{j}}\Gamma_{k_{\ell}}^{\varphi_{k_{\ell}}^{-1}}\\
\JUSTIFY{Eq.\;\eqref{eq:kadabra}} & \stackrel{\varepsilon}{\approx}\int_{Q_{i}\times Q_{j}}V\;.
\end{align*}
This yields the desired equation~(\ref{eq:abra}). 
\end{proof}
\begin{lem}
\label{lem:LIMACCcontainsMAX}Let $\Gamma_{1},\Gamma_{2},\Gamma_{3},\ldots\in\GRAPHONSPACE$
be a sequence of graphons on $[0,1]$ for which $\LIM\left(\Gamma_{1},\Gamma_{2},\Gamma_{3},\ldots\right)=\ACC\left(\Gamma_{1},\Gamma_{2},\Gamma_{3},\ldots\right)$.
Then $\LIM\left(\Gamma_{1},\Gamma_{2},\Gamma_{3},\ldots\right)$ contains
a maximum element with respect to the structuredness order.
\end{lem}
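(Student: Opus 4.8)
The plan is to produce a single graphon $W$ that lies in $\LIM(\Gamma_{1},\Gamma_{2},\ldots)$ and is $\succeq$-above every element of it; such a $W$ is by definition a maximum. I begin by reducing to a countable family of step graphons. Since $\LIM(\Gamma_{1},\Gamma_{2},\ldots)$ is closed in $L^{1}(\Omega^{2})$ by Lemma~\ref{lem:LIMclosed}\ref{enu:closedL1}, and $L^{1}(\Omega^{2})$ is separable, $\LIM(\Gamma_{1},\Gamma_{2},\ldots)$ is separable; fix a countable $L^{1}$-dense subset $\{D_{i}\}_{i\in\mathbb{N}}$ of it. For each $i$ and $m$, Lemma~\ref{lem:approx} gives a step graphon $D_{i}^{\Join\mathcal{P}_{i,m}}$ with $\|D_{i}-D_{i}^{\Join\mathcal{P}_{i,m}}\|_{1}<1/m$, and by Lemma~\ref{lem:averagingInsideLIM} each $D_{i}^{\Join\mathcal{P}_{i,m}}$ again lies in $\LIM(\Gamma_{1},\Gamma_{2},\ldots)$. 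The countable collection $\{U_{j}\}_{j\in\mathbb{N}}$ of all these step graphons is then $L^{1}$-dense in $\LIM(\Gamma_{1},\Gamma_{2},\ldots)$.

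Next I would merge the $U_{j}$ into one graphon by iterating Lemma~\ref{lem:forVasek}, which applies because we are on $[0,1]$ and $\LIM(\Gamma_{1},\Gamma_{2},\ldots)=\ACC(\Gamma_{1},\Gamma_{2},\ldots)$. Put $V_{1}:=U_{1}$; given a step graphon $V_{n}\in\LIM(\Gamma_{1},\Gamma_{2},\ldots)$, apply Lemma~\ref{lem:forVasek} to the pair $V_{n},U_{n+1}$ to obtain a step graphon $V_{n+1}\in\ACC(\Gamma_{1},\Gamma_{2},\ldots)=\LIM(\Gamma_{1},\Gamma_{2},\ldots)$ which refines $V_{n}$ and satisfies $V_{n},U_{n+1}\preceq V_{n+1}$. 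Inspecting the construction in the proof of Lemma~\ref{lem:forVasek}, the partition on which $V_{n+1}$ is constant can be taken to be the partition $\mathcal{R}$ there, which refines the partition $\mathcal{Q}_{n}$ of $V_{n}$; hence, writing $\mathcal{F}_{n}:=\mathcal{Q}_{n}^{*}\times\mathcal{Q}_{n}^{*}$, the $\mathcal{F}_{n}$ form an increasing filtration and $V_{n}=\EXP[V_{n+1}\mid\mathcal{F}_{n}]$. Thus $(V_{n})_{n}$ is an $L^{\infty}$-bounded martingale, so by the martingale convergence theorem \cite[Theorem 35.5]{Billingsley95} it converges in $L^{1}$ to a graphon $W$, and $V_{n}=\EXP[W\mid\mathcal{F}_{n}]=W^{\Join\mathcal{Q}_{n}}$ for every $n$.

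Finally, I claim $W$ is the desired maximum. It lies in $\LIM(\Gamma_{1},\Gamma_{2},\ldots)$: from $V_{n}\LONECONV W$ we get $V_{n}\WEAKCONV W$ by Fact~\ref{fact:differentnorms}, and $\LIM(\Gamma_{1},\Gamma_{2},\ldots)$ is weak{*} closed by Lemma~\ref{lem:LIMclosed}\ref{enu:weakstar}. Since $V_{n}=W^{\Join\mathcal{Q}_{n}}$, Lemma~\ref{lem:basicpropofenvelopes}\ref{enu:averagingINSIDE} gives $V_{n}\in\langle W\rangle$, i.e.\ $V_{n}\preceq W$; together with $U_{j}\preceq V_{j}$ and transitivity of $\preceq$ this yields $U_{j}\preceq W$ for every $j$. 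For an arbitrary $\Gamma\in\LIM(\Gamma_{1},\Gamma_{2},\ldots)$ choose, by density, indices $j_{1},j_{2},\ldots$ with $U_{j_{k}}\LONECONV\Gamma$; then $U_{j_{k}}\WEAKCONV\Gamma$ and $U_{j_{k}}\preceq W$ for all $k$, so $\Gamma\preceq W$ by Lemma~\ref{lem:biggerForConvergent}. Hence $W$ is a maximum of $\LIM(\Gamma_{1},\Gamma_{2},\ldots)$. The one genuinely delicate point is arranging the martingale structure in the iteration: the partitions of the $V_{n}$ must be chosen nested, which is exactly what the ``refines'' clause of Lemma~\ref{lem:forVasek} delivers; without it $(V_{n})$ need not be a martingale, and a mere refining sequence need not even be cut-distance Cauchy, so the passage to the limit $W$ and the identity $V_{n}=W^{\Join\mathcal{Q}_{n}}$ would break down. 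The remaining ingredients are routine uses of separability of $L^{1}$, weak{*} closedness of $\LIM$, and the monotonicity statements in Lemmas~\ref{lem:basicpropofenvelopes} and~\ref{lem:biggerForConvergent}.
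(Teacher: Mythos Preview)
Your proof is correct and follows essentially the same approach as the paper's: take a countable dense family of step graphons in $\LIM(\Gamma_{1},\Gamma_{2},\ldots)$, iterate Lemma~\ref{lem:forVasek} (using $\LIM=\ACC$) to build an increasing refining sequence $V_{n}$ whose nested partitions yield a martingale, pass to the $L^{1}$-limit $W$, and conclude $\Gamma\preceq W$ for every $\Gamma$ via Lemma~\ref{lem:biggerForConvergent}. The only cosmetic difference is that you take the dense family in the $L^{1}$ topology whereas the paper takes it in the weak{*} topology; since $L^{1}$-convergence implies weak{*} convergence (Fact~\ref{fact:differentnorms}) and only the latter is needed for Lemma~\ref{lem:biggerForConvergent}, this makes no substantive difference.
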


\begin{proof}
The space $\LIM\left(\Gamma_{1},\Gamma_{2},\Gamma_{3},\ldots\right)$
is separable metrizable since the space $\GRAPHONSPACE$ with the
weak{*} topology is separable metrizable and therefore we may find
a countable set $P\subseteq\LIM\left(\Gamma_{1},\Gamma_{2},\Gamma_{3},\ldots\right)$
such that its weak{*} closure is $\LIM\left(\Gamma_{1},\Gamma_{2},\Gamma_{3},\ldots\right)$.
For each $W\in P$ and $k\in\mathbb{N}$, consider a suitable graphon,
denoted by $W(k)$, that is an averaged $L^{1}$-approximation of
$W$ by a step graphon for precision $\frac{1}{k}$. Such a graphon
$W(k)$ exists by Lemma~\ref{lem:approx}. Note also that if $W(k)$
is chosen as $W^{\Join\mathcal{P}}$ for some finite partition $\mathcal{P}$
(as in Lemma~\ref{lem:approx}) then $W(k)\in\LIM\left(\Gamma_{1},\Gamma_{2},\Gamma_{3},\ldots\right)$
by Lemma~\ref{lem:averagingInsideLIM}.

Let us now consider the set $Q:=\left\{ W(k):W\in P,k\in\mathbb{N}\right\} $.
The set $Q$ is countable, contained in $\LIM\left(\Gamma_{1},\Gamma_{2},\Gamma_{3},\ldots\right)$
and its weak{*} closure is $\LIM\left(\Gamma_{1},\Gamma_{2},\Gamma_{3},\ldots\right)$.
Let $U_{1},U_{2},U_{3},\ldots$ be an enumeration of the elements
of $Q$. Let $M_{1}:=U_{1}$ and define inductively a sequence $\left\{ M_{n}\right\} _{n\in\mathbb{N}}\subseteq\LIM\left(\Gamma_{1},\Gamma_{2},\Gamma_{3},\ldots\right)$
as follows. Suppose that we have $M_{n}\in\LIM\left(\Gamma_{1},\Gamma_{2},\Gamma_{3},\ldots\right)$
and use Lemma~\ref{lem:forVasek} with $M_{n}$ in place of $U$
and $U_{n+1}$ in place of $V$ to find $M_{n+1}\in\ACC\left(\Gamma_{1},\Gamma_{2},\Gamma_{3},\ldots\right)$.
It follows from the assumption that $\ACC\left(\Gamma_{1},\Gamma_{2},\Gamma_{3},\ldots\right)=\LIM\left(\Gamma_{1},\Gamma_{2},\Gamma_{3},\ldots\right)$
that, in fact, $M_{n+1}\in\LIM\left(\Gamma_{1},\Gamma_{2},\Gamma_{3},\ldots\right)$.
Note that we have $U_{n}\preceq M_{n}$ for every $n\in\mathbb{N}$.
Moreover, it follows from the construction that there is a sequence
of finite partitions $\left\{ \mathcal{P}_{n}\right\} _{n\in\mathbb{N}}$
such that $M_{n}=M_{n+1}^{\Join\mathcal{P}_{n}}$ and $\mathcal{P}_{n+1}$
is a refinement of $\mathcal{P}_{n}$ for every $n\in\mathbb{N}$. 

By the Martingale convergence theorem \cite[Theorem 35.5]{Billingsley95}
we find $M$ such that $M_{n}\LONECONV M$ and $M^{\Join\mathcal{P}_{n}}=M_{n}$
for every $n\in\mathbb{N}$. Since $\LIM\left(\Gamma_{1},\Gamma_{2},\Gamma_{3},\ldots\right)$
is weak{*} closed by Lemma~\ref{lem:LIMclosed}\ref{enu:weakstar}
we have $M\in\LIM\left(\Gamma_{1},\Gamma_{2},\Gamma_{3},\ldots\right)$
and $M\succeq U_{n}$ for every $n\in\mathbb{N}$.

Finally, we claim that $M$ is a maximal element of $\LIM\left(\Gamma_{1},\Gamma_{2},\Gamma_{3},\ldots\right)$.
Indeed, let $\Gamma\in\LIM\left(\Gamma_{1},\Gamma_{2},\Gamma_{3},\ldots\right)$
be arbitrary. Since $Q$ is weak{*} dense in $\LIM\left(\Gamma_{1},\Gamma_{2},\Gamma_{3},\ldots\right)$,
we can find a sequence $U_{n_{1}},U_{n_{2}},U_{n_{3}},\ldots$ weak{*}
converging to $\Gamma$. Note that we have $U_{n_{i}}\preceq M$ for
each $i\in\mathbb{N}$. Lemma~\ref{lem:biggerForConvergent} gives
$\Gamma\preceq M$, as was needed.
\end{proof}

\subsection{Cut distance identifying graphon parameters}

In this section, we mention the notion of cut distance identifying
graphon parameters, which we then extensively study in a follow-up
paper~\cite{DGHRR:Parameters}. A \emph{graphon parameter} is any
function $\theta:\GRAPHONSPACE\rightarrow\mathbb{R}$ such that $\theta(W_{1})=\theta(W_{2})$
for any two graphons $W_{1}$ and $W_{2}$ with $\delta_{\square}(W_{1},W_{2})=0$.
We say that a graphon parameter $\theta(\cdot)$ is a \emph{cut distance
identifying graphon parameter} (\emph{CDIP}) if we have that $W_{1}\prec W_{2}$
implies $\theta\left(W_{1}\right)<\theta\left(W_{2}\right)$. In particular,
the key role of $\prec$-maximal element, such as in Theorem~\ref{enu:basicThmLIMACC},
can be in this setting expressed as $\theta$-maximal elements. Note
that $\INT_{f}(\cdot)$, where $f$ is a strictly convex convex, is
CDIP by \cite[Section 3.3]{DGHRR:Parameters}. Hence the proof of
Theorem~\ref{thm:compactness} in~\cite{DH:WeakStar} is a particular
instance of a CDIP-version of Theorem~\ref{enu:basicThmLIMACC},
the general version then being given in Theorem~3.3 and Theorem 3.4
in \cite{DGHRR:Parameters}. As we show in \cite{DGHRR:Parameters},
CDIP have more applications: they can be used for <<index-pumping>>
in the proof of the Frieze-Kannan regularity lemma, and they tell
us quite a bit about graph norms.

\subsection{Values and degrees with respect to the structuredness order\label{subsec:ValuesAndDegrees}}

Given a graphon $W:\Omega^{2}\rightarrow\left[0,1\right]$, we can
define a pushforward probability measure on $\left[0,1\right]$ by
\begin{equation}
\boldsymbol{\Phi}_{W}\left(A\right):=\nu^{\otimes2}\left(W^{-1}(A)\right)\,,\label{eq:pushforwardValues}
\end{equation}
for a set $A\subseteq\left[0,1\right]$. The measure $\boldsymbol{\Phi}_{W}$
gives us the distribution of the values of $W$, and we call it \emph{the
range frequencies of $W$}. Similarly, we can take the pushforward
measure of the degrees,
\begin{equation}
\boldsymbol{\Upsilon}_{W}\left(A\right):=\nu\left(\deg_{W}^{-1}\left(A\right)\right)\;,\label{eq:PushforwardDegrees}
\end{equation}
for a set $A\subseteq\left[0,1\right]$. We call $\boldsymbol{\Upsilon}_{W}$
the \emph{degree frequencies of $W$}. The measures $\boldsymbol{\Phi}_{W}$
and $\boldsymbol{\Upsilon}_{W}$ do not characterize $W$ in general
but certainly give us substantial information about $W$. Therefore,
given two graphons $U\preceq W$ it is natural to ask how $\boldsymbol{\Phi}_{U}$
compares to $\boldsymbol{\Phi}_{W}$ and how $\boldsymbol{\Upsilon}_{U}$
compares to $\boldsymbol{\Upsilon}_{W}$. To this end, we introduce
the following concept.
\begin{defn}
\label{def:flatter}Suppose that $\Lambda_{1}$ and $\Lambda_{2}$
are two finite measures on $\left[0,1\right]$. We say that $\Lambda_{1}$
is at \emph{least as flat} as $\Lambda_{2}$ if there exists a finite
measure $\Psi$ on $\left[0,1\right]^{2}$ such that $\Lambda_{1}$
is the marginal of $\Psi$ on the first coordinate, $\Lambda_{2}$
is the marginal of $\Psi$ on the second coordinate, and for each
measurable $D\subseteq\left[0,1\right]$ we have
\begin{equation}
\int_{D\times\left[0,1\right]}x\;\mathrm{d}\Psi(x,y)=\int_{D\times\left[0,1\right]}y\;\mathrm{d}\Psi(x,y)\;.\label{eq:masspreserve}
\end{equation}
In addition, we say that $\Lambda_{1}$ is \emph{strictly flatter}
than $\Lambda_{2}$ if $\Lambda_{1}\neq\Lambda_{2}$.

The condition that the marginals of $\Psi$ are $\Lambda_{1}$ and
$\Lambda_{2}$ is well-known in probability theory and referred to
as a \emph{coupling of $\Lambda_{1}$ and $\Lambda_{2}$}. However,
we are not aware of this concept being combined with the condition~(\ref{eq:masspreserve}).
\end{defn}

\begin{example}
We should understand $\Lambda_{1}$ as a certain averaging of $\Lambda_{2}$.
For example, suppose that $\Lambda_{1}$ has an atom $a\in[0,1]$,
say $\Lambda_{1}\left(\left\{ a\right\} \right)=m>0$. Then taking
$D=\left\{ a\right\} $, (\ref{eq:masspreserve}) tells us that by
averaging $y$ according to the normalized (by $\frac{1}{m}$) restriction
of the measure $\Psi$ to $\{a\}\times[0,1]$, we get $a$. As we
show in Lemma~\ref{lem:previ}, such a property extends also to non-atoms.
\end{example}

Let us prove two basic lemmas. In Lemma~\ref{lem:diagonal} we give
a useful characterization of strictly flatter pairs of measures. In
Lemma~\ref{lem:flattranisitve} we prove that the flatness relation
is actually an order. Even though we do not need these lemmas, we
believe that the theory we develop here would not be complete without
them.

We say that a finite measure $\Psi$ on $\left[0,1\right]^{2}$ is
\emph{diagonal} if we have $\Psi\left(\left\{ \left(x,y\right)\in[0,1]^{2}:x\neq y\right\} \right)=0$. 

Let us now recall the notion of disintegration of a measure. Suppose
that $\left(X,\mu\right)$ is a probability Borel measure space on
a Polish space $X$. Let $f:X\to Y$ be a Borel map onto another Polish
space $Y$ and denote as $f^{*}\mu$ the push-forward measure via
$f$. Then the Disintegration Theorem tells us that there is a system
$\{F_{y}\}_{y\in Y}$ of probability Borel measures on $X$ such that
\begin{enumerate}[label=(D\arabic*)]
\item \label{enu:dis1}$F_{y}\left(f^{-1}\left(y\right)\right)=1$ for
every $y\in Y$, and
\item \label{enu:dis2}$\int_{X}h\left(x\right)\:\mathrm{d}\mu(x)=\int_{Y}\left(\int_{X}h\left(x\right)\:\mathrm{d}F_{y}(x)\right)\:\mathrm{d}f^{*}\mu(y)$
for every Borel map $h:X\to\left[0,1\right]$.
\end{enumerate}
We will use the disintegration exclusively in the situation where
$X=\left[0,1\right]^{2}$, $f$ is the projection on the $i$-th coordinate,
$\mu=\Phi$ and $f^{*}\mu=\Lambda_{i}$ where $\Phi$ is a witness
for the fact that $\Lambda_{1}$ is at least as flat as $\Lambda_{2}$.
When we use the variable $x$ for the first coordinate and $y$ for
the second coordinate then we obtain a disintegration $\left\{ \Phi_{x}^{1}\right\} _{x\in\left[0,1\right]}$
and $\left\{ \Phi_{y}^{2}\right\} _{y\in\left[0,1\right]}$. Moreover,
to simplify notation we will always assume that each $\Phi_{z}^{i}$
lives on the interval $\left[0,1\right]$ instead of the corresponding
(horizontal or vertical) strip. For example in case of disintegration
on the second coordinate, the two conditions above then look like
this:
\begin{enumerate}[label=(D\arabic*)]
\item $\Phi_{y}^{2}\left([0,1]\right)=1$ for every $y\in[0,1]$, and
\item $\int_{[0,1]^{2}}h\left(x,y\right)\:\mathrm{d}\Phi(x,y)=\int_{[0,1]}\left(\int_{[0,1]}h\left(x,y\right)\:\mathrm{d}\Phi_{y}^{2}(x)\right)\:d\Lambda_{2}(y)$
for every Borel map $h:[0,1]^{2}\to\left[0,1\right]$.
\end{enumerate}
\begin{lem}
\label{lem:previ}Suppose that $\Lambda_{1}$ is at least as flat
as $\Lambda_{2}$, witnessed by a measure $\Phi$. Then for $\Lambda_{1}$-almost
every $x\in\left[0,1\right]$ we have
\begin{equation}
x=\int_{\left[0,1\right]}y\:\mathrm{d}\Phi_{x}^{1}\left(y\right).\label{eq:xfromy}
\end{equation}
\end{lem}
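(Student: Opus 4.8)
The plan is to combine the defining property~\eqref{eq:masspreserve} of ``$\Lambda_{1}$ is at least as flat as $\Lambda_{2}$'' with the disintegration $\left\{\Phi_{x}^{1}\right\}_{x\in[0,1]}$ of the witness $\Phi$ along the \emph{first} coordinate, and then invoke the standard fact that an $L^{1}$-function is determined up to a null set by its integrals over all measurable sets. So fix the witnessing measure $\Phi$ on $[0,1]^{2}$, with first marginal $\Lambda_{1}$, and let $\left\{\Phi_{x}^{1}\right\}_{x\in[0,1]}$ be its disintegration on the first coordinate (each $\Phi_{x}^{1}$ a probability measure on $[0,1]$, as normalized in the preamble to the lemma).

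The computation is as follows. Applying property~(D2) of the disintegration to the Borel map $h(x,y)=\mathbf{1}_{D}(x)\,y$ (which takes values in $[0,1]$, so (D2) applies), we get for every measurable $D\subseteq[0,1]$ that
\[
\int_{D\times[0,1]}y\;\mathrm{d}\Phi(x,y)=\int_{D}\left(\int_{[0,1]}y\;\mathrm{d}\Phi_{x}^{1}(y)\right)\mathrm{d}\Lambda_{1}(x)\;.
\]
On the other hand, since $\Lambda_{1}$ is the first marginal of $\Phi$,
\[
\int_{D\times[0,1]}x\;\mathrm{d}\Phi(x,y)=\int_{D}x\;\mathrm{d}\Lambda_{1}(x)\;.
\]
Feeding these two identities into~\eqref{eq:masspreserve} (with $\Psi=\Phi$) yields
\[
\int_{D}x\;\mathrm{d}\Lambda_{1}(x)=\int_{D}\left(\int_{[0,1]}y\;\mathrm{d}\Phi_{x}^{1}(y)\right)\mathrm{d}\Lambda_{1}(x)\qquad\text{for every measurable }D\subseteq[0,1]\;.
\]

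It remains to deduce that the two integrands coincide $\Lambda_{1}$-almost everywhere, which is exactly~\eqref{eq:xfromy}. Both $x\mapsto x$ and $x\mapsto\int_{[0,1]}y\,\mathrm{d}\Phi_{x}^{1}(y)$ are $[0,1]$-valued; the latter is measurable because the Disintegration Theorem asserts measurability of $x\mapsto\Phi_{x}^{1}$, so in particular $x\mapsto\int y\,\mathrm{d}\Phi_{x}^{1}(y)$ is a bounded measurable function. Hence both functions lie in $L^{1}(\Lambda_{1})$, and since their integrals over every measurable set agree, their difference is $\Lambda_{1}$-null, giving $x=\int_{[0,1]}y\,\mathrm{d}\Phi_{x}^{1}(y)$ for $\Lambda_{1}$-almost every $x$. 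The argument is essentially routine bookkeeping with disintegrations; the only two points that need a moment of care are the measurability of $x\mapsto\int_{[0,1]}y\,\mathrm{d}\Phi_{x}^{1}(y)$ (supplied by the Disintegration Theorem) and checking that the test function $h(x,y)=\mathbf{1}_{D}(x)\,y$ is admissible in~(D2), i.e.\ that it is genuinely $[0,1]$-valued.
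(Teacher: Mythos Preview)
Your proof is correct and is essentially the same argument as the paper's: both rewrite the two sides of~\eqref{eq:masspreserve} as $\int_{D}x\,\mathrm{d}\Lambda_{1}(x)$ and $\int_{D}\big(\int_{[0,1]}y\,\mathrm{d}\Phi_{x}^{1}(y)\big)\,\mathrm{d}\Lambda_{1}(x)$ via the marginal condition and the disintegration property~(D2), respectively. The only cosmetic difference is that the paper phrases the conclusion as a proof by contradiction (if the integrands differ on a set of positive $\Lambda_{1}$-measure, one obtains a strict inequality in~\eqref{eq:masspreserve}), whereas you invoke directly the standard fact that equality of integrals over all measurable sets forces $\Lambda_{1}$-a.e.\ equality of the integrands.
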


\begin{proof}
Assume not, then we may assume without loss of generality that there
is a set $D\subseteq\left[0,1\right]$ of positive $\Lambda_{1}$-measure
such that $x>\int_{\left[0,1\right]}y\:\mathrm{d}\Phi_{x}^{1}\left(y\right)$
for each $x\in D$. Then we have 
\[
\int_{D\times\left[0,1\right]}x\:\mathrm{d}\Phi(x,y)=\int_{D}x\:\mathrm{d}\Lambda_{1}(x)>\int_{D}\left(\int_{\left[0,1\right]}y\:\mathrm{d}\Phi_{x}^{1}\left(y\right)\right)\:\mathrm{d}\Lambda_{1}(x)\overset{\ref{enu:dis2}}{=}\int_{D\times\left[0,1\right]}y\:\mathrm{d}\Phi(x,y)
\]
which contradicts ~(\ref{eq:masspreserve}). 
\end{proof}
\begin{lem}
\label{lem:flatsamemeasure}Suppose that $\Lambda_{1}$ is at least
as flat as $\Lambda_{2}$. Then we have $\Lambda_{1}([0,1])=\Lambda_{2}([0,1])$.
\end{lem}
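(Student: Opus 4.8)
The plan is to observe that the conclusion follows directly from the coupling structure alone, without even invoking the mass-preservation condition~(\ref{eq:masspreserve}). Let $\Psi$ be a finite measure on $[0,1]^2$ witnessing that $\Lambda_1$ is at least as flat as $\Lambda_2$. By definition, $\Lambda_1$ is the marginal of $\Psi$ on the first coordinate, i.e.\ $\Lambda_1(D)=\Psi(D\times[0,1])$ for every measurable $D\subseteq[0,1]$; specialising to $D=[0,1]$ gives $\Lambda_1([0,1])=\Psi([0,1]^2)$. Analogously, since $\Lambda_2$ is the marginal of $\Psi$ on the second coordinate, $\Lambda_2([0,1])=\Psi([0,1]^2)$. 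Comparing the two identities yields $\Lambda_1([0,1])=\Lambda_2([0,1])$, which is exactly the assertion.

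There is no real obstacle here; the only point worth keeping in mind is the elementary fact that a marginal of a finite measure on a product space (that is, its push-forward under a coordinate projection) always has the same total mass as the measure itself. In probabilistic language this is simply the observation that a coupling of $\Lambda_1$ and $\Lambda_2$ can only exist when the two measures have equal total mass, so in particular Lemma~\ref{lem:flatsamemeasure} is a prerequisite implicitly hidden in Definition~\ref{def:flatter} rather than a substantive consequence of it.
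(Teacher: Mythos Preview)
Your proof is correct and is essentially identical to the paper's own argument: both simply take a witnessing measure $\Psi$ and use that both marginals have total mass $\Psi([0,1]^2)$. Your additional remark that this is really a built-in feature of the coupling condition rather than a substantive consequence is accurate and worth noting.
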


\begin{proof}
Let $\Psi$ be a witness that $\Lambda_{1}$ is at least as flat as
$\Lambda_{2}$. Then the marginal condition of Definition~\ref{def:flatter}
tells us that
\[
\Lambda_{1}([0,1])=\Psi([0,1]\times[0,1])=\Lambda_{2}([0,1])\;.
\]
\end{proof}
\begin{lem}
\label{lem:diagonal}Suppose that $\Lambda_{1}$ and $\Lambda_{2}$
are finite measures on $\left[0,1\right]$, and that $\Lambda_{1}$
is at least as flat as $\Lambda_{2}$. Then $\Lambda_{1}=\Lambda_{2}$
if and only if the only measure which witnesses that $\Lambda_{1}$
is at least as flat as $\Lambda_{2}$ is diagonal.
\end{lem}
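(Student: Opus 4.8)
\textbf{Proof plan for Lemma~\ref{lem:diagonal}.}
The plan is to prove both implications separately, with the nontrivial direction being that $\Lambda_1 = \Lambda_2$ forces every witness to be diagonal. For the easy direction, suppose the only witnessing measure is diagonal; then fix such a diagonal witness $\Psi$. Since $\Psi$ is concentrated on the diagonal $\{(x,y) : x=y\}$, the pushforward of $\Psi$ under the first projection equals its pushforward under the second projection (both just read off the common value), and these are $\Lambda_1$ and $\Lambda_2$ respectively; hence $\Lambda_1 = \Lambda_2$.

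For the harder direction, assume $\Lambda_1 = \Lambda_2 =: \Lambda$ and let $\Psi$ be an arbitrary witness that $\Lambda$ is at least as flat as $\Lambda$. The goal is to show $\Psi(\{(x,y) : x \neq y\}) = 0$. The natural tool is Lemma~\ref{lem:previ}: for $\Lambda$-almost every $x$ we have $x = \int_{[0,1]} y \; \mathrm{d}\Phi_x^1(y)$, where $\{\Phi_x^1\}_x$ is the disintegration of $\Psi$ over the first coordinate. Simultaneously, by symmetry of the setup (swapping the roles of the two coordinates, using $\Lambda_1 = \Lambda_2$), condition~(\ref{eq:masspreserve}) also yields the "reverse" mass-preservation statement, so that an analogous argument gives, for $\Lambda$-almost every $y$, that $y = \int_{[0,1]} x \; \mathrm{d}\Phi_y^2(x)$. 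The key computation is then to consider $\int_{[0,1]^2} (x-y)^2 \; \mathrm{d}\Psi(x,y)$ and show it is zero. Expanding, $\int (x-y)^2 \,\mathrm{d}\Psi = \int x^2 \,\mathrm{d}\Psi - 2\int xy \,\mathrm{d}\Psi + \int y^2 \,\mathrm{d}\Psi$. Using the marginal conditions, $\int x^2 \,\mathrm{d}\Psi = \int x^2 \,\mathrm{d}\Lambda_1(x)$ and $\int y^2 \,\mathrm{d}\Psi = \int y^2 \,\mathrm{d}\Lambda_2(y)$, which are equal since $\Lambda_1 = \Lambda_2$. For the cross term, disintegrate over the first coordinate and apply Lemma~\ref{lem:previ}: $\int xy \,\mathrm{d}\Psi = \int_{[0,1]} x \left( \int_{[0,1]} y \,\mathrm{d}\Phi_x^1(y) \right) \mathrm{d}\Lambda_1(x) = \int_{[0,1]} x \cdot x \; \mathrm{d}\Lambda_1(x) = \int x^2 \,\mathrm{d}\Lambda_1$. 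Substituting, $\int (x-y)^2 \,\mathrm{d}\Psi = \int x^2 \,\mathrm{d}\Lambda_1 - 2\int x^2 \,\mathrm{d}\Lambda_1 + \int y^2 \,\mathrm{d}\Lambda_2 = 0$. Since $(x-y)^2 \geq 0$, this forces $\Psi(\{x \neq y\}) = 0$, i.e.\ $\Psi$ is diagonal.

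The main obstacle I anticipate is a bookkeeping subtlety rather than a conceptual one: making sure that the application of Lemma~\ref{lem:previ} is legitimate here, i.e.\ that the disintegration $\{\Phi_x^1\}$ exists (this needs $[0,1]^2$ to be Polish and the relevant measures to be finite Borel measures, which holds) and that the integrability of $x \mapsto \int y \,\mathrm{d}\Phi_x^1(y)$ against $\Lambda_1$ is unproblematic — but since all functions involved are bounded on $[0,1]$ and all measures are finite, Fubini-type interchanges and the disintegration identity~\ref{enu:dis2} apply without difficulty. One should also double-check that Lemma~\ref{lem:previ} indeed applies with $\Lambda_1 = \Lambda_2$; inspecting its statement, it only requires that $\Lambda_1$ is at least as flat as $\Lambda_2$ and a witness $\Phi$, which is exactly our hypothesis. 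A minor point: the "In addition" clause of Definition~\ref{def:flatter} says $\Lambda_1$ is \emph{strictly} flatter than $\Lambda_2$ precisely when $\Lambda_1 \neq \Lambda_2$, so the contrapositive formulation (strictly flatter $\iff$ some witness is non-diagonal) is an immediate restatement of what we prove.
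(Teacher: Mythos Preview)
Your proof is correct and offers a slightly more elementary route than the paper's. The paper proves the contrapositive of the harder direction: given a non-diagonal witness $\Phi$, it fixes an arbitrary strictly convex $f:[0,1]\to[0,1]$, observes that $\Phi_x^1$ is non-Dirac for a positive $\Lambda_1$-measure set of $x$'s, and applies Jensen's inequality together with Lemma~\ref{lem:previ} to deduce $\int f\,\mathrm{d}\Lambda_1 < \int f\,\mathrm{d}\Lambda_2$, whence $\Lambda_1 \neq \Lambda_2$. Your argument is essentially the special case $f(t)=t^2$, but packaged as a direct variance computation: you show $\int(x-y)^2\,\mathrm{d}\Psi = 0$ using only the barycenter identity from Lemma~\ref{lem:previ} and the equality of the second moments of the marginals. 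This avoids invoking Jensen explicitly and is a bit cleaner for this lemma in isolation; the paper's formulation with general strictly convex $f$ is chosen because that inequality is reused later (see Lemma~\ref{lem:stone-weierstrass}).

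Two minor remarks. First, your aside about the ``reverse'' barycenter identity $y = \int x\,\mathrm{d}\Phi_y^2(x)$ is never used in your computation (you only disintegrate over the first coordinate), and it does not follow from Definition~\ref{def:flatter}, which is asymmetric in the two coordinates; simply delete that sentence. Second, the disintegration theorem as quoted in the paper is stated for probability measures, so strictly speaking one should normalize $\Psi$ first (cf.\ Lemma~\ref{lem:flatsamemeasure}); the paper does this explicitly and you should too.
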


\begin{proof}
Suppose that we have a diagonal measure $\Phi$ whose marginals are
$\Lambda_{1}$ and $\Lambda_{2}$. Then for every measurable set $D$
we have 
\[
\Lambda_{1}(D)=\Phi(D\times[0,1])=\Phi(D\times D)=\Phi([0,1]\times D)=\Lambda_{2}(D)\;,
\]
and so $\Lambda_{1}=\Lambda_{2}$.

On the other hand, suppose that we have a non-diagonal measure $\Phi$
whose marginals are $\Lambda_{1}$ and $\Lambda_{2}$. By Lemma \ref{lem:flatsamemeasure}
we may assume that both $\Lambda_{1}$ and $\Lambda_{2}$ are probability
measures. Let us fix a strictly convex function $f:[0,1]\rightarrow[0,1]$.
Let us consider the disintegration $\left\{ \Phi_{x}^{1}\right\} _{x\in\left[0,1\right]}$
of $\Phi$. Recall that by~\ref{enu:dis1}, for each $x\in[0,1]$,
$\Phi_{x}^{1}$ is a probability measure. In particular, Jensen's
inequality gives us

\[
f\left(\int y\:\mathrm{d}\Phi_{x}^{1}\left(y\right)\right)\le\int f\left(y\right)\:\mathrm{d}\Phi_{x}^{1}\left(y\right).
\]
Observe that for a positive $\Lambda_{1}$-measure of $x$'s, we have
that $\Phi_{x}^{1}$ is not a Dirac measure. For each such $x$, the
inequality above is strict. Then we have
\begin{align*}
\int f(x)\:\mathrm{d}\Lambda_{1}(x)= & \int f\left(\int y\:\mathrm{d}\Phi_{x}^{1}\left(y\right)\right)\:\mathrm{d}\Lambda_{1}(x)\\
\JUSTIFY{Jensen's\thinspace inequality\,as\,above}< & \int\left(\int f\left(y\right)\:\mathrm{d}\Phi_{x}^{1}\left(y\right)\right)\:\mathrm{d}\Lambda_{1}(x)=\int f(y)\:\mathrm{d}\Phi(x,y)=\int f\left(y\right)\:\mathrm{d}\Lambda_{2}(y).
\end{align*}
In particular, we can conclude that $\Lambda_{1}\neq\Lambda_{2}$.
\end{proof}
\begin{lem}
\label{lem:flattranisitve}Suppose that $\Lambda_{\mathrm{A}},\Lambda_{\mathrm{B}},\Lambda_{\mathrm{C}}$
are three finite measures on $\left[0,1\right]$. Suppose that$\Lambda_{\mathrm{A}}$
is at least as flat as $\Lambda_{\mathrm{B}}$ and that $\Lambda_{\mathrm{B}}$
is at least as flat as $\Lambda_{\mathrm{C}}$. Then $\Lambda_{\mathrm{A}}$
is at least as flat as $\Lambda_{\mathrm{C}}$. If, in addition at
least one of these flatness relations is strict, then $\Lambda_{\mathrm{A}}$
is strictly flatter than $\Lambda_{\mathrm{C}}$.
\end{lem}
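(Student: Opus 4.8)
The plan is to establish transitivity by \emph{gluing} the two witnessing measures along their common marginal $\Lambda_{\mathrm{B}}$, in the standard way one composes couplings, and then to obtain the strict part from a strictly convex test function, reusing the computation already performed in the proof of Lemma~\ref{lem:diagonal}.

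First I would fix a witness $\Psi_{\mathrm{AB}}$ for ``$\Lambda_{\mathrm{A}}$ is at least as flat as $\Lambda_{\mathrm{B}}$'' and a witness $\Psi_{\mathrm{BC}}$ for ``$\Lambda_{\mathrm{B}}$ is at least as flat as $\Lambda_{\mathrm{C}}$''. By Lemma~\ref{lem:flatsamemeasure} all three measures have the same total mass, so (rescaling, and discarding the trivial zero case) we may assume $\Lambda_{\mathrm{A}},\Lambda_{\mathrm{B}},\Lambda_{\mathrm{C}}$ are probability measures. Disintegrating $\Psi_{\mathrm{AB}}$ over its second coordinate gives probability measures $\{\mu_{y}\}_{y\in[0,1]}$ with $\Psi_{\mathrm{AB}}(\mathrm{d}x\,\mathrm{d}y)=\mu_{y}(\mathrm{d}x)\,\Lambda_{\mathrm{B}}(\mathrm{d}y)$, and disintegrating $\Psi_{\mathrm{BC}}$ over its first coordinate gives probability measures $\{\nu_{y}\}_{y\in[0,1]}$ with $\Psi_{\mathrm{BC}}(\mathrm{d}y\,\mathrm{d}z)=\Lambda_{\mathrm{B}}(\mathrm{d}y)\,\nu_{y}(\mathrm{d}z)$. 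I would then define a measure $\Pi$ on $[0,1]^{3}$ by $\Pi(\mathrm{d}x\,\mathrm{d}y\,\mathrm{d}z):=\mu_{y}(\mathrm{d}x)\,\Lambda_{\mathrm{B}}(\mathrm{d}y)\,\nu_{y}(\mathrm{d}z)$ and let $\Psi_{\mathrm{AC}}$ be the marginal of $\Pi$ on the first and third coordinates. Computing marginals: the $(x,y)$-marginal of $\Pi$ is $\Psi_{\mathrm{AB}}$, the $(y,z)$-marginal is $\Psi_{\mathrm{BC}}$, and hence the first marginal of $\Psi_{\mathrm{AC}}$ is $\Lambda_{\mathrm{A}}$ and the second is $\Lambda_{\mathrm{C}}$.

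The only point that requires care is checking~(\ref{eq:masspreserve}) for $\Psi_{\mathrm{AC}}$, and this is where the asymmetry of~(\ref{eq:masspreserve}) in its two coordinates is overcome using Lemma~\ref{lem:previ}. Fix a measurable $D\subseteq[0,1]$ and set $g(y):=\mu_{y}(D)$. Since the integrand $\mathbf{1}_{D}(x)\cdot x$ depends on $x$ only, and $\Psi_{\mathrm{AC}}$ and $\Psi_{\mathrm{AB}}$ are both marginals of $\Pi$,
\[
\int_{D\times[0,1]}x\;\mathrm{d}\Psi_{\mathrm{AC}}(x,z)=\int_{D\times[0,1]}x\;\mathrm{d}\Psi_{\mathrm{AB}}(x,y)\overset{(\ref{eq:masspreserve})}{=}\int_{D\times[0,1]}y\;\mathrm{d}\Psi_{\mathrm{AB}}(x,y)=\int_{[0,1]}y\,g(y)\;\mathrm{d}\Lambda_{\mathrm{B}}(y),
\]
while, expanding $\Pi$,
\[
\int_{D\times[0,1]}z\;\mathrm{d}\Psi_{\mathrm{AC}}(x,z)=\int_{[0,1]}g(y)\Bigl(\int_{[0,1]}z\;\mathrm{d}\nu_{y}(z)\Bigr)\mathrm{d}\Lambda_{\mathrm{B}}(y).
\]
Now Lemma~\ref{lem:previ}, applied to the pair $\Lambda_{\mathrm{B}},\Lambda_{\mathrm{C}}$ with the witness $\Psi_{\mathrm{BC}}$, gives $\int_{[0,1]}z\;\mathrm{d}\nu_{y}(z)=y$ for $\Lambda_{\mathrm{B}}$-almost every $y$, so the two right-hand sides agree. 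Hence $\Psi_{\mathrm{AC}}$ satisfies~(\ref{eq:masspreserve}), and $\Lambda_{\mathrm{A}}$ is at least as flat as $\Lambda_{\mathrm{C}}$.

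For the strict part, suppose at least one of the two given relations is strict, and argue by contradiction assuming $\Lambda_{\mathrm{A}}=\Lambda_{\mathrm{C}}$. Fix a strictly convex $f\colon[0,1]\to[0,1]$. The computation in the proof of Lemma~\ref{lem:diagonal} (combine Lemma~\ref{lem:previ} with Jensen's inequality) shows that whenever $\Lambda$ is at least as flat as $\Lambda'$ one has $\int f\;\mathrm{d}\Lambda\le\int f\;\mathrm{d}\Lambda'$; moreover, if in addition $\Lambda\neq\Lambda'$, then by Lemma~\ref{lem:diagonal} there is a non-diagonal witness, and then that same computation yields the strict inequality $\int f\;\mathrm{d}\Lambda<\int f\;\mathrm{d}\Lambda'$. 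Applying this along $\Lambda_{\mathrm{A}},\Lambda_{\mathrm{B}},\Lambda_{\mathrm{C}}$ and using that one of the two steps is strict gives $\int f\;\mathrm{d}\Lambda_{\mathrm{A}}<\int f\;\mathrm{d}\Lambda_{\mathrm{C}}$, contradicting $\Lambda_{\mathrm{A}}=\Lambda_{\mathrm{C}}$. Therefore $\Lambda_{\mathrm{A}}\neq\Lambda_{\mathrm{C}}$, i.e.\ $\Lambda_{\mathrm{A}}$ is strictly flatter than $\Lambda_{\mathrm{C}}$. The main obstacle is precisely the verification of~(\ref{eq:masspreserve}) for the glued measure; everything else is routine ($[0,1]^{2}$ is Polish, so all disintegrations exist and depend measurably on the base point, and $y\mapsto\mu_{y}\otimes\nu_{y}$ is a measurable kernel, so $\Pi$ and its marginals are well defined).
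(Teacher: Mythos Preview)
Your proof is correct and follows essentially the same approach as the paper: both glue the two witnesses along the common marginal $\Lambda_{\mathrm{B}}$ via disintegration, use the barycenter identity of Lemma~\ref{lem:previ} to verify~(\ref{eq:masspreserve}) for the composed coupling, and handle the strict part via a strictly convex test function exactly as in Lemma~\ref{lem:diagonal}. The only difference is cosmetic: you build a three-dimensional measure $\Pi$ and project (the standard ``composition of couplings'' formulation), whereas the paper defines the two-dimensional measure $\Xi$ directly on rectangles and invokes Carath\'eodory's extension theorem, which then necessitates a short auxiliary claim to unwind the $z$-integral; your packaging is slightly cleaner but the content is identical.
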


\begin{proof}
By Lemma~\ref{lem:flatsamemeasure}, the measures $\Lambda_{\mathrm{A}}$,
$\Lambda_{\mathrm{B}}$ and $\Lambda_{\mathrm{C}}$ have the same
total measure, say $\Lambda_{\mathrm{A}}([0,1])=\Lambda_{\mathrm{B}}([0,1])=\Lambda_{\mathrm{C}}([0,1])=m$.
By multiplying these measures, and all the corresponding measures
witnessing the flatness relation by $\frac{1}{m}$, it is enough to
restrict ourselves to the case of probability measures from now on.

Let $\widetilde{\Phi}$ be a witness that $\Lambda_{\mathrm{A}}$
is at least as flat as $\Lambda_{\mathrm{B}}$, and let $\widehat{\Phi}$
be a witness that $\Lambda_{\mathrm{B}}$ is at least as flat as $\Lambda_{\mathrm{C}}$.
Let us disintegrate $\widetilde{\Phi}$ on the second coordinate,
and $\widehat{\Phi}$ on the first coordinate. This gives us families
$\left\{ \widetilde{\Phi}_{y}^{2}\right\} _{y\in[0,1]}$ and $\left\{ \widehat{\Phi}_{y}^{1}\right\} _{y\in[0,1]}$
of probability Borel measures on $[0,1]$. This way, for each measurable
set $S\subseteq[0,1]^{2}$ we have
\[
\widetilde{\Phi}(S)=\int_{y}\:\widetilde{\Phi}_{y}^{2}\left(\left\{ x\::\:(x,y)\in S\right\} \right)\;\mathrm{d}\Lambda_{\mathrm{B}}(y)\;\text{and}\;\widehat{\Phi}(S)=\int_{y}\:\widehat{\Phi}_{y}^{1}\left(\left\{ z\::\:(y,z)\in S\right\} \right)\;\mathrm{d}\Lambda_{\mathrm{B}}(y)\;.
\]
Now, for every set $S\subseteq[0,1]^{2}$ of the form $S=\bigsqcup_{i=1}^{n}A_{i}\times B_{i}$
where $A_{i}$ and $B_{i}$ are measurable subsets of $[0,1]$, we
define
\begin{equation}
\Xi(S):=\sum_{i=1}^{n}\int_{y\in[0,1]}\:\widetilde{\Phi}_{y}^{2}\left(A_{i}\right)\cdot\widehat{\Phi}_{y}^{1}\left(B_{i}\right)\;\mathrm{d}\Lambda_{\mathrm{B}}(y)\;.\label{eq:defXI}
\end{equation}
It is tedious but straightforward to verify that the value of $\Xi(S)$
does not depend on the choice of the decomposition $S=\bigsqcup_{i=1}^{n}A_{i}\times B_{i}$.
Then Carathéodory's extension theorem allows us to extend $\Xi$ to
a Borel measure on $[0,1]^{2}$, which we still denote by $\Xi$.
We claim that this measure witnesses that $\Lambda_{\mathrm{A}}$
is at least as flat as $\Lambda_{\mathrm{C}}$. Firstly, let us check
that the marginals of $\Xi$ are $\Lambda_{\mathrm{A}}$ and $\Lambda_{\mathrm{C}}$,
respectively. For $D\subseteq[0,1]$, we have that
\begin{align*}
\Xi\left(D\times[0,1]\right) & \overset{\ref{eq:defXI}}{=}\int_{y\in[0,1]}\:\widetilde{\Phi}_{y}^{2}(D)\cdot\widehat{\Phi}_{y}^{1}\left([0,1]\right)\;\mathrm{d}\Lambda_{\mathrm{B}}(y)\\
\JUSTIFY{by\,\ref{enu:dis2},\,\text{\ensuremath{\widehat{\Phi}_{y}^{1}}([0,1])=1}} & =\int_{y\in[0,1]}\:\widetilde{\Phi}_{y}^{2}(D)\;\mathrm{d}\Lambda_{\mathrm{B}}(y)=\widetilde{\Phi}\left(D\times[0,1]\right)=\Lambda_{\mathrm{A}}\left(D\right)\;.
\end{align*}
Similarly, one can verify that $\Lambda_{C}(D)=\Xi\left([0,1]\times D\right)$. 

Let $D\subseteq[0,1]$. We have the following
\begin{align*}
\int_{D\times[0,1]}x\:\mathrm{d}\Xi(x,z)= & \int_{D}x\:\mathrm{d}\Lambda_{\mathrm{A}}(x)\\
= & \int_{D\times[0,1]}x\:\mathrm{d}\tilde{\Phi}(x,y)\\
= & \int_{D\times[0,1]}y\:\mathrm{d}\tilde{\Phi}(x,y)=\int_{[0,1]}\left(\int_{[0,1]}y\cdot{\bf 1}_{D\times[0,1]}(x,y)\:\mathrm{d}\widetilde{\Phi}_{y}^{2}(x)\right)\:\mathrm{d}\Lambda_{\mathrm{B}}(y)\\
= & \int_{[0,1]}y\widetilde{\Phi}_{y}^{2}(D)\:\mathrm{d}\Lambda_{B}(y)\\
= & \int_{[0,1]}\left(\int_{[0,1]}z\:\mathrm{d}\widehat{\Phi}_{y}^{1}\left(z\right)\right)\widetilde{\Phi}_{y}^{2}(D)\:\mathrm{d}\Lambda_{\mathrm{B}}(y)\\
= & \int_{D\times[0,1]}z\:\mathrm{d}\Xi(x,z)
\end{align*}
where the last equality follows from the following claim.
\begin{claim}
Let $g:\left[0,1\right]\to\left[0,1\right]$ be a measurable function.
Then 
\[
\int_{[0,1]}\left(\int_{[0,1]}g\left(z\right)\:\mathrm{d}\widehat{\Phi}_{y}^{1}\left(z\right)\right)\widetilde{\Phi}_{y}^{2}(D)\:\mathrm{d}\Lambda_{\mathrm{B}}(y)=\int_{D\times[0,1]}g\left(z\right)\:\mathrm{d}\Xi(x,z).
\]
\end{claim}

\begin{proof}
Let $A\subseteq\left[0,1\right]$ be a measurable set and consider
its characteristic function ${\bf 1}_{A}$. We have
\begin{align*}
\int_{[0,1]}\left(\int_{[0,1]}{\bf 1}_{A}\left(z\right)\:\mathrm{d}\widehat{\Phi}_{y}^{1}\left(z\right)\right)\widetilde{\Phi}_{y}^{2}(D)\:\mathrm{d}\Lambda_{\mathrm{B}}(y)= & \int{}_{[0,1]}\widehat{\Phi}_{y}^{1}\left(A\right)\widetilde{\Phi}_{y}^{2}(D)\:\mathrm{d}\Lambda_{\mathrm{B}}(y)\\
= & \Xi\left(D\times A\right)=\int_{D\times[0,1]}{\bf 1}_{A}\left(z\right)\:\mathrm{d}\Xi(x,z).
\end{align*}
This implies that the claim holds for every step function. Assume
now that $g_{n}\to g$ uniformly and $\left\{ g_{n}\right\} _{n\in\mathbb{N}}$
are step functions. We have 
\begin{align*}
\int_{[0,1]}\left(\int_{[0,1]}g\left(z\right)\:\mathrm{d}\widehat{\Phi}_{y}^{1}\left(z\right)\right)\widetilde{\Phi}_{y}^{2}(D)\:\mathrm{d}\Lambda_{\mathrm{B}}(y) & =\int_{[0,1]}\left(\int_{[0,1]}\lim_{n}g_{n}\left(z\right)\:\mathrm{d}\widehat{\Phi}_{y}^{1}\left(z\right)\right)\widetilde{\Phi}_{y}^{2}(D)\:\mathrm{d}\Lambda_{\mathrm{B}}(y)\\
 & =\int_{[0,1]}\lim_{n}\left(\int_{[0,1]}g_{n}\left(z\right)\:\mathrm{d}\widehat{\Phi}_{y}^{1}\left(z\right)\right)\widetilde{\Phi}_{y}^{2}(D)\:\mathrm{d}\Lambda_{\mathrm{B}}(y)\\
 & =\lim_{n}\int_{[0,1]}\left(\int_{[0,1]}g_{n}\left(z\right)\:\mathrm{d}\widehat{\Phi}_{y}^{1}\left(z\right)\right)\widetilde{\Phi}_{y}^{2}(D)\:\mathrm{d}\Lambda_{\mathrm{B}}(y)\\
 & =\lim_{n}\int_{D\times[0,1]}g_{n}\left(z\right)\:\mathrm{d}\Xi(x,z)=\int_{D\times[0,1]}g\left(z\right)\:\mathrm{d}\Xi(x,z)
\end{align*}
(the second, third and fifth equalities follow by the uniform convergence)
and that finishes the proof.
\end{proof}
It remains to prove the additional part. So suppose that either $\Lambda_{\mathrm{A}}$
is strictly flatter than $\Lambda_{\mathrm{B}}$ or $\Lambda_{\mathrm{B}}$
is strictly flatter than $\Lambda_{\mathrm{C}}$. Fix a strictly convex
function $f\colon[0,1]\rightarrow[0,1]$. In the very same way as
in the proof of Lemma~\ref{lem:diagonal} it follows that

\[
\int_{[0,1]}f(x)\;\mathrm{d}\Lambda_{\mathrm{A}}(x)\le\int_{[0,1]}f(y)\;\mathrm{d}\Lambda_{\mathrm{B}}(y)\le\int_{[0,1]}f(z)\;\mathrm{d}\Lambda_{\mathrm{C}}(z)\;,
\]
and at least one of the inequalities above is strict. Therefore $\Lambda_{\mathrm{A}}\neq\Lambda_{\mathrm{C}}$.
\end{proof}
The next two propositions answer the question of relating $\boldsymbol{\Phi}_{U}$
to $\boldsymbol{\Phi}_{W}$ and $\boldsymbol{\Upsilon}_{U}$ to $\boldsymbol{\Upsilon}_{W}$
using the above concept of flatter measures. While we consider these
result interesting per se, let us note that in~\cite{DGHRR:Parameters}
we give several quick and fairly powerful applications of these results.
For example, we show that the result of Doležal and Hladký can be
extended to discontinuous functions, as advertised in Footnote~\ref{fn:INT}.
\begin{prop}
\label{prop:flatter}Suppose that we have two graphons $U\preceq W$.
Then the measure $\boldsymbol{\Phi}_{U}$ is at least as flat as the
measure $\boldsymbol{\Phi}_{W}$. Similarly, the measure $\boldsymbol{\Upsilon}_{U}$
is at least as flat as the measure $\boldsymbol{\Upsilon}_{W}$. Lastly,
if $U\prec W$ then $\boldsymbol{\Phi}_{U}$ is strictly flatter than
$\boldsymbol{\Phi}_{W}$.
\end{prop}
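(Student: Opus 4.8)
The plan is to establish the claim about $\boldsymbol{\Phi}$ in three stages — first for a single application of the stepping operator, then for arbitrary $U\preceq W$ by a double approximation, and finally with strictness — and to deduce the claim about $\boldsymbol{\Upsilon}$ by running the same argument with degree functions in place of graphons. I would work on $\Omega=[0,1]$ (Remark~\ref{rem:UnitIntervalGroundSpace}) and fix the increasing sequence $\mathcal{P}_1,\mathcal{P}_2,\dots$ of dyadic partitions, whose union generates the Borel $\sigma$-algebra. Two elementary ingredients carry the argument: \textbf{(B1)} for every graphon $\Gamma$ and every finite partition $\mathcal{P}=\{\Omega_1,\dots,\Omega_k\}$, the measure $\boldsymbol{\Phi}_{\Gamma^{\Join\mathcal{P}}}$ is at least as flat as $\boldsymbol{\Phi}_\Gamma$; and \textbf{(B2)} if $U_n\to U$ in $L^1(\Omega^2)$ and each $\boldsymbol{\Phi}_{U_n}$ is at least as flat as a fixed probability measure $\Lambda$ on $[0,1]$, then $\boldsymbol{\Phi}_U$ is at least as flat as $\Lambda$.

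To prove (B1), let $c_{ij}$ be the constant value of $\Gamma^{\Join\mathcal{P}}$ on $\Omega_i\times\Omega_j$ and let $\mu_{ij}$ be the conditional distribution of the value of $\Gamma$ given $(x,y)\in\Omega_i\times\Omega_j$; the explicit witness is
\[
\Psi:=\sum_{i,j}\nu(\Omega_i)\nu(\Omega_j)\,(\delta_{c_{ij}}\otimes\mu_{ij}),
\]
a probability measure on $[0,1]^2$ whose first marginal is $\boldsymbol{\Phi}_{\Gamma^{\Join\mathcal{P}}}$, whose second marginal is $\boldsymbol{\Phi}_\Gamma$, and for which, since on the $(i,j)$-th summand the first coordinate is constantly $c_{ij}$ and $\int y\,\mathrm{d}\mu_{ij}(y)=c_{ij}$, both sides of~\eqref{eq:masspreserve} equal $\sum_{i,j:\,c_{ij}\in D}\nu(\Omega_i)\nu(\Omega_j)c_{ij}$. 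For (B2), first note that $U_n\to U$ in $L^1$ forces $\boldsymbol{\Phi}_{U_n}\to\boldsymbol{\Phi}_U$ weakly (test against bounded continuous functions). The witnessing couplings $\Psi_n$ all live on the compact square $[0,1]^2$, so a subsequence converges weakly to a probability measure $\Psi$; its marginals are then $\boldsymbol{\Phi}_U$ and $\Lambda$; and condition~\eqref{eq:masspreserve} passes to $\Psi$ because, a finite signed measure being determined by its integrals against continuous functions, that condition is equivalent to the single-family statement that $\int h(x)(x-y)\,\mathrm{d}\Psi(x,y)=0$ for every $h\in C([0,1])$, and each integrand $(x,y)\mapsto h(x)(x-y)$ is bounded and continuous on $[0,1]^2$, hence its integral is preserved under weak convergence.

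With (B1) and (B2) the first two assertions follow. Given $U\preceq W$, Lemma~\ref{lem:basicpropofenvelopes}\ref{enu:UprecWenve} gives $U\in\langle W\rangle$, so there are measure preserving bijections $\varphi_n$ with $W^{\varphi_n}\WEAKCONV U$. Fixing $k$, weak$^*$ convergence yields $\int_{\Omega_i\times\Omega_j}W^{\varphi_n}\to\int_{\Omega_i\times\Omega_j}U$ over the finitely many blocks of $\mathcal{P}_k$, i.e.\ $(W^{\varphi_n})^{\Join\mathcal{P}_k}\to U^{\Join\mathcal{P}_k}$ in $L^1$ as $n\to\infty$; since each $W^{\varphi_n}$ is a version of $W$ we have $\boldsymbol{\Phi}_{W^{\varphi_n}}=\boldsymbol{\Phi}_W$, so (B1) makes every $\boldsymbol{\Phi}_{(W^{\varphi_n})^{\Join\mathcal{P}_k}}$ at least as flat as $\boldsymbol{\Phi}_W$; letting $n\to\infty$, (B2) makes $\boldsymbol{\Phi}_{U^{\Join\mathcal{P}_k}}$ at least as flat as $\boldsymbol{\Phi}_W$; and since $U^{\Join\mathcal{P}_k}\to U$ in $L^1$ as $k\to\infty$ by the Martingale convergence theorem~\cite[Theorem 35.5]{Billingsley95}, a final use of (B2) shows $\boldsymbol{\Phi}_U$ is at least as flat as $\boldsymbol{\Phi}_W$. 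The degree statement is identical after replacing each graphon $V$ by $\deg_V\colon[0,1]\to[0,1]$: one uses that $\deg_{\Gamma^{\Join\mathcal{P}}}$ is constant on each part of $\mathcal{P}$, equal there to the average of $\deg_\Gamma$ (so the one-dimensional analogue of (B1) applies), that $\deg_{W^{\varphi_n}}=\deg_W\circ\varphi_n$ has distribution $\boldsymbol{\Upsilon}_W$, and that $W^{\varphi_n}\WEAKCONV U$ forces $\deg_{W^{\varphi_n}}\WEAKCONV\deg_U$ in $L^\infty([0,1])$, which comes from the two-dimensional convergence applied to rectangles $S\times[0,1]$.

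For the strictness clause I would argue by contraposition: assuming $U\preceq W$ and $\boldsymbol{\Phi}_U=\boldsymbol{\Phi}_W$, I claim $\cutDIST(U,W)=0$, whence $\langle U\rangle=\langle W\rangle$ by Lemma~\ref{lem:basicpropofenvelopes}\ref{enu:envelopeswhendistzero}, so $U\not\prec W$. Taking $W^{\varphi_n}\WEAKCONV U$ as above, the point is the identity
\[
\|W^{\varphi_n}-U\|_{L^2(\Omega^2)}^{2}=\int_{\Omega^2}(W^{\varphi_n})^{2}-2\int_{\Omega^2}W^{\varphi_n}U+\int_{\Omega^2}U^{2}\;\longrightarrow\;\int_{\Omega^2}U^{2}-2\int_{\Omega^2}U^{2}+\int_{\Omega^2}U^{2}=0,
\]
where $\int_{\Omega^2}(W^{\varphi_n})^{2}=\int_{\Omega^2}U^{2}$ because $\boldsymbol{\Phi}_{W^{\varphi_n}}=\boldsymbol{\Phi}_W=\boldsymbol{\Phi}_U$, and $\int_{\Omega^2}W^{\varphi_n}U\to\int_{\Omega^2}U^{2}$ is weak$^*$ convergence tested against $U\in L^1(\Omega^2)$. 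Hence $W^{\varphi_n}\to U$ in $L^1$, thus in the cut norm by Fact~\ref{fact:differentnorms}, and since $\DIST_\square(W^{\varphi_n},U)=\DIST_\square(W,U^{\varphi_n^{-1}})$ this forces $\cutDIST(U,W)=0$. I expect the main obstacle to be the stability step (B2) — concretely, checking that the mass-preservation condition~\eqref{eq:masspreserve} survives passage to a weak limit of couplings — while (B1) is an explicit computation, the assembly is a routine extraction of block-average limits together with martingale convergence, and the strictness step is the one-line Hilbert-space identity above.
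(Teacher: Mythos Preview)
Your proof is correct and takes a genuinely different route from the paper's in both halves.

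For the non-strict statements the paper proves a single general lemma (Lemma~\ref{lem:flatmeasures}): whenever $f_n\to f$ in the sense that $\int_A f_n\to\int_A f$ for all measurable $A$, and the pushforward measures of $f_n$ converge weak{*} to $\Delta^*$, then the pushforward of $f$ is at least as flat as $\Delta^*$. The witnessing coupling is built directly as a weak{*} accumulation point of the joint laws of the pairs $(f,f_n)$, after which~\eqref{eq:masspreserve} is verified by a somewhat lengthy $\varepsilon$-discretisation. Applying this with $f_n=W^{\pi_n}$ (whose pushforwards are constantly $\boldsymbol{\Phi}_W$) yields the claim in one shot. Your approach instead decomposes into the explicit stepping coupling (B1) and the $L^1$-stability step (B2), assembled via a double limit through dyadic steppings. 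This is more modular and arguably cleaner: your continuity characterisation of~\eqref{eq:masspreserve} as $\int h(x)(x-y)\,\mathrm{d}\Psi=0$ for all $h\in C([0,1])$ makes the passage to the weak limit of couplings immediate, whereas the paper carries out that passage by hand.

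For the strictness clause the paper argues that if $W^{\pi_n}\NOTLONECONV U$ then Lemma~\ref{lem:lonenonconvergence} produces an interval $J$ on which the value distributions visibly differ; it then splits $\Omega^2$ into $U^{-1}(J)$ and its complement, applies Lemma~\ref{lem:flatmeasures} on each piece, and combines via the additivity Lemma~\ref{lem:strictplusnonstrict}. Your $L^2$ identity is considerably shorter and more conceptual: it shows directly that $U\preceq W$ together with $\int U^2=\int W^2$ already forces $W^{\varphi_n}\to U$ in $L^2$, hence $\cutDIST(U,W)=0$. This is exactly the statement that $\INT_{x^2}$ is a cut distance identifying graphon parameter in the sense of Section~4.3, which the paper defers to the follow-up~\cite{DGHRR:Parameters}; you have given an independent, self-contained proof of this special case. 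The paper's route, on the other hand, yields the finer localisation that the two pushforwards differ already on a specific interval --- information your argument does not extract, though it is not needed for the proposition.
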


\begin{example}
We cannot conclude that $\boldsymbol{\Upsilon}_{U}$ is strictly flatter
than $\boldsymbol{\Upsilon}_{W}$ if $U\prec W$. To this end, it
is enough to take $U$ the constant-$p$ graphon (for some $p\in\left(0,1\right)$)
and $W$ some $p$-regular but non-constant graphon. Then $\boldsymbol{\Upsilon}_{U}$
and $\boldsymbol{\Upsilon}_{W}$ are both equal to the Dirac measure
on $p$.
\end{example}

\begin{example}
A probabilist might say that the information inherited from $\boldsymbol{\Phi}_{W}$
to $\boldsymbol{\Phi}_{U}$ is only <<annealed>>, and not <<quenched>>.
Let us explain this on an example. Suppose that $U$ is the constant-$\frac{1}{2}$
graphon and $W$ attains each of the values~$0,\frac{1}{2},1$ on
sets of measure $\frac{1}{3}$ each. Obviously, we have that $U\prec W$
and thus there is a sequence $W^{\pi_{1}},W^{\pi_{2}},W^{\pi_{3}},\ldots\WEAKCONV U\equiv\frac{1}{2}$.
Now, observe that there are many different scenarios where the values
$\frac{1}{2}$ can arise in the limit, of which we give two extreme
ones. The first possibility is that around each $(x,y)\in\Omega^{2}$,
we have alternations of values $0$'s, $\frac{1}{2}$'s, and $1$'s
in the graphons $W^{\pi_{n}}$, each with frequency $\frac{1}{3}$.
The second possibility is that for the measure $\frac{1}{3}$ of $(x,y)$'s,
the graphons $W^{\pi_{n}}$ attain values only $\frac{1}{2}$ around
$(x,y)$, and for the remaining $(x,y)$'s of measure $\frac{2}{3}$,
we have alternations of values $0$'s, and $1$'s, each with density
$\frac{1}{2}$.
\end{example}

In the proof of Proposition~\ref{prop:flatter}, we will need some
basic facts about the weak{*} convergence of measures on $[0,1]^{2}$
(this convergence is also often called weak convergence or narrow
convergence in the literature) which we recall here. We say that a
bounded sequence of finite positive measures $\Psi_{1},\Psi_{2},\Psi_{3},\ldots$
on $[0,1]^{2}$ converges in the weak{*} topology to a finite positive
measure $\Psi$ if for every continuous real function $f$ defined
on $[0,1]^{2}$ we have
\[
\lim_{n\rightarrow\infty}\int_{[0,1]^{2}}f(x,y)\:\mathrm{d}\Psi_{n}(x,y)=\int_{[0,1]^{2}}f(x,y)\:\mathrm{d}\Psi(x,y).
\]
This definition has many equivalent reformulations but we will need
only the following one: A sequence $\Psi_{1},\Psi_{2},\Psi_{3},\ldots$
converges to $\Psi$ in the weak{*} topology if and only if $\lim_{n\rightarrow\infty}\Psi_{n}(A)=\Psi(A)$
for every Borel subset $A$ of $[0,1]^{2}$ which satisfies that the
$\Psi$-measure of its boundary in $[0,1]^{2}$ is $0$.\footnote{The boundary of a set $A\subseteq[0,1]^{2}$ is defined as the set
of all points in the closure of $A$ which are not interior points
of $A$. Note that the interior points are considered only from point
of view of the topological space $[0,1]^{2}$ (not $\mathbb{R}^{2}$).
So for example, the boundary of the closed set $A=[0,\tfrac{1}{2}]\times[0,1]$
is $\{\tfrac{1}{2}\}\times[0,1]$ as all other points from $A$ are
interior points of $A$ in $[0,1]^{2}$.} Recall also that every sequence of probability measures on a compact
separable space has a weak{*} convergent subsequence.

The following lemma is the key ingredient of the proof of Lemma~\ref{lem:strictplusnonstrict}.
However, it may be of independent interest as it connects our research
with Choquet theory. (We will not use Choquet theory, and the rest
of this paragraph is meant only to hint the connection.) Recall that
from the point of view of Choquet theory, if $\Lambda_{1},\Lambda_{2}$
are finite positive measures on some compact convex subset $C$ of
a normed space then we say that \emph{$\Lambda_{1}$ is smaller than
$\Lambda_{2}$} if $\int_{C}f\:\mathrm{d}\Lambda_{1}\le\int_{C}f\:\mathrm{d}\Lambda_{2}$
for every continuous convex function $f\colon C\rightarrow\mathbb{R}$.
The next lemma states that the relation <<being flatter>> is naturally
embedded into this Choquet ordering of measures (when the compact
convex set $C$ is the unit interval $[0,1]$).
\begin{lem}
\label{lem:stone-weierstrass}Let $\Lambda_{1},\Lambda_{2}$ be finite
measures on $[0,1]$ such that $\Lambda_{1}$ is at least as flat
as $\Lambda_{2}$. Then for every continuous convex function $f\colon[0,1]\rightarrow\mathbb{R}$
it holds $\int_{[0,1]}f\:\mathrm{d}\Lambda_{1}\le\int_{[0,1]}f\:\mathrm{d}\Lambda_{2}$.
Moreover, if $\Lambda_{1}$ is strictly flatter than $\Lambda_{2}$
then there is a continuous convex function $g\colon[0,1]\rightarrow\mathbb{R}$
such that $\int_{[0,1]}g\:\mathrm{d}\Lambda_{1}<\int_{[0,1]}g\:\mathrm{d}\Lambda_{2}$.
\end{lem}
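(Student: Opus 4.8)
The plan is to derive the (non-strict) inequality from a single application of Jensen's inequality to a disintegration of a flatness witness, exactly in the spirit of the proof of Lemma~\ref{lem:diagonal}. By Lemma~\ref{lem:flatsamemeasure} the measures $\Lambda_1$ and $\Lambda_2$ have the same total mass, so after dividing everything (including any witness measure) by this common mass I may assume that $\Lambda_1,\Lambda_2$ are probability measures. Fix a witness $\Phi$ that $\Lambda_1$ is at least as flat as $\Lambda_2$, and disintegrate $\Phi$ on the first coordinate to obtain the family $\{\Phi_x^1\}_{x\in[0,1]}$ of probability measures furnished by the Disintegration Theorem. By Lemma~\ref{lem:previ}, for $\Lambda_1$-almost every $x$ we have $x=\int_{[0,1]}y\,\mathrm{d}\Phi_x^1(y)$.

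Next I would record the main estimate. Let $f\colon[0,1]\to\mathbb{R}$ be continuous and convex; in particular $f$ is bounded and Borel, and the map $x\mapsto\int_{[0,1]}f(y)\,\mathrm{d}\Phi_x^1(y)$ is $\Lambda_1$-measurable by the measurable dependence of the disintegration on the base point. Applying Jensen's inequality to the probability measure $\Phi_x^1$ for each good $x$, and then integrating against $\Lambda_1$, gives
\[
\int_{[0,1]}f\,\mathrm{d}\Lambda_1=\int_{[0,1]}f\Bigl(\int_{[0,1]}y\,\mathrm{d}\Phi_x^1(y)\Bigr)\,\mathrm{d}\Lambda_1(x)\le\int_{[0,1]}\Bigl(\int_{[0,1]}f(y)\,\mathrm{d}\Phi_x^1(y)\Bigr)\,\mathrm{d}\Lambda_1(x)=\int_{[0,1]^2}f(y)\,\mathrm{d}\Phi(x,y),
\]
where the last equality is property~\ref{enu:dis2} of the disintegration applied to the integrand $(x,y)\mapsto f(y)$ (first affinely renormalizing $f$ to take values in $[0,1]$, which is harmless since both sides of \ref{enu:dis2} are linear in the integrand and the fibre measures are probability measures). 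Since $\Lambda_2$ is the second marginal of $\Phi$, the right-hand side equals $\int_{[0,1]}f\,\mathrm{d}\Lambda_2$, which is the desired inequality.

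For the ``moreover'' part I would run exactly the same computation with $f$ replaced by a fixed strictly convex continuous function $g$ (for instance $g(t)=t^2$) and argue that the inequality above becomes strict. Indeed, suppose it were an equality. Then Jensen's inequality would be an equality for $\Lambda_1$-almost every $x$, and for the strictly convex $g$ this forces $\Phi_x^1$ to be a Dirac measure for $\Lambda_1$-a.e.\ $x$, say $\Phi_x^1=\delta_{h(x)}$; Lemma~\ref{lem:previ} then gives $h(x)=x$ a.e., so $\Phi$ is concentrated on the diagonal, i.e.\ $\Phi$ is diagonal, and the first half of Lemma~\ref{lem:diagonal} yields $\Lambda_1=\Lambda_2$, contradicting strict flatness. (Equivalently, one can quote Lemma~\ref{lem:diagonal} directly: strict flatness means no witness is diagonal, hence $\Phi_x^1$ is non-Dirac on a set of positive $\Lambda_1$-measure, on which Jensen is strict.)

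I do not expect a serious obstacle here: once Lemmas~\ref{lem:previ} and~\ref{lem:diagonal} are available, the whole argument is one application of Jensen's inequality together with the disintegration identity. The only points needing a little care are the $\Lambda_1$-measurability of $x\mapsto\int_{[0,1]}f(y)\,\mathrm{d}\Phi_x^1(y)$ (handled by the Disintegration Theorem) and the minor bookkeeping to apply property~\ref{enu:dis2}, stated there for $[0,1]$-valued integrands, to a general continuous $f$; both are routine.
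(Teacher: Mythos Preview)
Your proof is correct, but it takes a different route from the paper's in both halves.

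For the non-strict inequality, you argue via disintegration of the witness $\Phi$ together with Lemma~\ref{lem:previ} and a single application of Jensen's inequality; this is precisely the technique used in the proof of Lemma~\ref{lem:diagonal}, now applied to an arbitrary continuous convex $f$ rather than a strictly convex one. The paper instead gives a hands-on discretization: it partitions $[0,1]$ into short intervals $I_1,\dots,I_n$, uses the flatness identity~(\ref{eq:masspreserve}) on each $I_i$ to show that $x_i$ is approximately the barycentre $\sum_j \frac{\Psi(I_i\times I_j)}{\Psi(I_i\times[0,1])}y_j$, applies convexity pointwise, sums, and lets $n\to\infty$. Your argument is cleaner and reuses machinery already in place; the paper's is self-contained in that it does not invoke the Disintegration Theorem or Lemma~\ref{lem:previ}.

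For the ``moreover'' part the difference is more substantial. You take a single strictly convex $g$ (e.g.\ $g(t)=t^2$) and argue, via the equality case of Jensen and Lemma~\ref{lem:diagonal}, that the inequality must be strict; in fact your argument shows the stronger fact that \emph{every} strictly convex continuous function separates $\Lambda_1$ from $\Lambda_2$. The paper proceeds differently: since $\Lambda_1\neq\Lambda_2$ there is some continuous $h$ with $\int h\,\mathrm d\Lambda_1\neq\int h\,\mathrm d\Lambda_2$, and then a Stone--Weierstrass argument (every polynomial is a difference of two convex functions, so such differences are uniformly dense in $C[0,1]$) produces a continuous convex $g$ with $\int g\,\mathrm d\Lambda_1\neq\int g\,\mathrm d\Lambda_2$, after which the first part forces the strict inequality. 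This is, incidentally, why the lemma carries the label it does.
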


\begin{proof}
Let $\Psi$ be a witness for $\Lambda_{1}$ being at least as flat
as $\Lambda_{2}$, as in Definition~\ref{def:flatter}. Fix a continuous
convex function $f\colon[0,1]\rightarrow\mathbb{R}$ and $\varepsilon>0$.
Find a natural number $n$ such that $f(x)-f(y)<\varepsilon$ whenever
$x,y\in[0,1]$ are such that $|x-y|\le\frac{2}{n}$. Let $[0,1]=I_{1}\sqcup I_{2}\sqcup\ldots\sqcup I_{n}$
be a partition of $[0,1]$ into pairwise disjoint intervals of lengths
$\frac{1}{n}$. For every $i$ fix a point $x_{i}\in I_{i}$. As for
every $i$ we have
\[
\int_{\left(x,y\right)\in I_{i}\times\left[0,1\right]}x\;\mathrm{d}\Psi(x,y)=\int_{\left(x,y\right)\in I_{i}\times\left[0,1\right]}y\;\mathrm{d}\Psi(x,y)\:,
\]
\[
\left|x_{i}\Psi(I_{i}\times[0,1])-\int_{\left(x,y\right)\in I_{i}\times\left[0,1\right]}x\;\mathrm{d}\Psi(x,y)\right|\le\frac{1}{n}\cdot\Psi(I_{i}\times[0,1])
\]
and similarly
\[
\left|\sum_{j=1}^{n}y_{j}\Psi(I_{i}\times I_{j})-\int_{\left(x,y\right)\in I_{i}\times\left[0,1\right]}y\;\mathrm{d}\Psi(x,y)\right|\le\frac{1}{n}\cdot\Psi(I_{i}\times[0,1])\:,
\]
it follows that
\[
\left|x_{i}-\sum_{j=1}^{n}\frac{\Psi(I_{i}\times I_{j})}{\Psi(I_{i}\times[0,1])}y_{j}\right|\le\frac{2}{n}\;.
\]
So by the convexity of $f$ we have for every $i$ that
\[
f(x_{i})\stackrel{\varepsilon}{\approx}f\left(\sum_{j=1}^{n}\frac{\Psi(I_{i}\times I_{j})}{\Psi(I_{i}\times[0,1])}y_{j}\right)\le\sum_{j=1}^{n}\frac{\Psi(I_{i}\times I_{j})}{\Psi(I_{i}\times[0,1])}f(y_{j})\;.
\]
Therefore
\begin{eqnarray*}
\int_{x\in[0,1]}f(x)\:\mathrm{d}\Lambda_{1}(x) &  & =\sum_{i=1}^{n}\int_{x\in I_{i}}f(x)\:\mathrm{d}\Lambda_{1}(x)\stackrel{\varepsilon\cdot\Psi([0,1]^{2})}{\approx}\sum_{i=1}^{n}f(x_{i})\Psi(I_{i}\times[0,1])\\
 &  & \stackrel{\varepsilon\cdot\Psi([0,1]^{2})}{\approx}\sum_{i=1}^{n}f\left(\sum_{j=1}^{n}\frac{\Psi(I_{i}\times I_{j})}{\Psi(I_{i}\times[0,1])}y_{j}\right)\Psi(I_{i}\times[0,1])\\
 &  & \le\sum_{i=1}^{n}\sum_{j=1}^{n}f(y_{j})\Psi(I_{i}\times I_{j})\\
 &  & =\sum_{j=1}^{n}f(y_{j})\Psi([0,1]\times I_{j})\\
 &  & \stackrel{\varepsilon\cdot\Psi([0,1]^{2})}{\approx}\int_{y\in[0,1]}f(y)\:\mathrm{d}\Lambda_{2}(y)\;.
\end{eqnarray*}
As this is true for every $\varepsilon>0$ we conclude that $\int_{[0,1]}f\:\mathrm{d}\Lambda_{1}\le\int_{[0,1]}f\:\mathrm{d}\Lambda_{2}$.

Now suppose that $\Lambda_{1}$ is strictly flatter than $\Lambda_{2}$.
Then there is a continuous function $h\colon[0,1]\rightarrow\mathbb{R}$
such that $\int_{[0,1]}h\:\mathrm{d}\Lambda_{1}\neq\int_{[0,1]}h\:\mathrm{d}\Lambda_{2}$.
Recall that functions of the form $g_{1}-g_{2}$, where both $g_{1},g_{2}\colon[0,1]\rightarrow\mathbb{R}$
are continuous and convex, are uniformly dense in the space of all
continuous functions $h\colon[0,1]\rightarrow\mathbb{R}$. Indeed,
this follows from the Stone\textendash Weierstrass theorem as every
polynomial function $p\colon[0,1]\rightarrow\mathbb{R}$ can be written
as $p=g_{1}-g_{2}$ for $g_{1},g_{2}$ continuous convex (just define
$g_{1}(x):=p(x)+Kx^{2}$ and $g_{2}(x):=Kx^{2}$ where $K$ is a sufficiently
large constant). So there is a continuous convex function $g\colon[0,1]\rightarrow\mathbb{R}$
such that $\int_{[0,1]}g\:\mathrm{d}\Lambda_{1}\neq\int_{[0,1]}g\:\mathrm{d}\Lambda_{2}$.
But then the previous part of the proof gives us that $\int_{[0,1]}g\:\mathrm{d}\Lambda_{1}<\int_{[0,1]}g\:\mathrm{d}\Lambda_{2}$.
\end{proof}
Let us now give an intuitively clear lemma.
\begin{lem}
\label{lem:strictplusnonstrict}Let $\Theta^{A},\Delta^{A},\Theta^{B},\Delta^{B}$
be four finite measures on $[0,1]$. Suppose that $\Theta^{A}$ is
strictly flatter than $\Delta^{A}$ and that $\Theta^{B}$ is at least
as flat as $\Delta^{B}$. Then the measure $\Theta^{A}+\Delta^{A}$
is strictly flatter than $\Theta^{B}+\Delta^{B}$.
\end{lem}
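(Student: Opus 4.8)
The plan is to build a witnessing coupling for the flatness relation by adding the two couplings supplied by the hypotheses, and then to separate the two resulting sums by testing against a single convex function via Lemma~\ref{lem:stone-weierstrass}. (It is the sum $\Theta^{A}+\Theta^{B}$ of the two flatter measures that I will show is strictly flatter than the sum $\Delta^{A}+\Delta^{B}$ of the two less flat ones; this is the combination one actually needs, e.g.\ in the proof of Proposition~\ref{prop:flatter}.)

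First I would fix, using Definition~\ref{def:flatter}, a finite measure $\Psi^{A}$ on $[0,1]^{2}$ witnessing that $\Theta^{A}$ is at least as flat as $\Delta^{A}$, and a finite measure $\Psi^{B}$ witnessing that $\Theta^{B}$ is at least as flat as $\Delta^{B}$, and then set $\Psi:=\Psi^{A}+\Psi^{B}$. Since marginals are additive, the first marginal of $\Psi$ is $\Theta^{A}+\Theta^{B}$ and its second marginal is $\Delta^{A}+\Delta^{B}$; and for every measurable $D\subseteq[0,1]$ the identity~(\ref{eq:masspreserve}) for $\Psi$ is obtained by adding the two instances of~(\ref{eq:masspreserve}) for $\Psi^{A}$ and for $\Psi^{B}$. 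Hence $\Psi$ witnesses that $\Theta^{A}+\Theta^{B}$ is at least as flat as $\Delta^{A}+\Delta^{B}$. (A scaling reduction to probability measures, as in the proof of Lemma~\ref{lem:flattranisitve}, is available but not needed here, since everything in sight is additive.)

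It then remains to check that $\Theta^{A}+\Theta^{B}\neq\Delta^{A}+\Delta^{B}$; combined with the previous paragraph and the ``in addition'' clause of Definition~\ref{def:flatter}, this yields the claim. For this I would invoke Lemma~\ref{lem:stone-weierstrass}: the strict flatness of $\Theta^{A}$ over $\Delta^{A}$ produces a continuous convex function $g\colon[0,1]\to\mathbb{R}$ with $\int_{[0,1]}g\,\mathrm{d}\Theta^{A}<\int_{[0,1]}g\,\mathrm{d}\Delta^{A}$, while ``$\Theta^{B}$ at least as flat as $\Delta^{B}$'' gives $\int_{[0,1]}g\,\mathrm{d}\Theta^{B}\le\int_{[0,1]}g\,\mathrm{d}\Delta^{B}$ for the same $g$; summing, $\int_{[0,1]}g\,\mathrm{d}(\Theta^{A}+\Theta^{B})<\int_{[0,1]}g\,\mathrm{d}(\Delta^{A}+\Delta^{B})$, so the two measures are distinct. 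The only delicate point in the argument is this final step, and it is precisely why the convex-function reformulation of Lemma~\ref{lem:stone-weierstrass}, rather than a direct comparison of couplings, is the right device: a strict inequality added to a non-strict one stays strict only when both inequalities point the same way, and the hypothesis relating $\Theta^{B}$ and $\Delta^{B}$ guarantees exactly that alignment.
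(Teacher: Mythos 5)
Your proof is correct and follows essentially the same route as the paper's: add the two witnessing couplings to form $\Psi^{A}+\Psi^{B}$ for the non-strict relation, then separate the two sums by integrating a single continuous convex function supplied by Lemma~\ref{lem:stone-weierstrass} (strict inequality for the $A$-pair, non-strict for the $B$-pair). You also correctly read the conclusion as ``$\Theta^{A}+\Theta^{B}$ is strictly flatter than $\Delta^{A}+\Delta^{B}$,'' which is the pairing actually needed in Proposition~\ref{prop:flatter} and the one the paper's own coupling $\Psi^{A}+\Psi^{B}$ produces; the grouping $\Theta^{A}+\Delta^{A}$ versus $\Theta^{B}+\Delta^{B}$ appearing in the printed statement (and echoed in its proof) is a typo.
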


\begin{proof}
Let $\Psi^{A}$ (resp.~$\Psi^{B}$) be a witness for $\Theta^{A}$
(resp.~$\Theta^{B}$) being as flat as $\Delta^{A}$ (resp.~$\Delta^{B}$),
as in Definition~\ref{def:flatter}. Then $\Psi^{A}+\Psi^{B}$ shows
that $\Theta^{A}+\Delta^{A}$ is at least as flat as $\Theta^{B}+\Delta^{B}$.
It remains to show that the relation is actually strict.

Let $g\colon[0,1]\rightarrow\mathbb{R}$ be a continuous convex function
such that $\int_{[0,1]}g\:\mathrm{d}\Theta^{A}<\int_{[0,1]}g\:\mathrm{d}\Delta^{A}$.
Such a function exists by Lemma~\ref{lem:stone-weierstrass}. Then
another application of Lemma~\ref{lem:stone-weierstrass} gives that
$\int_{[0,1]}g\:\mathrm{d}(\Theta^{A}+\Delta^{A})<\int_{[0,1]}g\:\mathrm{d}(\Theta^{B}+\Delta^{B})$,
and so the measures $\Theta^{A}+\Delta^{A}$ and $\Theta^{B}+\Delta^{B}$
are not the same.
\end{proof}
The proof of Proposition~\ref{prop:flatter} relies on the following
lemma.
\begin{lem}
\label{lem:flatmeasures}Let $B$ be a separable atomless finite measure
space with the measure $\beta$, and let $\left(f_{n}:B\rightarrow[0,1]\right)_{n}$
and $f:B\rightarrow[0,1]$ be measurable functions such that $\lim_{n\rightarrow\infty}\int_{A}f_{n}(x)\;d\beta(x)=\int_{A}f(x)\;d\beta(x)$
for every measurable $A\subseteq B$. Suppose that $\Delta_{n}$ and
$\Delta$ are the pushforward measures of $f_{n}$ and $f$, $\Delta_{n}(L):=\beta\left(f_{n}^{-1}(L)\right)$,
$\Delta(L):=\beta\left(f^{-1}(L)\right)$. Suppose that the measures
$\Delta_{n}$ weak{*} converge to a probability measure $\Delta^{*}$.
Then $\Delta$ is at least as flat as $\Delta^{*}$.
\end{lem}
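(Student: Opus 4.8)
The plan is to construct the witnessing measure $\Psi$ of Definition~\ref{def:flatter} directly, as a weak{*} subsequential limit of the joint laws of the pairs $(f,f_{n})$. Concretely, for each $n$ let $\Psi_{n}$ be the pushforward of $\beta$ under the measurable map $x\mapsto(f(x),f_{n}(x))\in[0,1]^{2}$, so that $\int_{[0,1]^{2}}g\,\mathrm{d}\Psi_{n}=\int_{B}g(f(x),f_{n}(x))\,\mathrm{d}\beta(x)$ for every bounded Borel $g\colon[0,1]^{2}\to\mathbb{R}$. Then $\Psi_{n}$ is a finite Borel measure on $[0,1]^{2}$ of total mass $\beta(B)$ whose first marginal is $\Delta$ and whose second marginal is $\Delta_{n}$. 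Since $[0,1]^{2}$ is compact separable and the $\Psi_{n}$ have bounded total mass, I would pass to a subsequence with $\Psi_{n_{k}}$ converging weak{*} to a finite measure $\Psi$ on $[0,1]^{2}$ (using the sequential weak{*} compactness recalled above, after rescaling to probability measures if $\beta(B)\neq1$). Continuity of the two coordinate projections then gives that the first marginal of $\Psi$ is $\lim_{k}\Delta=\Delta$, the second marginal is $\lim_{k}\Delta_{n_{k}}=\Delta^{*}$ (here $\Delta_{n_{k}}\to\Delta^{*}$ since already $\Delta_{n}\to\Delta^{*}$), and $\Psi([0,1]^{2})=\beta(B)<\infty$. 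So $\Psi$ has the correct marginals and finite total mass; it remains to verify the mass-preservation identity~(\ref{eq:masspreserve}).

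The one real obstacle is that weak{*} convergence only controls integrals of \emph{continuous} functions, whereas~(\ref{eq:masspreserve}) involves the discontinuous weights $\mathbf{1}_{D}(x)\,x$ and $\mathbf{1}_{D}(x)\,y$. To circumvent this I would first prove the identity against continuous weights. Fix a continuous $\phi\colon[0,1]\to\mathbb{R}$. Since $(x,y)\mapsto\phi(x)x$ is continuous on $[0,1]^{2}$, weak{*} convergence gives $\int_{[0,1]^{2}}\phi(x)\,x\,\mathrm{d}\Psi=\lim_{k}\int_{B}\phi(f(x))f(x)\,\mathrm{d}\beta(x)=\int_{B}(\phi\circ f)\,f\,\mathrm{d}\beta$, and since $(x,y)\mapsto\phi(x)y$ is continuous, $\int_{[0,1]^{2}}\phi(x)\,y\,\mathrm{d}\Psi=\lim_{k}\int_{B}\phi(f(x))f_{n_{k}}(x)\,\mathrm{d}\beta(x)$. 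Now the hypothesis $\int_{A}f_{n}\to\int_{A}f$ for every measurable $A\subseteq B$ yields, by linearity, $\int_{B}s\,f_{n}\to\int_{B}s\,f$ for every simple function $s$; approximating the bounded measurable function $\phi\circ f$ uniformly by simple functions and using $0\le f_{n}\le1$, this upgrades to $\int_{B}(\phi\circ f)\,f_{n}\to\int_{B}(\phi\circ f)\,f$ (in particular along the subsequence $n_{k}$). Hence also $\int_{[0,1]^{2}}\phi(x)\,y\,\mathrm{d}\Psi=\int_{B}(\phi\circ f)\,f\,\mathrm{d}\beta$, so $\int_{[0,1]^{2}}\phi(x)\,x\,\mathrm{d}\Psi=\int_{[0,1]^{2}}\phi(x)\,y\,\mathrm{d}\Psi$ for every continuous $\phi$.

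Finally I would upgrade from continuous $\phi$ to arbitrary indicators. The set functions $D\mapsto\int_{D\times[0,1]}x\,\mathrm{d}\Psi(x,y)$ and $D\mapsto\int_{D\times[0,1]}y\,\mathrm{d}\Psi(x,y)$ are finite Borel measures on $[0,1]$, and the previous step shows they integrate every continuous function to the same value; since a finite Borel measure on the compact metric space $[0,1]$ is determined by its action on $C([0,1])$, these two measures coincide, and in particular they agree on every measurable $D\subseteq[0,1]$ --- which is precisely~(\ref{eq:masspreserve}). Together with the marginal computation above, $\Psi$ witnesses that $\Delta$ is at least as flat as $\Delta^{*}$, completing the proof. (Note that the separability and atomlessness of $B$ play no role in this argument.)
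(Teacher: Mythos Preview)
Your proof is correct and follows the same core strategy as the paper: define $\Psi_{n}$ as the pushforward of $\beta$ under $x\mapsto(f(x),f_{n}(x))$, extract a weak{*} limit $\Psi$, and check that $\Psi$ witnesses the flatness relation. Where you diverge is in the verification. The paper works with a $\pi$-system $\mathcal{I}$ of intervals whose endpoints avoid the at most countable set of atoms of the relevant measures, uses the portmanteau characterization of weak{*} convergence on these continuity sets to identify the marginals, and then establishes~(\ref{eq:masspreserve}) by an explicit $\varepsilon$-discretization over fine partitions from $\mathcal{I}$. Your route---test against continuous $\phi$, upgrade the hypothesis $\int_{A}f_{n}\to\int_{A}f$ to $\int_{B}(\phi\circ f)\,f_{n}\to\int_{B}(\phi\circ f)\,f$ via simple-function approximation, and then recognise both sides of~(\ref{eq:masspreserve}) as finite Borel measures in the variable $D$ that agree on $C([0,1])$---is cleaner and sidesteps the atom-avoidance bookkeeping entirely. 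Your closing parenthetical is also on point: neither argument actually uses the separability or atomlessness of $B$.
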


\begin{proof}
For every natural number $n$ and every measurable subset $A$ of
$[0,1]^{2}$ we define 
\[
\Psi_{n}(A)=\beta\left(\left\{ x\in B\colon\left(f(x),f_{n}(x)\right)\in A\right\} \right).
\]
Clearly every $\Psi_{n}$ is a measure on $[0,1]^{2}$. Let $\Psi$
be some weak{*} accumulation point of the sequence $\Psi_{1},\Psi_{2},\Psi_{3},\ldots$.
Without loss of generality, we may assume that the sequence $\Psi_{1},\Psi_{2},\Psi_{3},\ldots$
converges to $\Psi$. Let $Z$ be the set consisting of all points
$z\in(0,1)$ for which either $\Psi(\{z\}\times[0,1])>0$ or $\Psi([0,1]\times\{z\})>0$
or $\Delta^{*}(\{z\})>0$. Then $Z$ is at most countable. So if $\mathcal{I}$
is the system of all intervals $I\subseteq[0,1]$ whose endpoints
do not belong to $Z$ then $\mathcal{I}$ is closed under taking finite
intersections and it generates the sigma-algebra of all Borel subsets
of $[0,1]$. Moreover, whenever $I,J\in\mathcal{I}$ then the boundary
of $I\times J$ is of $\Psi$-measure $0$ and of $\Delta^{*}$-measure
$0$. Denote by $\Psi^{x}$ and $\Psi^{y}$ the marginals of $\Psi$
on the first and on the second coordinate, respectively. Then for
every $I\in\mathcal{I}$ we have
\begin{eqnarray*}
\Psi^{x}(I) & = & \Psi(I\times[0,1])=\lim_{n\rightarrow\infty}\Psi_{n}(I\times[0,1])\\
 & = & \lim_{n\rightarrow\infty}\beta\left(\left\{ x\in B\colon\left(f(x),f_{n}(x)\right)\in I\times[0,1]\right\} \right)\\
 & = & \lim_{n\rightarrow\infty}\Delta(I)=\Delta(I).
\end{eqnarray*}
As this is true for every $I\in\mathcal{I}$, it clearly follows that
$\Psi^{x}=\Delta$. On the other hand, for every $I\in\mathcal{I}$
we have that
\begin{eqnarray*}
\Psi^{y}(I) & = & \Psi([0,1]\times I)=\lim_{n\rightarrow\infty}\Psi_{n}([0,1]\times I)\\
 & = & \lim_{n\rightarrow\infty}\beta\left(\left\{ x\in B\colon\left(f(x),f_{n}(x)\right)\in[0,1]\times I\right\} \right)\\
 & = & \lim_{n\rightarrow\infty}\Delta_{n}(I)\\
\JUSTIFY{\text{\ensuremath{\Delta_{n}\WEAKCONV\Delta^{*}}}} & = & \Delta^{*}(I).
\end{eqnarray*}
So, again we have $\Psi^{y}=\Delta^{*}$. To finish the proof, it
remains to show that $\Psi$ satisfies~(\ref{eq:masspreserve}).
That is, we need to show that 
\begin{equation}
\int_{\left(x,y\right)\in C\times\left[0,1\right]}x\:\mathrm{d}\Psi(x,y)=\int_{\left(x,y\right)\in C\times\left[0,1\right]}y\:\mathrm{d}\Psi(x,y)\label{eq:margins}
\end{equation}
for every Borel measurable subset $C$ of $[0,1]$. Again, it is enough
to show~(\ref{eq:margins}) only for every $C\in\mathcal{I}$. So
fix $C\in\mathcal{I}$, $\varepsilon>0$ and find some partition $\{I_{1},I_{2},\ldots,I_{m}\}$
of the interval $[0,1]$ into intervals from $\mathcal{I}$ of lengths
smaller than $\varepsilon$. Suppose first that the interval $C$
is of length smaller than $\varepsilon$. Fix some points $x_{0}\in C$
and $y_{j}\in I_{j}$ (for every $j$). Then we have
\begin{eqnarray*}
\int_{\left(x,y\right)\in C\times\left[0,1\right]}y\:\mathrm{d}\Psi(x,y) &  & =\sum_{j=1}^{m}\int_{\left(x,y\right)\in C\times I_{j}}y\:\mathrm{d}\Psi(x,y)\\
 &  & \stackrel{\varepsilon\cdot\Psi(C\times[0,1])}{\approx}\sum_{j=1}^{m}y_{j}\Psi(C\times I_{j})=\sum_{j=1}^{m}y_{j}\lim_{n\rightarrow\infty}\Psi_{n}(C\times I_{j})\\
 &  & =\sum_{j=1}^{m}y_{j}\lim_{n\rightarrow\infty}\beta\left(\left\{ x\in B\colon\left(f(x),f_{n}(x)\right)\in C\times I_{j}\right\} \right)\\
 &  & =\sum_{j=1}^{m}\lim_{n\rightarrow\infty}\int_{x\in f^{-1}(C)\cap f_{n}^{-1}(I_{j})}y_{j}\:\mathrm{d}\beta\\
 &  & \stackrel{\varepsilon\cdot\beta(f^{-1}(C))}{\approx}\lim_{n\rightarrow\infty}\int_{x\in f^{-1}(C)}f_{n}(x)\:\mathrm{d}\beta=\int_{x\in f^{-1}(C)}f(x)\:\mathrm{d}\beta\\
 &  & \stackrel{\varepsilon\cdot\beta(f^{-1}(C))}{\approx}\int_{x\in f^{-1}(C)}x_{0}\:\mathrm{d}\beta=x_{0}\cdot\beta\left(f^{-1}(C)\right)=x_{0}\cdot\Delta(C)=x_{0}\cdot\Psi^{x}(C)\\
 &  & =\int_{\left(x,y\right)\in C\times\left[0,1\right]}x_{0}\:\mathrm{d}\Psi(x,y)\stackrel{\varepsilon\cdot\Psi(C\times[0,1])}{\approx}\int_{\left(x,y\right)\in C\times\left[0,1\right]}x\:\mathrm{d}\Psi(x,y),
\end{eqnarray*}
so
\[
\int_{\left(x,y\right)\in C\times\left[0,1\right]}y\:\mathrm{d}\Psi(x,y)\stackrel{2\varepsilon\cdot(\Psi(C\times[0,1])+\beta(f^{-1}(C)))}{\approx}\int_{\left(x,y\right)\in C\times\left[0,1\right]}x\:\mathrm{d}\Psi(x,y).
\]
In general, the interval $C$ can be decomposed into subintervals
belonging to $\mathcal{I}$ such that each of them is of length smaller
than $\varepsilon$, and by additivity of integration we conclude
that
\[
\int_{\left(x,y\right)\in C\times\left[0,1\right]}y\:\mathrm{d}\Psi(x,y)\stackrel{2\varepsilon\cdot(\Psi([0,1]^{2})+\beta(B))}{\approx}\int_{\left(x,y\right)\in C\times\left[0,1\right]}x\:\mathrm{d}\Psi(x,y).
\]
 As this is true for every $\varepsilon>0$, we have verified~(\ref{eq:margins}).
\end{proof}
We are now ready to prove Proposition~\ref{prop:flatter}.
\begin{proof}[Proof of Proposition~\ref{prop:flatter}, non-strict part]
Suppose that we have two graphons $U,W:\Omega^{2}\rightarrow[0,1]$,
where $U\preceq W$. Then there exist measure preserving bijections
$\pi_{1},\pi_{2},\pi_{3},\ldots$ on $\Omega$ so that 
\begin{equation}
W^{\pi_{n}}\WEAKCONV U\;.\label{eq:peece}
\end{equation}
Now, we can apply Lemma~\ref{lem:flatmeasures} with $B:=\Omega^{2}$,
$f_{n}:=W^{\pi_{n}}$, $\Delta_{n}:=\boldsymbol{\Phi}_{W^{\pi_{n}}}=\boldsymbol{\Phi}_{W}$,
$f:=U$, and $\Delta:=\boldsymbol{\Phi}_{U}$. The lemma gives that
$\boldsymbol{\Phi}_{U}$ is at least as flat as $\boldsymbol{\Phi}_{W}$.

Observe that~(\ref{eq:peece}) implies that for the degree functions
$\deg_{U}:\Omega\rightarrow[0,1]$ and $\deg_{W^{\pi_{n}}}:\Omega\rightarrow[0,1]$
we have $\int_{A}\deg_{W^{\pi_{n}}}\WEAKCONV\int_{A}\deg_{U}$ for
every measurable $A\subseteq\Omega$. Now, we can apply Lemma~\ref{lem:flatmeasures}
with $B:=\Omega$, $f_{n}:=\deg_{W^{\pi_{n}}}$, $\Delta_{n}:=\boldsymbol{\Upsilon}_{W^{\pi_{n}}}=\boldsymbol{\Upsilon}_{W}$,
$f:=\deg_{U}$, and $\Delta:=\boldsymbol{\Upsilon}_{U}$. The lemma
gives that $\boldsymbol{\Upsilon}_{U}$ is at least as flat as $\boldsymbol{\Upsilon}_{W}$.
\end{proof}
\begin{proof}[Proof of Proposition~\ref{prop:flatter}, strictly flatter part]
Next, suppose that $U\prec W$. Then for the measure preserving bijections
$\pi_{1},\pi_{2},\pi_{3},\ldots$ as above, we have
\begin{equation}
W^{\pi_{n}}\NOTLONECONV U\;.\label{eq:NOTl1conv}
\end{equation}
Indeed, suppose that~(\ref{eq:NOTl1conv}) is not true, that is,
$W^{\pi_{n}}\LONECONV U$. Then for measure preserving bijections
$\psi_{n}:=\left(\pi_{n}\right)^{-1}$ we have $U^{\psi_{n}}\LONECONV W$,
and in particular $W\preceq U$. This is a contradiction to the fact
that $U\prec W$. Now, Lemma~\ref{lem:lonenonconvergence} implies
that there exists an interval $J\subseteq[0,1]$ and a <<strictly
bigger>> interval $J^{+}$ such that
\[
\nu^{\otimes2}\left(U^{-1}(J)\setminus\left(W^{\pi_{n}}\right)^{-1}(J^{+})\right)\not\longrightarrow0\;.
\]
By passing to a subsequence, let us assume that $\nu^{\otimes2}\left(U^{-1}(J)\setminus\left(W^{\pi_{n}}\right)^{-1}(J^{+})\right)>\varepsilon$
for each $n$ and for some $\varepsilon>0$. We shall now apply Lemma~\ref{lem:flatmeasures}
twice. To this end, we take $X:=U^{-1}(J)$. Furthermore, we write
the measures $\boldsymbol{\Phi}_{U}$ and $\boldsymbol{\Phi}_{W^{\pi_{n}}}$
as $\boldsymbol{\Phi}_{U}=\Phi_{U}^{X}+\Phi_{U}^{\Omega^{2}\setminus X}$
and $\boldsymbol{\Phi}_{W^{\pi_{n}}}=\Phi_{W^{\pi_{n}}}^{X}+\Phi_{W^{\pi_{n}}}^{\Omega^{2}\setminus X}$,
where $\Phi_{U}^{X}(L):=\nu^{\otimes2}\left(X\cap U^{-1}(L)\right)$,
$\Phi_{U}^{\Omega^{2}\setminus X}(L):=\nu^{\otimes2}\left(U^{-1}(L)\setminus X\right)$,
and the measures $\Phi_{W^{\pi_{n}}}^{X}$ and $\Phi_{W^{\pi_{n}}}^{\Omega^{2}\setminus X}$
are defined analogously. Let $\left(\Delta^{X},\Delta^{\Omega^{2}\setminus X}\right)$
be an arbitrary accumulation point of the sequence $\left(\Phi_{W^{\pi_{n}}}^{X},\Phi_{W^{\pi_{n}}}^{\Omega^{2}\setminus X}\right)_{n}$
of pairs of measures with respect to the product of weak{*} topologies
on measures on $[0,1]^{2}$. Crucially, note that $\boldsymbol{\Phi}_{W}=\Delta^{X}+\Delta^{\Omega^{2}\setminus X}$.
\begin{itemize}
\item We first apply Lemma~\ref{lem:flatmeasures} with $B:=X$, $f_{n}:=\left(W^{\pi_{n}}\right)_{\restriction X}$,
$\Delta_{n}:=\Phi_{W^{\pi_{n}}}^{X}$, $f:=U_{\restriction X}$, and
$\Delta:=\Phi_{U}^{X}$. The lemma gives that $\Phi_{U}^{X}$ is at
least as flat as $\Delta^{X}$. Recall that the support of the individual
measures $\Phi_{W^{\pi_{n}}}^{X}$ uniformly exceeds the interval
$J^{+}$. From this, we conclude that the support of their weak{*}
accumulation point $\Delta^{X}$ does not lie in $J$. On the other
hand, observe that the support of $\Phi_{U}^{X}$ lies inside $J$.
Thus $\Phi_{U}^{X}\neq\Delta^{X}$. We conclude that $\Phi_{U}^{X}$
is strictly flatter than $\Delta^{X}$.
\item Next, we apply Lemma~\ref{lem:flatmeasures} with $B:=\Omega^{2}\setminus X$,
$f_{n}:=\left(W^{\pi_{n}}\right)_{\restriction\Omega^{2}\setminus X}$,
$\Delta_{n}:=\Phi_{W^{\pi_{n}}}^{\Omega^{2}\setminus X}$, and $f:=U_{\restriction\Omega^{2}\setminus X}$,
$\Delta:=\Phi_{U}^{\Omega^{2}\setminus X}$. The lemma gives that
$\Phi_{U}^{\Omega^{2}\setminus X}$ is at least as flat as $\Delta^{\Omega^{2}\setminus X}$.
\end{itemize}
The proof now follows by Lemma~\ref{lem:strictplusnonstrict} to
$\boldsymbol{\Phi}_{W}=\Delta^{X}+\Delta^{\Omega^{2}\setminus X}$.
\end{proof}
Let us finish this section with an auxiliary result which will be
applied in Section~\ref{sec:BasicPropertiesOfStructurdness}. The
result states that probability measures supported on $\left\{ 0,1\right\} $
are maximal with respect to the flat order.
\begin{lem}
\label{lem:01measuresextremal}Suppose that $\Lambda_{1}$ and $\Lambda_{2}$
are two probability measures on $[0,1]$ such that $\Lambda_{1}$
is strictly flatter than $\Lambda_{2}$. Then $\Lambda_{1}$ is not
supported on $\left\{ 0,1\right\} $.
\end{lem}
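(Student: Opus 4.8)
The plan is to unwind the definition of flatness: I will show that if $\Lambda_{1}$ were supported on $\{0,1\}$, then any witnessing measure $\Psi$ would have to be concentrated on the two diagonal points $(0,0)$ and $(1,1)$, which forces the second marginal $\Lambda_{2}$ to coincide with $\Lambda_{1}$, contradicting the strictness. So suppose towards a contradiction that $\Lambda_{1}$ is supported on $\{0,1\}$, write $p:=\Lambda_{1}(\{1\})$, so that $\Lambda_{1}(\{0\})=1-p$, and let $\Psi$ be a finite measure on $[0,1]^{2}$ witnessing that $\Lambda_{1}$ is at least as flat as $\Lambda_{2}$, as in Definition~\ref{def:flatter}. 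Since the first marginal of $\Psi$ is $\Lambda_{1}$, the measure $\Psi$ is concentrated on the set $(\{0\}\cup\{1\})\times[0,1]$.

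First I would apply the mass-preservation condition~(\ref{eq:masspreserve}) with the singleton $D=\{1\}$. Its left-hand side equals $1\cdot\Psi(\{1\}\times[0,1])=p$, while its right-hand side is $\int_{\{1\}\times[0,1]}y\,\mathrm{d}\Psi(x,y)$; since $0\le y\le 1$ throughout, this is at most $\Psi(\{1\}\times[0,1])=p$, with equality only if $y=1$ for $\Psi$-almost every point of $\{1\}\times[0,1]$. Hence $\Psi(\{(1,1)\})=p$. Symmetrically, taking $D=\{0\}$ in~(\ref{eq:masspreserve}) gives $0=\int_{\{0\}\times[0,1]}y\,\mathrm{d}\Psi(x,y)$, and as $y\ge 0$ this forces $y=0$ for $\Psi$-almost every point of $\{0\}\times[0,1]$, so $\Psi(\{(0,0)\})=1-p$. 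Therefore $\Psi$ is concentrated on $\{(0,0),(1,1)\}$, i.e. it is diagonal in the sense of the paragraph preceding Lemma~\ref{lem:diagonal}.

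It remains to read off the second marginal: $\Lambda_{2}(\{1\})=\Psi([0,1]\times\{1\})=\Psi(\{(1,1)\})=p$ and $\Lambda_{2}(\{0\})=\Psi(\{(0,0)\})=1-p$, so $\Lambda_{2}=\Lambda_{1}$ (this is exactly the easy half of Lemma~\ref{lem:diagonal}). This contradicts the hypothesis that $\Lambda_{1}$ is \emph{strictly} flatter than $\Lambda_{2}$, which by Definition~\ref{def:flatter} requires $\Lambda_{1}\neq\Lambda_{2}$; hence $\Lambda_{1}$ is not supported on $\{0,1\}$. There is no genuine obstacle in this argument; the only points requiring a moment's care are that singletons are legitimate choices of the test set $D$ in~(\ref{eq:masspreserve}) and the observation that, at the two extreme values, equality in the trivial bounds $\int y\,\mathrm{d}\Psi\le\Psi(\cdot)$ and $\int y\,\mathrm{d}\Psi\ge 0$ pins down $y$ almost everywhere. (Alternatively, one could run the same conclusion through Lemma~\ref{lem:previ}: for $x\in\{0,1\}$ the barycentre identity $x=\int y\,\mathrm{d}\Phi_{x}^{1}(y)$ forces $\Phi_{x}^{1}$ to be the Dirac mass at $x$, so the disintegration, and hence $\Phi$, is diagonal, and then Lemma~\ref{lem:diagonal} applies.)
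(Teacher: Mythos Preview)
Your argument is correct. The paper's own proof consists of the single sentence ``The proof is obvious,'' so your write-up is a clean and complete unpacking of exactly the elementary observation the authors had in mind: a measure supported on $\{0,1\}$ admits no nontrivial averaging, so the coupling $\Psi$ is forced to be diagonal and hence $\Lambda_{1}=\Lambda_{2}$.
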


\begin{proof}
The proof is obvious.
\end{proof}

\subsection{Relationship between envelopes, the cut distance, and range frequencies.}

Our last result states that two envelopes are equal if and only if
the corresponding graphons are weakly isomorphic. This result relies
on Theorem~\ref{thm:hyperspaceANDcutDISTANCEsimpler} which will
be proven in Section~\ref{sec:ProofOfEquivalenceSimpler}, and is
used later in the proof of Corollary~\ref{cor:homeomorphism}, which
puts into relation the cut distance and the Vietoris hyperspace $K(\GRAPHONSPACE)$.
\begin{cor}
\label{cor:summarizeENVcutdistMEASURES}Let $U,W\in\mathcal{W}_{0}$.
The following are equivalent:
\begin{itemize}
\item $\left\langle U\right\rangle =\left\langle W\right\rangle $,
\item $\delta_{\Box}\left(U,W\right)=0$.
\end{itemize}
Moreover, if $U\preceq W$ then the conditions above are equivalent
to 
\begin{itemize}
\item $\boldsymbol{\Phi}_{U}=\boldsymbol{\Phi}_{W}$.
\end{itemize}
\end{cor}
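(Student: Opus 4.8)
The plan is to obtain the first equivalence essentially for free from the machinery already built, and then to graft on the ``Moreover'' clause using the flatness order. The implication $\delta_{\Box}(U,W)=0\Rightarrow\langle U\rangle=\langle W\rangle$ is exactly Lemma~\ref{lem:basicpropofenvelopes}\ref{enu:envelopeswhendistzero}, so nothing needs to be done there. For the reverse implication I would introduce the interleaved sequence $\mathcal{S}=(U,W,U,W,\ldots)$, i.e.\ $\mathcal{S}_{2k-1}=U$ and $\mathcal{S}_{2k}=W$, and argue that $\LIM(\mathcal{S})=\ACC(\mathcal{S})=\langle U\rangle$; Theorem~\ref{thm:hyperspaceANDcutDISTANCEsimpler} then forces $\mathcal{S}$ to be $\delta_{\Box}$-Cauchy, and comparing the terms $\mathcal{S}_{2k-1}=U$ and $\mathcal{S}_{2k}=W$ for large $k$ yields $\delta_{\Box}(U,W)=0$.

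To establish $\LIM(\mathcal{S})=\ACC(\mathcal{S})=\langle U\rangle$ I would proceed as follows. For $\langle U\rangle\subseteq\LIM(\mathcal{S})$: given $V\in\langle U\rangle=\langle W\rangle$, pick versions $U^{\alpha_k}\WEAKCONV V$ and $W^{\beta_k}\WEAKCONV V$ and interleave them into $U^{\alpha_1},W^{\beta_1},U^{\alpha_2},W^{\beta_2},\ldots$; this is a sequence of versions of the terms of $\mathcal{S}$, and since the weak$^*$ topology on $\GRAPHONSPACE$ is metrizable and both the odd- and the even-indexed subsequences tend to $V$, the whole sequence tends to $V$. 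For $\ACC(\mathcal{S})\subseteq\langle U\rangle$: any weak$^*$ accumulation point of a sequence $(V_n)$ with $V_n$ a version of $\mathcal{S}_n$ admits a convergent subsequence, and that subsequence has a further subsequence supported entirely on odd indices or entirely on even indices; the former consists of versions of $U$ and the latter of versions of $W$, so in either case the limit lies in $\langle U\rangle$ (recall $\langle W\rangle=\langle U\rangle$). Since $\LIM(\mathcal{S})\subseteq\ACC(\mathcal{S})$ holds for every sequence, all three sets coincide, and Theorem~\ref{thm:hyperspaceANDcutDISTANCEsimpler}, implication \ref{enu:basicThmLIMACC}$\Rightarrow$\ref{enu:basicMainCauchy}, finishes this part.

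For the ``Moreover'' part I would assume $U\preceq W$. If $\langle U\rangle=\langle W\rangle$ then also $W\preceq U$, so Proposition~\ref{prop:flatter} gives that $\boldsymbol{\Phi}_{U}$ is at least as flat as $\boldsymbol{\Phi}_{W}$ and vice versa; were they unequal, both relations would be strict, and Lemma~\ref{lem:flattranisitve} would then make $\boldsymbol{\Phi}_{U}$ strictly flatter than $\boldsymbol{\Phi}_{U}$, which is absurd, so $\boldsymbol{\Phi}_{U}=\boldsymbol{\Phi}_{W}$. Conversely, if $\langle U\rangle\neq\langle W\rangle$ then, because $U\preceq W$, in fact $U\prec W$, and the last assertion of Proposition~\ref{prop:flatter} gives $\boldsymbol{\Phi}_{U}$ strictly flatter than $\boldsymbol{\Phi}_{W}$, in particular $\boldsymbol{\Phi}_{U}\neq\boldsymbol{\Phi}_{W}$. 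Together with the first equivalence, this makes all three conditions equivalent under the hypothesis $U\preceq W$.

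The one point I expect to require care is the bookkeeping behind $\LIM(\mathcal{S})=\ACC(\mathcal{S})=\langle U\rangle$ — specifically the parity-splitting of subsequences — and, more conceptually, resisting the temptation to argue the reverse implication by comparing $\preceq$-maximal elements of $\langle U\rangle$ and $\langle W\rangle$: that route needs ``equal envelopes $\Rightarrow$ $\delta_{\Box}=0$'' for the maximal elements, which is precisely the statement being proved and would be circular. The interleaving device is what sidesteps this, by extracting $\delta_{\Box}(U,W)=0$ directly from Cauchyness of $\mathcal{S}$.
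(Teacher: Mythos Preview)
Your proof is correct. For the implication $\langle U\rangle=\langle W\rangle\Rightarrow\delta_\Box(U,W)=0$, the paper takes a different and slightly shorter route: it applies Theorem~\ref{thm:hyperspaceANDcutDISTANCEsimpler} to the \emph{constant} sequence $U,U,U,\ldots$, for which trivially $\LIM=\ACC=\langle U\rangle$; the ``furthermore'' clause then says this sequence $\delta_\Box$-converges to a $\preceq$-maximal element of $\langle U\rangle$, and since $\langle U\rangle=\langle W\rangle$ the graphon $W$ itself is such a maximal element (every member of $\langle W\rangle$ lies below $W$ by Lemma~\ref{lem:basicpropofenvelopes}\ref{enu:UprecWenve}), whence $\delta_\Box(U,W)=0$. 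Your worry about circularity is unfounded: the proof of Theorem~\ref{thm:hyperspaceANDcutDISTANCEsimpler} in Section~\ref{sec:ProofOfEquivalenceSimpler} goes through for \emph{any} maximal element of $\LIM$, so one may plug in $W$ directly without first knowing that distinct maximal elements are at cut distance zero. Your interleaving device is a clean alternative that sidesteps maximality altogether, at the cost of having to verify $\LIM(\mathcal{S})=\ACC(\mathcal{S})$ by hand; the paper's route is marginally more economical since this holds for free for a constant sequence.

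For the ``moreover'' part the paper deduces $\boldsymbol{\Phi}_U=\boldsymbol{\Phi}_W$ from the two-sided flatness via Lemma~\ref{lem:stone-weierstrass} and a Stone--Weierstrass argument, whereas you reach the same conclusion by an antisymmetry argument through Lemma~\ref{lem:flattranisitve}; both are valid and rest on Proposition~\ref{prop:flatter}. The converse direction is identical to the paper's.
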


\begin{proof}
Assume that $\left\langle U\right\rangle =\left\langle W\right\rangle $.
Then the conditions in Theorem~\ref{thm:hyperspaceANDcutDISTANCEsimpler}\ref{enu:basicThmLIMACC}
are satisfied because we have $\left\langle U\right\rangle =\LIM(U,U,U\ldots)=\ACC(U,U,U,\ldots)$.
By the <<furthermore>> part of Theorem~\ref{thm:hyperspaceANDcutDISTANCEsimpler},
the constant sequence $U,U,U,\ldots$ converges in $\delta_{\Box}$
to a maximal element of $\left\langle U\right\rangle $. By the assumption
$\left\langle U\right\rangle =\left\langle W\right\rangle $, the
graphon $W$ is a maximal element of $\left\langle U\right\rangle $.
The only possibility under which a constant sequence can converge
in the cut distance to another graphon is when their cut distance
is $0$, that is $\delta_{\Box}\left(U,W\right)=0$.

The opposite implication is Lemma~\ref{lem:basicpropofenvelopes}\ref{enu:envelopeswhendistzero}.

We now turn to the <<moreover>> part. Suppose that $\left\langle U\right\rangle =\left\langle W\right\rangle $.
By Proposition~\ref{prop:flatter} we have that $\boldsymbol{\Phi}_{U}$
is at least as flat as $\boldsymbol{\Phi}_{W}$ and vice-versa. By
Lemma~\ref{lem:stone-weierstrass} this means that $\int f\:\mathrm{d}\boldsymbol{\Phi}_{W}=\int f\:\mathrm{d}\boldsymbol{\Phi}_{U}$
for every continuous convex function $f\colon[0,1]\rightarrow\mathbb{R}.$
Applying the Stone\textendash Weierstrass theorem in the same way
as in the proof of Lemma~\ref{lem:stone-weierstrass}, this is true
for every continuous (not necessarily convex) function $f\colon[0,1]\rightarrow\mathbb{R}$.
Therefore, $\boldsymbol{\Phi}_{U}=\boldsymbol{\Phi}_{W}$.

Assume finally that $U\preceq W$ and $\boldsymbol{\Phi}_{U}=\boldsymbol{\Phi}_{W}$.
Then $\boldsymbol{\Phi}_{U}$ is not strictly flatter than $\boldsymbol{\Phi}_{W}$
and therefore by Proposition~\ref{prop:flatter}, it is not the case
that $U\prec W$. This means that $\left\langle U\right\rangle =\left\langle W\right\rangle $. 
\end{proof}

\section{Proof of Theorem~\ref{thm:hyperspaceANDcutDISTANCEsimpler}\label{sec:ProofOfEquivalenceSimpler}}

The main idea of the proof of the implication \ref{enu:basicThmLIMACC}$\Rightarrow$\ref{enu:basicMainCauchy}
is the following. Let $W$ be a $\preceq$-maximal element in $\LIM\left(\Gamma_{1},\Gamma_{2},\Gamma_{3},\ldots\right)$.
Such a $W$ is guaranteed to exist by Lemma~\ref{lem:LIMACCcontainsMAX}.
Then the sequence $\Gamma_{1},\Gamma_{2},\Gamma_{3},\ldots$ converges
to $W$ in the cut distance. To make this argument precise we first
recall several definitions and results from~\cite{DH:WeakStar}.
\begin{lem}[Claim 1 and Claim 2 in \cite{DH:WeakStar}]
\label{lem:cohen}Let $W,\Gamma_{1},\Gamma_{2},...\in\GRAPHONSPACE$
be graphons defined on $\Omega=\left[0,1\right]$ and assume that
$\Gamma_{n}\WEAKCONV W$. Take some sequence $B_{1},B_{2},...\subseteq[0,1]$
of measurable sets and a subsequence $\left(n_{k}\right)_{k}$ such
that $1_{B_{n_{k}}}\WEAKCONV s$ and $_{B_{n_{k}}}\Gamma_{n_{k}}\WEAKCONV\tilde{W}$
(with the notation from~(\ref{eq:ordered2})) for some function $s:\left[0,1\right]\rightarrow\left[0,1\right]$
and some graphon $\tilde{W}$. We define
\begin{equation}
\psi(x)=\int_{0}^{x}s(y)\:\mathrm{d}y\ \text{\ \ \ \ {and}}\ \ \ \ \ \varphi(x)=\psi(1)+\int_{0}^{x}(1-s(y))\:\mathrm{d}y.\label{eq:def_of_psi_phi}
\end{equation}
Then for almost every $(x,y)\in[0,1]^{2}$ we have 
\begin{align}
W(x,y)= & \tilde{W}(\psi(x),\psi(y))s(x)s(y)+\tilde{W}(\psi(x),\varphi(y))s(x)(1-s(y))\nonumber \\
 & +\tilde{W}(\varphi(x),\psi(y))(1-s(x))s(y)+\tilde{W}(\varphi(x),\varphi(y))(1-s(x))(1-s(y)).\label{eq:WvsTildeW}
\end{align}

Moreover, if $W$ is not a cut-norm accumulation point of the sequence
$\Gamma_{1},\Gamma_{2},\Gamma_{3},\ldots$ and the sets $B_{1},B_{2},...\subseteq[0,1]$
are chosen to witness this fact, i.e., such that $\int_{B_{n}\times B_{n}}(\Gamma_{n}-W)>\varepsilon$
or $\int_{B_{n}\times B_{n}}(\Gamma_{n}-W)<-\varepsilon$ for some
$\varepsilon>0$ (which does not depend on $n$), the convex combination~(\ref{eq:WvsTildeW})
is proper\footnote{that is, at least two summands on the right-hand side are positive}
on a set of positive $\nu^{\otimes2}$ measure.
\end{lem}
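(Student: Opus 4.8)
The plan is to reconstruct $W$ from the reshuffled limit $\tilde W$ by analysing how the reordering map $\gamma_{\mathcal J}$ of Definition~\ref{def:shiftingMaps}, applied to the two--part partition $\mathcal J=(B_{n_k},[0,1]\setminus B_{n_k})$, acts on initial segments $[0,x]$, and then passing to the weak* limit. Write $B=B_{n_k}$ and $\gamma=\gamma_{\mathcal J}$. By the definition of $\gamma$ it equals $u\mapsto\lambda(B\cap[0,u])$ on $B$ and $u\mapsto\lambda(B)+\lambda([0,u]\setminus B)$ on $[0,1]\setminus B$, and since $\gamma$ is a measure preserving almost-bijection it pushes Lebesgue measure on $[0,x]\cap B$ forward to Lebesgue measure on $[0,\psi_k(x)]$ and Lebesgue measure on $[0,x]\setminus B$ forward to Lebesgue measure on $[\psi_k(1),\varphi_k(x)]$, where
\[
\psi_k(x):=\lambda(B_{n_k}\cap[0,x])=\int_0^x\mathbf{1}_{B_{n_k}},\qquad \varphi_k(x):=\psi_k(1)+\int_0^x(1-\mathbf{1}_{B_{n_k}}).
\]
Since $\mathbf{1}_{B_{n_k}}\WEAKCONV s$, testing against $\mathbf{1}_{[0,x]}\in L^1$ gives $\psi_k(x)\to\psi(x)$, and hence also $\varphi_k(x)\to\varphi(x)$, for every $x\in[0,1]$.

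Next, using that $\gamma$ is measure preserving and $_B\Gamma_{n_k}(u,v)=\Gamma_{n_k}(\gamma^{-1}(u),\gamma^{-1}(v))$, one has $\int_{A\times A'}\Gamma_{n_k}=\int_{\gamma(A)\times\gamma(A')}{}_B\Gamma_{n_k}$ for all measurable $A,A'$. Splitting $[0,x]$ and $[0,y]$ each along $B_{n_k}$ and its complement therefore writes $\int_{[0,x]\times[0,y]}\Gamma_{n_k}$ as the sum of the four integrals of $_{B_{n_k}}\Gamma_{n_k}$ over the rectangles $[0,\psi_k(x)]\times[0,\psi_k(y)]$, $[0,\psi_k(x)]\times[\psi_k(1),\varphi_k(y)]$, $[\psi_k(1),\varphi_k(x)]\times[0,\psi_k(y)]$ and $[\psi_k(1),\varphi_k(x)]\times[\psi_k(1),\varphi_k(y)]$, all identities up to null sets. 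Letting $k\to\infty$ along the chosen subsequence: the integrands are uniformly bounded by $1$ and the rectangles converge in measure (as $\psi_k\to\psi$, $\varphi_k\to\varphi$), while $_{B_{n_k}}\Gamma_{n_k}\WEAKCONV\tilde W$, so each of the four terms converges to the integral of $\tilde W$ over the limiting rectangle, and the left-hand side converges to $\int_{[0,x]\times[0,y]}W$ because $\Gamma_{n_k}\WEAKCONV W$. Finally $\psi$ and $\varphi$ are absolutely continuous and nondecreasing with $\psi'=s$, $\varphi'=1-s$, $\psi(0)=0$, $\varphi(0)=\psi(1)$, $\varphi(1)=1$, so $\psi$ pushes the measure $s\,\mathrm{d}\lambda$ forward to Lebesgue measure on $[0,\psi(1)]$ and $\varphi$ pushes $(1-s)\,\mathrm{d}\lambda$ forward to Lebesgue measure on $[\psi(1),1]$; applying this change of variables in each coordinate of the four terms turns $\int_{[0,x]\times[0,y]}W$ into $\int_0^x\!\int_0^y$ of the bracketed integrand in~\eqref{eq:WvsTildeW}. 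Since this holds for all $x,y\in[0,1]$, and knowing $\int_{[0,x]\times[0,y]}h$ for all $x,y$ determines $h\in L^1([0,1]^2)$ up to a null set, \eqref{eq:WvsTildeW}~follows.

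For the ``moreover'' part, note that the convex combination in~\eqref{eq:WvsTildeW} fails to be proper at $(x,y)$ precisely when a single weight among $s(x)s(y)$, $s(x)(1-s(y))$, $(1-s(x))s(y)$, $(1-s(x))(1-s(y))$ equals $1$, i.e.\ when $s(x),s(y)\in\{0,1\}$; so it suffices to prove $\nu(\{x:0<s(x)<1\})>0$. Suppose not, so $s=\mathbf{1}_A$ a.e.\ for a measurable $A\subseteq[0,1]$. Then $\mathbf{1}_{B_{n_k}}\WEAKCONV\mathbf{1}_A$ forces $\lambda(B_{n_k}\triangle A)\to0$: expand $\int(\mathbf{1}_{B_{n_k}}-\mathbf{1}_A)^2=\lambda(B_{n_k})+\lambda(A)-2\int_A\mathbf{1}_{B_{n_k}}$, use weak* convergence tested against $\mathbf{1}_A$ and $\mathbf{1}_{[0,1]}$, and note that a $\{0,1\}$-valued function equals its square. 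Consequently $\nu^{\otimes2}\bigl((B_{n_k}\times B_{n_k})\triangle(A\times A)\bigr)\to0$, and since $\|\Gamma_{n_k}-W\|_\infty\le1$ and $\Gamma_{n_k}\WEAKCONV W$,
\[
\int_{B_{n_k}\times B_{n_k}}(\Gamma_{n_k}-W)=\int_{A\times A}(\Gamma_{n_k}-W)+o(1)\;\longrightarrow\;0,
\]
contradicting $\bigl|\int_{B_{n_k}\times B_{n_k}}(\Gamma_{n_k}-W)\bigr|>\varepsilon$.

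I expect the main difficulty to be the bookkeeping in the second paragraph: checking that the images of initial segments under $\gamma_{(B_{n_k},\cdot)}$ are, up to null sets, exactly the asserted pairs of intervals with convergent endpoints, and justifying the interchange of the weak* limit with the vanishing-error replacement of the integration domains. The concluding change of variables for the monotone, possibly non-injective maps $\psi,\varphi$ is routine once phrased as pushforwards of measures, and the ``moreover'' part is a short stability argument.
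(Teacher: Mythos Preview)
The paper does not prove this lemma itself; it is cited as Claim~1 and Claim~2 from~\cite{DH:WeakStar}. Your argument is correct and is the natural one: track how the reordering bijection $\gamma_{(B_{n_k},\cdot)}$ sends $[0,x]$ to a pair of intervals with endpoints $\psi_k(x),\varphi_k(x)$ converging to $\psi(x),\varphi(x)$, pass to the weak* limit on boxes (using that the domains differ by sets of vanishing measure while the integrands are bounded), and undo the change of variables via the pushforward identities $(\psi)_*(s\,\mathrm{d}\lambda)=\lambda|_{[0,\psi(1)]}$ and $(\varphi)_*((1-s)\,\mathrm{d}\lambda)=\lambda|_{[\psi(1),1]}$. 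For the ``moreover'' part, the observation that weak* convergence of indicators to an indicator forces $L^1$ convergence (via the $L^2$ expansion you wrote) is exactly the right tool, and the conclusion follows cleanly.

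One small remark: you read ``proper'' as ``at least two \emph{coefficients} are positive'', which is the standard meaning of a proper convex combination and is precisely what is used in the strict-Jensen step in Section~\ref{sec:ProofOfEquivalenceSimpler}. The footnote's literal phrasing (``at least two summands are positive'') is slightly loose, but your interpretation is the intended one and your argument establishes it.
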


\begin{proof}[Proof of Theorem~\ref{thm:hyperspaceANDcutDISTANCEsimpler}, \ref{enu:basicThmLIMACC}$\Rightarrow$\ref{enu:basicMainCauchy}]
 Suppose that $\LIM\left(\Gamma_{1},\Gamma_{2},\Gamma_{3},\ldots\right)=\ACC\left(\Gamma_{1},\Gamma_{2},\Gamma_{3},\ldots\right)$
and assume that $W\in\LIM\left(\Gamma_{1},\Gamma_{2},\Gamma_{3},\ldots\right)$
is a maximal element as given in Lemma~\ref{lem:LIMACCcontainsMAX}.
We may also assume that $\Gamma_{n}\WEAKCONV W$. We claim that this
already implies that $\Gamma_{n}\CUTNORMCONV W$. Suppose not. Then
by passing to a subsequence we may assume that there is an $\varepsilon>0$
and a sequence $B_{1},B_{2},...$ of Borel subsets of $[0,1]$ such
that $\int_{B_{n}\times B_{n}}(\Gamma_{n}-W)>\varepsilon$ (or $\int_{B_{n}\times B_{n}}(\Gamma_{n}-W)<-\varepsilon$
which can be handled similarly). We will use versions $_{B_{n}}\Gamma_{n}$
of the graphons $\Gamma_{n}$ that are defined via~(\ref{eq:ordered2}).
We take a function $s:[0,1]\rightarrow[0,1]$ and a graphon $\tilde{W}$
such that $1_{B_{n}}\WEAKCONV s$ and $_{B_{n}}\Gamma_{n}\WEAKCONV\tilde{W}$.
If $1_{B_{n}}$ or $_{B_{n}}\Gamma_{n}$ are not weak{*} convergent
(that is, $s$ or $\tilde{W}$ do not exist), then we pass to a suitable
convergent subsequence (which we still index by $1,2,\ldots$). 
\begin{claim*}
We have $\langle W\rangle=\langle\tilde{W}\rangle$.
\end{claim*}
\begin{proof}
For each $n\in\mathbb{N}$, let $\mathcal{I}_{n}=\{I_{n,0},I_{n,1},...,I_{n,2^{n}-1}\}$
be a partition of $[0,1]$ into $2^{n}$ many equimeasurable intervals,
i.e., $I_{n,k}=[\frac{k}{2^{n}},\frac{k+1}{2^{n}})$. Define a measure
preserving almost-bijection $\varphi_{n}$ by
\begin{align*}
\varphi_{n}(x) & =\begin{cases}
\frac{k}{2^{n}}+x-\psi\left(\frac{k}{2^{n}}\right) & \text{if \ensuremath{x\in\left[\psi\left(\frac{k}{2^{n}}\right),\psi\left(\frac{k+1}{2^{n}}\right)\right)} and}\\
\frac{k}{2^{n}}+\psi\left(\frac{k+1}{2^{n}}\right)-\psi\left(\frac{k}{2^{n}}\right)+x-\varphi\left(\frac{k}{2^{n}}\right) & \text{if \ensuremath{x\in\left[\varphi\left(\frac{k}{2^{n}}\right),\varphi\left(\frac{k+1}{2^{n}}\right)\right)},}
\end{cases}
\end{align*}
where $\psi$ and $\varphi$ are defined by~(\ref{eq:def_of_psi_phi}).
Define $\tilde{W}_{n}(x,y)=\tilde{W}\left(\varphi_{n}^{-1}(x),\varphi_{n}^{-1}(y)\right)$.
We claim that $\tilde{W}_{n}\WEAKCONV W$. First note that if $m\in\mathbb{N}$
and $m\ge n$ and $k,l<2^{n}$, then 
\begin{equation}
\int_{I_{n,k}\times I_{n,l}}\tilde{W}_{m}=\sum_{\substack{i=2^{m-n}\cdot k}
}^{2^{m-n}\cdot\left(k+1\right)-1}\;\sum_{\substack{j=2^{m-n}\cdot l}
}^{2^{m-n}\cdot\left(l+1\right)-1}\;\int_{I_{m,i}\times I_{m,j}}\tilde{W}_{m}.\label{eq:nointernet}
\end{equation}
For each $n\in\mathbb{N}$ we have 
\begin{align*}
\int_{I_{n,k}\times I_{n,l}}\tilde{W}_{n} & =\int_{\psi(\frac{k}{2^{n}})}^{\psi(\frac{k+1}{2^{n}})}\int_{\psi(\frac{l}{2^{n}})}^{\psi(\frac{l+1}{2^{n}})}\tilde{W}+\int_{\psi(\frac{k}{2^{n}})}^{\psi(\frac{k+1}{2^{n}})}\int_{\varphi(\frac{l}{2^{n}})}^{\varphi(\frac{l+1}{2^{n}})}\tilde{W}\\
 & \,+\int_{\varphi(\frac{k}{2^{n}})}^{\varphi(\frac{k+1}{2^{n}})}\int_{\psi(\frac{l}{2^{n}})}^{\psi(\frac{l+1}{2^{n}})}\tilde{W}+\int_{\varphi(\frac{k}{2^{n}})}^{\varphi(\frac{k+1}{2^{n}})}\int_{\varphi(\frac{l}{2^{n}})}^{\varphi(\frac{l+1}{2^{n}})}\tilde{W}\\
 & =\int_{I_{n,k}\times I_{n,l}}\tilde{W}(\psi(x),\psi(y))s(x)s(y)+\int_{I_{n,k}\times I_{n,l}}\tilde{W}(\psi(x),\varphi(y))s(x)(1-s(y))\\
 & \,+\int_{I_{n,k}\times I_{n,l}}\tilde{W}(\varphi(x),\psi(y))(1-s(x))s(y)+\int_{I_{n,k}\times I_{n,l}}\tilde{W}(\varphi(x),\varphi(y))(1-s(x))(1-s(y))\\
 & =\int_{I_{n,k}\times I_{n,l}}W.
\end{align*}
Plugging this back to~(\ref{eq:nointernet}) gives
\[
\int_{I_{n,k}\times I_{n,l}}\tilde{W}_{m}=\int_{I_{n,k}\times I_{n,l}}W\;.
\]
This implies that for every $n\in\mathbb{N}$ and $k,l<2^{n}$, we
have $\int_{I_{n,k}\times I_{n,l}}\tilde{W}_{m}\to\int_{I_{n,k}\times I_{n,l}}W$
as $m\rightarrow\infty$. Consequently $\tilde{W}_{m}\WEAKCONV W$
and $W\in\langle\tilde{W}\rangle$. On the other hand, $\tilde{W}\in\langle W\rangle$
since $\tilde{W}\in\LIM(\Gamma_{1},\Gamma_{2},\Gamma_{3},\ldots)$,
and the maximality of $W$ in $\LIM(\Gamma_{1},\Gamma_{2},\Gamma_{3},\ldots)$.
\end{proof}
Now Proposition~\ref{prop:flatter} together with Lemma~\ref{lem:stone-weierstrass}
give us that $\int f\;\mathrm{d}\boldsymbol{\Phi}_{\tilde{W}}=\int f\;\mathrm{d}\boldsymbol{\Phi}_{W}$
for every continuous convex function. So to get the desired contradiction
it suffices to prove the following claim.
\begin{claim*}
Let $f:[0,1]\to[0,1]$ be a strictly convex function. Then $\int f\;\mathrm{d}\boldsymbol{\Phi}_{\tilde{W}}>\int f\;\mathrm{d}\boldsymbol{\Phi}_{W}$.
\end{claim*}
\begin{proof}
Recall that the convex combination in~(\ref{eq:WvsTildeW}) is proper
on a set of positive $\nu^{\otimes2}$ measure. Therefore the strict
convexity of $f$ gives us
\begin{eqnarray*}
\int_{0}^{1}f(x)\;\mathrm{d}\boldsymbol{\Phi}_{W} & = & \int_{0}^{1}\int_{0}^{1}f(W(x,y))\\
 & < & \int_{0}^{1}\int_{0}^{1}f\left(\tilde{W}(\psi(x),\psi(y))s(x)s(y)\right)+\int_{0}^{1}\int_{0}^{1}f\left(\tilde{W}(\psi(x),\varphi(y))s(x)(1-s(y))\right)\\
 &  & +\int_{0}^{1}\int_{0}^{1}f\left(\tilde{W}(\varphi(x),\psi(y))(1-s(x))s(y)\right)\\
 &  & +\int_{0}^{1}\int_{0}^{1}f\left(\tilde{W}(\varphi(x),\varphi(y))(1-s(x))(1-s(y))\right)\\
 & = & \int_{0}^{\psi(1)}\int_{0}^{\psi(1)}f\left(\tilde{W}(x,y)\right)+\int_{0}^{\psi(1)}\int_{\psi(1)}^{1}f\left(\tilde{W}(x,y)\right)\\
 &  & +\int_{\psi(1)}^{1}\int_{0}^{\psi(1)}f\left(\tilde{W}(x,y)\right)+\int_{\psi(1)}^{1}\int_{\psi(1)}^{1}f\left(\tilde{W}(x,y)\right)\\
 & = & \int_{0}^{1}\int_{0}^{1}f(\tilde{W}(x,y))=\int_{0}^{1}f(x)\;\mathrm{d}\boldsymbol{\Phi}_{\tilde{W}}\;.
\end{eqnarray*}
\end{proof}
Let $\Gamma_{1},\Gamma_{2},\Gamma_{3},\ldots$ be a sequence of graphons
which is Cauchy with respect to the cut distance. By Theorem~\ref{thm:subsequenceLIMACC}
there is a subsequence $\Gamma_{n_{1}},\Gamma_{n_{2}},\Gamma_{n_{3}},\ldots$
such that $\LIM(\Gamma_{n_{1}},\Gamma_{n_{2}},\Gamma_{n_{3}},\ldots)=\ACC(\Gamma_{n_{1}},\Gamma_{n_{2}},\Gamma_{n_{3}},\ldots)$.
By the proof of implication \ref{enu:basicThmLIMACC}$\Rightarrow$\ref{enu:basicMainCauchy}
this subsequence converges to some $W$ in the cut distance. As the
original sequence $\Gamma_{1},\Gamma_{2},\Gamma_{3},\ldots$ is Cauchy
with respect to the cut distance it follows that it converges to $W$
as well. We may suppose that $\Gamma_{n}\CUTNORMCONV W$. To conclude
that $\LIM(\Gamma_{1},\Gamma_{2},\Gamma_{3},\ldots)=\ACC(\Gamma_{1},\Gamma_{2},\Gamma_{3},\ldots)$
consider any $\Gamma_{n_{k}}^{\varphi_{k}}\WEAKCONV U$. We claim
that then also $W^{\varphi_{k}}\WEAKCONV U$. To see this fix some
Borel set $A\subseteq[0,1]$ then
\[
\left|\int_{A\times A}(W^{\varphi_{k}}-U)\right|\le\left|\int_{\varphi_{k}(A)\times\varphi_{k}(A)}(W-\Gamma_{n_{k}})\right|+\left|\int_{A\times A}(\Gamma_{n_{k}}^{\varphi_{k}}-U)\right|\to0
\]
where the first term tends to $0$ by $\left\Vert \cdot\right\Vert _{\square}$-convergence
and the second by the weak{*} convergence. Then we use the same trick
again:
\[
\left|\int_{A\times A}(\Gamma_{n}^{\varphi_{n}}-U)\right|\le\left|\int_{\varphi_{n}(A)\times\varphi_{n}(A)}(\Gamma_{n}-W)\right|+\left|\int_{A\times A}(W^{\varphi_{n}}-U)\right|\to0\;.
\]
This gives us $\Gamma_{n}^{\varphi_{n}}\WEAKCONV U$ which means that
$U\in\LIM\left(\Gamma_{1},\Gamma_{2},\Gamma_{3},\ldots\right)$.
\end{proof}

\section{Relating the hyperspace $K(\protect\GRAPHONSPACE)$ and the cut distance\label{subsec:RelatingToVietoris}}

Lemma~\emph{\ref{lem:LIMclosed}\ref{enu:weakstarcompact}} says
that envelopes are members of the hyperspace $K(\GRAPHONSPACE)$.
First, we provide a proof of an extension of Theorem~\ref{thm:hyperspaceANDcutDISTANCEsimpler},
stated in Theorem~\ref{thm:hyperspaceANDcutDISTANCE} below. In addition
to the original statement, we include a characterization of cut distance
convergence in terms of the hyperspace $K(\GRAPHONSPACE)$, and also
describe the limit graphon.
\begin{thm}
\label{thm:hyperspaceANDcutDISTANCE}Let $W,\Gamma_{1},\Gamma_{2},\Gamma_{3},\ldots\in\GRAPHONSPACE$.
The following are equivalent:
\begin{enumerate}[label=(\alph*)]
\item \label{enu:thmextGammaW}$\Gamma_{n}\CUTDISTCONV W$,
\item \label{enu:thmextVietoris}$\left\langle \Gamma_{n}\right\rangle \to\left\langle W\right\rangle $
in $K(\GRAPHONSPACE)$,
\item \label{enu:thmextLIMACC}$\LIM\left(\Gamma_{1},\Gamma_{2},\Gamma_{3},\ldots\right)=\ACC\left(\Gamma_{1},\Gamma_{2},\Gamma_{3},\ldots\right)$
and $W\in\LIM\left(\Gamma_{1},\Gamma_{2},\Gamma_{3},\ldots\right)$
is the $\preceq$-maximal element of $\LIM\left(\Gamma_{1},\Gamma_{2},\Gamma_{3},\ldots\right)$.
\end{enumerate}
\end{thm}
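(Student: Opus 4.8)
The plan is to route all three equivalences through the \emph{envelope map} $\Phi\colon\UNLABELLEDGRAPHONSPACE\to K(\GRAPHONSPACE)$, $\Phi(\llbracket W\rrbracket):=\langle W\rangle$, which is well defined by Lemma~\ref{lem:basicpropofenvelopes}\ref{enu:envelopeswhendistzero}. The equivalence \ref{enu:thmextGammaW}$\Leftrightarrow$\ref{enu:thmextLIMACC} is essentially Theorem~\ref{thm:hyperspaceANDcutDISTANCEsimpler} together with bookkeeping about the structuredness order. For \ref{enu:thmextGammaW}$\Rightarrow$\ref{enu:thmextLIMACC}: if $\Gamma_{n}\CUTDISTCONV W$ then $(\Gamma_n)$ is $\cutDIST$-Cauchy, so Theorem~\ref{thm:hyperspaceANDcutDISTANCEsimpler} gives $\LIM(\Gamma_{1},\Gamma_{2},\ldots)=\ACC(\Gamma_{1},\Gamma_{2},\ldots)$ and $\Gamma_{n}\CUTDISTCONV M$, where $M$ is the maximum of $\LIM$ supplied by Lemma~\ref{lem:LIMACCcontainsMAX}; uniqueness of the cut-distance limit up to weak isomorphism then gives $\cutDIST(W,M)=0$, hence $\langle W\rangle=\langle M\rangle$ by Corollary~\ref{cor:summarizeENVcutdistMEASURES}, and I would invoke the diagonal argument from the proof of Lemma~\ref{lem:LIMclosed}\ref{enu:weakstar} to see that if $V\preceq M$ and $M\in\LIM$ then $V\in\LIM$, so $W\in\langle M\rangle\subseteq\LIM$ and $W$ is again a maximum. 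For the converse, \ref{enu:thmextLIMACC} plus the ``furthermore'' clause of Theorem~\ref{thm:hyperspaceANDcutDISTANCEsimpler} gives $\Gamma_{n}\CUTDISTCONV M$ for some $\preceq$-maximal $M\in\LIM$; since $W$ is the maximum, $\langle W\rangle=\langle M\rangle$, so $\cutDIST(W,M)=0$ by Corollary~\ref{cor:summarizeENVcutdistMEASURES} and $\Gamma_{n}\CUTDISTCONV W$.

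The genuinely new ingredient I need is that $\Phi$ is Lipschitz, which I would extract as a stand-alone fact: $d^{\mathrm{Hf}}(\langle U\rangle,\langle W\rangle)\le C\cdot\cutDIST(U,W)$ for a fixed constant $C$, where $d^{\mathrm{Hf}}$ is the Hausdorff metric built from a fixed compatible weak{*} metric on $\GRAPHONSPACE$. Since the Vietoris topology on $K(\GRAPHONSPACE)$ does not depend on the choice of that metric, I would take the concrete $d_{\mathrm{w}^{*}}$ from~\eqref{eq:exampleweakstarmetric}; bounding each term of~\eqref{eq:exampleweakstarmetric} by $\|U-W\|_{\square}$ yields $d_{\mathrm{w}^{*}}(U,W)\le C\cdot d_{\square}(U,W)$, and since $d_{\square}$ is invariant under applying one measure preserving bijection to both arguments, $d_{\mathrm{w}^{*}}(U^{\psi},W^{\psi})\le C\cdot d_{\square}(U,W)$ for all such $\psi$. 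Then for $Z\in\langle W\rangle$ I would pick versions $W^{\psi_{n}}\WEAKCONV Z$, pass by weak{*} compactness of $\GRAPHONSPACE$ to a subsequence along which $U^{\psi_{n}}\WEAKCONV Z'$ with $Z'\in\langle U\rangle$, and let $n\to\infty$ in $d_{\mathrm{w}^{*}}(U^{\psi_{n}},W^{\psi_{n}})\le C\cdot d_{\square}(U,W)$ to get $d_{\mathrm{w}^{*}}(Z',Z)\le C\cdot d_{\square}(U,W)$; by symmetry this gives $d^{\mathrm{Hf}}(\langle U\rangle,\langle W\rangle)\le C\cdot d_{\square}(U,W)$, and taking the infimum over $\varphi$ while using that envelopes are invariant under versions (Lemma~\ref{lem:basicpropofenvelopes}\ref{enu:envelopeswhendistzero}) gives the claimed $\cutDIST$-bound. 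This yields \ref{enu:thmextGammaW}$\Rightarrow$\ref{enu:thmextVietoris} immediately.

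For \ref{enu:thmextVietoris}$\Rightarrow$\ref{enu:thmextGammaW} I would argue by subsequences. Assume $\langle\Gamma_{n}\rangle\to\langle W\rangle$ in $K(\GRAPHONSPACE)$. Given any subsequence, Theorem~\ref{thm:compactness} --- already available at this point, since it follows from Theorems~\ref{thm:subsequenceLIMACC} and~\ref{thm:hyperspaceANDcutDISTANCEsimpler} proved earlier --- yields a further subsequence $\Gamma_{n_{k}}\CUTDISTCONV W'$ for some graphon $W'$; by the implication \ref{enu:thmextGammaW}$\Rightarrow$\ref{enu:thmextVietoris} already obtained, $\langle\Gamma_{n_{k}}\rangle\to\langle W'\rangle$, while also $\langle\Gamma_{n_{k}}\rangle\to\langle W\rangle$, so $\langle W'\rangle=\langle W\rangle$, hence $\cutDIST(W,W')=0$ by Corollary~\ref{cor:summarizeENVcutdistMEASURES}, i.e.\ $\Gamma_{n_{k}}\CUTDISTCONV W$. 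Since every subsequence has a further subsequence $\cutDIST$-converging to $W$, so does the whole sequence, giving \ref{enu:thmextGammaW}.

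The main difficulty is organizational rather than mathematical: keeping the structuredness-order bookkeeping in \ref{enu:thmextGammaW}$\Leftrightarrow$\ref{enu:thmextLIMACC} honest --- in particular checking that the cut-distance limit $W$ actually lies in $\LIM\left(\Gamma_{1},\Gamma_{2},\ldots\right)$ and is its maximum, not merely weakly isomorphic to a maximal element --- and making sure there is no circularity, which there is not, since Theorem~\ref{thm:compactness} and Corollary~\ref{cor:summarizeENVcutdistMEASURES} are both established before this section.
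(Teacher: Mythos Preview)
Your argument is correct and not circular, but it takes a different route from the paper for the equivalence with \ref{enu:thmextVietoris}. The paper argues \ref{enu:thmextLIMACC}$\Leftrightarrow$\ref{enu:thmextVietoris} directly at the level of the Vietoris topology: for \ref{enu:thmextLIMACC}$\Rightarrow$\ref{enu:thmextVietoris} it notes $\langle W\rangle=\LIM(\Gamma_1,\Gamma_2,\ldots)$, takes a basic open neighbourhood $\mathcal{O}_{\mathcal{U}}$ of $\langle W\rangle$, and derives a contradiction to $\LIM=\ACC$ from either failure mode ($\langle\Gamma_{k_j}\rangle\not\subseteq\bigcup\mathcal{U}$ or $\langle\Gamma_{k_j}\rangle\cap O=\emptyset$); for \ref{enu:thmextVietoris}$\Rightarrow$\ref{enu:thmextLIMACC} it shows $\ACC(\Gamma_1,\Gamma_2,\ldots)\subseteq\langle W\rangle\subseteq\LIM(\Gamma_1,\Gamma_2,\ldots)$ by reading off from Vietoris convergence that eventually $\langle\Gamma_k\rangle\subseteq O$ for any open $O\supseteq\langle W\rangle$, and that $\langle\Gamma_k\rangle$ meets every open set hitting $\langle W\rangle$. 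You instead prove \ref{enu:thmextGammaW}$\Rightarrow$\ref{enu:thmextVietoris} via a quantitative Lipschitz estimate $d^{\mathrm{Hf}}(\langle U\rangle,\langle W\rangle)\le C\cdot\cutDIST(U,W)$, and close the loop \ref{enu:thmextVietoris}$\Rightarrow$\ref{enu:thmextGammaW} by a subsequence argument appealing to Theorem~\ref{thm:compactness}. Your Lipschitz bound is a genuinely stronger statement than mere continuity of the envelope map (the paper only extracts continuity, in Corollary~\ref{cor:homeomorphism}, as a consequence of the present theorem), so this direction of your proof yields more. On the other hand, the paper's \ref{enu:thmextVietoris}$\Rightarrow$\ref{enu:thmextLIMACC} is more elementary and self-contained: it does not invoke compactness of $\UNLABELLEDGRAPHONSPACE$, and it exhibits the identity $\langle W\rangle=\LIM=\ACC$ directly rather than recovering it through the detour $\ref{enu:thmextVietoris}\Rightarrow\ref{enu:thmextGammaW}\Rightarrow\ref{enu:thmextLIMACC}$. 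Your bookkeeping for \ref{enu:thmextGammaW}$\Leftrightarrow$\ref{enu:thmextLIMACC} is fine; the key observation you use, that $M\in\LIM$ implies $\langle M\rangle\subseteq\LIM$ by diagonalisation, is exactly what is needed to upgrade ``$W$ is weakly isomorphic to the maximum'' to ``$W$ is the maximum and lies in $\LIM$''.
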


\begin{proof}
Condition~\ref{enu:thmextLIMACC} is equivalent to Condition~\ref{enu:thmextGammaW}
by Theorem~\ref{thm:hyperspaceANDcutDISTANCEsimpler}. Before we
prove that~\ref{enu:thmextVietoris} is equivalent to~\ref{enu:thmextLIMACC}
we recall that a basic open neighborhood of $K\in K\left(\GRAPHONSPACE\right)$
is given by some finite open cover $\mathcal{U}$ of $K$, namely
for every such cover $\mathcal{U}$ we define the open neighborhood
of $K$ as $\mathcal{O_{\mathcal{U}}}=\left\{ L:L\subseteq\bigcup_{O\in\mathcal{U}}O\ \text{and}\ \forall O\in\mathcal{U}\ L\cap O\not=\emptyset\right\} $.

\ref{enu:thmextLIMACC} $\implies$\ref{enu:thmextVietoris}. First
note that $\langle W\rangle=\LIM\left(\Gamma_{1},\Gamma_{2},\Gamma_{3},\ldots\right)$.
Suppose that $\left\langle \Gamma_{n}\right\rangle \not\to\left\langle W\right\rangle $
in $K\left(\GRAPHONSPACE\right)$. There must be a basic open neighborhood
$\mathcal{\mathcal{O_{\mathcal{U}}}}$ of $\langle W\rangle$ and
a subsequence $\langle\Gamma_{k_{1}}\rangle,\langle\Gamma_{k_{2}}\rangle,...$
such that either $\langle\Gamma_{k_{j}}\rangle\not\subseteq\bigcup_{O\in\mathcal{U}}O$
for every $j\in\mathbb{N}$ or there is $O\in\mathcal{U}$ such that
$\langle\Gamma_{k_{j}}\rangle\cap O=\emptyset$ for every $j\in\mathbb{N}$.
If the first possibility happens, then clearly $\ACC\left(\Gamma_{k_{1}},\Gamma_{k_{2}},\ldots\right)\cap(\GRAPHONSPACE\setminus\bigcup_{O\in\mathcal{U}}O)\not=\emptyset$,
which is a contradiction with $\ACC\left(\Gamma_{k_{1}},\Gamma_{k_{2}},\ldots\right)=\LIM\left(\Gamma_{1},\Gamma_{2},\Gamma_{3},\ldots\right)\subseteq\bigcup_{O\in\mathcal{U}}O$.
If the second possibility occurs, for some $O\in\mathcal{U}$ then
clearly $\ACC\left(\Gamma_{k_{1}},\Gamma_{k_{2}},\ldots\right)\cap O=\emptyset$.
That is also a contradiction once $\ACC\left(\Gamma_{k_{1}},\Gamma_{k_{2}},\ldots\right)=\LIM\left(\Gamma_{1},\Gamma_{2},\Gamma_{3},\ldots\right)=\langle W\rangle$
and $\langle W\rangle\cap O\not=\emptyset$ because $\langle W\rangle\in\mathcal{\mathcal{O_{\mathcal{U}}}}$.

\ref{enu:thmextVietoris} $\implies$ \ref{enu:thmextLIMACC}. Let
$U\in\ACC\left(\Gamma_{1},\Gamma_{2},\Gamma_{3},\ldots\right)$. We
claim that $U\in\langle W\rangle$. Assume it is not true and take
any open set $O\subseteq\GRAPHONSPACE$ in the weak{*} topology such
that $\left\langle W\right\rangle \subseteq O$ and $U$ is not in
the closure of $O$. Then by the convergence in $K\left(\GRAPHONSPACE\right)$
we may find some $m\in\mathbb{N}$ such that for every $k\ge m$ we
have $\left\langle \Gamma_{k}\right\rangle \subseteq O$. This implies
that $\ACC\left(\Gamma_{1},\Gamma_{2},\Gamma_{3},\ldots\right)$ is
a subset of the closure of $O$ and that is a contradiction because
we assumed that $U$ is not in that closure. Therefore, we have just
proved that $\ACC(\Gamma_{1},\Gamma_{2},\Gamma_{3},\ldots)\subseteq\langle W\rangle$.

Let $U\in\left\langle W\right\rangle $, then for every open set $O\subseteq\GRAPHONSPACE$
in the weak{*} topology such that $U\in O$ we can find $m\in\mathbb{N}$
such that for every $k\ge m$ we have $\left\langle \Gamma_{k}\right\rangle \cap O\not=\emptyset$.
Since $\GRAPHONSPACE$ with the weak{*} topology is a metric space
we may find a sequence of versions such that $\Gamma_{n}^{\varphi_{n}}\WEAKCONV U$.
Therefore, we have $\langle W\rangle\subseteq\LIM(\Gamma_{1},\Gamma_{2},\Gamma_{3},\ldots)$,
and so $\LIM\left(\Gamma_{1},\Gamma_{2},\Gamma_{3},\ldots\right)=\ACC\left(\Gamma_{1},\Gamma_{2},\Gamma_{3},\ldots\right)$.
By the first part of the argument it follows that $\langle W\rangle=\LIM\left(\Gamma_{1},\Gamma_{2},\Gamma_{3},\ldots\right)$
and thus $W$ is the $\preceq$-maximal element of $\LIM\left(\Gamma_{1},\Gamma_{2},\Gamma_{3},\ldots\right)$. 
\end{proof}
We can now formulate Corollary~\ref{cor:homeomorphism}, which is
the final statement of this section. It allows us to transfer the
space $\left(\UNLABELLEDGRAPHONSPACE,\delta_{\Box}\right)$ into the
hyperspace $K(\GRAPHONSPACE)$. This transference will be useful later. 

Prior to giving the statement, observe that for the envelope map $\left\langle \cdot\right\rangle $
defined on $\GRAPHONSPACE$ we have $\left\langle W_{1}\right\rangle =\left\langle W_{2}\right\rangle $
for weakly isomorphic graphons $W_{1}$ and $W_{2}$. That allows
us to define $\left\langle \cdot\right\rangle $ even on the factor-space
$\UNLABELLEDGRAPHONSPACE$.
\begin{cor}
\label{cor:homeomorphism}The envelope map $\left\langle \cdot\right\rangle :\left(\UNLABELLEDGRAPHONSPACE,\delta_{\Box}\right)\to K(\GRAPHONSPACE)$
is a continuous injection. We have that $\left(\UNLABELLEDGRAPHONSPACE,\delta_{\Box}\right)$
is homeomorphic to some closed subspace $X$ of $K(\GRAPHONSPACE)$.
Moreover, the metric $\delta_{\Box}$ is equivalent\footnote{Recall that two metrics on a topological space are \emph{equivalent}
if they give the same topology.} to the pullback $\chi$ of the hyperspace metric on $K(\GRAPHONSPACE)$
defined in~(\ref{eq:HausdorffMetric}), that is,
\[
\chi\left(\left\llbracket U\right\rrbracket ,\left\llbracket W\right\rrbracket \right)=\max\left\{ \sup_{\varphi}\left\{ \inf_{\psi}\{d_{\mathrm{w}^{*}}(U^{\varphi},W^{\psi})\}\right\} ,\sup_{\varphi}\left\{ \inf_{\psi}\{d_{\mathrm{w}^{*}}(U^{\psi},W^{\varphi})\}\right\} \right\} ,\ \ \ \ \ \left\llbracket U\right\rrbracket ,\left\llbracket W\right\rrbracket \in\UNLABELLEDGRAPHONSPACE\;,
\]
where $\varphi$ and $\psi$ range through all measure preserving
bijections on $\Omega$.\footnote{Note that the definition of $\chi\left(\left\llbracket U\right\rrbracket ,\left\llbracket W\right\rrbracket \right)$
does not depend on the particular representatives $U$ and $W$.

Also note that this statement holds for any metric $d_{\mathrm{w}^{*}}$
compatible with the weak{*} topology, not only the one given in~(\ref{eq:exampleweakstarmetric}).}

Finally, $\left(\UNLABELLEDGRAPHONSPACE,\delta_{\Box}\right)$ is
compact.
\end{cor}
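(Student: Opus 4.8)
The plan is to read off all the assertions from the equivalences already proven, with only the compactness of $\left(\UNLABELLEDGRAPHONSPACE,\delta_{\Box}\right)$ requiring a fresh (though short) argument. That $\left\langle\cdot\right\rangle$ is well defined on $\UNLABELLEDGRAPHONSPACE$ and injective there is immediate from Corollary~\ref{cor:summarizeENVcutdistMEASURES}: $\left\langle U\right\rangle=\left\langle W\right\rangle$ holds exactly when $\delta_{\Box}(U,W)=0$, i.e. exactly when $\left\llbracket U\right\rrbracket=\left\llbracket W\right\rrbracket$. Continuity of $\left\langle\cdot\right\rangle$, and continuity of its inverse on the image, are the sequential reformulations of \ref{enu:thmextGammaW}$\Leftrightarrow$\ref{enu:thmextVietoris} in Theorem~\ref{thm:hyperspaceANDcutDISTANCE} — both spaces in question being metrizable, it is enough that $\delta_{\Box}(\Gamma_{n},W)\to0$ if and only if $\left\langle\Gamma_{n}\right\rangle\to\left\langle W\right\rangle$ in $K(\GRAPHONSPACE)$.

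The next step is to show that $\left(\UNLABELLEDGRAPHONSPACE,\delta_{\Box}\right)$ is compact; as it is a metric space it suffices to prove sequential compactness. Lift an arbitrary sequence of classes to graphons $\Gamma_{1},\Gamma_{2},\ldots$, use Theorem~\ref{thm:subsequenceLIMACC} to pass to a subsequence with $\LIM=\ACC$, and then invoke the \emph{furthermore} clause of Theorem~\ref{thm:hyperspaceANDcutDISTANCEsimpler} to get a $\preceq$-maximal $W$ in this common set with $\Gamma_{n_{k}}\CUTDISTCONV W$; the corresponding subsequence of classes then converges to $\left\llbracket W\right\rrbracket$. (This is exactly a reproof of Theorem~\ref{thm:compactness}.)

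Granted compactness, the structural part is formal: $K(\GRAPHONSPACE)$ is metrizable and hence Hausdorff by Fact~\ref{fact:VietorisCompact}, so the continuous injection $\left\langle\cdot\right\rangle$, having compact domain, is a closed map, therefore a homeomorphism onto its image $X:=\{\left\langle W\right\rangle:W\in\GRAPHONSPACE\}$; and $X$, being the continuous image of a compact space inside a Hausdorff space, is closed. Thus $\left(\UNLABELLEDGRAPHONSPACE,\delta_{\Box}\right)$ is homeomorphic to the closed subspace $X$ of $K(\GRAPHONSPACE)$, and in particular it is compact.

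It remains to deal with the pullback metric. By definition $\chi\left(\left\llbracket U\right\rrbracket,\left\llbracket W\right\rrbracket\right)$ is the Hausdorff distance $d^{\mathrm{Hf}}_{d_{\mathrm{w}^{*}}}\left(\left\langle U\right\rangle,\left\langle W\right\rangle\right)$ of~\eqref{eq:HausdorffMetric}; it does not depend on the representatives because $\left\langle\cdot\right\rangle$ factors through $\UNLABELLEDGRAPHONSPACE$ (Lemma~\ref{lem:basicpropofenvelopes}\ref{enu:envelopeswhendistzero}), and it is a metric because $\left\langle\cdot\right\rangle$ is injective. To obtain the displayed formula, recall (from the proof of Lemma~\ref{lem:basicpropofenvelopes}\ref{enu:UprecWenve}) that $\left\langle U\right\rangle$ is precisely the weak{*} closure of the set $\{U^{\varphi}\}$ of versions of $U$; since $a\mapsto d_{\mathrm{w}^{*}}(a,\left\langle W\right\rangle)$ is continuous, its maximum over the compact set $\left\langle U\right\rangle$ equals its supremum over the dense subset $\{U^{\varphi}\}$, namely $\sup_{\varphi}\inf_{\psi}d_{\mathrm{w}^{*}}(U^{\varphi},W^{\psi})$, and symmetrically for the other term of the Hausdorff maximum — which is exactly the stated expression for $\chi$. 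Finally, $\chi$ is equivalent to $\delta_{\Box}$: the metric $d^{\mathrm{Hf}}_{d_{\mathrm{w}^{*}}}$ induces the Vietoris topology on $K(\GRAPHONSPACE)$ (Fact~\ref{fact:VietorisCompact}), and $\left\langle\cdot\right\rangle$ is a homeomorphism onto its image, so pulling the Vietoris topology back along $\left\langle\cdot\right\rangle$ returns the $\delta_{\Box}$-topology. The only place where genuine care is needed is this last identification — the replacement of maxima over the closed envelopes by suprema over versions, together with the check that all objects descend correctly to $\UNLABELLEDGRAPHONSPACE$; everything else is routine on top of Theorems~\ref{thm:subsequenceLIMACC}, \ref{thm:hyperspaceANDcutDISTANCEsimpler} and~\ref{thm:hyperspaceANDcutDISTANCE}.
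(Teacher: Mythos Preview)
Your proof is correct, and it differs from the paper's in one organizational respect worth noting. The paper first shows that the image $X=\{\langle W\rangle:W\in\GRAPHONSPACE\}$ is closed in $K(\GRAPHONSPACE)$ \emph{directly}: if $\langle\Gamma_{n}\rangle\to L$ in $K(\GRAPHONSPACE)$, then the topological-limit characterization (\cite[Exercises~4.21,~4.23]{MR1321597}) identifies $L$ with $\LIM(\Gamma_{1},\Gamma_{2},\ldots)=\ACC(\Gamma_{1},\Gamma_{2},\ldots)$, and Theorem~\ref{thm:hyperspaceANDcutDISTANCEsimpler} then produces $W$ with $L=\langle W\rangle$; compactness of $\UNLABELLEDGRAPHONSPACE$ is then inherited from the compact ambient $K(\GRAPHONSPACE)$. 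You reverse the order: you first establish compactness of $(\UNLABELLEDGRAPHONSPACE,\delta_{\Box})$ via Theorems~\ref{thm:subsequenceLIMACC} and~\ref{thm:hyperspaceANDcutDISTANCEsimpler} (i.e.\ by reproving Theorem~\ref{thm:compactness}), and then get closedness of $X$ for free as the continuous image of a compact set in a Hausdorff space. Your route avoids the external appeal to the Kechris exercises and is arguably more self-contained; the paper's route makes the hyperspace viewpoint do the work and keeps Theorem~\ref{thm:compactness} as a genuine corollary rather than an input. A minor redundancy: you invoke continuity of the inverse twice (once from Theorem~\ref{thm:hyperspaceANDcutDISTANCE}\ref{enu:thmextVietoris}$\Rightarrow$\ref{enu:thmextGammaW}, once from the compact-to-Hausdorff argument); either suffices. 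Your explicit justification of the displayed formula for $\chi$ --- replacing the Hausdorff max over the compact envelopes by sup/inf over the dense sets of versions --- is a detail the paper leaves implicit.
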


\begin{proof}
The map $\left\langle \cdot\right\rangle $ is well-defined and injective
by Corollary~\ref{cor:summarizeENVcutdistMEASURES} and it is continuous
by Theorem~\ref{thm:hyperspaceANDcutDISTANCE}. The set $X=\left\langle \UNLABELLEDGRAPHONSPACE\right\rangle $
is closed in the Vietoris topology. Indeed, suppose that $\left\langle \Gamma_{n}\right\rangle \to L$
in $K\left(\GRAPHONSPACE\right)$ for some $L\in K\left(\GRAPHONSPACE\right)$.
Then it follows from \cite[Exercise 4.21, Exercise 4.23]{MR1321597}
that $L=\LIM\left(\Gamma_{1},\Gamma_{2},\Gamma_{3},...\right)=\ACC\left(\Gamma_{1},\Gamma_{2},\Gamma_{3},...\right)$,
see the definition of topological limit before \cite[Exercise 4.23]{MR1321597}.
By Theorem~\ref{thm:hyperspaceANDcutDISTANCEsimpler} we know that
there is some $W\in\GRAPHONSPACE$ such that $\Gamma_{n}\CUTDISTCONV W$.
By the continuity of $\langle.\rangle$ we also have $\left\langle \Gamma_{n}\right\rangle \to\left\langle W\right\rangle $
in $K\left(\GRAPHONSPACE\right)$. 

$K(\GRAPHONSPACE)$ is compact by Fact~\ref{fact:VietorisCompact}
and Remark~\ref{rem:interestedhyperspace}. Since $X$ is a closed
subset of $K(\GRAPHONSPACE)$, it is also compact. By Theorem~\ref{thm:hyperspaceANDcutDISTANCE},
we have that the inverse map $\langle\cdot\rangle^{-1}$ is also continuous.
Therefore $\UNLABELLEDGRAPHONSPACE$ is homeomorphic to $X$, and
hence compact. Therefore, $\delta_{\Box}$ and $\chi$ give the same
compact topology on $\UNLABELLEDGRAPHONSPACE$.
\end{proof}

\section{Relation to multiway cuts\label{sec:Multiway}}

In this section, we compare our view with~\cite{MR2925382}. The
main result of~\cite{MR2925382} is a statement that several properties
of graph(on) sequences are equivalent to cut distance convergence.
These properties include so-called right-convergence and convergence
of free energies. As we shall see, however, the one that relates to
our approach involves so-called multiway cuts.

Let us give the key definition (see p.~178 in~\cite{MR2925382}).
First, for two real matrices $M$ and $N$ of the same dimensions,
their \emph{$L_{1}$-distance} is defined as $\left\Vert M-N\right\Vert _{1}:=\sum_{i,j}\left|M_{ij}-N_{ij}\right|$.
Suppose that $\mathbf{a}=\left(a_{1},\ldots,a_{q}\right)$ is a vector
with non-negative entries that sum up to~1. Given a graphon $W:\Omega^{2}\rightarrow[0,1]$,
we write $\mathcal{S}_{\mathbf{a}}(W)$ for the set of all $q\times q$
matrices $M$ that can be obtained in the following way. Partition
suitably $\Omega=\Omega_{1}\sqcup\ldots\sqcup\Omega_{q}$ so that
$\nu(\Omega_{i})=a_{i}$. Then for each $i,j\in[q]$, define $M_{ij}:=\int_{\Omega_{i}}\int_{\Omega_{j}}W$.
So, $\mathcal{S}_{\mathbf{a}}(W)\subseteq[0,1]^{q\times q}$. In particular,
given two graphons $U$ and $W$, we can talk about the \emph{Hausdorff
distance with respect to $\ell_{1}$} of the sets $\mathcal{S}_{\mathbf{a}}(U)$
and $\mathcal{S}_{\mathbf{a}}(W)$,
\[
d_{1}^{\mathrm{Hf}}\left(\mathcal{S}_{\mathbf{a}}(U),\mathcal{S}_{\mathbf{a}}(W)\right):=\max\left\{ \sup_{M\in\mathcal{S}_{\mathbf{a}}(U)}\inf_{N\in\mathcal{S}_{\mathbf{a}}(W)}\left\Vert M-N\right\Vert _{1},\sup_{M\in\mathcal{S}_{\mathbf{a}}(W)}\inf_{N\in\mathcal{S}_{\mathbf{a}}(U)}\left\Vert M-N\right\Vert _{1}\right\} \;.
\]
Then one of the main equivalences of~\cite{MR2925382}, given there
in Theorem~3.5 and stated here in a slightly tailored form\footnote{More precisely, Theorem 3.5 of~\cite{MR2925382} does not talk about
$\mathcal{S}_{\mathbf{a}}(\cdot)$, but about a slightly different
object $\mathcal{S}_{q}(\cdot).$ However, equivalence of these two
approaches easily follows from Lemma 4.5 of~\cite{MR2925382}.}, is the following.
\begin{thm}[\cite{MR2925382}]
\label{thm:Multiway}Let $\Gamma_{1},\Gamma_{2},\Gamma_{3},\ldots\in\GRAPHONSPACE$.
The following are equivalent:
\begin{enumerate}[label=(\alph*)]
\item The sequence $\Gamma_{1},\Gamma_{2},\Gamma_{3},\ldots$ is Cauchy
with respect to the cut distance $\delta_{\square}$,
\item For each $\mathbf{a}$ as above, the sequence $\mathcal{S}_{\mathbf{a}}(\Gamma_{1}),\mathcal{S}_{\mathbf{a}}(\Gamma_{2}),\mathcal{S}_{\mathbf{a}}(\Gamma_{3}),\ldots$
is Cauchy with respect to $d_{1}^{\mathrm{Hf}}$.
\end{enumerate}
\end{thm}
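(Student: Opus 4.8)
The plan is to show that Theorem~\ref{thm:Multiway} is equivalent to Theorem~\ref{thm:hyperspaceANDcutDISTANCEsimpler} (combined with Theorem~\ref{thm:subsequenceLIMACC}), and thus deduce it from what has already been established. Since the implication \ref{enu:thmextGammaW}$\Rightarrow$ cut distance convergence is classical (cut distance convergence implies convergence of every $\mathcal{S}_{\mathbf a}(\cdot)$ because a measure preserving bijection nearly matching two graphons in cut norm transports a partition witnessing a matrix in $\mathcal{S}_{\mathbf a}$), the real content is the reverse direction: if $\mathcal{S}_{\mathbf a}(\Gamma_1),\mathcal{S}_{\mathbf a}(\Gamma_2),\ldots$ is Cauchy in $d_1^{\mathrm{Hf}}$ for every $\mathbf a$, then $\Gamma_1,\Gamma_2,\ldots$ is Cauchy in $\delta_\square$. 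For this I would argue by contradiction and use Theorem~\ref{thm:subsequenceLIMACC} to pass to a subsequence with $\LIM=\ACC$, so that the subsequence is already cut-distance convergent by Theorem~\ref{thm:hyperspaceANDcutDISTANCEsimpler}; the point is then to reconcile two cut-distance-convergent subsequences of the same $\mathcal{S}_{\mathbf a}$-Cauchy sequence.

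Concretely, suppose $\Gamma_n$ is not $\delta_\square$-Cauchy. Then there exist two subsequences converging in cut distance to graphons $W$ and $W'$ with $\delta_\square(W,W')=\varepsilon>0$. By the (classical) easy direction above, $\mathcal{S}_{\mathbf a}(W)$ and $\mathcal{S}_{\mathbf a}(W')$ are then the $d_1^{\mathrm{Hf}}$-limits of the corresponding subsequences of $\mathcal{S}_{\mathbf a}(\Gamma_n)$, and since the whole sequence $\mathcal{S}_{\mathbf a}(\Gamma_n)$ is Cauchy, these two limits must coincide: $\mathcal{S}_{\mathbf a}(W)=\mathcal{S}_{\mathbf a}(W')$ for every $\mathbf a$. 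So it remains to prove the separating statement: \emph{if $\mathcal{S}_{\mathbf a}(W)=\mathcal{S}_{\mathbf a}(W')$ for all $\mathbf a$ then $\delta_\square(W,W')=0$}. This is really the combinatorial heart, but it is exactly the uniqueness part of the counting/sampling characterization of graphons: knowing all quotient sets $\mathcal{S}_{\mathbf a}$ determines the graphon up to weak isomorphism (this is essentially contained in~\cite{MR2925382} and in \cite[Chapter~12]{Lovasz2012}), so we may simply invoke it; alternatively one can derive it from the moment/homomorphism-density characterization, since the matrices in $\mathcal{S}_{\mathbf a}$ for all $\mathbf a$ determine all homomorphism densities $t(F,W)$, which in turn determine $W$ up to weak isomorphism.

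For the forward implication I would spell out the routine estimate: given $\delta_\square(\Gamma_m,\Gamma_n)<\eta$, pick $\varphi$ with $d_\square(\Gamma_m,\Gamma_n^\varphi)<2\eta$; for any partition $\Omega=\Omega_1\sqcup\cdots\sqcup\Omega_q$ with $\nu(\Omega_i)=a_i$ realizing a matrix $M\in\mathcal{S}_{\mathbf a}(\Gamma_m)$, the partition $\varphi^{-1}(\Omega_1)\sqcup\cdots\sqcup\varphi^{-1}(\Omega_q)$ realizes a matrix $N\in\mathcal{S}_{\mathbf a}(\Gamma_n)$ with $\|M-N\|_1=\sum_{i,j}|\int_{\Omega_i\times\Omega_j}(\Gamma_m-\Gamma_n^\varphi)|\le q^2\cdot\|\Gamma_m-\Gamma_n^\varphi\|_\square < 2q^2\eta$, and symmetrically; hence $d_1^{\mathrm{Hf}}(\mathcal{S}_{\mathbf a}(\Gamma_m),\mathcal{S}_{\mathbf a}(\Gamma_n))\le 2q^2\eta\to 0$. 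Combining the two directions with Theorem~\ref{thm:hyperspaceANDcutDISTANCEsimpler} then yields the equivalence, and in fact shows that Theorem~\ref{thm:Multiway} and Theorem~\ref{thm:hyperspaceANDcutDISTANCEsimpler} are equivalent formulations of the same phenomenon. The main obstacle I anticipate is the separating statement ``$\mathcal{S}_{\mathbf a}(W)=\mathcal{S}_{\mathbf a}(W')$ for all $\mathbf a$ $\Rightarrow$ $\delta_\square(W,W')=0$''; whether we prove it ourselves via homomorphism densities or cite it from~\cite{MR2925382}/\cite{Lovasz2012} determines how much work this section requires, and one should be slightly careful that the object used in \cite{MR2925382} is $\mathcal{S}_q$ rather than $\mathcal{S}_{\mathbf a}$, as already flagged in the footnote to Theorem~\ref{thm:Multiway}.
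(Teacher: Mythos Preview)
First, note that the paper does not itself prove Theorem~\ref{thm:Multiway}: it is quoted from~\cite{MR2925382}. What the paper \emph{does} prove is Proposition~\ref{prop:MultiwayEquivalentLIMACC}, which shows directly that condition~(b) of Theorem~\ref{thm:Multiway} is equivalent to $\LIM=\ACC$; combined with Theorem~\ref{thm:hyperspaceANDcutDISTANCEsimpler} this yields an independent proof of Theorem~\ref{thm:Multiway} inside the weak{*} framework. So the relevant comparison is between your argument and the proof of Proposition~\ref{prop:MultiwayEquivalentLIMACC}.

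Your forward direction is correct and standard; the paper's route \ref{enu:Tired1}$\Rightarrow$\ref{enu:Tired2}$\Rightarrow$\ref{enu:Tired3} through envelopes and the Hausdorff metric on $K(\GRAPHONSPACE)$ is the same estimate in different packaging.

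Your reverse direction is genuinely different from the paper's, and it has a soft spot. The paper's argument for \ref{enu:Tired3}$\Rightarrow$\ref{enu:Tired1} in Proposition~\ref{prop:MultiwayEquivalentLIMACC} is self-contained: given $W\in\ACC(\Gamma_1,\Gamma_2,\ldots)$, it uses the $\mathcal{S}_{\mathbf a}$-Cauchy hypothesis only for the equitable vectors $\mathbf a=(1/k,\ldots,1/k)$ to manufacture, for every index $n$ outside the chosen subsequence, a partition $\mathcal P_n$ such that the step graphons $((\Gamma_n)^{\pi_{\mathcal P_n}})^{\Join\mathcal I_k}$ converge weak{*} to $W^{\Join\mathcal I_k}$; this places each $W^{\Join\mathcal I_k}$ in $\LIM$, and letting $k\to\infty$ gives $W\in\LIM$. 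No separating statement is needed, and only equitable $\mathbf a$ are used. Your route instead passes to two cut-distance limits $W,W'$ and reduces everything to the claim that $\mathcal{S}_{\mathbf a}(W)=\mathcal{S}_{\mathbf a}(W')$ for all $\mathbf a$ forces $\delta_\square(W,W')=0$. That claim is true, but your two proposed justifications are both problematic: citing it from~\cite{MR2925382} is essentially citing the theorem you are trying to prove (apply Theorem~\ref{thm:Multiway} to the alternating sequence $W,W',W,W',\ldots$ and you get exactly this statement), and the assertion that ``the matrices in $\mathcal{S}_{\mathbf a}$ for all $\mathbf a$ determine all homomorphism densities $t(F,W)$'' is not a routine observation---the equivalence of quotient-convergence and left-convergence is precisely one of the nontrivial implications established in~\cite{MR2925382}, not an input one can take for free. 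So as written, your reverse direction either begs the question or leaves the hard step unproved; the paper's constructive step-graphon argument avoids this entirely.
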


To close the circle, we prove that the second conditions of Theorem~\ref{thm:hyperspaceANDcutDISTANCEsimpler}
and of Theorem~\ref{thm:Multiway} are equivalent. We shall use the
Hausdorff metric $d_{\mathrm{w^{*}}}^{\mathrm{Hf}}$ on $K(\GRAPHONSPACE)$
derived as in~(\ref{eq:HausdorffMetric}) from the weak{*} metric
defined, say, by~(\ref{eq:exampleweakstarmetric}) .
\begin{prop}
\label{prop:MultiwayEquivalentLIMACC}Let $\Gamma_{1},\Gamma_{2},\Gamma_{3},\ldots\in\GRAPHONSPACE$.
The following are equivalent:
\begin{enumerate}[label=(\alph*)]
\item \label{enu:Tired1}$\LIM\left(\Gamma_{1},\Gamma_{2},\Gamma_{3},\ldots\right)=\ACC\left(\Gamma_{1},\Gamma_{2},\Gamma_{3},\ldots\right)$,
\item \label{enu:Tired2}The sequence $\left\langle \Gamma_{1}\right\rangle ,\left\langle \Gamma_{2}\right\rangle ,\left\langle \Gamma_{3}\right\rangle ,\ldots$
is Cauchy with respect to $d_{\mathrm{w^{*}}}^{\mathrm{Hf}}$,
\item \label{enu:Tired3}For each $\mathbf{a}$ as above, the sequence $\mathcal{S}_{\mathbf{a}}(\Gamma_{1}),\mathcal{S}_{\mathbf{a}}(\Gamma_{2}),\mathcal{S}_{\mathbf{a}}(\Gamma_{3}),\ldots$
is Cauchy with respect to $d_{1}^{\mathrm{Hf}}$.
\end{enumerate}
\end{prop}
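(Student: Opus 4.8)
The plan is to route all three conditions through the notion of being Cauchy in the cut distance, using the machinery already built in Sections~\ref{sec:WeakStarANDCutDist}--\ref{subsec:RelatingToVietoris} together with the imported Theorem~\ref{thm:Multiway}. First I would dispose of the equivalence \ref{enu:Tired1}$\Leftrightarrow$\ref{enu:Tired3}, which is essentially immediate: by Theorem~\ref{thm:hyperspaceANDcutDISTANCEsimpler}, condition~\ref{enu:Tired1} holds precisely when $\Gamma_1,\Gamma_2,\Gamma_3,\ldots$ is Cauchy with respect to $\delta_\square$, and by Theorem~\ref{thm:Multiway} the latter is precisely condition~\ref{enu:Tired3}. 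So it remains only to insert \ref{enu:Tired2} into this chain.

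For \ref{enu:Tired1}$\Rightarrow$\ref{enu:Tired2} I would invoke the ``furthermore'' clause of Theorem~\ref{thm:hyperspaceANDcutDISTANCEsimpler}: assuming \ref{enu:Tired1}, there is a $\preceq$-maximal element $W\in\LIM(\Gamma_1,\Gamma_2,\Gamma_3,\ldots)$ with $\Gamma_n\CUTDISTCONV W$. Theorem~\ref{thm:hyperspaceANDcutDISTANCE} then yields $\left\langle\Gamma_n\right\rangle\to\left\langle W\right\rangle$ in $K(\GRAPHONSPACE)$, and since by Fact~\ref{fact:VietorisCompact} the Vietoris topology on $K(\GRAPHONSPACE)$ is induced by the Hausdorff metric $d_{\mathrm{w}^*}^{\mathrm{Hf}}$, a convergent sequence is in particular Cauchy, which is \ref{enu:Tired2}.

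For \ref{enu:Tired2}$\Rightarrow$\ref{enu:Tired1} I would first note that $(K(\GRAPHONSPACE),d_{\mathrm{w}^*}^{\mathrm{Hf}})$ is compact metrizable (Fact~\ref{fact:VietorisCompact}, Remark~\ref{rem:interestedhyperspace}), hence complete, so a $d_{\mathrm{w}^*}^{\mathrm{Hf}}$-Cauchy sequence $\left\langle\Gamma_1\right\rangle,\left\langle\Gamma_2\right\rangle,\left\langle\Gamma_3\right\rangle,\ldots$ converges in $K(\GRAPHONSPACE)$ to some compact set $L$. Then, exactly as in the proof of Corollary~\ref{cor:homeomorphism} --- that is, using the characterization of the Vietoris limit as the topological limit (\cite[Exercises 4.21 and 4.23]{MR1321597}) and identifying this topological limit with $\ACC(\Gamma_1,\Gamma_2,\Gamma_3,\ldots)$ from above (every weak{*} accumulation point of a sequence of versions lies in $\limsup_n\left\langle\Gamma_n\right\rangle$ and conversely) and with $\LIM(\Gamma_1,\Gamma_2,\Gamma_3,\ldots)$ from below --- one gets $L=\LIM(\Gamma_1,\Gamma_2,\Gamma_3,\ldots)=\ACC(\Gamma_1,\Gamma_2,\Gamma_3,\ldots)$, i.e.\ \ref{enu:Tired1}.

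I do not expect a genuine obstacle: all the substance is already packaged in Theorem~\ref{thm:hyperspaceANDcutDISTANCEsimpler}, Theorem~\ref{thm:hyperspaceANDcutDISTANCE}, Corollary~\ref{cor:homeomorphism}, and the imported Theorem~\ref{thm:Multiway}. The one place that deserves a sentence of justification is the interchangeability of ``$d_{\mathrm{w}^*}^{\mathrm{Hf}}$-Cauchy'' with ``convergent in $K(\GRAPHONSPACE)$'', which is valid precisely because $K(\GRAPHONSPACE)$ is compact metrizable; in particular this also shows that the condition in \ref{enu:Tired2} is insensitive to the choice of a compatible metric on $K(\GRAPHONSPACE)$, so stating it via the specific metric of~(\ref{eq:exampleweakstarmetric}) costs nothing.
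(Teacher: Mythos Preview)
Your argument is correct, but it takes a different route from the paper's own proof, and the difference is worth noting because it touches on the purpose of the proposition.

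You establish \ref{enu:Tired1}$\Leftrightarrow$\ref{enu:Tired3} by composing Theorem~\ref{thm:hyperspaceANDcutDISTANCEsimpler} with the imported Theorem~\ref{thm:Multiway}, i.e.\ by passing through the intermediate condition ``$\delta_\square$-Cauchy''. The paper instead proves the cycle \ref{enu:Tired3}$\Rightarrow$\ref{enu:Tired1}$\Rightarrow$\ref{enu:Tired2}$\Rightarrow$\ref{enu:Tired3} with direct arguments for the two implications involving~\ref{enu:Tired3}: for \ref{enu:Tired3}$\Rightarrow$\ref{enu:Tired1} it takes $W\in\ACC(\Gamma_1,\Gamma_2,\ldots)$, approximates by equitable step averages $W^{\Join\mathcal{I}_k}$, and uses the $d_1^{\mathrm{Hf}}$-Cauchy hypothesis on the matrix profiles to manufacture versions of the \emph{missing} $\Gamma_n$'s whose step averages also converge to $W^{\Join\mathcal{I}_k}$, whence $W\in\LIM(\Gamma_1,\Gamma_2,\ldots)$; for \ref{enu:Tired2}$\Rightarrow$\ref{enu:Tired3} it gives a quantitative estimate showing that a small $d_{\mathrm{w}^*}^{\mathrm{Hf}}$ distance between envelopes forces a small $d_1^{\mathrm{Hf}}$ distance between the corresponding matrix profiles. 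The paper's \ref{enu:Tired1}$\Rightarrow$\ref{enu:Tired2} matches yours (via Corollary~\ref{cor:homeomorphism}).

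What this buys: your proof is shorter, but it leans on Theorem~\ref{thm:Multiway} as a black box, so the equivalence \ref{enu:Tired1}$\Leftrightarrow$\ref{enu:Tired3} is obtained only \emph{modulo} the external result. The paper's intent in this section is precisely to ``close the circle'' by linking conditions~\ref{enu:Tired1} and~\ref{enu:Tired3} directly, thereby showing that the weak{*}/envelope framework recovers the multiway-cut characterization on its own terms rather than via the already-known equivalence with $\delta_\square$-Cauchy. Your \ref{enu:Tired2}$\Rightarrow$\ref{enu:Tired1} (via compactness of $K(\GRAPHONSPACE)$ and the topological-limit identification) is fine and essentially the same mechanism used in the proof of Corollary~\ref{cor:homeomorphism}.
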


\begin{proof}
First we show that~\ref{enu:Tired3} implies~\ref{enu:Tired1}.
Suppose that $\Omega=\left[0,1\right]$ and let $\mathcal{I}_{k}=\left\{ I_{k,1},\dots,I_{k,k}\right\} $
be the canonical partition of $\left[0,1\right]$ into equimeasurable
intervals, i.e., $\lambda\left(I_{k,i}\right)=\frac{1}{k}$ and $I_{k,i}$
precedes $I_{k,i+1}$. If $\mathcal{P}=\left\{ P_{1},\ldots,P_{k}\right\} $
is another equitable partition of $\left[0,1\right]$, then we denote
as $\pi_{\mathcal{P}}$ any fixed measure preserving bijection such
that $\pi_{\mathcal{P}}\left(P_{i}\right)=I_{k,i}$ for every $i\in[k]$.
Let $W\in\ACC\left(\Gamma_{1},\Gamma_{2},\Gamma_{3},\ldots\right)$.
We may suppose that there is a subsequence $\left\{ n_{j}\right\} _{j\in\mathbb{N}}$
such that $\Gamma_{n_{j}}\WEAKCONV W$. Fix $k\in\mathbb{N}$. It
is easy to see that $\left(\Gamma_{n_{j}}^{\Join\mathcal{I}_{k}}\right)\WEAKCONV W^{\Join\mathcal{I}_{k}}$.
Define $M_{n_{j}}\left(r,s\right)=\int_{I_{k,r}\times I_{k,s}}\Gamma_{n_{j}}$
and $N_{k}\left(r,s\right)=\int_{I_{k,r}\times I_{k,s}}W$ for every
$r,s\in[k]$. Then we have $M_{n_{j}}\in\mathcal{S}_{{\bf a}}\left(\Gamma_{n_{j}}\right)$
for every $j\in\mathbb{N}$ and $N_{k}\in\mathcal{S}_{{\bf a}}\left(W\right)$
where ${\bf a}=\left(\frac{1}{k},\dots,\frac{1}{k}\right)$. Moreover,
$M_{n_{j}}\LONECONV N_{k}$. The assumption~\ref{enu:Tired3} allows
to find an equitable partition $\mathcal{P}_{n}=\left\{ P_{i}\right\} _{\in[k]}$
for every $n\not\in\left\{ n_{j}:j\in\mathbb{N}\right\} $ such that
if we define $M_{n}\left(r,s\right)=\int_{P_{r}\times P_{s}}\Gamma_{n}$
for every $r,s\in[k]$, then we have $M_{n}\LONECONV N_{k}$. One
can easily verify that for $W_{n}:=\left(\left(\Gamma_{n}\right)^{\Join\mathcal{P}_{n}}\right)^{\pi_{\mathcal{P}_{n}}}$
we have $W_{n}=\left(\left(\Gamma_{n}\right)^{\pi_{\mathcal{P}_{n}}}\right)^{\Join\mathcal{I}_{k}}$
and therefore $W_{n}\in\left\langle \Gamma_{n}\right\rangle $ by
Lemma\ref{lem:averagingInsideLIM}. This implies that $W_{n}\WEAKCONV W^{\Join\mathcal{I}_{k}}$
because $W_{n}\upharpoonright I_{k,r}\times I_{k,s}=k^{-2}\cdot M_{n}\left(r,s\right)$
holds for every $n\in\mathbb{N}$ and $r,s\in[k]$. Hence, $W^{\Join\mathcal{I}_{k}}\in\LIM\left(\Gamma_{1},\Gamma_{2},\Gamma_{3},\ldots\right)$.
Finally, since $W^{\Join\mathcal{I}_{k}}\WEAKCONV W$ and $\LIM\left(\Gamma_{1},\Gamma_{2},\Gamma_{3},\ldots\right)$
is weak{*} closed by Lemma\ref{lem:LIMclosed} \ref{enu:weakstar},
we conclude that $W\in\LIM\left(\Gamma_{1},\Gamma_{2},\Gamma_{3},\ldots\right)$.

The direction from~\ref{enu:Tired1} to~\ref{enu:Tired2} follows
from Corollary~\ref{cor:homeomorphism}. It remains to show that~\ref{enu:Tired2}
implies~\ref{enu:Tired3}. Let $\mathbf{a}\in[0,1]^{q}$ be a vector
whose entries sum up to~1, $\epsilon>0$ and $\mathcal{P}=\left\{ P_{i}\right\} _{i=1}^{q}$
be a partition of $\Omega$ with the property that $\nu(P_{i})=a_{i}$.
Recall that we have fixed a collection $\left\{ A_{r}\right\} _{r\in\mathbb{N}}$
of measurable subsets of $\left[0,1\right]$ that is dense in the
measure algebra and that defines the Hausdorff metric $d_{\mathrm{w^{*}}}^{\mathrm{Hf}}$
on $K(\GRAPHONSPACE)$ by~(\ref{eq:exampleweakstarmetric}). It follows
that there is a sequence $\left\{ A_{r_{i}}\right\} _{i=1}^{q}$ such
that $\sum_{i=1}^{q}\lambda\left(P_{i}\triangle A_{r_{i}}\right)<\epsilon$.
Let $W,U\in\mathcal{W}$ be such that $d_{\mathrm{w^{*}}}^{\mathrm{Hf}}\left(\left\langle U\right\rangle ,\left\langle W\right\rangle \right)<\frac{\epsilon}{2^{2r_{q}}}$.

Let $M\in\mathcal{S}_{{\bf a}}\left(W\right)$. Then there are measure
preserving bijections $\pi,\phi$ such that $M\left(i,j\right)=\int_{P_{i}\times P_{j}}W^{\phi}=\int_{\phi^{-1}\left(P_{i}\right)\times\phi^{-1}\left(P_{j}\right)}W$
and $d_{w^{*}}\left(\left(W^{\phi}\right)^{\Join\mathcal{P}},U^{\pi}\right)<\frac{\epsilon}{2^{2r_{q}}}$.
Define $N\in\mathcal{S}_{{\bf a}}\left(U\right)$ as $N\left(i,j\right)=\int_{P_{i}\times P_{j}}U^{\pi}=\int_{\pi^{-1}\left(P_{i}\right)\times\pi^{-1}\left(P_{j}\right)}U$.
This gives

\begin{align*}
\left\Vert M-N\right\Vert _{1} & =\sum_{i,j=1}^{q}\left|\int_{P_{i}\times P_{j}}\left(W^{\phi}\right)^{\Join\mathcal{P}}-U^{\pi}\right|\le\sum_{i,j=1}^{q}\left|\int_{A_{r_{i}}\times A_{r_{j}}}\left(W^{\phi}\right)^{\Join\mathcal{P}}-U^{\pi}\right|+\epsilon\\
 & \le2^{2r_{q}}\sum_{i,j=1}^{q}2^{-\left(r_{i}+r_{j}\right)}\left|\int_{A_{r_{i}}\times A_{r_{j}}}\left(W^{\phi}\right)^{\Join\mathcal{P}}-U^{\pi}\right|+\epsilon\le2^{2r_{q}}d_{\mathrm{w^{*}}}^{\mathrm{Hf}}\left(\left\langle U\right\rangle ,\left\langle W\right\rangle \right)+\epsilon\\
 & \le2\epsilon.
\end{align*}
This shows that if $d_{\mathrm{w^{*}}}^{\mathrm{Hf}}\left(\left\langle U\right\rangle ,\left\langle W\right\rangle \right)<\frac{\epsilon}{2^{2r_{q}}}$,
then $d_{1}^{\mathrm{Hf}}\left(\mathcal{S}_{{\bf a}}\left(U\right),\mathcal{S}_{{\bf a}}\left(W\right)\right)\le2\epsilon$.
Consequently, \ref{enu:Tired2} implies~\ref{enu:Tired3}.
\end{proof}

\section{Properties of the structuredness (quasi)order\label{sec:BasicPropertiesOfStructurdness}}

Above, we obtained properties of the structuredness (quasi)order $\preceq$
that were needed for our abstract proof of Theorem~\ref{thm:hyperspaceANDcutDISTANCE}.
In this section, we establish further properties of $\preceq$. In
Lemma~\ref{lem:ORDER} we prove that $\preceq$ is actually a closed
order on $\UNLABELLEDGRAPHONSPACE$. In Corollary~\ref{cor:chains},
we prove that $\preceq$-increasing/decreasing chains of graphons
are cut distance convergent. In Corollary~\ref{cor:WhichHyperspaceElementsAreEnvelopes},
we characterize the elements of $K(\GRAPHONSPACE)$ that are envelopes
of graphons. Finally, in Proposition~\ref{cor:WhichHyperspaceElementsAreEnvelopes},
we characterize $\preceq$-minimal and $\preceq$-maximal elements.
This last-mentioned result is just starting point of investigating
the structure of the poset $\preceq$, which we leave open.

\medskip{}
Let us first prove that $\preceq$ is actually an order modulo weak
isomorphism.
\begin{lem}
\label{lem:ORDER}The relation $\preceq$ on the space $\UNLABELLEDGRAPHONSPACE$
is an order, and as a subset of $\UNLABELLEDGRAPHONSPACE\times\UNLABELLEDGRAPHONSPACE$
it is closed.
\end{lem}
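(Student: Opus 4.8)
The plan is to read off all three order axioms, together with closedness, from the envelope formalism of Section~\ref{sec:structurdness} and the hyperspace correspondence of Corollary~\ref{cor:homeomorphism}; no genuinely new ideas are needed. First I would note that $\preceq$ descends to a well-defined relation on $\UNLABELLEDGRAPHONSPACE$: if $\delta_{\Box}(U,U')=0$ and $\delta_{\Box}(W,W')=0$, then Lemma~\ref{lem:basicpropofenvelopes}\ref{enu:envelopeswhendistzero} gives $\langle U\rangle=\langle U'\rangle$ and $\langle W\rangle=\langle W'\rangle$, so $U\preceq W$ holds if and only if $U'\preceq W'$ does, and $\llbracket U\rrbracket\preceq\llbracket W\rrbracket$ is unambiguous. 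Reflexivity ($U\in\langle U\rangle$) and transitivity (inclusion of envelopes is transitive) were already recorded in the text. The only axiom requiring an argument is antisymmetry: if $U\preceq W$ and $W\preceq U$, then Lemma~\ref{lem:basicpropofenvelopes}\ref{enu:UprecWenve} gives $\langle U\rangle=\langle W\rangle$, whence $\delta_{\Box}(U,W)=0$ by Corollary~\ref{cor:summarizeENVcutdistMEASURES}, i.e.\ $\llbracket U\rrbracket=\llbracket W\rrbracket$ in $\UNLABELLEDGRAPHONSPACE$. Thus $\preceq$ is a partial order.

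For closedness I would pass to the hyperspace. By Lemma~\ref{lem:basicpropofenvelopes}\ref{enu:UprecWenve}, $\llbracket U\rrbracket\preceq\llbracket W\rrbracket$ holds exactly when $\langle U\rangle\subseteq\langle W\rangle$, so $\preceq$ is the preimage of the containment relation
\[
\mathcal{C}:=\{(A,B)\in K(\GRAPHONSPACE)\times K(\GRAPHONSPACE):A\subseteq B\}
\]
under the map $\llbracket U\rrbracket\mapsto\langle U\rangle$, which is continuous by Corollary~\ref{cor:homeomorphism}. Hence it suffices to prove that $\mathcal{C}$ is closed in $K(\GRAPHONSPACE)^{2}$ --- the classical fact that set inclusion is a closed relation in the Vietoris hyperspace of a compact metrizable space. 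Fixing a metric $\rho$ compatible with the weak{*} topology on $\GRAPHONSPACE$ and the associated Hausdorff metric $d_{\rho}^{\mathrm{Hf}}$ of~\eqref{eq:HausdorffMetric} (which induces the Vietoris topology by Fact~\ref{fact:VietorisCompact}), suppose $A_{n}\to A$ and $B_{n}\to B$ with $A_{n}\subseteq B_{n}$. Given $x\in A$, choose $x_{n}\in A_{n}$ attaining $\rho(x,A_{n})$, so that $\rho(x_{n},x)\le d_{\rho}^{\mathrm{Hf}}(A_{n},A)\to0$; since $x_{n}\in B_{n}$ we also have $\rho(x_{n},B)\le d_{\rho}^{\mathrm{Hf}}(B_{n},B)\to0$, hence $\rho(x,B)=0$ and $x\in B$ by compactness of $B$. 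Therefore $A\subseteq B$, $\mathcal{C}$ is closed, and $\preceq$ is closed in $\UNLABELLEDGRAPHONSPACE\times\UNLABELLEDGRAPHONSPACE$.

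I do not expect a real obstacle here: all the substantive content is already packaged in the envelope machinery, in Corollary~\ref{cor:summarizeENVcutdistMEASURES}, and in Corollary~\ref{cor:homeomorphism}. The one thing to watch --- the closest thing to a difficulty --- is circularity, and there is none: Corollary~\ref{cor:summarizeENVcutdistMEASURES} and Corollary~\ref{cor:homeomorphism} rest on Theorem~\ref{thm:hyperspaceANDcutDISTANCEsimpler}, Theorem~\ref{thm:hyperspaceANDcutDISTANCE} and Proposition~\ref{prop:flatter}, none of which uses Lemma~\ref{lem:ORDER}.
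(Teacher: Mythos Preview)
Your proof is correct and follows essentially the same approach as the paper: both transfer the problem to $K(\GRAPHONSPACE)$ via the envelope map of Corollary~\ref{cor:homeomorphism}, where $\preceq$ becomes set inclusion, and then observe that $\subseteq$ is a closed partial order on the hyperspace. The paper's proof simply declares the latter fact ``trivially satisfied'' while you spell it out (and make the antisymmetry step explicit via Corollary~\ref{cor:summarizeENVcutdistMEASURES}, which is exactly what underlies the injectivity in Corollary~\ref{cor:homeomorphism}); your check for non-circularity is also accurate.
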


\begin{proof}
Since by Corollary~\ref{cor:homeomorphism} the space $\left(\UNLABELLEDGRAPHONSPACE,\delta_{\Box}\right)$
is homeomorphic to some closed subspace of $K(\GRAPHONSPACE)$ and
the relation $\preceq$ is interpreted as $\subseteq$ on $K(\GRAPHONSPACE)$
it is enough to verify the properties for the relation $\subseteq$
on $K(\GRAPHONSPACE)$. But both properties are trivially satisfied
for the relation $\subseteq$. 
\end{proof}
Next, we turn our attention to finding upper and lower bounds with
respect to $\preceq$. Let us first give an auxiliary result, which
is then utilized in Corollaries~\ref{cor:chains} and~\ref{cor:WhichHyperspaceElementsAreEnvelopes}.
\begin{prop}
\label{prop:directed}
\begin{enumerate}[label=(\alph*)]
\item \label{enu:upper-directed}Suppose that $P\subseteq\GRAPHONSPACE$
is upper-directed in the structuredness order, i.e., for every $U_{0},U_{1}\in P$
there is $V\in P$ such that $U_{0},U_{1}\preceq V$. Then there is
a graphon $W\in\GRAPHONSPACE$ such that $W$ is the supremum of $P$
with respect to $\preceq$.
\item \label{enu:down-directed}Suppose that $P\subseteq\GRAPHONSPACE$
is down-directed in the structuredness order, i.e., for every $U_{0},U_{1}\in P$
there is $V\in P$ such that $V\preceq U_{0},U_{1}$. Then there is
a graphon $W\in\GRAPHONSPACE$ such that $W$ is the infimum of $P$
with respect to $\preceq$.
\end{enumerate}
\end{prop}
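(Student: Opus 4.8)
The plan is to reduce, in both parts, the possibly uncountable directed family $P$ to a single countable chain of graphons to which the machinery of Section~\ref{sec:WeakStarANDCutDist} applies, and then to identify the cut distance limit of that chain as the desired supremum (resp.\ infimum). I spell out part~\ref{enu:upper-directed}; part~\ref{enu:down-directed} is entirely dual, with $\overline{\bigcup_n\langle V_n\rangle}$ replaced by $\bigcap_n\langle V_n\rangle$ throughout. We may assume $P\neq\emptyset$. Since $(\GRAPHONSPACE,\mathrm{w}^*)$ is separable metrizable, so is $K(\GRAPHONSPACE)$ by Fact~\ref{fact:VietorisCompact}, hence so is its subspace $\{\langle U\rangle:U\in P\}$; fix a countable $\{U_n\}_{n\in\mathbb{N}}\subseteq P$ whose envelopes $\{\langle U_n\rangle\}_n$ are dense in it. Using that $P$ is upper-directed I would build recursively $V_1:=U_1$ and $V_{n+1}\in P$ with $V_n\preceq V_{n+1}$ and $U_{n+1}\preceq V_{n+1}$, obtaining a chain $V_1\preceq V_2\preceq\cdots$ with all $V_n\in P$ and $\langle U_n\rangle\subseteq\langle V_n\rangle$ for all $n$.

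The next step is to show that this chain is cut distance convergent and to compute the envelope of its limit. Put $E:=\overline{\bigcup_n\langle V_n\rangle}$ (weak* closure). I claim $\LIM(V_1,V_2,\ldots)=\ACC(V_1,V_2,\ldots)=E$. The inclusion $\ACC(V_1,V_2,\ldots)\subseteq E$ is immediate, since any accumulation point of versions $V_{n_k}^{\psi_k}$ is a weak* limit along a subsequence of points of $\bigcup_n\langle V_n\rangle$. For $E\subseteq\LIM(V_1,V_2,\ldots)$ I would use monotonicity of the chain: if $Z\in\langle V_m\rangle$, then $Z\in\langle V_{m'}\rangle$ for every $m'\ge m$, so one can choose versions $V_{m'}^{\varphi_{m'}}$ with $d_{\mathrm{w}^*}(V_{m'}^{\varphi_{m'}},Z)\to0$, which witnesses $Z\in\LIM(V_1,V_2,\ldots)$; since $\LIM(V_1,V_2,\ldots)$ is weak* closed by Lemma~\ref{lem:LIMclosed}\ref{enu:weakstar}, this upgrades to $E\subseteq\LIM(V_1,V_2,\ldots)$. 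As $\LIM\subseteq\ACC$ always holds, all three sets coincide. By Theorem~\ref{thm:hyperspaceANDcutDISTANCEsimpler} the sequence $(V_n)$ is $\cutDIST$-Cauchy, and for a $\preceq$-maximal $W\in\LIM(V_1,V_2,\ldots)$ (which exists by Lemma~\ref{lem:LIMACCcontainsMAX}) we get $V_n\CUTDISTCONV W$. Moreover $\langle W\rangle=E$: by Theorem~\ref{thm:hyperspaceANDcutDISTANCE} we have $\langle V_n\rangle\to\langle W\rangle$ in $K(\GRAPHONSPACE)$, and the topological limit of the $\langle V_n\rangle$ equals $\LIM(V_1,V_2,\ldots)=\ACC(V_1,V_2,\ldots)$; alternatively, maximality gives $V_n\preceq W$ for all $n$, so $E\subseteq\langle W\rangle$, while $W\in E$ together with the fact that $E$ is weak* closed and closed under passing to versions gives $\langle W\rangle\subseteq E$.

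It then remains to check that $W$ is the supremum of $P$ with respect to $\preceq$. First, $W$ is an upper bound: for any $U\in P$ pick a subsequence with $\langle U_{n_j}\rangle\to\langle U\rangle$ in $K(\GRAPHONSPACE)$; convergence in the Hausdorff metric means that every point of $\langle U\rangle$ is a weak* limit of points $x_j\in\langle U_{n_j}\rangle\subseteq\langle V_{n_j}\rangle\subseteq E$, so $\langle U\rangle\subseteq E=\langle W\rangle$, i.e.\ $U\preceq W$. Second, $W$ is least: if $W'$ is any upper bound of $P$, then $V_n\preceq W'$ for all $n$ since $V_n\in P$, so $\bigcup_n\langle V_n\rangle\subseteq\langle W'\rangle$; taking weak* closures, $\langle W\rangle=E\subseteq\langle W'\rangle$, i.e.\ $W\preceq W'$. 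In part~\ref{enu:down-directed} the dual verifications go the same way: any $y\in\bigcap_n\langle V_n\rangle\subseteq\langle V_{n_j}\rangle\subseteq\langle U_{n_j}\rangle$ has $d_{\mathrm{w}^*}(y,\langle U\rangle)\le d_{\mathrm{w}^*}^{\mathrm{Hf}}(\langle U_{n_j}\rangle,\langle U\rangle)\to0$, hence $y\in\langle U\rangle$, giving $W\preceq U$; and any lower bound $W'$ satisfies $\langle W'\rangle\subseteq\langle V_n\rangle$ for all $n$, hence $\langle W'\rangle\subseteq\bigcap_n\langle V_n\rangle=\langle W\rangle$, i.e.\ $W'\preceq W$.

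I expect the main obstacle to be the middle step. One must extract from the directed family a \emph{single} sequence $(V_n)$ that simultaneously dominates a weak*-dense set of envelopes coming from $P$ (so that the final envelope $\langle W\rangle$ ends up containing $\langle U\rangle$ for \emph{every} $U\in P$) and has $\LIM=\ACC$ (so that Theorem~\ref{thm:hyperspaceANDcutDISTANCEsimpler} is applicable and actually manufactures a graphon limit rather than merely a closed set of accumulation points); the reason (ii) works is precisely the monotonicity of $\langle V_n\rangle$ along the chain. Once $\langle W\rangle=E$ has been pinned down, the order-theoretic checks in the last step are routine manipulations with the Hausdorff metric and weak* closure.
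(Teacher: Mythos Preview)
Your proposal is correct and follows essentially the same route as the paper: pass to a countable dense family of envelopes via separability of $K(\GRAPHONSPACE)$, use directedness to build a monotone chain $(V_n)$ in $P$, observe that monotonicity forces $\LIM=\ACC$ to equal the closure of $\bigcup_n\langle V_n\rangle$ (resp.\ $\bigcap_n\langle V_n\rangle$), and then take the $\preceq$-maximal element of this set via Lemma~\ref{lem:LIMACCcontainsMAX}. Your write-up is in fact more detailed than the paper's on the verification that $E=\LIM=\ACC$ and on the supremum/infimum checks, but the structure and the key ideas are the same.
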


\begin{proof}
Let us focus on~\ref{enu:upper-directed}. First of all consider
the set $\langle P\rangle=\left\{ \langle U\rangle:U\in P\right\} $.
This set is upper-directed with respect to $\subseteq$ in $K(\GRAPHONSPACE)$.
Let $K$ be the weak{*} closure of $\bigcup_{U\in P}\langle U\rangle$.
Clearly, $K\in K(\GRAPHONSPACE)$. Further, $K$ is the supremum of
$\langle P\rangle$ with respect to $\subseteq$ on $K(\GRAPHONSPACE)$.
To finish the proof, we only need to show that there exists $W\in\GRAPHONSPACE$
such that $K=\langle W\rangle$. Consider a countable set $P_{0}\subseteq P$
such that $\langle P_{0}\rangle$ is dense in $\langle P\rangle$.
This can be done since $K(\GRAPHONSPACE)$ is separable metrizable
by Fact~\ref{fact:VietorisCompact}. Take some enumeration $U_{1},U_{2},...$
of $P_{0}$. Define inductively an increasing chain $\Gamma_{1},\Gamma_{2},...\in P$
such that for every $n\in\mathbb{N}$ we have that $U_{1},...,U_{n},\Gamma_{n-1}\preceq\Gamma_{n}$.
This can be done since $P$ is upper-directed. Since $\Gamma_{1}\preceq\Gamma_{2}\preceq\Gamma_{3}\preceq\ldots$,
we have $\LIM\left(\Gamma_{1},\Gamma_{2},\Gamma_{3},\ldots\right)=\ACC\left(\Gamma_{1},\Gamma_{2},\Gamma_{3},\ldots\right)$.
Indeed, whenever we take $\Gamma\in\ACC\left(\Gamma_{1},\Gamma_{2},\Gamma_{3},\ldots\right)$,
say $\Gamma_{n_{1}}^{\pi_{n_{1}}},\Gamma_{n_{2}}^{\pi_{n_{2}}},\Gamma_{n_{3}}^{\pi_{n_{3}}},\ldots\CUTNORMCONV\Gamma$,
then for each index $i$ in the interval $(n_{k},n_{k+1})$, we can
use that $\Gamma_{n_{k}}\preceq\Gamma_{i}$ to approximate $\Gamma_{n_{k}}^{\pi_{n_{k}}}$
by some version $\Gamma_{i}^{\pi_{i}}$ of $\Gamma_{i}$ (with a vanishing
error as $i\rightarrow\infty$). With the gaps $(n_{k},n_{k+1})$
filled-in this way, we have $\Gamma_{n_{1}}^{\pi_{n_{1}}},\Gamma_{1+n_{1}}^{\pi_{1+n_{1}}},\Gamma_{2+n_{1}}^{\pi_{2+n_{1}}},\ldots\CUTNORMCONV\Gamma$,
and consequently $\Gamma\in\LIM\left(\Gamma_{1},\Gamma_{2},\Gamma_{3},\ldots\right)$,
as we wanted. Moreover $K=\LIM\left(\Gamma_{1},\Gamma_{2},\Gamma_{3},\ldots\right)$
because $\langle P_{0}\rangle$ is dense in $\langle P\rangle$. By
Lemma~\ref{lem:LIMACCcontainsMAX}, we can pick a $\preceq$-maximal
element $W$ of $\LIM\left(\Gamma_{1},\Gamma_{2},\Gamma_{3},\ldots\right)$.
Now, we have that $K=\langle W\rangle$.

The proof of~\ref{enu:down-directed} is similar. The only difference
is that the desired infimum is of the form $K=\bigcap_{U\in P}\left\langle U\right\rangle $
and we inductively build a decreasing sequence. 
\end{proof}
Along very similar lines, we can prove that $\preceq$-increasing/decreasing
chains of graphons are cut distance convergent.
\begin{cor}
\label{cor:chains}
\begin{enumerate}[label=(\alph*)]
\item \label{enu:chainINCREASING-1}Suppose that $W_{1}\preceq W_{2}\preceq W_{3}\preceq\ldots$
is a sequence of graphons. Then this sequence is cut distance convergent.
\item \label{enu:chainDECREASING-1}Suppose that $W_{1}\succeq W_{2}\succeq W_{3}\succeq\ldots$
is a sequence of graphons. Then this sequence is cut distance convergent.
\end{enumerate}
\end{cor}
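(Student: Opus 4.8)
The plan is to deduce both parts from Theorem~\ref{thm:hyperspaceANDcutDISTANCEsimpler}: it suffices to show that a monotone $\preceq$-chain $W_{1},W_{2},W_{3},\ldots$ satisfies $\LIM(W_{1},W_{2},W_{3},\ldots)=\ACC(W_{1},W_{2},W_{3},\ldots)$, since then the ``furthermore'' part of Theorem~\ref{thm:hyperspaceANDcutDISTANCEsimpler} yields $W_{1},W_{2},W_{3},\ldots\CUTDISTCONV W$ for a $\preceq$-maximal element $W$ of $\LIM(W_{1},W_{2},W_{3},\ldots)$. In fact the only nontrivial inclusion, namely $\ACC(W_{1},W_{2},W_{3},\ldots)\subseteq\LIM(W_{1},W_{2},W_{3},\ldots)$, is exactly the gap-filling argument that already appears inside the proof of Proposition~\ref{prop:directed}\ref{enu:upper-directed}; so the proof essentially amounts to isolating and reusing it.

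For Part~\ref{enu:chainINCREASING-1}, assume $W_{1}\preceq W_{2}\preceq W_{3}\preceq\ldots$. The inclusion $\LIM\subseteq\ACC$ always holds, so fix $\Gamma\in\ACC(W_{1},W_{2},W_{3},\ldots)$, witnessed by indices $n_{1}<n_{2}<\ldots$ and measure preserving bijections $\pi_{n_{j}}$ with $W_{n_{j}}^{\pi_{n_{j}}}\WEAKCONV\Gamma$. For every index $i\ge n_{1}$ let $k=k(i)$ be the largest index with $n_{k}\le i$; by transitivity of $\preceq$ we have $W_{n_{k}}\preceq W_{i}$, hence $W_{n_{k}}^{\pi_{n_{k}}}\in\left\langle W_{n_{k}}\right\rangle \subseteq\left\langle W_{i}\right\rangle $ by Lemma~\ref{lem:basicpropofenvelopes}\ref{enu:UprecWenve}. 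Since $\left\langle W_{i}\right\rangle $ is the weak{*} closure of the set of versions of $W_{i}$, we may choose a version $W_{i}^{\pi_{i}}$ of $W_{i}$ with $d_{\mathrm{w}^{*}}\!\left(W_{i}^{\pi_{i}},W_{n_{k}}^{\pi_{n_{k}}}\right)<\tfrac{1}{k}$ (for $i<n_{1}$ pick $\pi_{i}$ arbitrarily). Then $W_{1}^{\pi_{1}},W_{2}^{\pi_{2}},W_{3}^{\pi_{3}},\ldots$ is a sequence of versions of $W_{1},W_{2},W_{3},\ldots$ which along the subsequence $(n_{j})_{j}$ converges to $\Gamma$ and on each block $\{i:n_{k}\le i<n_{k+1}\}$ stays within $\tfrac{1}{k}$ of $W_{n_{k}}^{\pi_{n_{k}}}$; hence it weak{*} converges to $\Gamma$. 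Thus $\Gamma\in\LIM(W_{1},W_{2},W_{3},\ldots)$, which proves the equality and therefore cut distance convergence.

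Part~\ref{enu:chainDECREASING-1} is entirely symmetric: if $W_{1}\succeq W_{2}\succeq W_{3}\succeq\ldots$, then for $n_{k}\le i<n_{k+1}$ one has $W_{n_{k+1}}\preceq W_{i}$, so $W_{n_{k+1}}^{\pi_{n_{k+1}}}\in\left\langle W_{i}\right\rangle $ and one approximates $W_{n_{k+1}}^{\pi_{n_{k+1}}}$ by a version of $W_{i}$ with error at most $\tfrac{1}{k}$; since $W_{n_{k+1}}^{\pi_{n_{k+1}}}\WEAKCONV\Gamma$ this again fills the gaps with vanishing error and gives $\Gamma\in\LIM(W_{1},W_{2},W_{3},\ldots)$.

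There is no serious obstacle here; the only point requiring care is that the envelope $\left\langle W_{i}\right\rangle $ is genuinely the \emph{weak{*} closure} of the family of versions of $W_{i}$ (so that an actual version, not merely a weak{*} limit of versions, can be produced within a prescribed distance of $W_{n_{k}}^{\pi_{n_{k}}}$), which is precisely the content of Lemma~\ref{lem:basicpropofenvelopes}\ref{enu:UprecWenve}. Everything else is the bookkeeping of the diagonal construction.
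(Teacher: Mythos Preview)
Your proposal is correct and follows essentially the same approach as the paper: show $\LIM=\ACC$ for a monotone chain via the gap-filling argument (which the paper carries out in the proof of Proposition~\ref{prop:directed}\ref{enu:upper-directed} and then simply invokes here), and conclude via Theorem~\ref{thm:hyperspaceANDcutDISTANCEsimpler}. One minor point: the fact that every element of $\langle W_{i}\rangle$ can be approximated by an \emph{actual version} of $W_{i}$ comes directly from the definition $\langle W_{i}\rangle=\LIM(W_{i},W_{i},\ldots)$ rather than from Lemma~\ref{lem:basicpropofenvelopes}\ref{enu:UprecWenve}, but this does not affect the argument.
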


\begin{proof}
Suppose that $W_{1}\preceq W_{2}\preceq W_{3}\preceq\ldots$. Then
we have $\LIM\left(W_{1},W_{2},W_{3},\ldots\right)=\ACC\left(W_{1},W_{2},W_{3},\ldots\right)$.
By Lemma~\ref{lem:LIMACCcontainsMAX}, we can pick a $\preceq$-maximal
element $W$ of $\LIM\left(W_{1},W_{2},W_{3},\ldots\right)$. Now,
by Theorem~\ref{thm:hyperspaceANDcutDISTANCE} we have that $W_{n}\CUTDISTCONV W$.

The proof for a decreasing sequence is the same.
\end{proof}
Next, we characterize the elements of $K(\GRAPHONSPACE)$ that are
envelopes of graphons.
\begin{cor}
\label{cor:WhichHyperspaceElementsAreEnvelopes}Let $K\in K(\GRAPHONSPACE)$.
Then there exists $W\in\GRAPHONSPACE$ such that $K=\langle W\rangle$
if and only if $K$ is upper-directed (for every $U_{0},U_{1}\in K$
there is $V\in K$ such that $U_{0},U_{1}\preceq V$) and downwards
closed (for every $U\in K$ and $V\preceq U$ we have $V\in K$). 
\end{cor}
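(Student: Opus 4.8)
The statement is a characterization of which compact sets $K\in K(\GRAPHONSPACE)$ arise as envelopes. One direction is essentially contained in earlier results; the other will require building a graphon $W$ with $\langle W\rangle=K$ out of the abstract hypotheses, and this is where the real work lies.

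First I would dispatch the \emph{only if} direction. Suppose $K=\langle W\rangle$ for some graphon $W$. Downward closedness is exactly Lemma~\ref{lem:basicpropofenvelopes}\ref{enu:UprecWenve}: if $U\in\langle W\rangle$ then $U\preceq W$, and if $V\preceq U$ then $V\preceq W$ by transitivity of $\preceq$, hence $V\in\langle W\rangle=K$. Upper-directedness: given $U_0,U_1\in\langle W\rangle$, we have $U_0,U_1\preceq W$, so $W$ itself serves as the common upper bound $V\in K$. (Here I use that $W\in\langle W\rangle$, which holds since the identity is a measure preserving bijection.) That finishes this direction with no effort.

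For the \emph{if} direction, assume $K\in K(\GRAPHONSPACE)$ is upper-directed and downward closed, and produce $W$ with $\langle W\rangle=K$. The strategy mirrors the proof of Proposition~\ref{prop:directed}\ref{enu:upper-directed}. Since $K$ (with the weak* topology) is separable metrizable, pick a countable dense subset $U_1,U_2,U_3,\ldots$ of $K$. Using upper-directedness repeatedly, build a $\preceq$-increasing chain $\Gamma_1\preceq\Gamma_2\preceq\Gamma_3\preceq\cdots$ of elements of $K$ with $U_1,\ldots,U_n\preceq\Gamma_n$ for every $n$ (at each step, $\Gamma_n$ is a common upper bound of $\Gamma_{n-1}$ and $U_n$; such an upper bound lies in $K$ by upper-directedness applied inside $K$). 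As in Proposition~\ref{prop:directed}, for a $\preceq$-increasing chain one has $\LIM(\Gamma_1,\Gamma_2,\ldots)=\ACC(\Gamma_1,\Gamma_2,\ldots)$: any $\Gamma\in\ACC$ arising as a cut-norm limit of versions along a subsequence can be turned into a genuine weak* limit of versions of the \emph{whole} sequence by filling the gaps $(n_k,n_{k+1})$ with versions of the intermediate $\Gamma_i$, approximating $\Gamma_{n_k}^{\pi_{n_k}}$ (possible since $\Gamma_{n_k}\preceq\Gamma_i$, so every version of $\Gamma_{n_k}$ is in $\langle\Gamma_i\rangle$) with vanishing error. Then Lemma~\ref{lem:LIMACCcontainsMAX} gives a $\preceq$-maximal element $W\in\LIM(\Gamma_1,\Gamma_2,\ldots)$, and by Theorem~\ref{thm:hyperspaceANDcutDISTANCE} we get $\langle\Gamma_n\rangle\to\langle W\rangle$ in $K(\GRAPHONSPACE)$, i.e.\ $\LIM(\Gamma_1,\Gamma_2,\ldots)=\langle W\rangle$.

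It remains to check $\langle W\rangle=K$. For $\langle W\rangle\subseteq K$: each $\Gamma_n\in K$ and $\Gamma_n\preceq W$ so $\langle\Gamma_n\rangle\subseteq K$; since $K$ is weak* closed and $\langle W\rangle=\LIM(\Gamma_1,\Gamma_2,\ldots)$ is the weak* closure of $\bigcup_n\langle\Gamma_n\rangle$ (this equality uses that the $\Gamma_n$'s form a $\preceq$-increasing chain through which the sequence is cofinal — essentially Lemma~\ref{lem:LIMclosed}\ref{enu:weakstar} plus the gap-filling argument again), we get $\langle W\rangle\subseteq K$. For $K\subseteq\langle W\rangle$: each $U_m\preceq\Gamma_m\preceq W$, so $U_m\in\langle W\rangle$; since $\{U_m\}$ is weak* dense in $K$ and $\langle W\rangle$ is weak* closed (Lemma~\ref{lem:basicpropofenvelopes}\ref{enu:closureINenve}), $K\subseteq\langle W\rangle$. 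The main obstacle is making the gap-filling / cofinality argument fully rigorous, i.e.\ justifying $\langle W\rangle=\overline{\bigcup_n\langle\Gamma_n\rangle}^{\,w*}$ from $\LIM=\ACC$; but this is exactly the mechanism already deployed in Proposition~\ref{prop:directed}\ref{enu:upper-directed}, so I would import it essentially verbatim. Interestingly, downward closedness of $K$ is used only in the \emph{only if} direction — the \emph{if} direction needs only upper-directedness plus weak* closedness, with downward closedness of $\langle W\rangle$ coming for free from Lemma~\ref{lem:basicpropofenvelopes}\ref{enu:UprecWenve}; I would remark on this.
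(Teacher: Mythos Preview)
Your proof is correct in substance and follows the same route as the paper (which simply invokes Proposition~\ref{prop:directed}\ref{enu:upper-directed} to obtain the supremum $W$, and then uses downward closedness to conclude $\langle W\rangle=K$). You are essentially reproving Proposition~\ref{prop:directed} inline, which is fine.

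However, your closing remark is wrong: downward closedness \emph{is} needed in the \emph{if} direction, and you use it without noticing. The implication ``$\Gamma_n\in K$, so $\langle\Gamma_n\rangle\subseteq K$'' is exactly where it enters: an element $V\in\langle\Gamma_n\rangle$ satisfies $V\preceq\Gamma_n$ by Lemma~\ref{lem:basicpropofenvelopes}\ref{enu:UprecWenve}, and to conclude $V\in K$ you need $K$ to be downward closed. Without this hypothesis the argument fails; indeed, an upper-directed weak{*}-closed $K$ that is not downward closed (for instance, a single non-constant graphon) is never an envelope, since any envelope $\langle W\rangle$ is automatically downward closed. So delete that final remark, and instead make explicit that the step $\langle\Gamma_n\rangle\subseteq K$ appeals to downward closedness of $K$.
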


\begin{proof}
Suppose first that $K=\langle W\rangle$. Then for every $U_{0},U_{1}\in K$,
we have $U_{0},U_{1}\preceq W$. That is, $K$ is upper-directed.
Suppose next that $U\in K$ and $V\preceq U$. Let $\epsilon>0$ be
arbitrary. Since $V\preceq U$, there exists a version $U^{\pi}$
of $U$ such that $d_{\mathrm{w}^{*}}\left(V,U^{\pi}\right)<\frac{\epsilon}{2}$.
Since $U\in K$, we have $U\preceq W$. Thus, also $U^{\pi}\preceq W$.
Therefore, there exists a version $W^{\theta}$ of $W$ such that
$d_{\mathrm{w}^{*}}\left(U^{\pi},W^{\theta}\right)<\frac{\epsilon}{2}$.
We conclude that $d_{\mathrm{w}^{*}}\left(V,W^{\theta}\right)<\epsilon$.
Since $\epsilon$ was arbitrarily small and since $K$ is weak{*}
closed, we conclude that $V\in K$.

Let us now turn to the other implication. As $K$ is upper-directed
then by Proposition~\ref{prop:directed} we can take its supremum
$W$. Because $K$ is downwards closed we have $\langle W\rangle=K$.
\end{proof}
Last, we characterize the minimal and maximal elements of the structuredness
order; the latter part being suggested to us by László Miklós Lovász. 
\begin{prop}
\label{prop:minimalAndMaximalElements}The minimal elements of the
structuredness order are exactly constant graphons, and for every
graphon $W$ there is a minimal graphon $W_{\min}\preceq W$. The
maximal elements of the structuredness order are exactly 0-1 valued
graphons, and for every graphon $W$ there is a maximal graphon $W_{\max}\succeq W$.
\end{prop}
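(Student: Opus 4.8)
The plan is to treat the ``minimal'' and ``maximal'' halves separately, and in each case to (i) verify the claimed family consists of extremal elements, (ii) verify these are the \emph{only} extremal elements, and (iii) produce a dominating/dominated extremal element for an arbitrary graphon $W$. For the minimal part: a constant graphon $c$ satisfies $\langle c\rangle=\{c\}$, so nothing is strictly below it, hence it is minimal. Conversely, if $W$ is not constant, then $W^{\Join\mathcal{P}}$ for a suitable finite partition $\mathcal{P}$ with $\|W-W^{\Join\mathcal{P}}\|_1>0$ small is in $\langle W\rangle$ by Lemma \ref{lem:basicpropofenvelopes}\ref{enu:averagingINSIDE}, but taking $\mathcal P=\{\Omega\}$ already shows the constant graphon $\overline W$ equal to the edge density lies in $\langle W\rangle$, i.e.\ $\overline W\preceq W$; and $W\not\preceq\overline W$ since $\langle\overline W\rangle=\{\overline W\}\not\ni W$. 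Thus $\overline W\prec W$, so $W$ is not minimal, and simultaneously this exhibits $W_{\min}:=\overline W$.

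For the maximal part: first I would show that every $0$-$1$ valued graphon $U$ is maximal. Suppose $U\preceq V$; then $\boldsymbol\Phi_U$ is at least as flat as $\boldsymbol\Phi_V$ by Proposition \ref{prop:flatter}. Since $U$ is $0$-$1$ valued, $\boldsymbol\Phi_U$ is supported on $\{0,1\}$, so by Lemma \ref{lem:01measuresextremal} $\boldsymbol\Phi_U$ cannot be strictly flatter than $\boldsymbol\Phi_V$; hence it is not the case that $U\prec V$ (using the last clause of Proposition \ref{prop:flatter}), so $V\preceq U$ too, i.e.\ $U$ is maximal. Conversely, if $W$ is not $0$-$1$ valued, I must exhibit $V\succ W$; here is where the real work lies. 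The natural candidate is a ``two-point split'' of each non-extremal value: pick a value-interval $[a,b]\subseteq(0,1)$ where $W$ takes values on a set of positive measure, and replace $W$ there by something $0$-$1$ valued whose local averages reproduce $W$. Concretely, on the relevant ``rectangle-like'' pieces one wants a graphon $V$ with $W\in\langle V\rangle$ obtained by a chessboard-type refinement; then $W\preceq V$, and $V\not\preceq W$ because $\boldsymbol\Phi_V$ is strictly less flat (it has moved mass towards $\{0,1\}$) so $\boldsymbol\Phi_W$ is strictly flatter than $\boldsymbol\Phi_V$, forcing $W\prec V$ by Proposition \ref{prop:flatter}.

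To actually build $V$ and to get $W_{\max}$ I would invoke Proposition \ref{prop:directed}\ref{enu:upper-directed} together with Corollary \ref{cor:WhichHyperspaceElementsAreEnvelopes}: consider the set $P$ of all graphons $U$ with $W\preceq U$; this is upper-directed (given $U_0,U_1\succeq W$, Lemma \ref{lem:forVasek}-type arguments via $\langle U_0\rangle\cup\langle U_1\rangle$ produce a common upper bound --- or more cleanly, one uses that the sequence $W,U_0,U_1$ lies in a single $\LIM=\ACC$ situation and applies Lemma \ref{lem:LIMACCcontainsMAX}), so it has a supremum $W_{\max}$, and one checks $W_{\max}$ is maximal: if $W_{\max}\preceq V$ then $V\in P$, so $V\preceq W_{\max}$ by the supremum property. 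It remains to see $W_{\max}$ is $0$-$1$ valued, which follows from the previous paragraph: a non-$0$-$1$ valued graphon is never maximal. This also completes the ``only $0$-$1$ valued graphons are maximal'' direction and shows $W_{\max}\succeq W$.

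\textbf{Main obstacle.} The delicate point is the explicit chessboard construction showing that any graphon $W$ taking a value strictly between $0$ and $1$ on a positive-measure set is dominated by a \emph{strictly} more structured graphon. One must carefully choose where to do the refinement (a single rectangle $A\times A$ on which $W$ is close to a constant $c\in(0,1)$ suffices after an $L^1$-approximation by a step graphon, using Lemma \ref{lem:approx}), replace $c$ there by an increasingly fine $0$-$1$ chessboard of density $c$, and verify via Lemma \ref{lem:cohen}/weak$*$ convergence that the original $W$ reappears in the envelope of the new graphon --- while checking that $\boldsymbol\Phi$ has genuinely become less flat so that Proposition \ref{prop:flatter} yields strictness. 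Once this local strict-domination lemma is in place, the directedness/supremum machinery from Proposition \ref{prop:directed} and Corollary \ref{cor:WhichHyperspaceElementsAreEnvelopes} assembles the global statement routinely.
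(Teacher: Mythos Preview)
Your treatment of the minimal elements and of ``$0$-$1$ valued implies maximal'' is correct and matches the paper. There are, however, two genuine gaps in the maximal half.

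\textbf{Existence of $W_{\max}$.} Your argument hinges on the claim that $P=\{U:W\preceq U\}$ is upper-directed. This is false. Take $W\equiv\tfrac12$; then $P$ consists of all graphons of edge density $\tfrac12$. The complete balanced bipartite graphon and the ``clique'' graphon $\mathbf{1}_{[0,1/\sqrt2]^2}$ both lie in $P$, both are $0$-$1$ valued and hence (by the part you already proved) $\preceq$-maximal, yet they are not weakly isomorphic. A common upper bound $V$ would have to satisfy $\langle V\rangle=\langle U_0\rangle=\langle U_1\rangle$, a contradiction. Neither of your suggested justifications works: Lemma~\ref{lem:forVasek} produces an upper bound only \emph{inside} some $\ACC(\Gamma_1,\Gamma_2,\ldots)$, not in the whole space, and Lemma~\ref{lem:LIMACCcontainsMAX} likewise operates within a fixed $\LIM$. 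The paper instead applies Zorn's lemma: every \emph{chain} in $P$ has an upper bound in $P$ by Proposition~\ref{prop:directed}\ref{enu:upper-directed}, so $P$ has a maximal element. You need chains, not directedness.

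\textbf{``Not $0$-$1$ valued implies not maximal.''} Your chessboard sketch conflates $W$ with its step approximation. Replacing a constant $c$ on a step $A\times A$ of $W^{\Join\mathcal P}$ by a $0$-$1$ chessboard yields a graphon whose envelope contains $W^{\Join\mathcal P}$, not $W$; the $L^1$-approximation does not bridge this, since you need $W$ itself (not something close to it) in the new envelope. The paper avoids this by working with exact copies of $W$: one pulls back along the $2$-to-$1$ map $\varphi(x)=2x\bmod 1$ so that $W^\varphi$ consists of four identical scaled copies of $W$, then adds $+\varepsilon$ on the diagonal blocks and $-\varepsilon$ on the off-diagonal blocks (restricted to where $W^\varphi\in[\varepsilon,1-\varepsilon]$). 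Interlacing bijections then recover $W$ exactly in the weak{*} limit, while $\INT_f$ for a strictly convex $f$ separates the perturbed graphon from $W$.
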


\begin{proof}
The first part follows directly from the fact that an envelope of
any graphon contains a constant graphon (see Lemma~\ref{lem:basicpropofenvelopes}\ref{enu:averagingINSIDE}),
and also the graphon itself. For the second part we at first prove
that only 0-1 valued graphons can be maximal. 

Suppose that $W$ is a graphon such that its value is neither $0$
nor $1$ on a set of positive measure. This implies that there is
an $\varepsilon>0$ such that $W$ has values between $\varepsilon$
and $1-\varepsilon$ on a set of positive measure. Now consider a
map $\varphi\colon[0,1]\rightarrow[0,1]$ such that $\varphi(x)=2x$
for $0\leq x\leq\frac{1}{2}$ and $\varphi(x)=2x-1$ for $\frac{1}{2}<x\leq1$.
The graphon $W^{\varphi}$ contains four copies of $W$ scaled by
a factor of one half. Let $B\subseteq[0,1]^{2}$ be the set, on which
$W^{\varphi}$ takes values between $\varepsilon$ and $1-\varepsilon$.
Let $\widetilde{W}$ be a graphon such that $\widetilde{W}=W^{\varphi}+\varepsilon$
for $x,y\in\left([0,\frac{1}{2}]^{2}\;\cup\;[\frac{1}{2},1]^{2}\right)\cap B$
, $\widetilde{W}=W^{\varphi}-\varepsilon$ for $x,y\in\left([0,\frac{1}{2}]\times[\frac{1}{2},1]\;\cup\;[\frac{1}{2},1]\times[0,\frac{1}{2}]\right)\cap B$
and $\widetilde{W}=W$ otherwise. The values of $\widetilde{W}$ are
bounded by $0$ and $1$ and $W$ and $\widetilde{W}$ are not weakly
isomorphic (compare $\INT_{f}(W)$ and $\INT_{f}(\widetilde{W})$
for any strictly convex function $f$). Moreover, we claim that $W\in\left\langle \widetilde{W}\right\rangle $.
To see this, one can construct a sequence of measure preserving almost-bijections
$\psi_{1},\psi_{2},\dots$ , defined as $\psi_{n}(x)=\frac{\lfloor2nx\rfloor}{2n}+x$
for $0\leq x\leq\frac{1}{2}$ and $\psi_{n}(x)=\frac{\lfloor2nx\rfloor-2n+1}{2n}+x$
for $\frac{1}{2}\leq x\leq1$, that interlace the two intervals $[0,\frac{1}{2}]$
and $[\frac{1}{2},1]$ and thus serve as an approximation of $\varphi$.
The fact that $\widetilde{W}{}_{1}^{\psi_{1}^{-1}},\widetilde{W}_{2}^{\psi_{2}^{-1}},\dots\WEAKCONV W$
can be seen directly from the definition of weak{*} convergence.

Next, we prove that all 0-1 graphons are maximal. Indeed, let $W$
be a 0-1 valued graphon, and suppose that there exists some graphon
$U$ such that $U\succ W$. Then for the measures $\boldsymbol{\Phi}_{U}$
and $\boldsymbol{\Phi}_{W}$ we have that $\boldsymbol{\Phi}_{W}$
is strictly flatter than $\boldsymbol{\Phi}_{U}$ by Proposition~\ref{prop:flatter}.
This contradicts Lemma~\ref{lem:01measuresextremal}.

Finally, let $W$ be an arbitrary graphon. Consider the set $\mathcal{P}$
of all graphons $P\succeq W$. Then every chain in $\mathcal{P}$
has a supremum in the structuredness order by Proposition~\ref{prop:directed}\ref{enu:upper-directed}.
Therefore, we can apply Zorn's lemma to conclude that there is a maximal
graphon $W_{\max}\succeq W$.
\end{proof}
\begin{rem}
Let us take $W\equiv\frac{1}{2}$ and a sequence $W_{n}$ of graphons
corresponding to Erd\H{o}s\textendash Rényi random graphs $\mathbb{G}(n,\frac{1}{2})$.
By Proposition~\ref{prop:minimalAndMaximalElements}, $W$ is a $\prec$-minimal
elements, while each $W_{n}$ is a $\prec$-maximal element. Yet,
it is well-known that $W_{n}\CUTDISTCONV W$ almost surely. This example
shows that even a cut distance convergent sequence can be fairly <<separated>>
from the limit point with respect to the structurdness order. (Note
also, that while $W$ is a $\prec$-minimal elements, and each $W_{n}$
is a $\prec$-maximal element, $W_{n}$ and $W$ are typically incomparable
in the structuredness order, since the density of $W_{n}$ is typically
not exactly $\frac{1}{2}$.)
\end{rem}

In the example below we show that the structuredness order does not
have meet and joins in general.
\begin{example}
We shall construct graphons $W_{1},W_{2},U_{1},U_{2}$ such that we
have $U_{1},U_{2}\preceq W_{1},W_{2}$, but there is no graphon $V$
such that $U_{1},U_{2}\preceq V\preceq W_{1},W_{2}$. For a fixed
$\varepsilon>0$ we define the four graphons (on the unit square,
denoting the Lebesgue measure on $[0,1]$ as $\nu$) as follows (the
definitions should be clear from Figure~\ref{fig:insersection_of_envelopes}).
\begin{figure}
\includegraphics[width=1\textwidth]{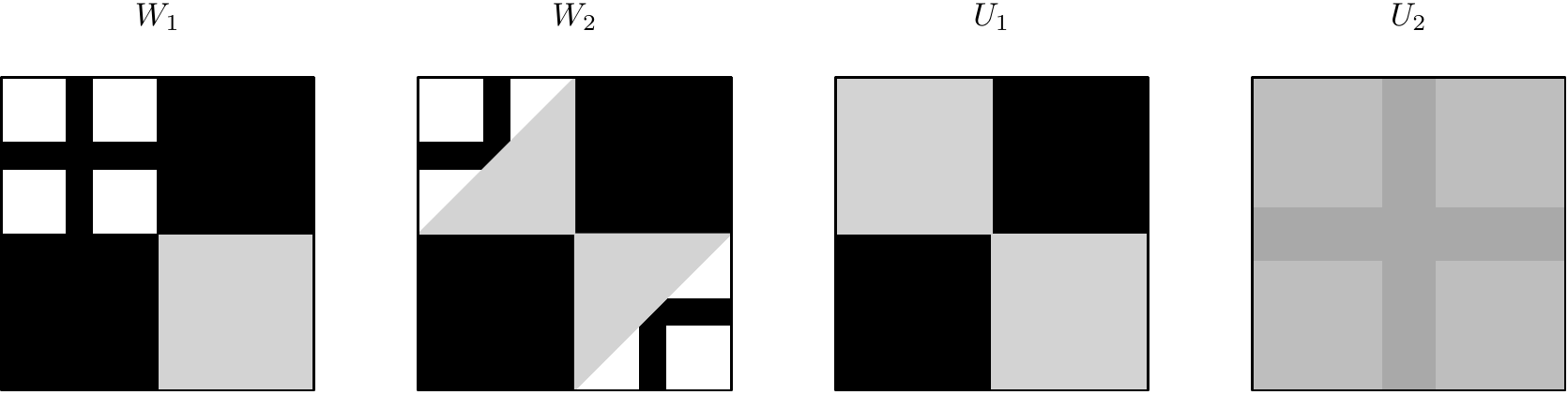}

\caption{\label{fig:insersection_of_envelopes}The four graphons $W_{1},W_{2},U_{1},U_{2}$
witnessing that an intersection of two envelopes is not necessarily
an envelope. }
\end{figure}
\begin{enumerate}
\item Define $W_{1}(x,y)=1$ if and only if $(x,y)\in[\frac{1}{2},1]\times[0,\frac{1}{2}]\;\cup\;[0,\frac{1}{2}]\times[\frac{1}{2},1]\;\cup\;[\frac{1}{4}-\frac{\varepsilon}{2},\frac{1}{4}+\frac{\varepsilon}{2}]\times[0,1]\;\cup\;[0,1]\times[\frac{1}{4}-\frac{\varepsilon}{2},\frac{1}{4}+\frac{\varepsilon}{2}]$.
Moreover, $W_{1}=4(\varepsilon-\varepsilon^{2})$ for $(x,y)\in[\frac{1}{2},1]^{2}$
and is zero otherwise. 
\item Define $W_{2}$ as $W_{1}$ but switch its values on the triangle
with vertices $[0,\frac{1}{2}],[\frac{1}{2},\frac{1}{2}],[\frac{1}{2},0]$
with the triangle with vertices $[\frac{1}{2},1],[1,1],[1,\frac{1}{2}]$.
Note that $\int_{[0,\frac{1}{2}]^{2}}W_{1}=\int_{[\frac{1}{2},1]^{2}}W_{1}=\int_{[0,\frac{1}{2}]^{2}}W_{2}=\int_{[\frac{1}{2},1]^{2}}W_{2}$. 
\item Define $U_{1}=4(\varepsilon-\varepsilon^{2})$ for $(x,y)\in[0,\frac{1}{2}]^{2}\cup[\frac{1}{2},1]^{2}$
and $1$ otherwise. Note that $U_{1}\preceq W_{1},W_{2}$ (this follows
by applying Lemma~\ref{lem:basicpropofenvelopes}\ref{enu:averagingINSIDE}
twice just for the top-left and bottom-right subgraphons of the graphons~$W_{1}$
and~$W_{2}$). 
\item Finally, define $U_{2}$ to be such that 
\[
U_{2}(x,y)=\frac{1}{4}\left(W_{1}\left(\frac{x}{2},\frac{y}{2}\right)+W_{1}\left(\frac{x+1}{2},\frac{y}{2}\right)+W_{1}\left(\frac{x}{2},\frac{y+1}{2}\right)+W_{1}\left(\frac{x+1}{2},\frac{y+1}{2}\right)\right).
\]
We have again $U_{2}\preceq W_{1},W_{2}$ (consider a sequence of
bijections interlacing the two intervals $[0,\frac{1}{2}]$ and $[\frac{1}{2},1]$,
as in the proof of Proposition~\ref{prop:minimalAndMaximalElements}). 
\end{enumerate}
We are now ready to get to the main two properties of this example.
\begin{claim*}[Claim A]
If $V$ is a graphon such that $U_{1}\preceq V$, then $V$ contains
a set $J$ such that $V$ is~1 almost everywhere on $J\times\left([0,1]\setminus J\right)$.
\end{claim*}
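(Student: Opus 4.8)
The plan is to turn the relation $U_{1}\preceq V$ into a concrete weak{*} statement, extract a limiting ``bipartition profile'', and read off $J$ from its level sets; I will work with $V$ defined on $[0,1]$ (Remark~\ref{rem:UnitIntervalGroundSpace}). Since $U_{1}\in\left\langle V\right\rangle=\LIM(V,V,V,\ldots)$, there are measure preserving bijections $\varphi_{n}$ of $[0,1]$ with $V^{\varphi_{n}}\WEAKCONV U_{1}$. Write $A=[0,\tfrac{1}{2}]$, $B=[\tfrac{1}{2},1]$, $c=4\varepsilon(1-\varepsilon)$, and put $J_{n}:=\varphi_{n}(A)$, so $\nu(J_{n})=\nu(J_{n}^{c})=\tfrac{1}{2}$. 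A measure preserving change of variables gives $\int_{J_{n}\times J_{n}^{c}}V=\int_{A\times B}V^{\varphi_{n}}\to\int_{A\times B}U_{1}=\tfrac{1}{4}$, and symmetrically for $J_{n}^{c}\times J_{n}$. Since $0\le V\le1$ and $\nu^{\otimes2}(J_{n}\times J_{n}^{c})=\tfrac14$, this forces
\[
\int_{J_{n}\times J_{n}^{c}}(1-V)\to0\qquad\text{and}\qquad\int_{J_{n}^{c}\times J_{n}}(1-V)\to0 .
\]
I also record for later use that $\int_{[0,1]^{2}}V=\int U_{1}=\tfrac{1}{2}+\tfrac{c}{2}$, which is strictly below $1$ since $1-c=(1-2\varepsilon)^{2}>0$.

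Passing to a subsequence, let $s:[0,1]\to[0,1]$ be a weak{*} limit of $\mathbf{1}_{J_{n}}$ in $L^{\infty}([0,1])$, so $\int s=\tfrac{1}{2}$. The point that makes the argument go through is that $\mathbf{1}_{J_{n}}(x)\mathbf{1}_{J_{n}}(y)\WEAKCONV s(x)s(y)$ in $L^{\infty}([0,1]^{2})$: the sequence is bounded, and tested against product functions $\phi(x)\psi(y)$ (whose span is dense in $L^{1}$) it factors as $\bigl(\int \mathbf{1}_{J_{n}}\phi\bigr)\bigl(\int \mathbf{1}_{J_{n}}\psi\bigr)\to\bigl(\int s\phi\bigr)\bigl(\int s\psi\bigr)$. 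Using $|a-b|=a+b-2ab$ for $a,b\in\{0,1\}$, it follows that $|\mathbf{1}_{J_{n}}(x)-\mathbf{1}_{J_{n}}(y)|=\mathbf{1}_{J_{n}}(x)+\mathbf{1}_{J_{n}}(y)-2\mathbf{1}_{J_{n}}(x)\mathbf{1}_{J_{n}}(y)$ weak{*} converges to $\rho(x,y):=s(x)(1-s(y))+(1-s(x))s(y)\ge0$. Pairing with $1-V\ge0$ and using the two displayed limits,
\[
\int_{[0,1]^{2}}(1-V(x,y))\,\rho(x,y)\,\mathrm{d}\nu^{\otimes2}=\lim_{n}\Bigl(\int_{J_{n}\times J_{n}^{c}}(1-V)+\int_{J_{n}^{c}\times J_{n}}(1-V)\Bigr)=0 ,
\]
so $V=1$ almost everywhere on $\{\rho>0\}$. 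An elementary check shows $\rho(x,y)=0$ exactly when $s(x)=s(y)\in\{0,1\}$.

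It remains to produce $J$ from $s$. If $\nu(\{s=1\})>0$, then also $\nu(\{s=1\})<1$ (because $\int s=\tfrac12$), and on $\{s=1\}\times\{s=1\}^{c}$ one has $\rho(x,y)=1-s(y)>0$; so $J:=\{s=1\}$ works. If $\nu(\{s=1\})=0$, then $\nu(\{s=0\})>0$ — otherwise $s\in(0,1)$ a.e., whence $\rho>0$ a.e.\ and $V=1$ a.e.\ on $[0,1]^{2}$, contradicting $\int V<1$ — and also $\nu(\{s=0\})<1$; on $\{s=0\}\times\{s=0\}^{c}$, which up to a null set equals $\{s=0\}\times\{0<s<1\}$, one has $\rho(x,y)=s(y)>0$, so $J:=\{s=0\}$ works. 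In either case $\nu(J)\in(0,1)$ and $V=1$ almost everywhere on $J\times([0,1]\setminus J)$, which is the claim.

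The routine parts are the change-of-variables bookkeeping in the first step and the finite case analysis for $\rho$. The only slightly delicate point, and the one I would be most careful about, is the identity $\mathbf{1}_{J_{n}}\otimes \mathbf{1}_{J_{n}}\WEAKCONV s\otimes s$ (one might at first expect only an inequality); this is what lets one compute the weak{*} limit $\rho$ exactly, and, combined with the strict bound $\int V<1$ used only to discard the degenerate possibility $0<s<1$ a.e., the rest follows.
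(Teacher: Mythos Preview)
Your proof is correct and, for the key step, cleaner than the paper's. Both arguments start identically: from $V^{\varphi_n}\WEAKCONV U_1$ one gets sets $J_n=\varphi_n([0,\tfrac12])$ with $\int_{J_n\times J_n^{c}}(1-V)\to0$, and one passes to a weak{*} limit $s$ (the paper calls it $g$) of $\mathbf{1}_{J_n}$. The paper then shows $V=1$ a.e.\ on $\mathrm{supp}(g)\times\mathrm{supp}(1-g)$ by a hands-on contradiction: if $V<1-a$ on a nontrivial rectangle $X\times Y$ inside this product, then $J_n$ meets $X$ and $J_n^{c}$ meets $Y$ in definite proportion, contradicting $\int_{J_n\times J_n^{c}}V\to\tfrac14$. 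You instead observe the tensor identity $\mathbf{1}_{J_n}\otimes\mathbf{1}_{J_n}\WEAKCONV s\otimes s$ (valid because the span of $L^1$ tensors is dense and the sequence is bounded), deduce $|\mathbf{1}_{J_n}(x)-\mathbf{1}_{J_n}(y)|\WEAKCONV\rho$, and pair directly with $1-V$ to get $\int(1-V)\rho=0$. This is a genuinely slicker route to the same conclusion: after symmetrizing, both arguments yield $V=1$ a.e.\ off $\{s=0\}^2\cup\{s=1\}^2$.

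One small remark: the paper extracts $J$ with $\nu(J)=\tfrac12$ by choosing any measurable $J$ with $\{s=1\}\subseteq J\subseteq\{s>0\}$, and this specific measure is (implicitly) used in Claim~B when they write ``without loss of generality $J=[0,\tfrac12]$''. Your case analysis produces a $J$ with $\nu(J)\in(0,1)$ but not necessarily $\tfrac12$; however, since you have already established $V=1$ on all of $\{\rho>0\}$, the paper's choice of $J$ is available to you as well with no extra work. Also, your appeal to $\int V<1$ to exclude the degenerate case $0<s<1$ a.e.\ is fine but unnecessary: in that case $V\equiv1$ and any $J$ works.
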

\begin{proof}
The argument is similar to the proof of~\cite[Lemma 3]{DoHl:Polytons}
and~\cite[Lemma 2.3]{HlHuPi:Komlos}. Observe that in $U_{1}$, the
values on the rectangle $\left[0,\frac{1}{2}\right]\times\left[\frac{1}{2},1\right]$
are~1 everywhere. Since there exists a sequence of measure preserving
transformations $\pi_{1},\pi_{2},\dots$ such that $V^{\pi_{n}}\WEAKCONV U_{1}$,
taking $J_{n}:=\pi_{n}\left(\left[0,\frac{1}{2}\right]\right)$ yields
sets of measure $\frac{1}{2}$ with the property that 
\begin{equation}
\lim_{n}\int_{J_{n}\times\left(\left[0,1\right]\setminus J_{n}\right)}V=\int_{\left[0,\frac{1}{2}\right]\times\left[\frac{1}{2},1\right]}U_{1}=\frac{1}{4}\;.\label{eq:ManyHodny}
\end{equation}
Let us pass to a subsequence so that the sequence of indicators $\left(\mathbf{1}_{J_{n}}\right)_{n}$
converges weak{*}, say to a function $g:[0,1]\rightarrow[0,1]$. Note
that then the sequence $\left(\mathbf{1}_{[0,1]\setminus J_{n}}\right)_{n}$
converges weak{*} to $1-g$. 

We claim that $V$ is~1 almost everywhere on $\left(\mathrm{supp}\,g\right)\times\left(\mathrm{supp}\,(1-g)\right)$.
Before proving this, let us explain, why this gives the desired set
$J$. To this end, note that since $\nu(J_{n})=\frac{1}{2}$, weak{*}
convergence implies $\int g=\frac{1}{2}$. In particular, the measure
of the support of $g$ is at least $\frac{1}{2}$ while at the same
time the measure of points where $g\equiv1$ is at most $\frac{1}{2}$.
So, we take $J$ to be an arbitrary set of measure exactly $\frac{1}{2}$
contained in the former set and containing the latter set. It is clear
that all the required properties are satisfied. 

So, let us prove that $V$ is~1 almost everywhere on $\left(\mathrm{supp}\,g\right)\times\left(\mathrm{supp}\,(1-g)\right)$.
It suffices to prove that for each $a>0$, the Lebesgue measure of
those pairs $(x,y)\in g^{-1}\left((a,1]\right)\times g^{-1}\left([0,1-a)\right)$
for which $V(x,y)<1-a$ is zero. So, suppose that this fails for some
$a>0$. By basic properties of measure, we can then find two sets
$X\subseteq g^{-1}\left((a,1]\right)$, $Y\subseteq g^{-1}\left([0,1-a)\right)$
of positive measure such that 
\begin{equation}
\nu^{\otimes2}\left(\left\{ (x,y)\in X\times Y:V(x,y)\ge1-a\right\} \right)<\frac{a^{3}}{100}\cdot\nu(X)\nu(Y)\;.\label{eq:XY}
\end{equation}
Now, observe that since $g$ is a weak{*} limit of $\left(\mathbf{1}_{J_{n}}\right)_{n}$
and $X\subseteq g^{-1}\left((a,1]\right)$, we have that for all sufficiently
large $n$, $\nu(X\cap J_{n})>a\cdot\nu(X)$. By a similar reasoning,
$\nu\left(Y\cap\left(\left[0,1\right]\setminus J_{n}\right)\right)>a\cdot\nu(Y)$
for all sufficiently large $n$. To say this in words, a substantial
(non-vanishing) part of the set $J_{n}$ is inside the set $X$ and
a substantial part of the set $\left[0,1\right]\setminus J_{n}$ is
inside the set $Y$, and by~(\ref{eq:XY}), $V$ is very far from
being close to~1 on $X\times Y$. This contradicts~(\ref{eq:ManyHodny}),
which is saying that $V$ must be very close to~1 on most of $J_{n}\times\left(\left[0,1\right]\setminus J_{n}\right)$.
\end{proof}
\begin{claim*}[Claim B]
There does not exist any graphon $V$ such that $U_{1},U_{2}\preceq V\preceq W_{1},W_{2}$. 
\end{claim*}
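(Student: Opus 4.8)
The plan is to argue by contradiction. Suppose such a graphon $V$ exists, so that $U_{1},U_{2}\preceq V$ and $V\preceq W_{1},W_{2}$. From $U_{1}\preceq V$ we invoke Claim~A to obtain a set $J\subseteq[0,1]$ of measure $\frac12$ such that $V$ equals $1$ almost everywhere on $J\times([0,1]\setminus J)$. The idea is that such a set $J$ is essentially forced to be (up to a null set) a ``half-interval'' in the block structure that $V$ must respect because $V\preceq W_{1}$ and $V\preceq W_{2}$ simultaneously, and then we derive a contradiction from $U_{2}\preceq V$, which forces the off-diagonal density of $V$ on $J\times([0,1]\setminus J)$ to be strictly below $1$ on a set of positive measure.

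First I would extract what $V\preceq W_{1}$ and $V\preceq W_{2}$ impose on $V$. By Proposition~\ref{prop:flatter}, $\boldsymbol{\Phi}_{V}$ is at least as flat as both $\boldsymbol{\Phi}_{W_{1}}$ and $\boldsymbol{\Phi}_{W_{2}}$; since $\boldsymbol{\Phi}_{W_{1}}=\boldsymbol{\Phi}_{W_{2}}$ (the two graphons have the same range frequencies by construction — $W_2$ is obtained from $W_1$ by swapping two triangles of equal size), this gives a single flatness constraint, and in particular $V$ takes only finitely many values, essentially the values $\{0,\tfrac14(\varepsilon-\varepsilon^2)\text{-related constants},1\}$ appearing in $W_1$, up to the averaging. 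The key point is to combine the two relations $V\preceq W_1$ and $V\preceq W_2$: a version of $V$ weak* approximates $W_1$, and another version weak* approximates $W_2$. Because $W_1$ and $W_2$ differ exactly on the two triangles, and the set where $V$ is forced to be $1$ off-diagonally (from Claim~A) has measure structure incompatible with simultaneously lying ``inside the $1$-region of both $W_1$ and $W_2$'', we pin down that the density of $V$ across the cut $J \mid [0,1]\setminus J$, when pushed to the two graphons, cannot be realized. More concretely: $\int_{J\times([0,1]\setminus J)} V = \tfrac14$ by Claim~A's displayed equation, but this quantity is $\le \delta_\square$-stable, so it must match $\int_{[0,\frac12]\times[\frac12,1]} W_i$ for an appropriate cut in each $W_i$ — and the constraint from $U_2$ contradicts this.

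The cleanest route for the contradiction is through a cut-distance-stable graphon parameter. Since $U_{2}\preceq V$, Proposition~\ref{prop:flatter} gives that $\boldsymbol{\Phi}_{U_2}$ is at least as flat as $\boldsymbol{\Phi}_{V}$, hence (transitivity, Lemma~\ref{lem:flattranisitve}) at least as flat as $\boldsymbol{\Phi}_{W_1}$. But $U_2$ is an explicit average of four scaled copies of $W_1$, so $\boldsymbol{\Phi}_{U_2}$ is genuinely flatter than $\boldsymbol{\Phi}_{W_1}$ unless the averaging is trivial — which it is not, because $W_1$ is not constant on the relevant blocks. Meanwhile $V\preceq W_1$ forces $\boldsymbol{\Phi}_{V}$ to sit ``between'' $\boldsymbol{\Phi}_{U_2}$ and $\boldsymbol{\Phi}_{W_1}$ in the flatness order. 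The real obstruction to squeezing $V$ in is geometric rather than purely about range frequencies: one shows that any $V$ with $U_1\preceq V$ must, by Claim~A, contain a genuine ``bipartite-type'' cut with density $1$, but any $V$ with $V\preceq W_2$ (which has its two unit triangles placed asymmetrically) cannot contain such a cut of measure $\tfrac12$ with the exact overlap pattern dictated by also requiring $U_2\preceq V$. I would make this precise by computing, for the forced set $J$, the quantity $\int_{J\times J}V$ versus $\int_{([0,1]\setminus J)\times([0,1]\setminus J)}V$ and comparing with what $W_1$, $W_2$ allow on any half-measure cut; the two graphons $W_1,W_2$ give different admissible pairs of diagonal densities for a half-cut, and no single value of $\int_{J\times J}V$ is compatible with both while also matching the value dictated by $U_2\preceq V$.

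The main obstacle I anticipate is \emph{rigidifying} the set $J$ from Claim~A: Claim~A only produces \emph{some} set $J$ of measure $\tfrac12$ with $V\equiv 1$ a.e.\ on $J\times([0,1]\setminus J)$, and to get a contradiction I need to know that this cut interacts with the (version-dependent) block structures underlying $V\preceq W_1$ and $V\preceq W_2$ in an essentially unique way. This is where I expect the bulk of the technical work: showing that the weak* limits forcing $V\preceq W_1$ and $V\preceq W_2$ constrain the ``shape'' of $V$ enough that $J$ is determined up to null sets and up to the symmetry that swaps the two halves, and then checking by direct computation with the explicit formulas for $W_1,W_2,U_1,U_2$ (and the parameter $\varepsilon$) that the resulting numerical constraints on the densities $\int_{J\times J}V$, $\int_{J\times([0,1]\setminus J)}V$ are mutually inconsistent. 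The comparison of $\INT_f$ values (for a strictly convex $f$) across $U_2$, $V$, $W_1$ serves as the quantitative engine: $\INT_f(U_2) < \INT_f(W_1)$ strictly, while any $V$ squeezed between them would have to satisfy $\INT_f(U_2)\le \INT_f(V)\le \INT_f(W_1)$ \emph{and} match the geometric cut constraint, and these cannot both hold.
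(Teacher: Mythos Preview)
Your outline begins correctly: invoke Claim~A to get the half-measure set $J$ on which $V\equiv 1$ off-diagonally, and then try to rigidify $V$ using $V\preceq W_1,W_2$. You also correctly flag that rigidification as the crux. The gap is in the quantitative step that is supposed to produce the contradiction.

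None of the invariants you propose actually separates $U_2$ from the admissible $V$'s. The chain $\INT_f(U_2)\le\INT_f(V)\le\INT_f(W_1)$ is perfectly consistent (strict inequality $\INT_f(U_2)<\INT_f(W_1)$ just leaves room in the middle), and likewise the range-frequency flatness order merely sandwiches $\boldsymbol{\Phi}_V$ between $\boldsymbol{\Phi}_{U_2}$ and $\boldsymbol{\Phi}_{W_1}$ without any clash. Your suggestion to look at the pair $\bigl(\int_{J\times J}V,\int_{J^c\times J^c}V\bigr)$ also fails: since the total edge density and $\int_{J\times J^c}V=\tfrac14$ are fixed, only the \emph{sum} of the two diagonal block densities is determined, and by construction that sum is the same for $W_1$, $W_2$ and $U_1$. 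Finally, the sentence ``any $V$ with $V\preceq W_{2}$ \ldots\ cannot contain such a cut'' is simply false: $W_2$ itself has $J=[0,\tfrac12]$ with $V\equiv1$ on $J\times J^c$.

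The paper's contradiction uses a sharper, geometry-sensitive invariant that you do not mention: the maximal thin-strip integral $\sup_{\nu(C)=2\varepsilon}\int_{[0,1]\times C}V$. First one shows (this is the rigidification) that the measure-preserving bijections $\varphi_n$ witnessing $W_1^{\varphi_n}\WEAKCONV V$ must, up to $\delta$, send $[0,\tfrac12]$ either onto $[0,\tfrac12]$ or onto $[\tfrac12,1]$; hence each diagonal block of $V$ is a weak{*} limit of versions of a diagonal block of $W_1$, and similarly for $W_2$. After fixing the pairing, one diagonal block of $V$ is (a weak{*} limit of) the constant block $4(\varepsilon-\varepsilon^2)$ of $W_1$, and the other is dominated by the corresponding block of $W_2$. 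A direct computation then gives $\sup_{\nu(C)=2\varepsilon}\int_{[0,1]\times C}V\le \tfrac54\varepsilon+o(\varepsilon)$. On the other hand, the thin ``cross'' in $U_2$ (inherited from $W_1$) creates a strip with $\int_{[0,1]\times[\frac12-\varepsilon,\frac12+\varepsilon]}U_2=\tfrac32\varepsilon+o(\varepsilon)$, and since this quantity is monotone under $\preceq$, $U_2\preceq V$ forces the sup for $V$ to be at least $\tfrac32\varepsilon+o(\varepsilon)$. The numerical gap $\tfrac54<\tfrac32$ is the contradiction. Your plan has no analogue of this step, and without it the argument does not close.
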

\begin{proof}
Assume that there is a graphon $V$ such that $U_{1},U_{2}\preceq V\preceq W_{1},W_{2}$.
Let the set $J$ be given from Claim~A. Without loss of generality
assume that $J=[0,\frac{1}{2}]$. Now, we turn our attention to the
graphons $W_{i}$, $i=1,2$. From the fact that there is a sequence
of measure preserving transformations $\varphi_{1},\varphi_{2},\dots$
such that $W_{1}^{\varphi_{1}},W_{1}^{\varphi_{2}},\dots\WEAKCONV V$
and, thus, $\lim_{n\rightarrow\infty}\int_{\left[0,\frac{1}{2}\right]\times\left[\frac{1}{2},1\right]}W_{1}^{\varphi_{n}}=\int_{\left[0,\frac{1}{2}\right]\times\left[\frac{1}{2},1\right]}V$,
we obtain that for any $\delta>0$ there is $n$ sufficiently large
such that either $\nu\left(\varphi_{n}\left([0,\frac{1}{2}]\right)\cap[0,\frac{1}{2}]\right)\leq\delta$
or $\nu\left(\varphi_{n}\left([0,\frac{1}{2}]\right)\cap[0,\frac{1}{2}]\right)\geq\frac{1}{2}-\delta$.
Indeed, suppose that $\frac{1}{2}-\delta>\nu\left(\varphi_{n}\left([0,\frac{1}{2}]\right)\cap[0,\frac{1}{2}]\right)>\delta$
and observe that the density of $W_{1}^{\varphi_{n}}$ is equal to
$4(\varepsilon-\varepsilon^{2})$ on a subset of $[0,\frac{1}{2}]\times[\frac{1}{2},1]$
of measure at least $\delta^{2}$. Now it suffices to recall that
$\int_{[0,\frac{1}{2}]\times[\frac{1}{2},1]}W_{1}^{\varphi_{n}}\rightarrow\nu\left([0,\frac{1}{2}]\times[\frac{1}{2},1]\right)=\frac{1}{4}$
which implies that the assumption is false for large enough $n$.
From the fact that either $\nu\left(\varphi_{n}\left([0,\frac{1}{2}]\right)\cap[0,\frac{1}{2}]\right)\leq\delta$
or $\nu\left(\varphi_{n}\left([0,\frac{1}{2}]\right)\cap[0,\frac{1}{2}]\right)\geq\frac{1}{2}-\delta$
we conclude that actually (after passing to a subsequence) some versions
of $W_{1}\cap[0,\frac{1}{2}]^{2},W_{1}\cap[0,\frac{1}{2}]^{2},\dots$
converge weak{*} to either $V\cap[0,\frac{1}{2}]^{2}$ or $V\cap[\frac{1}{2},1]^{2}$
(redefine $\varphi_{n}$ on a set of small measure such that it maps
$[0,\frac{1}{2}]$ either onto $[0,\frac{1}{2}]$ or onto $[\frac{1}{2},1]$
to get the required measure preserving bijections). Without loss of
generality assume that some versions of $W_{1}\cap[\frac{1}{2},1]^{2},W_{1}\cap[\frac{1}{2},1]^{2},\dots$
converge weak{*} to $V\cap[\frac{1}{2},1]^{2}$. Similarly, we get
that there are versions of $W_{2}\cap[0,\frac{1}{2}]^{2},W_{2}\cap[0,\frac{1}{2}]^{2},\dots$
converging weak{*} to $V\cap[0,\frac{1}{2}]^{2}$ (the other case
when the versions $W_{2}\cap[\frac{1}{2},1]^{2},W_{2}\cap[\frac{1}{2},1]^{2},\dots$
converge weak{*} to $V\cap[0,\frac{1}{2}]^{2}$ can be treated in
the same way). 

Notice that 
\begin{equation}
\int_{[0,1]\times[\frac{1}{2}-\varepsilon,\frac{1}{2}+\varepsilon]}U_{2}=(2\varepsilon)\cdot1\cdot\frac{3}{4}+o(\varepsilon)=\frac{3}{2}\varepsilon+o(\varepsilon)\;.\label{eq:U2}
\end{equation}

On the other hand, 
\begin{align}
\sup_{\nu\left(C\right)=2\varepsilon}\int_{[0,1]\times C}V & =\sup_{A\subseteq[0,\frac{1}{2}],B\subseteq[\frac{1}{2},1],\nu\left(A\cup B\right)=2\varepsilon}\left(\int_{[0,1]\times A}V+\int_{[0,1]\times B}V\right)\nonumber \\
 & =2\varepsilon\cdot\frac{1}{2}+\sup_{A\subseteq[0,\frac{1}{2}],B\subseteq[\frac{1}{2},1],\nu\left(A\cup B\right)=2\varepsilon}\left(\int_{[0,\frac{1}{2}]\times A}V+\int_{[\frac{1}{2},1]\times B}V\right)\nonumber \\
 & \leq\varepsilon+\sup_{A\subseteq[0,\frac{1}{2}],B\subseteq[\frac{1}{2},1],\nu\left(A\cup B\right)=2\varepsilon}\left(\int_{[0,\frac{1}{2}]\times A}W_{2}+\int_{[\frac{1}{2},1]\times B}W_{1}\right)\nonumber \\
 & =\varepsilon+\left(\frac{1}{4}\varepsilon+o(\varepsilon)\right)+o(\varepsilon)\nonumber \\
 & =\frac{5}{4}\varepsilon+o(\varepsilon),\label{eq:54}
\end{align}
which, for $\varepsilon$ small enough, is smaller than the appropriate
value for $U_{2}$ appearing in~(\ref{eq:U2}). Now, we can conclude
that $V\not\succeq U_{2}$. Indeed, suppose that $V\succeq U_{2}$.
Then for every $a>0$ there is a version $V^{\pi}$ of $V$ such that
$\DIST_{\mathrm{w}^{*}}\left(V^{\pi},U_{2}\right)<a$. In particular,
there exists a version $V^{\pi}$ such that $\left|\int_{[0,1]\times[\frac{1}{2}-\varepsilon,\frac{1}{2}+\varepsilon]}V^{\pi}-U_{2}\right|<\frac{1}{100}$.
Taking $C=\pi^{-1}\left([\frac{1}{2}-\varepsilon,\frac{1}{2}+\varepsilon]\right)$,
(\ref{eq:54}) contradicts~(\ref{eq:U2}).
\end{proof}
\end{example}

\section{Conclusion}

Inspired by previous work~\cite{DH:WeakStar}, we created a comprehensive
theory of approaching the cut distance convergence via the weak{*}
topology. The main results, Theorem~\ref{thm:hyperspaceANDcutDISTANCEsimpler}
and Theorem~\ref{thm:subsequenceLIMACC}, say that in each sequence
$\Gamma_{1},\Gamma_{2},\Gamma_{3},\ldots$ there exists a subsequence
$\Gamma_{n_{1}},\Gamma_{n_{2}},\Gamma_{n_{3}},\ldots$ such that $\LIM\left(\Gamma_{n_{1}},\Gamma_{n_{2}},\Gamma_{n_{3}},\ldots\right)=\ACC\left(\Gamma_{n_{1}},\Gamma_{n_{2}},\Gamma_{n_{3}},\ldots\right)$,
and that the latter property is equivalent to the cut distance convergence
of the said subsequence. It follows from the equivalence established
in Proposition~\ref{prop:MultiwayEquivalentLIMACC} that Theorem~\ref{thm:hyperspaceANDcutDISTANCEsimpler}
can be regarded as the functional-analytic formulation of Theorem~\ref{thm:Multiway},
which was originally established in~\cite{MR2925382}. So, while
each of the above mentioned theorems could have been obtained from
previously existing results, we believe that the weak{*} approach
we introduce provides an important perspective on the theory of graph
limits. For example, Corollary~\ref{cor:chains} offers families
of convergent graphon sequences, which are very natural with this
perspective but would be even difficult to define previously. It turns
out that many of these concepts extend (in a nontrivial way) to hypergraphons,
and this is currently work in progress.

Our initial motivation was to build a theory rather than to solve
any particular problem, quite contrary to the usual perception of
combinatorics (see \cite{MR1754768}). That said, we already see that
this abstract theory has its fruits on the problem-solving side: in~\cite{DGHRR:Parameters}
we use it to prove that a connected graph is weakly norming if and
only if it is step Sidorenko, and that if a graph is norming then
it is step forcing (see~\cite[Remark 3.20]{DGHRR:Parameters} for
an explanation of the role of the the weak{*} theory in these results).
We see a potential for more applications in the theory of hypergraphons.

The weak{*} approach may provide alternative proofs of various generalizations
of Theorem~\ref{thm:compactness}, such as Banach space valued graphons~\cite{KLS:MultigraphLimits},
representations of limits of sparse graphs using measures~\cite{MR3979225},
and an $L^{p}$-approach to limits of sparse graphs~\cite{BHHZ:LPgraphlimits}.
To indicate the plausibility of this approach in the setting $L^{p}$-limits
(which seems the most feasible one), we recall that the corresponding
notion of weak{*} convergence in this setting is based on tests against
all $L^{q}$-functions (where $\frac{1}{p}+\frac{1}{q}=1$). This
would of course change the definition of $\LIM(\Gamma_{1},\Gamma_{2},\Gamma_{3},\ldots)$,
and all related definitions. Working out this program in detail is
currently in progress.

\section*{Acknowledgments}

We would like to thank László Miklós Lovász, Daniel Král and Oleg
Pikhurko for their useful suggestions, and Frederik Garbe and Ond\v{r}ej
K\r{u}rka for comments on a preliminary version of this paper. We
thank a referee for his or her comments.

\bibliographystyle{plain}
\bibliography{bibl}

\end{document}